\RequirePackage{fix-cm}
\documentclass{amsart} 
\usepackage{mathptmx}
\usepackage{latexsym}
\usepackage{amsmath}
\usepackage{amssymb}
\usepackage{amsfonts} 
\usepackage{eucal} 
\usepackage{amsbsy}
\usepackage{amsthm}
\usepackage{graphicx}
\usepackage[all]{xy}
\xyoption{knot}
\usepackage{hyperref}

\textwidth13cm
\oddsidemargin1.3cm
\evensidemargin1.3cm

\newtheorem{thm}{Theorem}[section]
\newtheorem*{thm*}{Theorem}
\newtheorem{lemma}[thm]{Lemma}
\newtheorem{prop}[thm]{Proposition}
\newtheorem{cor}[thm]{Corollary}
\newtheorem*{cor*}{Corollary}

\theoremstyle{definition}
\newtheorem{defn}[thm]{Definition}
\newtheorem{example}[thm]{Example}
\newtheorem{examples}[thm]{Examples}

\theoremstyle{remark}
\newtheorem{remark}[thm]{Remark}

\newcommand {\Ba}    {\ensuremath{\mbox{$\mathcal{B}$}}}

\newcommand {\Fa}    {\ensuremath{\mbox{$\mathcal{F}$}}}

\newcommand {\real}  {\ensuremath{\mathbb{R}}}
\newcommand {\mbs}  {\ensuremath{\mathbb{S}}}
\newcommand {\intg}  {\ensuremath{\mathbb{Z}}}

\newcommand {\cplx}  {\ensuremath{\mathbb{C}}}

\newcommand {\Hom}   {\ensuremath{\operatorname{Hom}}}

\newcommand {\Ima}    {\operatorname{Im}}

\newcommand {\interi}{\operatorname{int}}

\newcommand {\smlhf} {\ensuremath{\mbox{$\frac{1}{2}$}}}

\newcommand {\supp}  {\ensuremath{\operatorname{supp}}}

\newcommand {\id}    {\ensuremath{\operatorname{id}}}

\newcommand {\hotimes}   {\ensuremath{\widehat{\otimes}}}

\newcommand {\catc}   {\ensuremath{\mathbf{C}}}
\newcommand {\catd}   {\ensuremath{\mathbf{D}}}

\newcommand {\vect}   {\ensuremath{\mathbf{Vect}}}

\newcommand {\fm} {\ensuremath{\operatorname{FM}}}
\newcommand {\nn}   {\ensuremath{\mathbb{N}[\mathbb{N}]}}
\newcommand {\nat}   {\ensuremath{\mathbb{N}}}
\newcommand {\bool}   {\ensuremath{\mathbb{B}}}

\newcommand {\closed} {\ensuremath{\operatorname{closed}}}
\newcommand {\dom} {\ensuremath{\operatorname{dom}}}
\newcommand {\cod} {\ensuremath{\operatorname{cod}}}

\newcommand {\tr} {\ensuremath{\operatorname{Tr}}}
\newcommand {\reg} {\ensuremath{\operatorname{Reg}}}
\newcommand {\genim} {\ensuremath{\operatorname{GenIm}}}

\newcommand {\Ob} {\ensuremath{\operatorname{Ob}}}
\newcommand {\Mor} {\ensuremath{\operatorname{Mor}}}

\newcommand {\card} {\ensuremath{\operatorname{card}}}

\newcommand {\wlh} {\ensuremath{W_{\leq 1/2}}}
\newcommand {\wgh} {\ensuremath{W_{\geq 1/2}}}
\newcommand {\fim} {\ensuremath{f|_{I\times M}}}
\newcommand {\fimk} {\ensuremath{f(k)|_{I\times M(k)}}}
\newcommand {\fun} {\ensuremath{\operatorname{Fun}}}

\newcommand {\lh}   {\ensuremath{\widehat{\lambda}}}
\newcommand {\OP} {\ensuremath{\operatorname{OP}}}
\newcommand {\tpi}   {\ensuremath{\widetilde{\pi}}}

\newcommand {\ophi} {\ensuremath{\overline{\phi}}}
\newcommand {\Br} {\ensuremath{\mathbf{Br}}}
\newcommand {\bz} {\ensuremath{\mathbf{0}}}

\begin{document}

\UseComputerModernTips


\title{High-Dimensional Topological Field Theory, Positivity, and Exotic Smooth Spheres}

\author{Markus Banagl}

\address{Mathematisches Institut, Universit\"at Heidelberg,
  Im Neuenheimer Feld 288, 69120 Heidelberg, Germany}

\email{banagl@mathi.uni-heidelberg.de}

\date{August, 2015}

\subjclass[2010]{57R56, 81T45, 57R45, 57R60, 57R55, 16Y60.}

\keywords{Topological quantum field theories, smooth manifolds, homotopy spheres, singularities of smooth maps, fold maps, complete semirings.}


\begin{abstract}
In previous work, we proposed a general framework of positive topological field
theories (TFTs) based on Eilenberg's notion of summation completeness for semirings.
In the present paper, we apply this framework in constructing explicitly a concrete positive
TFT defined on smooth manifolds of any dimension greater than $1$.
We prove that this positive TFT detects exotic smooth spheres.
We show further that polynomial invariants (subject to boundary conditions) can be extracted from the
state sum if the dimension of the cobordisms is at least $3$.
\end{abstract}

\maketitle


\tableofcontents


\section{Introduction}

In \cite{banagl-postft}, we introduced a general method which assigns
to any system of fields on cobordisms and system of action functionals on these
fields a positive topological field theory (TFT). Here, positivity refers to the fact that the state sum of such
a theory takes values in a semiring, and is not definable over rings.
Thus positivity here does not refer to positivity of the associated Hermitian
pairing on the state modules in a unitary theory, as e.g. in \cite{kreckteichnerpostft}.
The reason why semirings are used is a fundamental discovery of Eilenberg (\cite{eilenbergalm}):
Certain semirings allow for an additive completeness that enables summation
over families with large index sets. The so-called ``Eilenberg-swindle'' shows
that such a completeness can never hold in nontrivial rings.
In \cite{banagl-postft}, we also announced an application of the abstract
framework to explicitly constructing a positive TFT for all compact smooth manifolds
in every dimension at least $2$, which is capable of detecting exotic smooth structures. 
In the present paper, we carry out this construction in full detail. From now on,
all manifolds are understood to be smooth.\\

Based on certain algebraic data, which are independent of any manifold and 
to be described more precisely below, we first construct a commutative,
Eilenberg-complete monoid $Q$, which comes naturally equipped with two
multiplications, yielding semirings $Q^c$ and $Q^m$. The elements of
$Q$ are formal power series in one variable $q$ with coefficients in finite-dimensional
matrices. The variable $q$ has the interpretation as a bookkeeping device
for certain loops on cobordisms.
Given any integer $n\geq 2,$ we construct a theory $Z$ with the following properties.
To any closed $(n-1)$-manifold $M$ (not necessarily orientable), we assign state-semimodules
$Z(M)$ over $(Q^c, Q^m)$. These are infinitely generated unless $M$ is empty.
Therefore, as in Hilbert space theory, the algebraic tensor product is inadequate for
describing the states of a disjoint union $M\sqcup N$, where $N$ is another closed
$(n-1)$-manifold. Instead, using a suitably completed tensor product
$\hotimes$, we construct in Proposition \ref{prop.zmonoidalonmodules} 
an isomorphism 
\[ Z(M\sqcup N) \cong Z(M) \hotimes Z(N). \]
Diffeomorphisms $\phi: M\to N$ induce isomorphisms
$\phi_*: Z(M)\to Z(N)$.
If $\psi: N\to P$ is another diffeomorphism, then the functoriality relation
\[ \psi_\ast \circ \phi_\ast = (\psi \circ \phi)_\ast \]
holds, as well as
$(\id_M)_\ast = \id_{Z(M)}: Z(M)\to Z(M).$
There is a contraction product
\[ \langle \cdot, \cdot \rangle: (Z(M)\hotimes Z(N))\times (Z(N)\hotimes Z(P)) 
 \longrightarrow Z(M)\hotimes Z(P), \]
which serves as a means of propagating states along cobordisms and uses the
multiplication of $Q^c$.
To any $n$-dimensional cobordism $W^n$ (not necessarily orientable) from $M$ to $N$, 
we assign an element
\[ Z_W \in Z(M)\hotimes Z(N), \]
called the \emph{state sum} of $W$.
Technically, we work with embedded cobordisms, which is no loss of generality,
but it needs to be investigated to what extent the state sum is independent of the embedding.
Let $W$ be a cobordism from $M$ to $N$ and
$W'$ a cobordism from $M'$ to $N'$. If $W$ and $W'$ are disjoint, then
\[ Z_{W\sqcup W'} = Z_W \hotimes Z_{W'} \]
under the identification
\[ Z(M\sqcup M')\hotimes Z(N\sqcup N') \cong 
  Z(M)\hotimes Z(M')\hotimes Z(N)\hotimes Z(N') \]
(Theorem \ref{thm.statesumdisjunion}).
The tensor product on the right hand side of the above equation of state sums
uses the multiplication in $Q^m$.
The most important characteristic of any TFT is the gluing property.
Let $W'$ be a cobordism from $M$ to $N$ and $W''$ a cobordism from
$N$ to $P$.
Let $W=W' \cup_N W''$ be the cobordism from $M$ to $P$
obtained by gluing $W'$ and $W''$ along $N$.
Then the Gluing Theorem \ref{thm.gluingmain} asserts that
the state sums
\[ Z_{W'} \in Z(M)\hotimes Z(N),~ Z_{W''} \in Z(N)\hotimes Z(P),~
  Z_W \in Z(M)\hotimes Z(P) \]
are related by the gluing law
\[ Z_W = \langle Z_{W'}, Z_{W''} \rangle, \]
where $\langle \cdot, \cdot \rangle$ is the above contraction product.
The embedding of a cobordism gives in particular rise to a ``time'' function.
We say that time on $W$ is \emph{progressive}, if at every time $t$, there is a 
point on $W$ at which the time function is submersive.
Let $W$ be a cobordism with progressive time from $M$ to $N$ and 
$W'$ a cobordism with progressive time from $M'$ to $N'$.
Let $\phi: \partial W \to \partial W'$ be a diffeomorphism that maps $M$ to $M'$ (and thus $N$ to $N'$).
We show in Theorem \ref{thm.tcdiffeoinvariance} that
if $\phi$ extends to a time consistent diffeomorphism (cf. Definition \ref{def.timeconsistent})
$\Phi: W\to W'$, then
the state sums of $W$ and $W'$ are related by
\[ \phi_\ast (Z_W) = Z_{W'} \in Z(M')\hotimes Z(N') \]
under the isomorphism
\[ \phi_\ast: Z(M)\hotimes Z(N)\longrightarrow Z(M')\hotimes Z(N'). \]
In particular, the state sum can at most depend on the time function of the embedding,
if at all.\\

The value of the state sum $Z_W$ on a given boundary condition lies in $Q$ and hence is a
power series in $q$. In Section \ref{sec.rationality},
we prove that for cobordisms $W$ of dimension $n\geq 3$, this value is
actually a rational function in $q$ (Theorem \ref{thm.rationalinq}). Moreover, the denominator is universal
(independent of $W$), and hence all the information is contained in a polynomial in $q$
(which does depend on the boundary conditions). \\

It is not a priori clear that TFTs are capable of recognizing exotic smooth structures.
Could it not be that the gluing law required of a TFT trivializes the theory to such an
extent, that it cannot recognize the subtle global phenomenon of exotic smooth structures?
The present paper shows that this is not the case. Recall that an \emph{exotic sphere}
is a closed smooth $n$-dimensional manifold $\Sigma^n$ which is homeomorphic but
not diffeomorphic to the smooth standard sphere $S^n$. For any smooth closed manifold 
$M^n,$ $n\geq 5,$ homeomorphic to $S^n$, we define in Section \ref{sec.exoticspheres}
an invariant
$\mathfrak{A}(M)\in Q$ and call it the \emph{aggregate invariant} of $M$.
In fact we already showed in \cite{banagl-postft} that, given any positive TFT, this kind of invariant is
always defined. It, too, uses Eilenberg-completeness in an essential way and
has no counterpart in classical TFTs over rings.
We prove that if $\Sigma^n,$ $n\geq 5,$ is exotic, then $\mathfrak{A}(\Sigma^n)\not=
\mathfrak{A}(S^n)$ in $Q$ (Corollary \ref{cor.asigmanotasn}).
The proof relies on the work \cite{saekihtpyspheres} of O. Saeki on cobordism of so-called
special generic functions.
In seeking a TFT that can detect exotic spheres, one is initially encouraged by Milnor's
classical $\lambda$-invariant (\cite{milnor7sphere}) in dimension $7$. 
It is given on a closed oriented $7$-manifold $M$
by first choosing a compact, smooth, oriented
manifold $W^8$ with $\partial W=M$ (always possible) and then setting
$\lambda (M) = 2p_1^2 [W] - \sigma (W)$ modulo $7$,
where $p^2_1 [W]$ is a Pontrjagin number and $\sigma (W)$ is the signature of the 
intersection form. This residue class modulo $7$ is independent of the choice of $W$ by the 
Hirzebruch signature theorem for closed manifolds.
By Novikov additivity, the signature always behaves like a TFT.
It turns out that the Pontrjagin number is also additive, and thus behaves like a TFT,
\emph{provided} $H^3 (M)= 0 = H^4 (M)$. In fact, only under these assumptions
is the Pontrjagin number $p^2_1 [W]$ topologically well-defined.
It follows that $\lambda (W) := 2p_1^2 [W] - \sigma (W) \in \intg$ transforms like a TFT
if $H^3 (\partial W)=0=H^4 (\partial W)$. 
The problem is that Pontrjagin numbers do not in general satisfy a gluing law:
Easy examples such as $\cplx \mathbb{P}^2$ show that
it may well be that $W=W' \cup_N W''$ has a nontrivial Pontrjagin number, while
the Pontrjagin classes of $W'$ and $W''$ are trivial. This can also be understood from the
analytic viewpoint: A choice of Riemannian metric on $W$ determines an associated
curvature two-form $R$ and, expressed in terms of $R$, 
top-degree differential form representatives $p$ of polynomials in Pontrjagin classes.
By the additivity of integrals, $\int_W p$ does of course satisfy a gluing law, but this
integral is not topological. Indeed, according to Atiyah-Patodi-Singer \cite{APSI}, if we use
the Hirzebruch $L$-polynomial, then 
$\int_W p$ is the sum of $\sigma (W)$ and the $\eta$-invariant of the boundary,
at least when $W$ is isometric to a product near $\partial W$. The $\eta$ invariant,
however, is known to depend on the metric of the boundary. Thus, at best, one would
obtain a Riemannian field theory, not a topological field theory.
Kreck, Stolz and Teichner informed us that using the work of
Galatius, Madsen, Tillmann and Weiss (\cite{galmadtilwei}), they obtained results on expressing
Pontrjagin numbers as partition functions of invertible TFTs.
Witten's paper \cite{wittengravanoms} shows how exotic spheres arise in Physics in the context of
global gravitational anomalies.\\

This brings us to our method of constructing $Z$, which does not use Pontrjagin numbers
at all. For our overall blueprint, we take inspiration from Physics:
First, specify a system of fields on cobordisms and closed manifolds.
Then construct an action functional on these fields. This action should assign
a ``quantized'' algebraic/combinatorial structure to a given smooth field.
Finally define the state sum by summing the actional functional of all fields on the
cobordism. The fundamental idea of positive TFTs, as explained in \cite{banagl-postft},
is to use Eilenberg-completeness to implement this summation, thereby
circumventing measure-theoretic problems that frequently plague the rigorous development
of path integrals.
As the fields, we use in principle smooth maps from the cobordism into the plane which have
only fold singularities. Such maps are not dense in the space of all smooth maps
and are known to carry a lot of information on the topology of their domain.
If the boundary of a connected cobordism $W$ is empty, then a smooth map $W\to \real^2$
is homotopic to a fold map if and only if the Euler characteristic of $W$ is even.
Thus our TFT will have nothing to say about closed cobordisms of odd Euler characteristic.
If the boundary of $W$ is not empty, then the parity of the Euler characteristic is no 
obstruction to the existence of fold maps: For example if $W$ is the cylinder $[0,1]\times M$
on a closed manifold $M$ of odd Euler characteristic, then suspending a Morse function
on $M$ will give a fold map on $W$, despite the Euler characteristic of $W$ being odd.
We summarize the required basic properties of fold maps in Section \ref{sec.foldmaps}.
The problem is that using all such fold maps will not yield a TFT for two reasons:
it is not clear how to define the action functional, and the gluing law cannot be established.
Therefore, one must find suitable further conditions to place on a fold map.
This is somewhat delicate since one must simultaneously enable the gluing law
while not losing too many fold maps in order to still detect exotic spheres.
This problem is solved in Definition \ref{def.foldfield}, where we introduce the concept
of a fold \emph{field}, which is central to this work.
We do not prove any new results on fold maps. The goal was rather to understand
whether, and how, they can be used to obtain TFTs.
Now given a fold field on a cobordism, we must define the action functional on it.
This is done in two steps: The first one uses the singular set of the fold map to assign
a combinatorial object to the map. The second step uses representation theory to map 
the combinatorial object to linear algebra.
For step one, we define the Brauer category $\Br$, a categorification of the
Brauer algebras $D_m$. The latter arose in the representation theory of the orthogonal
group $O(n)$, see \cite{brauer}, \cite{wenzl}, and have since played an important role in
knot theory. The Brauer category is a
compact (i.e. every object is dualizable), symmetric,
strict monoidal category.
Loosely speaking, the morphisms are $1$-dimensional unoriented tangles in a 
high-dimensional Euclidean space. As those can always be disentangled,
$\Br$ is very close, but not equal, to the category of $1$-dimensional
cobordisms. One difference is that the
objects of the latter, being $0$-manifolds, are unordered (finite) sets, whereas the
objects of $\Br$ are \emph{ordered} tuples of points.
Another difference is that the cobordism category has a huge number of
objects (though few isomorphism types), whereas the Brauer category has very
few objects to begin with and has the property that two objects are isomorphic
if and only if they are equal. The important point is that the morphisms of $\Br$
are determined entirely by the combinatorics of their endpoint connections and the
number of loops they contain.
In Section \ref{ssec.actionfoldfieldsbrauer}, 
we construct a map $\mbs$, which
assigns to every fold field a morphism in the Brauer category.
For the second step, we construct in Theorem \ref{thm.functorytangvect}
linear representations of the Brauer category,
that is, symmetric strict monoidal functors $Y:\Br \to \vect,$ where $\vect$ is the
category of finite dimensional real vector spaces and linear maps.
It is technically important here that one endows $\vect$ (as such, i.e. without
changing its objects or morphisms) with a tensor product
so that it becomes a \emph{strict} monoidal category. That this can in fact be
done was shown by Schauenburg in \cite{schauenburg}.
To construct representations $Y$, we begin in Section \ref{ssec.linearduality} by introducing
the algebraic concept of a \emph{duality structure} (Definition \ref{def.dualitystructure}).
Using techniques from knot theory introduced by Turaev in
\cite{turaevtanglesrmatrices} (see also \cite{kassel}), we find in
Proposition \ref{prop.tanggensrels} a presentation of the Brauer category by generating 
morphisms and relations.
This result then allows us to prove that every duality structure on a finite dimensional
vector space gives rise to a symmetric strict monoidal functor $Y$.
In order to detect exotic smooth structures, it is important that some of these representations $Y$
are faithful on loops. This is shown in Proposition \ref{prop.expleloopfaithful}.
Given such a $Y$, we form the composition $Y\circ \mbs$. Given a cobordism $W$,
we get the state sum $Z_W$ in Section \ref{ssec.statesums} by
evaluating $Y\circ \mbs$ on all fold fields 
(obeying boundary conditions)
on $W$ and summing the resulting values
using the Eilenberg-completeness of $Q$ (see in particular Equation (\ref{equ.statesumdef})). \\

The commutative monoid $Q$ depends on a choice of duality structure and uses
the associated representation $Y$ as described above. It is constructed in Section
\ref{sec.proidemcompletion} by applying a general process called profinite idempotent
completion. Roughly, this process completes a subset of a vector space which is invariant
under rescaling by powers of a given scalar (in the application the trace of the loop) to
an idempotent complete semimodule. The rest of that section describes the purely algebraic,
as well as the important completeness and continuity properties, of $Q$.
It also introduces the semirings $Q^c$ and $Q^m$, whose common underlying
additive monoid is $Q$. The product on $Q^c$ comes from the composition of morphisms,
while the product on $Q^m$ comes from the monoidal product of morphisms.\\

The informational content of our state sum invariant seems to be a mixture of
index constraints such as
Morse inequalities, i.e. roughly homological content, and certain
characteristic classes. However, our TFT packages this mixture in such
a way that a Gluing Theorem holds, which does not involve contributions
from the common boundary. Recall that homology itself does not glue in this
sense, since the Mayer-Vietoris sequence \emph{does} involve the
homology of the boundary. Similarly, as discussed above, characteristic classes and numbers
do not in general glue like TFT-type invariants.\\

Atiyah's original TQFT axioms imply that
state modules are always finite dimensional, but on p. 181 of
\cite{atiyahtqft}, he indicates that allowing state modules to be 
infinite dimensional may be necessary in interesting examples of TFTs.
The state modules of our positive TFT are indeed infinitely generated, as mentioned before.
Atiyah's classical axioms also demand that 
the state sum $Z_{I\times M}$ of a cylinder $W=I\times M$ be the identity
when viewed as an endomorphism.
The map $Z_{I\times M}$ on $Z(M)$ should be the
``imaginary time''  evolution operator $e^{-tH}$ (where $t$ is the length
of the interval $I$), so Atiyah's axiom means that the Hamiltonian
$H=0$ and there is no dynamics along a cylinder.
We do not require this for positive TFTs and will allow interesting
propagation along the cylinder. In particular, one can not phrase positive
TFTs as monoidal functors on cobordism categories, as this would imply that
the cylinder, which is an identity morphism in the cobordism category, would
have to be mapped to an identity morphism. 
Since $Z_{I\times M}$ need not be the identity, it can also
generally not be deduced in a positive TFT that $Z_{S^1 \times M} =
\dim Z(M)$, an identity that would not make sense in the first place,
as $Z(M)$ need not have finite dimension.
The present paper does not make substantial use of the cobordism category. 
Nor do we consider $n$-categories or
manifolds with corners here. \\

We have tried to make the exposition as self-contained as possible.
In particular, the reader is not assumed to be familiar with \cite{banagl-postft}.
Likewise, no prior exposure to semirings and semimodules over them is necessary.
Section \ref{sec.monssemirings} is devoted to a careful review of the required
elements of semimodule theory. Particular emphasis is placed on explaining the
notion of Eilenberg-completeness, as well as additive idempotence and continuity.
Proposition \ref{prop.eilensummaslovint} relates Eilenberg summation to
idempotent integration theory as developed in idempotent analysis 
by the Russian school around V. P. Maslov, see e.g. \cite{litmasshpizidemfunctana}.
It follows that the entire paper, in particular our state sum, could be recast
in terms of Maslov's idempotent integral. A few words about the completed
tensor product $\hotimes,$ which is the topic of Section \ref{sec.funcompltensorprod}, are in order.
In the complete idempotent \emph{commutative} setting, Litvinov, Maslov and Shpiz have
constructed in \cite{litmasshpiztensor} such a tensor product. 
This is an idempotent analog of topological tensor products in the sense of Grothendieck.
Now, the present paper needs such an analog even over noncommutative semirings,
since the semirings $Q^c, Q^m$ are generally not commutative.
We construct such a completed tensor product $\hotimes$ in Section 
\ref{sec.funcompltensorprod}. Our main result is Theorem \ref{thm.funiso},
which provides a tensor decomposition of function semimodules.
This is essential for establishing some of the above properties of $Z$.\\

\textbf{General Notation.} 
The letter $I$ will denote both the unit interval $[0,1]$ and unit objects in
monoidal categories.
The symbol $1$ will denote the (real) number as well as identity morphisms
of various categories. In addition, we often find it convenient to denote certain
identity morphisms by the symbol $\id$. We do this in the interest of readability:
We usually reserve $\id_X$ for the identity map on some topological space $X$,
whereas $1_X$ is used for any identity morphism in categories different from the
topological one.
The imaginary part of a complex number $z$ will be denoted by $\Ima (z)$.
This is not to be confused with the image of a map. Thus $\Ima F$ is the
imaginary part of a complex valued function $F$, not the image of $F$. 
Categories will be denoted by boldface letters such as $\catc$. 
We write $\Ob \catc$ for the class of objects and $\Mor \catc$ for the class
of morphisms.
If $\phi$ is a morphism in some category, then $\dom \phi$ and $\cod \phi$
denote its domain and codomain, respectively.
Closed smooth 
$(n-1)$-dimensional manifolds will be named $M,N,P$, while compact smooth
$n$-manifolds with possibly nonempty boundary (cobordisms) will be named
$W,W',$ etc.
For the present paper, we wish the natural numbers to be an additive monoid.
Thus we include zero, $\nat = \{ 0,1,2,\ldots \}.$\\

\textbf{Acknowledgments.}
We are grateful to Vladimir Turaev for helpful comments and to Osamu Saeki
for providing us with some initial orientation concerning fold maps.
We thank Dominik Wrazidlo for his remarks on an early version of the manuscript,
leading to several improvements. \\

\section{The Brauer Algebra and its Categorification}
\label{sec.brauercat}

In the course of the construction
of our TFT, two particular monoidal categories will play an important role:
the Brauer category and the category of real vector spaces $\vect$.
The Brauer algebras $D_m$ arose in the representation theory of the orthogonal
group $O(n)$, see \cite{brauer}, \cite{wenzl}, and have since played an important role in
knot theory. We shall require a categorification, which we denote by $\Br$, of Brauer's algebras.
The present section gives a detailed construction of this categorification, together with
derivations of its main properties. We also construct linear representations of $\Br$ 
(Theorem \ref{thm.functorytangvect}) and prove
that there exist linear representations of $\Br$ which are faithful on loops
(Proposition \ref{prop.expleloopfaithful}).

Let $\vect$ denote the category of finite dimensional real vector spaces and
linear maps. We shall endow $\vect$ with the structure of a symmetric 
strict monoidal category.

\subsection{The Schauenburg Tensor Product}

The ordinary tensor product of vector spaces is well-known
not to be associative, though it is associative up to natural isomorphism. Thus,
if we endowed $\vect$ with the ordinary tensor product
and took the unit object $I$ to be the one-dimensional
vector space $\real$, then, using obvious associators and unitors,
$\vect$ would become a monoidal category, but not a strict one.
There is an abstract process of turning a monoidal category $\catc$ into 
a monoidally equivalent strict monoidal category $\catc^{\operatorname{str}}$. 
However, this process
changes the category considerably and is thus not always practical.
Instead, we base our monoidal structure on the Schauenburg tensor product
$\odot$ introduced in \cite{schauenburg}, which does not change the
category $\vect$ at all. 
The product $\odot$ satisfies the strict associativity
\[ (U\odot V)\odot W = U\odot (V\odot W). \]
We shall thus simply write $U\odot V \odot W$ for this vector space.
The unit object $I$ remains the same as in the usual nonstrict monoidal structure, $I=\real$,
and one has
$V\odot I = V,~ I\odot V =V.$
The strict
monoidal category $(\vect, \odot, I)$ thus obtained is monoidally equivalent to the usual nonstrict
monoidal category of vector spaces. The underlying functor of this monoidal
equivalence is the identity.
In particular, there is a natural isomorphism $\xi: \otimes \to \odot,$
$\xi_{VW}: V\otimes W \to V\odot W$, where $\otimes$ denotes the standard
tensor product of vector spaces.
Note that via $\xi$ we are able to speak of elements
$v\odot w\in V\odot W,$ $v\odot w := \xi_{VW} (v\otimes w),$ $v\in V,$ $w\in W$.
The identity
$(u\odot v)\odot w = u\odot (v\odot w)$
holds for elements $u\in U, v\in V$ and $w\in W$. 
The basic idea behind the construction of $\odot$ is to set up a specific new
equivalence of categories 
$L: \vect \rightleftarrows \vect^{\operatorname{str}}: R$ such that $LR$ is the
identity and then setting $V\odot W = R(LV*LW),$ where $*$ is the strictly associative 
tensor product in $\vect^{\operatorname{str}}$. Then
\begin{eqnarray*}
(U\odot V)\odot W & = & R(L(U\odot V)*LW) =
  R(LR (LU*LV)*LW) = R((LU*LV)*LW) \\
& = & R(LU*(LV*LW)) = R(LU*LR(LV*LW)) =
   R(LU*L(V\odot W)) \\
& = & U\odot (V\odot W).
\end{eqnarray*}
Let $\beta$ be the standard braiding $\beta (v\otimes w)=w\otimes v$ on $V\otimes W$.
Then $\beta$ is symmetric and satisfies the two hexagon equations with respect to the
standard associator $\alpha: (V\otimes W)\otimes U \to V\otimes (W\otimes U)$.
We define isomorphisms
$b_{VW}: V\odot W \to W\odot V$
by $b_{VW} = \xi_{WV} \circ \beta_{VW} \circ \xi^{-1}_{VW}$, i.e.
$b_{VW} (v\odot w)=w\odot v$.
Note that $b$ is natural and one verifies easily:
\begin{prop}
The natural isomorphism $b$ is a symmetric braiding on the strict monoidal
category $(\vect, \odot, I)$.
\end{prop}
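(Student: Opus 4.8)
The plan is to transport the symmetry and hexagon properties of the standard braiding $\beta$ on $(\vect, \otimes, \real)$ across the natural isomorphism $\xi \colon \otimes \to \odot$. Since $b$ is defined by $b_{VW} = \xi_{WV} \circ \beta_{VW} \circ \xi_{VW}^{-1}$, its naturality is immediate from that of $\xi$ and $\beta$ (and has already been noted above), so the substance of the proof is the symmetry condition $b_{WV} \circ b_{VW} = 1_{V \odot W}$ together with the two hexagon axioms.

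For symmetry I would argue directly on elements. The identity $b_{VW}(v \odot w) = w \odot v$ gives, for $v \in V$ and $w \in W$,
\[ b_{WV}(b_{VW}(v \odot w)) = b_{WV}(w \odot v) = v \odot w. \]
Since $\xi_{VW}$ is an isomorphism, the elements $v \odot w$ span $V \odot W$, and since all maps in sight are linear this yields $b_{WV} \circ b_{VW} = 1_{V \odot W}$. (Equivalently, one conjugates the identity $\beta_{WV} \circ \beta_{VW} = 1_{V \otimes W}$ by $\xi$.)

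For the hexagon axioms, note that $(\vect, \odot, I)$ has trivial associator, so — writing the required identities for three spaces $U,V,W$ and using $(U \odot V) \odot W = U \odot (V \odot W)$ — the two hexagons reduce to
\[ b_{U, V \odot W} = (1_V \odot b_{U, W}) \circ (b_{U, V} \odot 1_W), \qquad b_{U \odot V, W} = (b_{U,W} \odot 1_V) \circ (1_U \odot b_{V,W}). \]
I would verify the first by evaluating both sides on a spanning element $u \odot v \odot w$: the left-hand side sends it to $v \odot w \odot u$, while the right-hand side sends $u \odot v \odot w \mapsto v \odot u \odot w \mapsto v \odot w \odot u$, the two reorderings being legitimized by the element identity $(u \odot v) \odot w = u \odot (v \odot w)$ recorded above; the second hexagon is entirely analogous. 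A more structural alternative would be to invoke the fact, due to Schauenburg \cite{schauenburg}, that the identity functor underlies a monoidal equivalence $(\vect, \otimes, \real, \alpha) \to (\vect, \odot, I, \mathrm{id})$ with monoidal structure isomorphism $\xi$, and then pull the hexagon equations for $(\beta, \alpha)$ back along $\xi$ by a diagram chase using the coherence square relating $\xi$ to $\alpha$.

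The only obstacle is bookkeeping: in the structural route one must keep careful track of the direction of $\xi$ and of the precise form of the coherence relating $\xi$ and $\alpha$, whereas in the element-wise route one must be sure that the element identities for $\odot$ used in the spanning-set computation (strict associativity on objects and on elements, and $V \odot I = V = I \odot V$) are precisely those already extracted from the Schauenburg construction. Neither is a genuine difficulty, and the short element-wise verification is what I would write up.
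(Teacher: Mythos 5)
Your verification is correct, and it is exactly the "easy" check the paper has in mind when it states the proposition without proof: naturality is inherited from $\beta$ and $\xi$, symmetry and the (strictified) hexagons are checked on the spanning elements $v\odot w$ using $b_{VW}(v\odot w)=w\odot v$, the element identity $(u\odot v)\odot w=u\odot(v\odot w)$, and the fact that $\xi$ is a natural isomorphism. No discrepancy with the paper's (omitted) argument.
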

Thus $(\vect, \odot, I, b)$ is a symmetric strict monoidal category.
For the rest of this paper we will always use the Schauenburg tensor product
on $\vect$ and thus will from now on write $\otimes$ for $\odot$.

\subsection{Linear Duality}
\label{ssec.linearduality}

Let us recall the general notion of duality in a monoidal context.
\begin{defn} \label{def.dual}
An object $X$ of a symmetric monoidal category $\catc$ is called \emph{(right) dualizable},
if there is an object $X^\ast \in \catc$, called a
\emph{dual} of $X$, and morphisms
\[ i_X: I\longrightarrow X^\ast \otimes X,~ e_X: X\otimes X^\ast \longrightarrow I, \]
called \emph{unit} and \emph{counit}, respectively, satisfying the two zig-zag equations,
that is,
\begin{equation} \label{equ.zigzagx}
X \cong X\otimes I \stackrel{1_X \otimes i_X}{\longrightarrow}
X \otimes (X^\ast \otimes X)\cong (X\otimes X^\ast)\otimes X
\stackrel{e_X \otimes 1_X}{\longrightarrow} I\otimes X \cong X
\end{equation}
is the identity and
\begin{equation} \label{equ.zigzagxstar}
X^\ast \cong I\otimes X^\ast \stackrel{i_X \otimes 1_{X^\ast}}{\longrightarrow}
(X^\ast \otimes X) \otimes X^\ast \cong X^\ast \otimes (X \otimes X^\ast)
\stackrel{1_{X^\ast} \otimes e_X}{\longrightarrow} X^\ast \otimes I \cong X^\ast
\end{equation}
is the identity. (The natural isomorphisms appearing in the above compositions are
the unitors and associators which are part of the monoidal structure on $\catc$.)
\end{defn}
\begin{defn}
A symmetric monoidal category $\catc$ is called \emph{compact}, if every
object of $\catc$ is (right) dualizable.
\end{defn}
The dual of an object is unique up to canonical isomorphism.
Let $V$ be a finite dimensional real vector space.
A \emph{symmetric pairing} on $V$ is a linear map
$e: V\otimes V \to \real$ such that
\[ \xymatrix@R=10pt{ 
V\otimes V \ar[rd]^e  \ar[dd]_b^{\cong} & \\
& \real \\
V\otimes V \ar[ru]_e &
} \]
commutes.
The dual notion of a symmetric copairing can be obtained by reversing all arrows.
Explicitly, a \emph{symmetric copairing} on $V$ is a linear map
$i: \real \to V\otimes V$ such that
\[ \xymatrix@R=10pt{ 
& V\otimes V \ar[dd]_b^{\cong} \\
\real \ar[ru]^i \ar[rd]_i & \\
& V\otimes V 
} \]
commutes.
In order to prepare the notion of a duality structure on a vector space, we observe
the following, where $V^{\otimes 3} = V\otimes V\otimes V$:
\begin{lemma} \label{lem.bothzigokbysymm}
If $e:V\otimes V\to \real$ is a symmetric pairing and
$i:\real \to V\otimes V$ a symmetric copairing, then the diagram
\[ \xymatrix{
V \ar[r]^{i\otimes 1_V} \ar[d]_{1_V \otimes i} & V^{\otimes 3}
  \ar[d]^{1_V \otimes e} \\
V^{\otimes 3} \ar[r]^{e\otimes 1_V} & V
} \]
commutes.
\end{lemma}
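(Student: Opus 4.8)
The plan is to express both composites as elements of $V^{\otimes 3}$ obtained from a single element of $V$ and then use the symmetry of $e$ and $i$ to identify them. Write $i(1) = \sum_k a_k \otimes b_k \in V\otimes V$; the symmetry of the copairing means $\sum_k a_k \otimes b_k = \sum_k b_k \otimes a_k$. Given $v\in V$, the top-right composite sends $v \mapsto (i\otimes 1_V)(v) = \sum_k a_k \otimes b_k \otimes v$ and then applies $1_V \otimes e$, yielding $\sum_k e(b_k \otimes v)\, a_k$. The left-bottom composite sends $v \mapsto (1_V \otimes i)(v) = \sum_k v \otimes a_k \otimes b_k$ and then applies $e\otimes 1_V$, yielding $\sum_k e(v\otimes a_k)\, b_k$.

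Now I would compare the two outputs. Using symmetry of $e$, $e(b_k\otimes v) = e(v\otimes b_k)$, so the first output is $\sum_k e(v\otimes b_k)\, a_k$. Using symmetry of $i$ to swap the roles of $a_k$ and $b_k$ in the summation, $\sum_k e(v\otimes b_k)\, a_k = \sum_k e(v\otimes a_k)\, b_k$, which is exactly the second output. Hence the diagram commutes. An alternative phrasing avoiding explicit bases: both composites equal the map $V \to V$ obtained from the ``S-shaped'' morphism in the symmetric monoidal category, and the two ways of reading it differ precisely by inserting the braiding $b$ into the $i$ (or the $e$), which changes nothing by the assumed symmetry of the (co)pairing; one then invokes naturality of $b$ and the coherence theorem for symmetric monoidal categories to see the two readings agree.

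The only genuine bookkeeping is keeping track of which tensor slot each map acts on, and confirming that the associator/unitor isomorphisms implicit in writing $1_V\otimes e: V^{\otimes 3}\to V$ and $e\otimes 1_V: V^{\otimes 3}\to V$ are handled consistently; since we are using the Schauenburg tensor product, $V^{\otimes 3}$ is literally $V\otimes V\otimes V$ with strict associativity, so there are no coherence isomorphisms to track and the element-chasing above is rigorous as written. I expect the main (very mild) obstacle is simply making the symmetry hypotheses do their work in the right order: one needs symmetry of $e$ once and symmetry of $i$ once, and it is worth noting that \emph{neither} hypothesis alone suffices — this is the content of the lemma's name. No step is hard; the lemma is essentially a sanity check that will be used to show both zig-zag equations for the duality structure can be verified from a single one.
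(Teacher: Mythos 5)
Your proof is correct: writing $i(1)=\sum_k a_k\otimes b_k$, chasing an element through both composites, and using the symmetry of $e$ once and of $i$ once (the latter by applying the linear map $x\otimes y\mapsto e(v\otimes y)x$ to the identity $\sum_k a_k\otimes b_k=\sum_k b_k\otimes a_k$) is exactly the routine verification the paper has in mind — it states the lemma without proof, treating it as evident. Your remark that strictness of the Schauenburg product removes any coherence bookkeeping is also accurate, so nothing is missing.
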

\begin{defn} \label{def.dualitystructure}
A \emph{duality structure} on $V$ is a pair $(i,e)$ whose components are
a symmetric copairing $i: \real \to V\otimes V$ and a symmetric pairing
$e: V\otimes V \to \real$,
called \emph{unit} and \emph{counit}, respectively, satisfying the zig-zag equation, i.e.
the composition
\[
V=  V\otimes I \stackrel{1_V \otimes i}{\longrightarrow}
V^{\otimes 3}
\stackrel{e \otimes 1_V}{\longrightarrow} I\otimes V =V
\]
is the identity.
\end{defn}
Lemma \ref{lem.bothzigokbysymm} ensures that a duality structure $(i,e)$ on $V$ satisfies both
zig-zag equations of Definition \ref{def.dual}, taking $V^\ast =V$.
Consequently, $V$ is then dualizable with dual $V$,
that is, $V$ is \emph{self-dual}. 
Fix a basis $\{ e_1,\ldots, e_n \}$ of the vector space $V$.
A symmetric copairing $i$ on $V$ determines, and is determined by, a
symmetric $(n\times n)$-matrix $\operatorname{Mat}(i)=(i_{jk})_{j,k}$ such that
$i(1) = \sum_{j,k} i_{jk} e_j \otimes e_k$.
Similarly, a symmetric pairing $e$ on $V$ determines, and is determined by, a
symmetric $(n\times n)$-matrix $\operatorname{Mat}(e)=(e(e_j \otimes e_k))_{j,k}$.
\begin{prop} \label{prop.ieinverse}
(1) Given a symmetric copairing $i$ on $V$, there exists a symmetric pairing $e$ on $V$
such that $(i,e)$ is a duality structure on $V$ if and only if $\operatorname{Mat}(i)$ is
invertible. In this case, $\operatorname{Mat}(e)=\operatorname{Mat}(i)^{-1}$ and $e$
is uniquely determined by $i$. \\
(2) Given a symmetric pairing $e$ on $V$, there exists a symmetric copairing $i$ on $V$
such that $(i,e)$ is a duality structure on $V$ if and only if $\operatorname{Mat}(e)$ is
invertible. In this case, $\operatorname{Mat}(i)=\operatorname{Mat}(e)^{-1}$ and $i$
is uniquely determined by $e$.
\end{prop}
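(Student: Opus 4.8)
The plan is to reduce both assertions to a single matrix computation, exploiting the fact that everything takes place in coordinates once a basis $\{e_1,\dots,e_n\}$ of $V$ is fixed. First I would translate the zig-zag equation of Definition \ref{def.dualitystructure} into a relation between $\operatorname{Mat}(i)$ and $\operatorname{Mat}(e)$. Writing $i(1)=\sum_{j,k} i_{jk}\, e_j\otimes e_k$ and $e(e_a\otimes e_b)=e_{ab}$, the composite $V\xrightarrow{1_V\otimes i} V^{\otimes 3}\xrightarrow{e\otimes 1_V} V$ sends a basis vector $e_\ell$ to $e_\ell\otimes i(1)=\sum_{j,k} i_{jk}\, e_\ell\otimes e_j\otimes e_k$ and then, applying $e$ to the first two tensor factors, to $\sum_{j,k} i_{jk}\, e_{\ell j}\, e_k=\sum_k\bigl(\sum_j e_{\ell j} i_{jk}\bigr)e_k$. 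Hence the zig-zag equation holds if and only if $\sum_j e_{\ell j} i_{jk}=\delta_{\ell k}$ for all $\ell,k$, i.e. if and only if $\operatorname{Mat}(e)\cdot\operatorname{Mat}(i)=\operatorname{Id}_n$ as matrices. (Here I am using that the unitors $V\otimes I\cong V\cong I\otimes V$ of the Schauenburg structure are genuine equalities, so no associator or unitor bookkeeping intervenes.)

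From this identity the proposition falls out quickly. For part (1): if a symmetric pairing $e$ exists making $(i,e)$ a duality structure, then $\operatorname{Mat}(e)\operatorname{Mat}(i)=\operatorname{Id}_n$ shows $\operatorname{Mat}(i)$ is invertible with $\operatorname{Mat}(e)=\operatorname{Mat}(i)^{-1}$, which also proves uniqueness of $e$ since a linear map is determined by its matrix. Conversely, if $\operatorname{Mat}(i)$ is invertible, I define $e$ to be the bilinear form whose matrix is $\operatorname{Mat}(i)^{-1}$; since the inverse of a symmetric invertible matrix is symmetric, $e$ is a symmetric pairing, and the computation above shows $(i,e)$ satisfies the zig-zag equation, so it is a duality structure. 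Part (2) is entirely symmetric: the same matrix identity reads $\operatorname{Mat}(e)\operatorname{Mat}(i)=\operatorname{Id}_n$, so $e$ admits a partner $i$ iff $\operatorname{Mat}(e)$ is invertible, in which case $\operatorname{Mat}(i)=\operatorname{Mat}(e)^{-1}$ is forced and symmetric.

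The only genuinely delicate point — and the step I would be most careful about — is verifying that I have taken the correct tensor factors when applying $e$ and $i$, i.e. that the maps $1_V\otimes i$ and $e\otimes 1_V$ act on the factors I think they do; a transpose slips in very easily here. Because both $\operatorname{Mat}(i)$ and $\operatorname{Mat}(e)$ are symmetric, the relation $\operatorname{Mat}(e)\operatorname{Mat}(i)=\operatorname{Id}$ is equivalent to $\operatorname{Mat}(i)\operatorname{Mat}(e)=\operatorname{Id}$ and to the transposed versions, so in fact any consistent choice of conventions yields the same conclusion; nonetheless I would state the convention explicitly. Finally, I note that Lemma \ref{lem.bothzigokbysymm} is not needed for the proposition itself — it is what upgrades a duality structure to a genuine self-duality in the sense of Definition \ref{def.dual} — so I will not invoke it here.
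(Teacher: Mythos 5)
Your proof is correct and follows essentially the same route as the paper's: evaluate the zig-zag composition on a basis vector, compare coefficients to get that $\operatorname{Mat}(e)$ and $\operatorname{Mat}(i)$ are mutually inverse, and conversely observe that a symmetric invertible matrix together with its (automatically symmetric) inverse defines a unit and counit satisfying the zig-zag equation. Your remark that Lemma \ref{lem.bothzigokbysymm} is not needed here is also consistent with the paper, which invokes it only to conclude self-duality, not in the proof of this proposition.
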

\begin{proof}
Suppose that $(i,e)$ is a duality structure on $V$.
Evaluating the zig-zag equation on a basis vector yields
\[ e_l = (e\otimes 1_V)(1_V \otimes i)(e_l)
 = (e\otimes 1_V)\sum_{j,k} i_{jk} e_l \otimes e_j \otimes e_k =
 \sum_{j,k} i_{jk} e(e_l \otimes e_j) e_k. \]
Comparing coefficients and using symmetry, it follows that
$\sum_{j} i_{kj} e(e_j \otimes e_l) = \delta_{kl},$ that is, $\operatorname{Mat}(i)
\operatorname{Mat}(e)=1,$ the identity matrix.
Conversely, symmetric matrices $U$ and $C$ which are inverse to each other
define a unit $i$ and a counit $e$ with
$\operatorname{Mat}(i)=U$ and $\operatorname{Mat}(e)=C,$
such that the zig-zag equation holds.
\end{proof}
The proposition shows that the algebraic variety of duality structures on $V$ can be
identified with $\operatorname{Sym}(n,\real) \subset GL (n,\real),$ the symmetric invertible
real matrices of size $n$.
It also follows that the pairing $e:V\otimes V\to \real$ is  
nondegenerate if it is the component of a duality structure on $V$.
The reason why we required $V$ to be finite dimensional from the outset is given
in the next proposition, which is an easy exercise.
\begin{prop}
If $V$ is any real vector space which possesses a duality structure, then
$V$ is finite dimensional.
\end{prop}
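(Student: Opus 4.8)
The plan is to show that if $V$ possesses a duality structure $(i,e)$, then the counit $e: V \otimes V \to \real$ must be nondegenerate as a bilinear form, and nondegeneracy forces finite-dimensionality. First I would unwind what $i: \real \to V \otimes V$ gives us: the element $i(1) \in V \otimes V$ is, by the very definition of the algebraic tensor product, a \emph{finite} sum $i(1) = \sum_{\alpha=1}^{N} x_\alpha \otimes y_\alpha$ with $x_\alpha, y_\alpha \in V$ and $N < \infty$. Let $V_0 \subseteq V$ be the (finite-dimensional) subspace spanned by $\{x_1,\dots,x_N,y_1,\dots,y_N\}$; then $i(1) \in V_0 \otimes V_0$.

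Next I would exploit the zig-zag equation from Definition \ref{def.dualitystructure}: for every $v \in V$,
\[
v = (e \otimes 1_V)(1_V \otimes i)(v) = (e \otimes 1_V)\Bigl(\sum_\alpha v \otimes x_\alpha \otimes y_\alpha\Bigr) = \sum_\alpha e(v \otimes x_\alpha)\, y_\alpha.
\]
The right-hand side lies in $V_0$, since each $y_\alpha \in V_0$. Hence $v \in V_0$ for every $v \in V$, i.e. $V = V_0$, which is finite-dimensional. This is essentially the classical argument that a dualizable object in the category of modules over a commutative ring is finitely generated projective, specialized to vector spaces, and it is clean enough that no serious obstacle arises.

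The only point requiring a little care is making sure the identifications $V = V \otimes I$ and $I \otimes V = V$ and the application of $1_V \otimes i$ are handled correctly so that the displayed computation of $(e \otimes 1_V)(1_V \otimes i)(v)$ is legitimate; but under the Schauenburg tensor product these unitors are literal equalities, so there is nothing to check beyond the bookkeeping above. An alternative phrasing, if one prefers to invoke earlier results, is to note that Proposition \ref{prop.ieinverse} already shows a duality structure is equivalent to an invertible symmetric matrix $\operatorname{Mat}(i)$ \emph{once a finite basis is fixed}; the content here is precisely that no duality structure can exist in the absence of such a basis, which is exactly what the finiteness of the tensor $i(1)$ delivers. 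I expect the entire proof to be two or three lines, so the main "obstacle" is merely resisting the temptation to overcomplicate it.
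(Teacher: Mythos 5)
Your proof is correct: the paper itself omits the argument (declaring it an easy exercise), and your finite-sum-plus-zig-zag argument, i.e.\ $v=\sum_\alpha e(v\otimes x_\alpha)\,y_\alpha \in \operatorname{span}\{y_1,\dots,y_N\}$ for every $v$, is exactly the standard one intended. One small caution: your opening sentence suggests the strategy is ``nondegeneracy of $e$ forces finite-dimensionality,'' which is not true as stated (infinite-dimensional spaces admit nondegenerate pairings); the real point, which your actual argument uses, is that $i(1)$ is a \emph{finite} tensor, so the zig-zag equation traps all of $V$ in a finitely generated subspace.
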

A general notion of \emph{trace} on a braided monoidal category (with a so-called twist)
has been introduced in \cite{jsvtracedmoncat}. The present paper does not require the
full level of generality, only the following quantity will play an important role:
\begin{defn}
The \emph{trace} of a duality
structure $(i,e)$ on $V$ is $\tr (i,e) = e\circ i$. Being a linear endomorphism of $\real$,
this trace is uniquely determined by the real number $\tr (i,e)(1),$ which we 
also call the trace of $(i,e)$.
\end{defn}
\begin{prop} \label{prop.traceisdimension}
For any duality structure $(i,e)$ on a finite dimensional real vector space $V,$
the trace formula
$\tr (i,e) = \dim V$
holds.
\end{prop}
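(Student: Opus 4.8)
The plan is to compute $\tr(i,e)(1) = e(i(1))$ directly in terms of the matrix $\operatorname{Mat}(i) = (i_{jk})$ and $\operatorname{Mat}(e) = (e(e_j \otimes e_k))$ using the fixed basis $\{e_1,\dots,e_n\}$ of $V$. First I would write $i(1) = \sum_{j,k} i_{jk}\, e_j \otimes e_k$ by definition of $\operatorname{Mat}(i)$, and then apply $e$ to obtain
\[ \tr(i,e)(1) = e(i(1)) = \sum_{j,k} i_{jk}\, e(e_j \otimes e_k) = \sum_{j,k} i_{jk}\, e_{jk}. \]
This last double sum is precisely $\operatorname{tr}(\operatorname{Mat}(i)\operatorname{Mat}(e)^{T})$, but since both matrices are symmetric this equals $\operatorname{tr}(\operatorname{Mat}(i)\operatorname{Mat}(e))$ in the ordinary matrix sense.

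The key input is then Proposition \ref{prop.ieinverse}(1), which tells us that whenever $(i,e)$ is a duality structure, $\operatorname{Mat}(i)\operatorname{Mat}(e) = 1$, the $n\times n$ identity matrix. Hence $\sum_{j,k} i_{jk} e_{jk} = \operatorname{tr}(\operatorname{Mat}(i)\operatorname{Mat}(e)) = \operatorname{tr}(1_{n\times n}) = n = \dim V$. Finally, since $\tr(i,e)$ is the linear endomorphism of $\real$ sending $1$ to $\dim V$, it is multiplication by $\dim V$, which is what the statement $\tr(i,e) = \dim V$ abbreviates (as set up in the definition preceding the proposition).

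There is essentially no obstacle here; the only point requiring a little care is bookkeeping with the two symmetric matrices to see that the coefficient pairing $\sum_{j,k} i_{jk} e_{jk}$ really is the trace of the matrix product rather than, say, a trace of a transpose — but symmetry of both matrices makes this immediate. Alternatively, one can avoid matrices altogether: from the zig-zag equation $V = V\otimes I \xrightarrow{1_V\otimes i} V^{\otimes 3} \xrightarrow{e\otimes 1_V} I\otimes V = V$ being the identity, one reads off directly that $\sum_j e(e_l\otimes (\text{$j$-th component of }i(1)))\cdot(\text{partner}) = e_l$ for each $l$, summing over $l$ recovers $\sum_l 1 = n$ on one side and $\sum_{j,k} i_{jk} e(e_j\otimes e_k) = \tr(i,e)(1)$ on the other. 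I would present the short matrix computation as the cleanest route, citing Proposition \ref{prop.ieinverse} for the inversion identity.
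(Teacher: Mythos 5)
Your proposal is correct and follows essentially the same route as the paper: expand $e(i(1))$ in the fixed basis, recognize $\sum_{j,k} i_{jk}\, e(e_j\otimes e_k)$ as $\operatorname{tr}(\operatorname{Mat}(i)\operatorname{Mat}(e))$ (the symmetry remark handles the transpose bookkeeping), and invoke Proposition \ref{prop.ieinverse} to identify this with $\operatorname{tr}(1_{n\times n})=n=\dim V$. No gaps.
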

\begin{proof}
Using a basis $e_1, \ldots, e_n$ of $V,$
\[ \tr (i,e) = e(\sum_{j,k} i_{jk} e_j \otimes e_k) =
  \sum_{j,k} i_{jk} e(e_j \otimes e_k) = \sum_{k=1}^n (\operatorname{Mat}(i)
  \operatorname{Mat}(e))_{k,k} = \sum_{k=1}^n 1_{k,k} =n. \]
\end{proof}
If an endomorphism $a\in \operatorname{End}(V)$ factors
through a vector space $W$ with $\dim W < \dim V$, then
the determinant of $a$ vanishes.
Since $ie$ factors through the $1$-dimensional space $\real$, we have:
\begin{prop} \label{prop.detieiszero}
If $\dim V\geq 2$, then the determinant of the endomorphism 
$i\circ e: V\otimes V \to V\otimes V$ vanishes.
\end{prop}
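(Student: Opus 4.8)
The statement to prove is Proposition \ref{prop.detieiszero}: if $\dim V \geq 2$, then $\det(i\circ e)=0$ for the endomorphism $i\circ e: V\otimes V\to V\otimes V$. The plan is to exhibit $i\circ e$ as a composite that factors through a vector space of dimension strictly smaller than $\dim(V\otimes V)=n^2$, and then invoke the elementary linear-algebra fact — already quoted in the paragraph immediately preceding the proposition — that an endomorphism factoring through a space of smaller dimension has vanishing determinant.

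First I would observe that by the very definition of $i$ and $e$ as morphisms in the monoidal category $(\vect,\otimes,I)$, we have $e: V\otimes V\to I=\real$ and $i:\real\to V\otimes V$, so the composite $i\circ e$ is literally the composition
\[
V\otimes V \xrightarrow{\ e\ } \real \xrightarrow{\ i\ } V\otimes V,
\]
which by construction passes through the $1$-dimensional vector space $\real$. Since $\dim V\geq 2$ forces $\dim(V\otimes V)=n^2\geq 4>1=\dim\real$, the factoring space has strictly smaller dimension than the domain. Applying the cited fact — if $a\in\operatorname{End}(U)$ factors through $W$ with $\dim W<\dim U$ then $\det a=0$ — with $U=V\otimes V$ and $W=\real$ yields $\det(i\circ e)=0$ immediately.

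For completeness I would also record the one-line reason the cited fact holds, in case the reader wants it spelled out: if $i\circ e = i'\circ e'$ with $e': V\otimes V\to\real$ and $i':\real\to V\otimes V$, then the image of $i\circ e$ is contained in the image of $i'$, which has dimension at most $1<n^2$; hence $i\circ e$ is not surjective, so it is not invertible, so its determinant vanishes. There is no real obstacle here — the only "content" is recognizing that the unit object $I=\real$ is one-dimensional (which is exactly the hypothesis that made us require $V$ finite dimensional, and the same one-dimensionality of $I$ that underlies Proposition \ref{prop.traceisdimension}) — and the proof is genuinely a two-line argument once the factorization is written down.
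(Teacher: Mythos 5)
Your argument is exactly the paper's: the proposition is an immediate consequence of the remark preceding it, since $i\circ e$ factors through the one-dimensional unit object $\real$ while $\dim(V\otimes V)=n^2>1$, so the determinant vanishes. Correct, and essentially identical to the paper's (one-line) proof.
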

In Section \ref{ssec.reptangle}, we shall use duality structures on
vector spaces to construct linear monoidal representations of
a natural categorification of the Brauer algebras. But first let us introduce this
categorification.

\subsection{The Brauer Category}
\label{ssec.tangles}

We shall next construct the Brauer category $\Br$, which will ultimately serve as an
intermediary structure between the fields of our TFT on the one hand and real
matrices on the other.
In some sense, it may thus be construed as a device for linearization.
Loosely speaking, the morphisms will be $1$-dimensional unoriented tangles in a 
high-dimensional Euclidean space. As those can always be disentangled,
$\Br$ is very close, but not equal, to the category of $1$-dimensional
cobordisms. One difference is that the
objects of the latter, being $0$-manifolds, are unordered (finite) sets, whereas the
objects of $\Br$ will be \emph{ordered} tuples of points.
Another difference is that the cobordism category has a huge number of
objects (though few isomorphism types), whereas the Brauer category has very
few objects to begin with and has the property that two objects are isomorphic
if and only if they are equal. \\

Let us detail the formal definition of $\Br$. Given $n=1,2,\ldots,$
we write $[n]$ for the set $\{ 1,\ldots, n \}$. We write $[0]$ for the
empty set. The objects of $\Br$ are $[0], [1], [2],\ldots$.
Each object $[n]$
determines a $0$-submanifold $M[n]$ of $\real^1$ by taking
$M[n]=\{ 1,\ldots, n \} \subset \real^1$.
Morphisms $[m] \to [n]$ in $\Br$ are represented by compact
smooth $1$-manifolds $W$, smoothly embedded in 
$[0,1]\times \real^3,$ such that $\partial W = W\cap (\{ 0,1 \} \times \real^3)$
with
\[ \partial W \cap \{ 0 \} \times \real^3 = 0\times M[m] \times 0 \times 0,~
 \partial W \cap \{ 1 \} \times \real^3 = 1\times M[n] \times 0 \times 0. \]
We require that near the boundary, the embedding of $W$ is the product
embedding
\[ [0,\epsilon]\times M[m]\times 0 \times 0 \sqcup
  [1-\epsilon,1]\times M[n]\times 0 \times 0, \]
for some small $\epsilon >0$. Two such $W$ for fixed $[m],[n]$
define the same morphism
in $\Br$, if they are smoothly isotopic in $[0,1]\times \real^3$ by an isotopy
that is the identity near $\{ 0,1 \} \times \real^3$.

\begin{example}
The diagram 
\[
\xygraph{ !{0;/r1pc/:}
 !{\xcaph@(0)} !{\xcaph@(0)} !{\xcaph@(0)} !{\xcaph@(0)}  !{\hcap} [rr] !{\hcap-} [ll]
[lllld]  !{\xcaph@(0)} !{\xcaph@(0)} !{\hcap} [rr] !{\hcap-} [l]
[llld]  !{\hcap} [rr] !{\hcap-} [d] !{\xcaph@(0)} !{\xcaph@(0)} 
[u] !{\hcross}  !{\xcaph@(0)} [dl]  !{\xcaph@(0)}
[dlll] !{\hcap-} !{\hcap}
}
\] \\
\noindent determines a morphism $[4] \to [4]$ in $\Br$. (The overpass/underpass
information is merely an artifact of the pictorial representation and is irrelevant for the morphism.)
\end{example}

The composition of two morphisms $\phi: [m] \to [n],$
$\psi: [n] \to [p]$ is defined in the most natural manner:
If $\phi$ is represented by the cobordism $V$ and $\psi$ by $W$,
then we translate $W$ from $[0,1]\times \real^3$ to
$[1,2]\times \real^3$ and define a cobordism $U$ as the union
along $\{ 1 \}\times M[n]\times 0 \times 0$ of $V$ and the
translated copy of $W$. Then we reparametrize the embedding
of $U$ from $[0,2]\times \real^3$ to $[0,1]\times \real^3$.
The resulting cobordism represents $\psi \circ \phi$;
its isotopy class depends clearly only on $\phi$ and $\psi$,
not on the particular choice of representatives $V$ and $W$.
The identity $1_{[0]}: [0] \to [0]$ is represented
by the empty cobordism $W=\varnothing$.
For $n>0$, the identity $1_{[n]}: [n] \to [n]$
is represented by the product $[0,1]\times M[n]\times 0\times 0.$ 
Then $\Br$ is indeed a category. Note that
$\Hom_{\Br} ([m],[n])$ is empty for $m+n$ odd (or equivalently
$m-n$ odd) and nonempty for $m+n$ (or equivalently $m-n$) even. \\

We make $\Br$ into a strict monoidal category by defining a
tensor product $\otimes: \Br \times \Br \to \Br$ on objects by
$[m] \otimes [n] = [m+n]$. Let the unit object $I$
be $[0]$. Then 
$\otimes$ on objects of $\Br$ is strictly associative, strictly commutative, and
has a strict unit.
The tensor product $\phi \otimes \phi'$ of two morphisms
$\phi:[m] \to [n]$ and $\phi': [m'] \to [n']$ is defined by
``stacking'' a representative $W'$ of $\phi'$ on top of a representative
$W$ of $\phi$. More precisely, apply the translation $(x,y,z,t)\mapsto (x,y+m,z,t)$
to $W'$. In a product region near the $[n']$-endpoints of the translated
copy of $W'$, redirect those parallel strands so that they connect to the
correct points in $M([m']\otimes [n'])$. Finally, use a small isotopy to ensure that
the new embedding of $W'$ is disjoint from $W$. Then the union of this
embedding of $W'$ together with $W$ represents $\phi \otimes \phi'$.
Since $1_{[m]} \otimes 1_{[m']} = 1_{[m] \otimes [m']}$
and $(\psi \phi)\otimes (\psi' \phi') = (\psi \otimes \psi')\circ
(\phi \otimes \phi'),$ the assignment $\otimes:\Br \times \Br \to \Br$
is a functor. As $\phi \otimes 1_{[0]} = \phi = 1_{[0]} \otimes
\phi$, we may take the unitors in $\Br$ to be the identity morphisms.
As $(\phi \otimes \phi')\otimes \phi'' = \phi \otimes (\phi' \otimes \phi'')$
for morphisms $\phi, \phi', \phi'',$ we may take the associators in
$\Br$ to be the identity morphisms. Thus $(\Br, \otimes, I)$ is a
strict monoidal category. However, although the tensor product is commutative
on objects, it is noncommutative on morphisms. For instance, the two
morphisms $[3]\to [1]$ given by
\[ 
\xygraph{ !{0;/r1pc/:}
!{\hcap} [dd] !{\xcaph@(0)} !{\xcaph@(0)}
} \hspace{1cm} \text{ and } \hspace{1cm}
\xygraph{ !{0;/r1pc/:}
 !{\xcaph@(0)} !{\xcaph@(0)} !{\hcap} [rr]  [l]
[llld]  !{\hcap} [rr] !{\hcap-} [d] !{\xcaph@(0)}  
} \]
are not equal. 

There is precisely one endomorphism $\lambda: [0] \to [0],$
$\lambda \not= 1_{[0]},$ such that $\lambda$ is represented by a
connected, nonempty manifold.
This morphism is represented by a smooth embedding of a
circle in $(0,1)\times \real^3$. Any two embeddings of a circle in
$(0,1)\times \real^3$ are isotopic and therefore represent the same morphism.
We will call this endomorphism $\lambda$ the \emph{loop}.
The endomorphisms in $\Br$ of the identity object $I$ are then given by
$\operatorname{End}_{\Br} (I)= \{ \lambda^{\otimes n} ~|~
   n\geq 0 \},$
with $\lambda^{\otimes n} \not= \lambda^{\otimes m}$
for $n\not= m$.
Here we wrote $\lambda^{\otimes n}$ for the $n$-fold tensor product
$\lambda \otimes \cdots \otimes \lambda$ and $\lambda^{\otimes 0} =
1_{[0]}$. 

Given two objects $[m]$ and
$[n]$ in $\Br,$ we define the braiding
$b_{m,n}: [m] \otimes [n] \to [n] \otimes [m]$ to be the
isomorphism represented by a Brauer diagram which is loop-free and
connects $i\in M([m] \otimes [n]) = \{ 1,\ldots, m, m+1,\ldots, m+n \},$
$1\leq i \leq m,$ to $n+i \in M([n] \otimes [m]) = \{ 1,\ldots, n, n+1,
\ldots, n+m \}$ and $m+i \in M([m] \otimes [n]),$ $1\leq i \leq n,$ to
$i\in M([n] \otimes [m]).$ Since we are in codimension $3$, $b$ is
symmetric, $b_{m, n} = b^{-1}_{n,m}$. Consequently, we only need
to verify one of the two hexagon equations. By strictness, the hexagonal
shape deflates to the triangular shape
\[ \xymatrix@C=6pt{
[m]\otimes [n]\otimes [p] \ar[rd]_{b_{m,n} \otimes 1_{[p]}}
\ar[rr]^{b_{m,n+p}} & &
 [n]\otimes [p]\otimes [m], \\
& [n]\otimes [m]\otimes [p] \ar[ru]_{1_{[n]} \otimes b_{m,p}}
} \]
the commutativity of which is best checked by drawing a picture.
We conclude that the structure
$(\Br, \otimes, I, b)$ is a symmetric strict monoidal category. 
The elementary braiding 
\[ b_{1,1} : \hspace{.6cm}
\xygraph{ !{0;/r1pc/:}
  [r] !{\xcaph@(0)} !{\hcross}  !{\xcaph@(0)} [r] 
[dlllll]  [r] !{\xcaph@(0)} [r]  !{\xcaph@(0)} [r] 
} \] 
is of fundamental importance. 

There are natural unit and counit morphisms in $\Br$ so that
each object of $\Br$ is self-dual in the sense of
Definition \ref{def.dual}. The unit $i_n: I\to [n] \otimes [n]$
is given by
\begin{center}
\includegraphics[height=3.5cm]{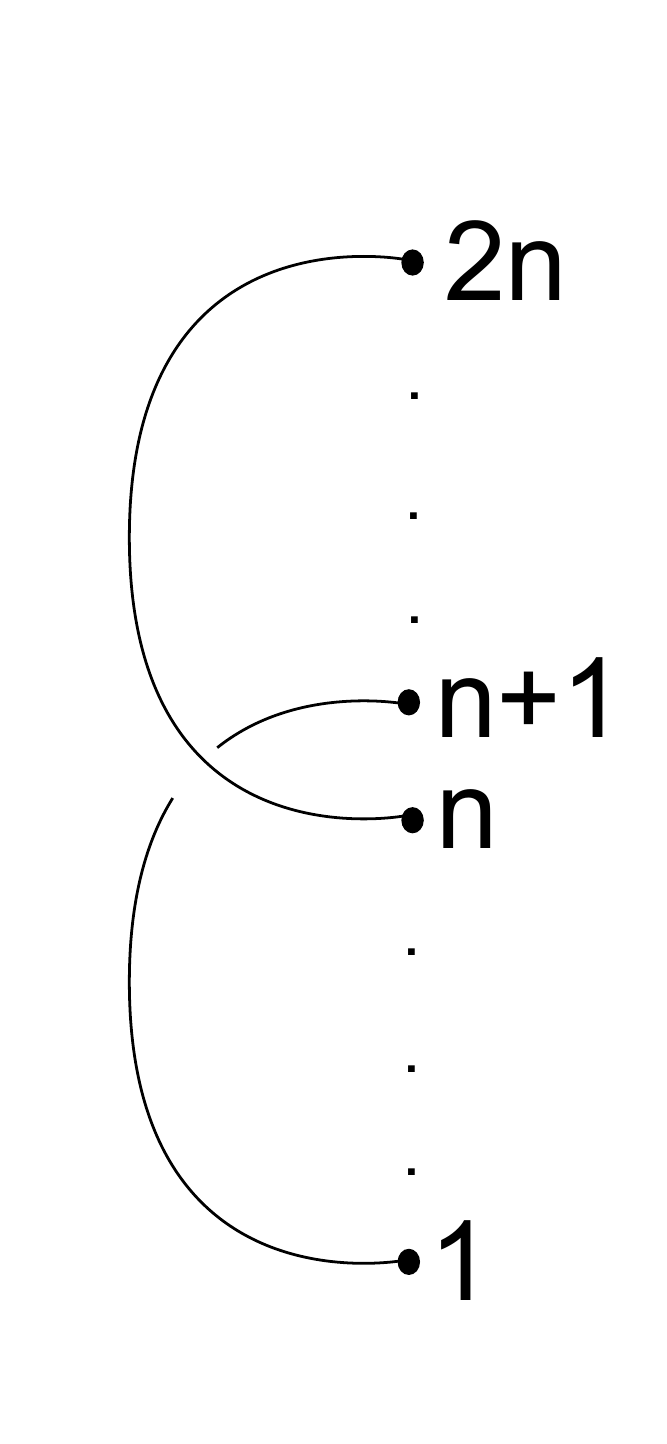}
\end{center}
That is, interpret the cylinder $[0,1]\times M[n]\times 0\times 0$
as a morphism $I\to [n] \otimes [n]$.
In particular, we have
the elementary unit
\[ 
\xygraph{ !{0;/r1pc/:}
 !{\hcap-|{i_1}}  !{\xcaph@(0)} [r]  [dll]  !{\xcaph@(0)} [r].  
} \] 
Note the symmetry identity $i_1 = b_{1,1} \circ i_1$;
$i_1$ is isotopic rel endpoints to
\[ 
\xygraph{ !{0;/r1pc/:}
 !{\hcap-} 
 !{\xcaph@(0)} !{\hcross}  !{\xcaph@(0)} [r] 
[dllll]  !{\xcaph@(0)} [r]  !{\xcaph@(0)} [r]  [r].
} \]
The counit $e_n: [n] \otimes [n] \to I$ is given by
\begin{center}
\includegraphics[height=3.5cm]{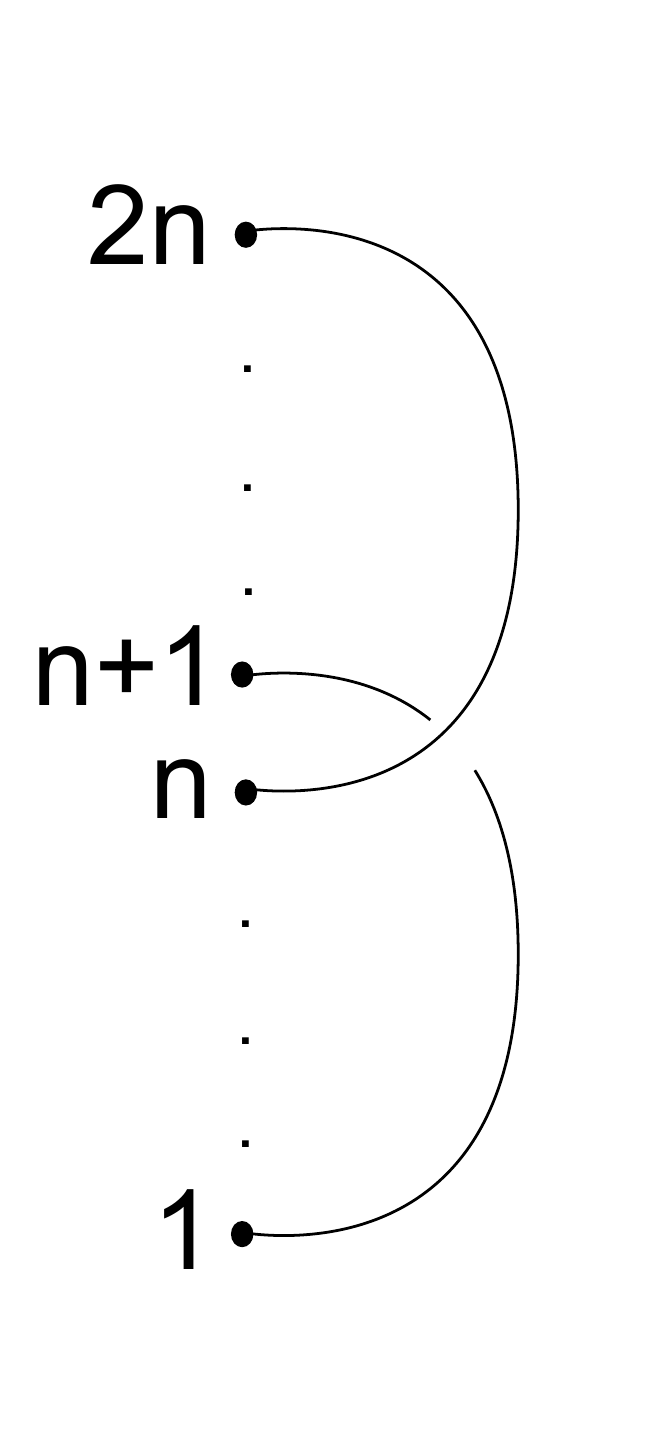}
\end{center}
i.e. this time interpret the
cylinder $[0,1]\times M[n]\times 0\times 0$ as a morphism
$[n] \otimes [n] \to I$. In particular, we have the elementary counit
\[ e_1: \hspace{.6cm}
\xygraph{ !{0;/r1pc/:}
 [r] !{\xcaph@(0)} !{\hcap} [dll]  [r] !{\xcaph@(0)} [r].
}
\] 
Note the symmetry identity $e_1 = e_1 \circ b_{1,1}$;
$e_1$ is isotopic rel endpoints to
\[ 
\xygraph{ !{0;/r1pc/:}
  [r] !{\xcaph@(0)} !{\hcross}  !{\xcaph@(0)} !{\hcap}
[dllll]  [r] !{\xcaph@(0)} [r]  !{\xcaph@(0)}  [r].
} \]
The zig-zag equation (\ref{equ.zigzagx}) is satisfied, as
\begin{center}
\includegraphics[height=6.5cm]{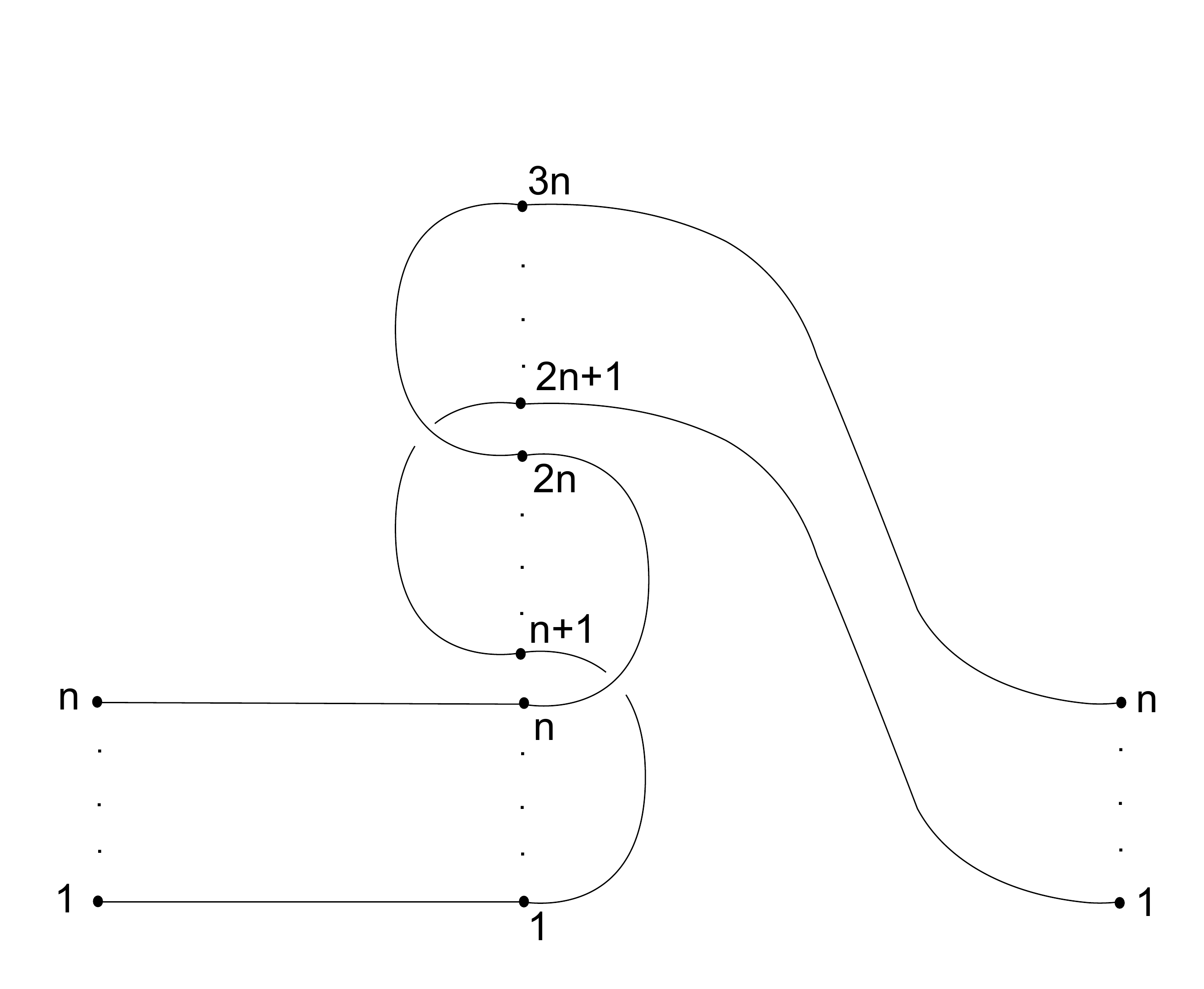}
\end{center}
is isotopic to the cylinder, i.e. the identity $[n] \to [n]$.
Similarly, the zig-zag equation (\ref{equ.zigzagxstar}) is satisfied, as
\begin{center}
\includegraphics[height=5.5cm]{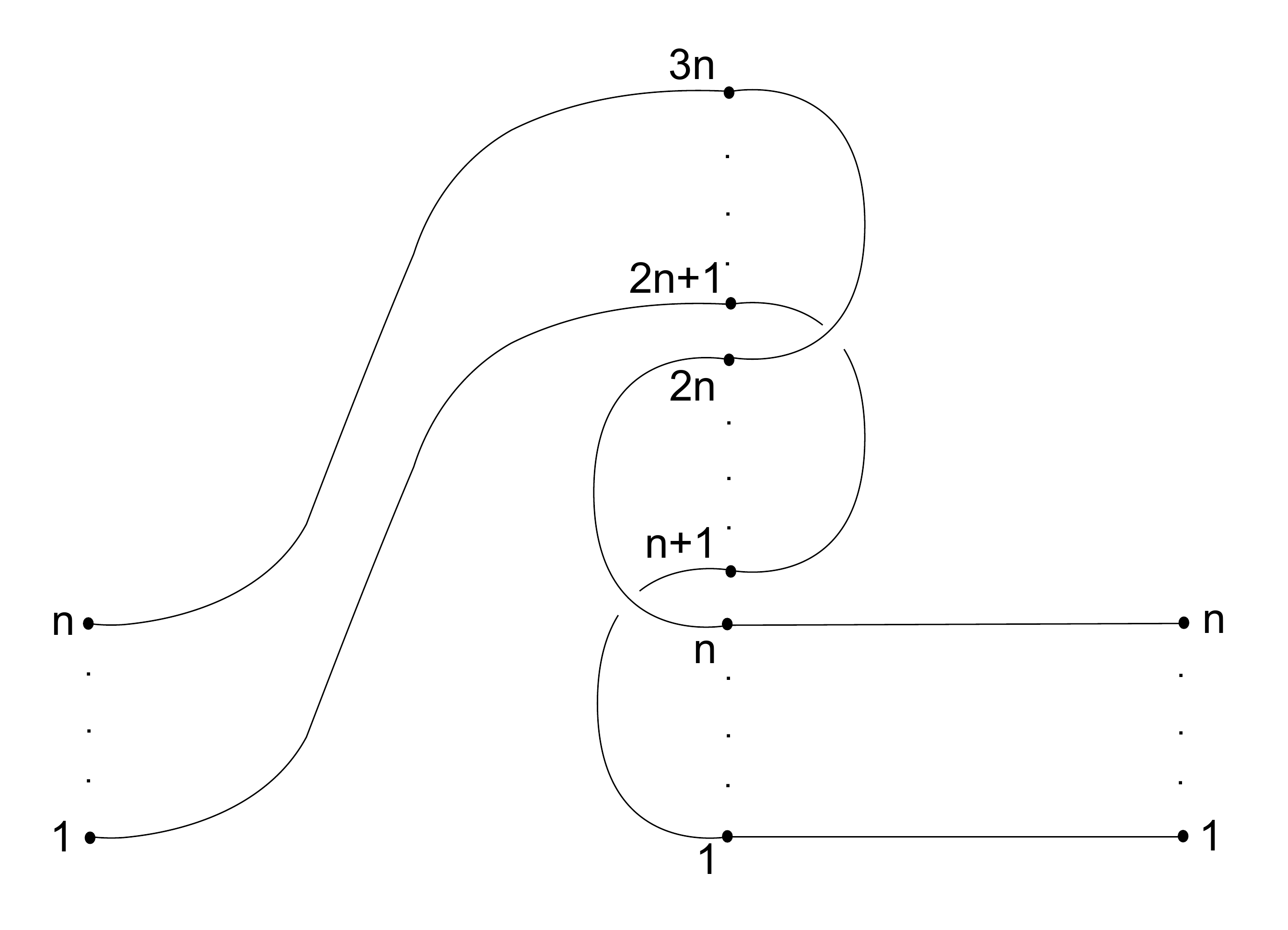}
\end{center}
is isotopic to the cylinder, i.e. the identity $[n] \to [n]$.
This shows that the category $(\Br, \otimes, I, b)$ is compact.
We summarize:
\begin{prop}
The structure $(\Br, \otimes, I, b, i,e)$ is a compact, symmetric,
strict monoidal category.
\end{prop}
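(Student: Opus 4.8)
The plan is to assemble the statement from the structural facts already established in this subsection, filling in only the verifications that were left implicit. First I would recall that $(\Br,\otimes,I)$ has been shown to be a strict monoidal category: on objects $[m]\otimes[n]=[m+n]$ is strictly associative with strict unit $I=[0]$; the stacking construction on morphisms gives a functor $\otimes\colon\Br\times\Br\to\Br$ (using $1_{[m]}\otimes 1_{[m']}=1_{[m+m']}$ together with the interchange law $(\psi\phi)\otimes(\psi'\phi')=(\psi\otimes\psi')\circ(\phi\otimes\phi')$); and the unitors and associators may be taken to be identity morphisms. Next I would recall that the braiding $b_{m,n}\colon[m]\otimes[n]\to[n]\otimes[m]$, represented by the loop-free crossing diagram, is natural in both variables (one isotopes a diagram past the crossing strands), is an isomorphism, and --- because the tangles live in codimension $\geq 3$, so that over- and under-crossings are isotopic --- satisfies $b_{m,n}=b_{n,m}^{-1}$. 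Hence only one hexagon identity need be checked, and by strictness it collapses to the triangle already exhibited, whose commutativity is a manifest isotopy of Brauer diagrams. This establishes that $(\Br,\otimes,I,b)$ is a symmetric strict monoidal category.

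It then remains to establish compactness, i.e. that every object is (right) dualizable in the sense of Definition \ref{def.dual}. For each $n$ I would take $[n]^\ast=[n]$ and let the unit $i_n\colon I\to[n]\otimes[n]$ and counit $e_n\colon[n]\otimes[n]\to I$ be the two morphisms obtained by reading the cylinder $[0,1]\times M[n]\times 0\times 0$ as a cobordism from $\varnothing$ to $M[2n]$, respectively from $M[2n]$ to $\varnothing$, as pictured. The first zig-zag equation (\ref{equ.zigzagx}) asserts that $(e_n\otimes 1_{[n]})\circ(1_{[n]}\otimes i_n)=1_{[n]}$; concretely this composite is represented by the ``S-shaped'' family of $n$ parallel arcs shown, which is visibly isotopic rel endpoints to the straight cylinder, i.e. the identity of $[n]$. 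The second zig-zag equation (\ref{equ.zigzagxstar}) is the mirror statement and follows from the mirror isotopy. Since $[n]^\ast=[n]$, no separate choice of dual for a tensor product or for $I$ is required. Combining these three parts yields the proposition.

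I do not anticipate any genuine obstacle here: the proposition is a collation, and the only points needing care are the naturality of $b$ together with the single hexagon/triangle identity, and the exhibition of the two zig-zag isotopies --- all routine manipulations of one-dimensional submanifolds that are rendered harmless precisely because everything disentangles in codimension $\geq 3$. The one thing worth flagging is that ``compact'' in Definition \ref{def.dual} requires only dualizability of each object and \emph{not} any compatibility between the chosen dualities and the braiding $b$, so nothing further is needed on that front.
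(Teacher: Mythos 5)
Your proposal is correct and follows essentially the same route as the paper: the strict monoidal structure and the symmetry of $b$ (with the hexagon collapsing by strictness to the triangle, checked by an isotopy of diagrams) are exactly the paper's build-up, and compactness is obtained, as in the paper, by declaring each $[n]$ self-dual with $i_n$ and $e_n$ given by the bent cylinder and verifying the two zig-zag equations by the evident isotopies rel endpoints. Your closing remark that compactness demands only dualizability of each object, with no further compatibility with $b$, is also consistent with the paper's Definition \ref{def.dual}.
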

The loop endomorphism $\lambda: I\to I$ can be factored as
\begin{equation} \label{equ.lambdafact}
\lambda = e_1 \circ i_1. 
\end{equation}
It commutes with every morphism, $\lambda \otimes \phi =
\phi \otimes \lambda$ for any morphism $\phi$.
Loops are cancellative: if $\lambda \otimes \phi = \lambda \otimes \psi$, then
$\phi = \psi$.
Loops are persistent: if $\phi: [m]\to [n]$ has $k$ loops and
$\psi: [n]\to [p]$ has $l$ loops, then $\psi \phi$ has at least
$k+l$ loops. In particular, an isomorphism can never contain a loop.
Note that if $\phi: [m]\to [n]$ is an isomorphism, then
$m=n$ and every connected component of a representative of
$\phi$ has precisely one endpoint on the hyperplane $0\times \real^3$
and the other endpoint on the hyperplane $1\times \real^3$.
Hence $\phi$ determines a bijection $[m] \to [m]$.
We will therefore write $\phi (i)=j$ when the point $(0,i,0,0)\in
0\times M[m] \times 0\times 0$ is connected by $\phi$ to the point
 $(1,j,0,0)\in 1\times M[m] \times 0\times 0$.
Conversely, every bijection $[m]\to [m]$ determines an
isomorphism $\phi \in \Hom_\Br ([m], [m]).$ With respect to
the tensor product, an identity morphism cannot be factored nontrivially:
\begin{lemma} \label{lem.idfactorstriv}
Let $\phi$ and $\psi$ be morphisms of $\Br$ such that
$\phi \otimes \psi = 1$, an identity morphism. Then each of the
factors is an identity: $\phi=1$ and $\psi =1$.
\end{lemma}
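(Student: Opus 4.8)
The plan is to argue in terms of the underlying combinatorics of Brauer diagrams, using the loop-persistence and endpoint-bijection facts already established just above the statement. Suppose $\phi: [m] \to [m']$ and $\psi: [k] \to [k']$ are morphisms with $\phi \otimes \psi = 1_{[n]}$ for some $n$. Since the tensor product is $[m+k] \to [m'+k']$, we get $m+k = m'+k' = n$. The first observation is that the identity morphism $1_{[n]}$ is loop-free: it is represented by $[0,1] \times M[n] \times 0 \times 0$, which has $n$ connected components, each an arc with one endpoint on each boundary hyperplane. By persistence of loops, if $\phi$ had $\geq 1$ loops then $\phi \otimes \psi = \phi \otimes \psi$ would have $\geq 1$ loops as well (a loop component of $\phi$ remains a loop component of the disjoint union used to form $\phi \otimes \psi$), contradicting loop-freeness of $1_{[n]}$. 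Hence both $\phi$ and $\psi$ are loop-free, and likewise every connected component of a representative of $\phi$ (and of $\psi$) is an arc meeting each of the two boundary hyperplanes exactly once — because this is true of every component of $1_{[n]}$, and tensoring is just disjoint stacking, so each component of $\phi$ appears unchanged as a component of $1_{[n]}$. In particular $m = m'$ and $k = k'$.

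Next I would identify the bijection determined by $\phi \otimes \psi$. Since $1_{[n]}$ is an isomorphism, so are $\phi$ and $\psi$ (a component of $\phi$ meeting each hyperplane once means $\phi$ determines a bijection $[m] \to [m]$, and similarly for $\psi$; no component of $\phi$ can be a loop or an arc with both endpoints on the same hyperplane, as just shown). Using the explicit description of $\otimes$ on morphisms — stack $\psi$ on top of $\phi$ after translating by $m$ in the second coordinate — the bijection $[n] \to [n]$ associated to $\phi \otimes \psi$ is $\phi$ on $\{1,\dots,m\}$ and $i \mapsto m + \psi(i-m)$ on $\{m+1,\dots,m+k\}$. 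Since $\phi \otimes \psi = 1_{[n]}$ and isotopic Brauer diagrams induce the same endpoint bijection, this composite bijection must be $\mathrm{id}_{[n]}$. Restricting to $\{1,\dots,m\}$ forces $\phi = \mathrm{id}_{[m]}$ as a bijection, and restricting to $\{m+1,\dots,m+k\}$ forces $\psi = \mathrm{id}_{[k]}$ as a bijection.

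Finally I would upgrade "equal as bijections" to "equal as morphisms of $\Br$." A loop-free Brauer diagram all of whose components are arcs joining the two boundary hyperplanes represents an isomorphism, and such an isomorphism is determined by the bijection it induces — this is the converse direction of the endpoint-bijection correspondence stated in the paragraph preceding the lemma (every bijection $[m] \to [m]$ determines a unique such isomorphism, and the assignment is a bijection between $\mathrm{Aut}_\Br([m])$ and bijections of $[m]$). Hence $\phi$, inducing the identity bijection, must equal $1_{[m]}$, and $\psi = 1_{[k]}$. This completes the proof.

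I expect the only genuinely delicate point to be the claim that each connected component of a representative of $\phi$ survives, unaltered as an abstract embedded component, inside a representative of $\phi \otimes \psi$, so that loop-counts and endpoint-types transfer directly; but this is immediate from the definition of $\otimes$ as translation-plus-disjoint-union with an isotopy that only moves strands apart and never merges or closes them. Everything else is bookkeeping with the already-proven persistence and bijection statements.
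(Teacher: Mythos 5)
Your proof is correct. The paper itself offers no argument for this lemma --- it states only that it ``is evident and requires no proof'' --- so there is no official proof to compare against; your write-up supplies the combinatorial content behind that claim, and it is the right content: the loop count and the pairing of boundary points into components are invariants of isotopy rel boundary, a representative of $\phi\otimes\psi$ is the disjoint union of a representative of $\phi$ and a translated, boundary-adjusted copy of a representative of $\psi$, so both factors must be loop-free with every component an arc meeting each hyperplane exactly once (forcing $m=m'$, $k=k'$), and the induced endpoint bijection of $\phi\otimes\psi$ is block-diagonal, hence restricts to the identity on each block. The only imprecision is in your last step's citation: the paragraph preceding the lemma asserts that every bijection $[m]\to[m]$ \emph{determines} an isomorphism, whereas what you actually need is injectivity of the assignment, i.e.\ that a loop-free diagram all of whose components join the two hyperplanes is determined up to isotopy rel boundary by its endpoint bijection. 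That injectivity is precisely the paper's earlier assertion that morphisms of $\Br$ are determined by the combinatorics of their endpoint connections together with the number of loops --- true because the strands sit in $[0,1]\times\real^3$, where they can always be disentangled --- so the step stands; it is merely attributed to the wrong sentence.
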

This is evident and requires no proof. \\

Let $(\catc, \otimes, I)$ be a strict monoidal category and $G$ a collection
of morphisms in $\catc$. Interpreting $G$ as an alphabet of formal symbols
$\{ [g] ~|~ g\in G \},$
we may form \emph{words} as follows: $[g]$ is a word for all $g\in G$ and $[1_X]$ is
a word for all $X\in \Ob \catc$. If $w_1$ and $w_2$ are words, then the string
$(w_1 \otimes w_2)$ is a word and the string $(w_2 \circ w_1)$ is a word
if $\cod w_1 = \dom w_2$. Every word $w$ determines a morphism $|w|$ of $\catc$
by the rules
\[ |[g]|=g,~ |[1_X]|=1_X,~ |(w_1 \otimes w_2)| = |w_1| \otimes |w_2|,~
 |(w_2 \circ w_1)| = |w_2| \circ |w_1|. \]
Two words are called \emph{freely equivalent} (we write $\sim$), if they can be
obtained from each other by a finite sequence of subword substitutions
implementing associativity for $\circ$ and $\otimes$, identity cancellation
for $\circ$ and $\otimes$ and compatibility between $\circ$ and $\otimes$.
Note that if $w_1$ and $w_2$ are freely equivalent, then $|w_1|=|w_2|$.
\begin{lemma} \label{lem.freenormalform}
Any word in $G$ is freely equivalent to either $[1_X]$ for some object $X$
or to a word of the form
\[ ([1_{X_1}] \otimes [g_1] \otimes [1_{Y_1}]) \circ
  ([1_{X_2}] \otimes [g_2] \otimes [1_{Y_2}]) \circ \cdots \circ
  ([1_{X_k}] \otimes [g_k] \otimes [1_{Y_k}]).
   \]
with $g_1, \ldots, g_k \in G$.
\end{lemma}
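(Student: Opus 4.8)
The plan is to argue by structural induction on the word $w$, using the three admissible subword substitutions (associativity, identity cancellation, and the interchange law) to push every generator symbol $[g_i]$ into its own ``horizontal layer.'' The base cases are immediate: if $w=[g]$ for $g\in G$, then $w\sim ([1_{X}]\otimes[g]\otimes[1_{Y}])$ with $X=Y$ the unit object $I$ (using the strict unitors, which are part of ``identity cancellation for $\otimes$''); and if $w=[1_X]$, there is nothing to do. So I would carry out the inductive step in the two remaining cases, $w=(w_1\otimes w_2)$ and $w=(w_2\circ w_1)$, applying the inductive hypothesis to $w_1$ and $w_2$ first.

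For the composition case $w=(w_2\circ w_1)$: by induction each $w_i$ is freely equivalent either to an identity word or to a ``stacked'' word of the stated form. If either factor is freely equivalent to an identity word $[1_X]$, then identity cancellation for $\circ$ reduces $w$ to the other factor, and we are done. Otherwise write $w_1\sim L_1\circ\cdots\circ L_k$ and $w_2\sim L_1'\circ\cdots\circ L_\ell'$ with each layer of the form $[1_X]\otimes[g]\otimes[1_Y]$; then, since $\circ$ is strictly associative, $w\sim L_1'\circ\cdots\circ L_\ell'\circ L_1\circ\cdots\circ L_k$, which is already in the desired form (with $k+\ell$ generators). For the tensor case $w=(w_1\otimes w_2)$: again dispatch the subcases where $w_1$ or $w_2$ is an identity word via unit cancellation for $\otimes$. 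In the main subcase, write $w_1\sim L_1\circ\cdots\circ L_k$ and $w_2\sim L_1'\circ\cdots\circ L_\ell'$ as above. The key move is to rewrite
\[
w_1\otimes w_2 \;\sim\; \bigl(w_1\otimes [1_{\dom w_2}]\bigr)\circ\bigl([1_{\cod w_1}]\otimes w_2\bigr),
\]
which is an instance of the interchange law combined with identity cancellation; then one distributes $\otimes[1_{\dom w_2}]$ into each layer $L_i$ of $w_1$ and $[1_{\cod w_1}]\otimes$ into each layer $L_j'$ of $w_2$, using strict associativity of $\otimes$ to see that a padded layer $[1_X]\otimes[g]\otimes[1_Y]\otimes[1_Z]$ is again of the required shape $[1_X]\otimes[g]\otimes[1_{Y\otimes Z}]$. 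Concatenating the two resulting stacks via strict associativity of $\circ$ yields the normal form with $k+\ell$ generators.

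The only genuinely delicate point, and the step I expect to be the main obstacle, is bookkeeping the object labels $X_i,Y_i$ so that the domains and codomains actually match up when the padded layers are composed — in particular, verifying that ``compatibility between $\circ$ and $\otimes$'' (the interchange law) is strong enough, in the strict setting, to license the rewrite displayed above without any leftover associator or unitor symbols. Since the paper's monoidal category $\catc$ is \emph{strict}, there are no such leftover coherence morphisms, so this reduces to a finite, purely syntactic check that the listed substitution rules suffice; everything else is routine manipulation of words. I would therefore present the two inductive cases with explicit layer notation and leave the label-matching verification as the (straightforward) content of the argument.
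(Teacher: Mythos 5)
Your induction is the standard argument — it is in substance the proof the paper delegates to Kassel (Lemma XII.1.2), so in outline you and the paper take the same route: induct on the word, concatenate stacks in the $\circ$-case, and in the $\otimes$-case use the interchange law to serialize the two factors and then absorb padding identities into each layer. Two points in your write-up, however, are wrong as stated and need repair. First, your key rewrite is type-incorrect: with $w_1\colon A\to B$ and $w_2\colon C\to D$, the word $(w_1\otimes[1_{\dom w_2}])\circ([1_{\cod w_1}]\otimes w_2)$ is not composable, since $[1_{\cod w_1}]\otimes w_2$ has codomain $B\otimes D$ while $w_1\otimes[1_{\dom w_2}]$ has domain $A\otimes C$. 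The correct instances of the interchange-plus-identity argument are $w_1\otimes w_2\sim (w_1\otimes[1_{\cod w_2}])\circ([1_{\dom w_1}]\otimes w_2)$ or $w_1\otimes w_2\sim([1_{\cod w_1}]\otimes w_2)\circ(w_1\otimes[1_{\dom w_2}])$; with either choice the rest of your distribution argument goes through.

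Second, in the $\otimes$-case you cannot ``dispatch the subcases where $w_1$ or $w_2$ is an identity word via unit cancellation for $\otimes$'': unit cancellation only removes $[1_I]$, not $[1_X]$ for a general object $X$. If, say, $w_1\sim[1_X]$ and $w_2\sim L_1'\circ\cdots\circ L_\ell'$, you must expand $[1_X]$ as an $\ell$-fold composite of copies of itself (identity cancellation for $\circ$ run backwards), apply interchange to get $([1_X]\otimes L_1')\circ\cdots\circ([1_X]\otimes L_\ell')$, and then merge $[1_X]\otimes[1_{X_j'}]\sim[1_{X\otimes X_j'}]$ — i.e. exactly the same absorption you already use in the main subcase; the case of two identity words needs $[1_X]\otimes[1_Y]\sim[1_{X\otimes Y}]$. (This merging of adjacent identity symbols, which you also invoke to turn $[1_Y]\otimes[1_Z]$ into $[1_{Y\otimes Z}]$, should be stated explicitly as one of the allowed substitutions rather than left implicit.) With these corrections the proof is complete and matches the cited one.
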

See \cite[Lemma XII.1.2.]{kassel} for the (easy) proof.
Let $\Fa (G)$ be the class of free equivalence classes of words in $G$.
As noted above, the realization $|\cdot |$ is still well-defined on $\Fa (G)$.
Let $R$ be a collection of pairs $(w_1, w_2)$ of words in $G$ such that
$|w_1| = |w_2|$. For two elements
$x,y\in \Fa (G)$ we define $x\sim_R y,$ and say that
$x,y$ are $R$-equivalent,
if and only if one can obtain some representative of $y$ from
some representative of $x$ by a finite sequence of subword substitutions, where an allowable 
substitution consists of replacing a subword $w_1$ by $w_2$ for
$(w_1, w_2)\in R$. We say that $(\catc,\otimes, I)$ is
\emph{generated by the generators $G$ and the relations $R$}, if
\begin{itemize}
\item any morphism in $\catc$ can be obtained as $|w|$ for some word $w$ in $G$, and

\item for any $x,y\in \Fa (G),$ we have $x\sim_R y$ if and only if $|x|=|y|$
 in $\catc$. 
\end{itemize}
The structure of morphisms in $\Br$ is then elucidated by the 
following result; 
we simply write $1$ for the identity morphism on $[1]$ and omit square brackets in words.
\begin{prop} \label{prop.tanggensrels}
The compact, symmetric, strict monoidal category $\Br$ is generated
by the three morphisms $i_1, e_1, b_{1,1}$
and the following relations: \\

\noindent (B1) Zig-Zag:
\[ (1\otimes e_1)\circ (i_1 \otimes 1) = 1 = (e_1 \otimes 1)\circ (1\otimes i_1), \]

\noindent (B2) Twisted Zig-Zag:
\[ (e_1 \otimes 1 \otimes 1)\circ (1\otimes b_{1,1} \otimes 1)\circ
  (1\otimes 1\otimes i_1)= b_{1,1} = (1\otimes 1\otimes e_1)\circ
 (1\otimes b_{1,1} \otimes 1)\circ (i_1 \otimes 1 \otimes 1), \]

\noindent (B3) Reidemeister I:
\[ (1\otimes e_1)\circ (b_{1,1} \otimes 1)\circ (1\otimes i_1)=1=
  (e_1 \otimes 1)\circ (1\otimes b_{1,1})\circ (i_1 \otimes 1), \]

\noindent (B4) Reidemeister II:
\[ b_{1,1} \circ b_{1,1} = 1\otimes 1, \]

\noindent (B5) Reidemeister III (a.k.a. the Yang-Baxter equation):
\[ (b_{1,1} \otimes 1)\circ (1\otimes b_{1,1})\circ (b_{1,1} \otimes 1) =
  (1\otimes b_{1,1})\circ (b_{1,1} \otimes 1)\circ (1\otimes b_{1,1}). \]
\end{prop}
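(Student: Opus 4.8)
The plan is to prove both halves of the statement --- that $i_1, e_1, b_{1,1}$ generate every morphism of $\Br$, and that (B1)--(B5) generate all relations among words in these three morphisms --- by a Morse-theoretic analysis of the representing $1$-manifolds, following the tangle-presentation technique of Turaev \cite{turaevtanglesrmatrices} (see also \cite[Chapter XII]{kassel}). The argument is lighter than in the classical oriented/knotted case because we work in codimension $3$: over/under information is irrelevant, so $b_{1,1}$ is its own inverse and there is no orientation to track.

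First I would treat generation. Given $\phi\colon [m]\to [n]$ represented by $W\subset [0,1]\times\real^3$, I would perturb $W$ rel a neighbourhood of $\partial W$ so that projection to the time coordinate $[0,1]$ is a Morse function with pairwise distinct critical values, and so that projection of $W$ to a fixed plane $\real^2\subset\real^3$ is generic, i.e.\ an immersion with only transverse double points, all occurring at noncritical times. Cutting $[0,1]$ at one regular value between each consecutive pair of critical values decomposes $W$ into horizontal slabs, each containing exactly one ``event'' flanked by vertical identity strands: a local minimum (a copy of $i_1$), a local maximum (a copy of $e_1$), or a transverse crossing of the planar projection (a copy of $b_{1,1}$). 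Reading the slabs from bottom to top and applying Lemma \ref{lem.freenormalform} exhibits $\phi = |w|$ for a word $w$ in $\{i_1, e_1, b_{1,1}\}$.

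Next I would turn to the relations. That (B1)--(B5) hold in $\Br$ is checked by picture: (B1) is the zig-zag identity already established above, (B3)--(B5) are the three Reidemeister moves (valid because in codimension $3$ a crossing is ambient-isotopic to the trivial tangle locally), and (B2) is the straightening of a crossing pulled across a cup or a cap; hence $x\sim_R y$ implies $|x|=|y|$. For the converse, given $|x|=|y|$ I would choose a generic isotopy $h_t$ rel $\partial$ between representatives $W_x$ and $W_y$. For all but finitely many $t$ the induced height function is Morse and the planar projection is generic, so the word read off as above is locally constant; at each of the finitely many exceptional parameters exactly one codimension-one degeneracy occurs and is matched to an allowed substitution: exchanging the heights of two events is a free-equivalence move (the interchange law, already part of $\sim$); birth or death of a maximum--minimum pair is (B1); a tangency, a triple point, or a crossing sliding past a cup or cap in the planar projection is (B4), (B5), (B3) respectively; and a crossing being absorbed into or emitted from a turn-back at a critical point is (B2) (together with (B3)). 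Running through this catalogue of generic transitions of a one-parameter family of admissible embeddings yields $x\sim_R y$, as required.

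The hard part is this last step: one must be sure the catalogue of codimension-$\le 1$ singularities is complete, so that no generic transition escapes the list (B1)--(B5) modulo free equivalence. Here one uses the product-near-the-boundary condition to forbid transitions over $\{0,1\}\times\real^3$, and the two spare dimensions of $\real^3$ to guarantee that any spurious crossing created along the isotopy can be undone --- which at the level of words is exactly what (B4) records. If one prefers to avoid the parametrized Morse theory, an equivalent combinatorial route is to show that (B1)--(B5) together with free equivalence reduce every word to a canonical form: use (B4)--(B5) to write the permutation part as a reduced expression for an element of a symmetric group, use (B1)--(B3) to push all minima below all maxima and cancel adjacent extrema, and (B2) to commute crossings past turn-backs, arriving at $\lambda^{\otimes \ell}$ tensored with a layer of units $i_1$, then a permutation, then a layer of counits $e_1$; two such forms coincide precisely when the underlying Brauer diagram and the number of loops agree, i.e.\ precisely when the morphisms they represent in $\Br$ are equal.
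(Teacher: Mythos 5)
Your proposal follows essentially the same route as the paper: the paper also proves generation by putting representatives in generic position and reading off $i_1, e_1, b_{1,1}$ along the time coordinate, and it handles relations exactly via your catalogue of generic transitions — its Facts (A) and (B) list the exchange move (a free equivalence, as you note), the zig-zag and twisted zig-zag moves, and the three Reidemeister moves, matched to (B1)–(B5), citing Turaev, Kassel and Reidemeister for the completeness of the catalogue. So your argument is correct and coincides in substance with the paper's (which is itself only sketched at the same level of detail).
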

\begin{proof}
This can be proved following Turaev's methods of
\cite{turaevtanglesrmatrices}, see also \cite{kassel}; thus there is no need to go into
detail here.
As in knot theory, these methods, adapted to the present case, involve the introduction of planar polygonal
Brauer diagrams contained in $[0,1]\times \real$ and then reasoning with these diagrams using appropriate moves.
Ultimately, one is reduced to Reidemeister's classical arguments
in \cite{reidemeisterknoten}. 
It is sufficient to limit oneself to generic (see below) Brauer diagrams, since
every diagram can be transformed to such a generic one by a planar isotopy.
Polygonal representatives of $i_1, e_1$ and 
$b_{1,1}$ are shown in Figure \ref{fig.ieb};
we will simply write $i,e,b$ for these
representatives.
\begin{figure} 
\frame{\includegraphics[height=3cm]{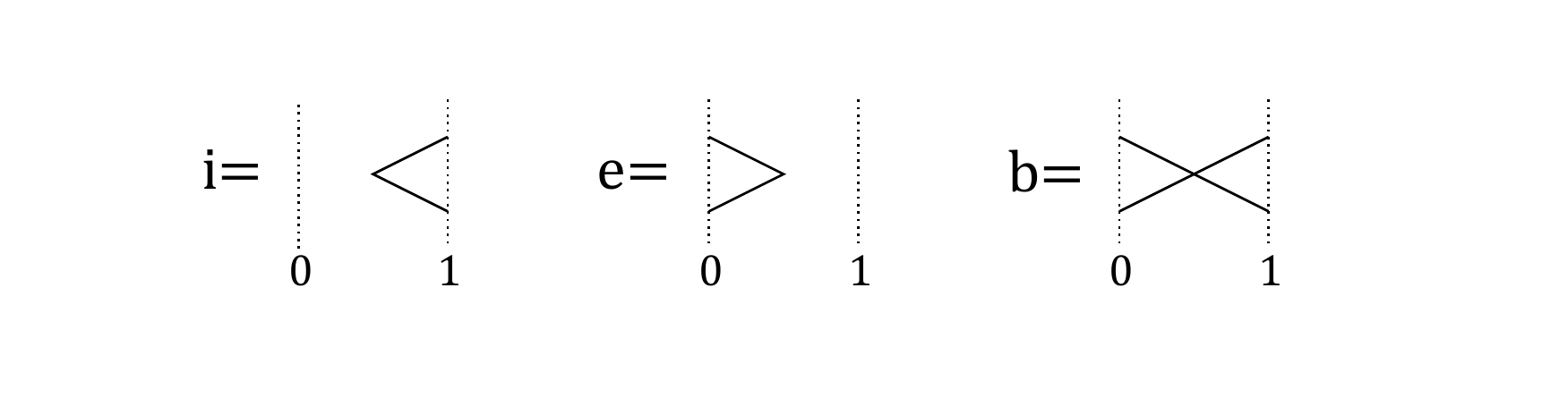}}
\caption{\label{fig.ieb} The polygonal generators $i,e$ and $b$.}
\end{figure}
A point in a Brauer diagram $B$ is called \emph{singular}, if it 
lies in $(0,1)\times \real$ and is either
a vertex which is a local minimum or maximum for the ``horizontal'' coordinate, 
or a crossing point. 
Each of $i,e,b$ has precisely one singular point: $i$ a local minimum,
$e$ a local maximum and $b$ a crossing point.
A Brauer diagram $B$ is called \emph{generic} if any two distinct singular points
have different horizontal coordinates.
The Brauer diagrams $i,e$ and $b$ are generic.
As mentioned earlier, every Brauer diagram is related by a planar isotopy to a generic Brauer diagram.
In particular, every morphism of $\Br$ can be represented by a generic Brauer diagram.
Two generic Brauer diagrams $B$ and $B'$ are related by a \emph{generic} planar isotopy,
if there is a planar isotopy $H$ with $H(B,1)=B'$ such that $H(B,t)$ is a generic
Brauer diagram for every $t\in [0,1]$. The proof then rests on two facts, (A) and (B):\\

Fact (A):\\
Two generic Brauer diagrams are related by a planar isotopy if and only if
one is obtained from the other by a finite number of applications of the 
moves (GI), (EX), (ZZ), (TZ) below:\\
(GI) A generic planar isotopy,\\
(EX) an isotopy exchanging the order of two singular points with respect to
their horizontal coordinate, see Figure \ref{fig.exmove},\\
(ZZ) a zig-zag move as shown in the top row of Figure \ref{fig.zzmove}, and\\
(TZ) a twisted zig-zag move in the neighborhood of a crossing point, as
 shown in the bottom of Figure \ref{fig.zzmove}. \\
\begin{figure} 
\frame{\includegraphics[height=7cm]{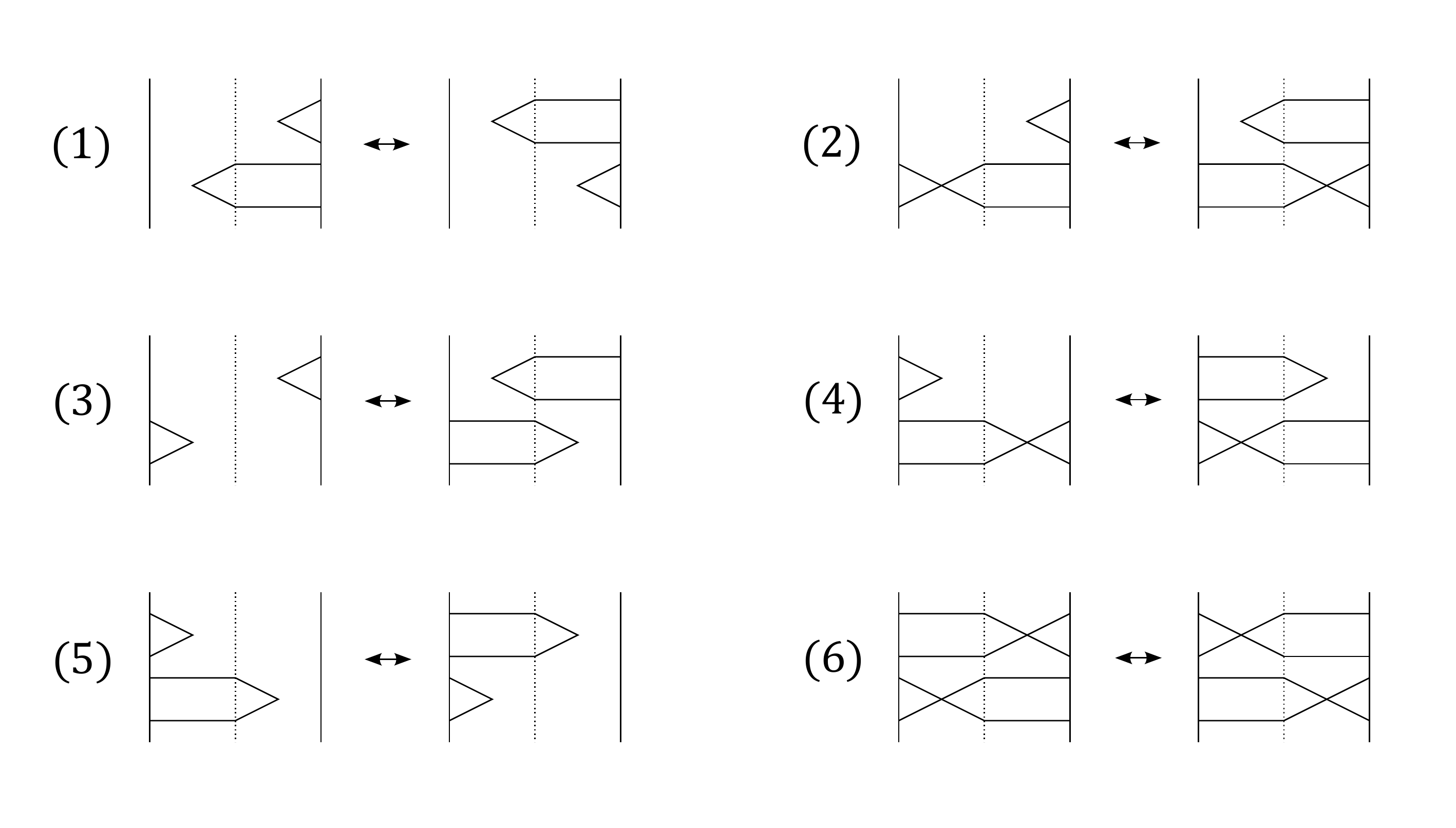}}
\caption{\label{fig.exmove} Exchange moves (EX) on (generic) Brauer diagrams.}
\end{figure}
\begin{figure} 
\frame{\includegraphics[height=6cm]{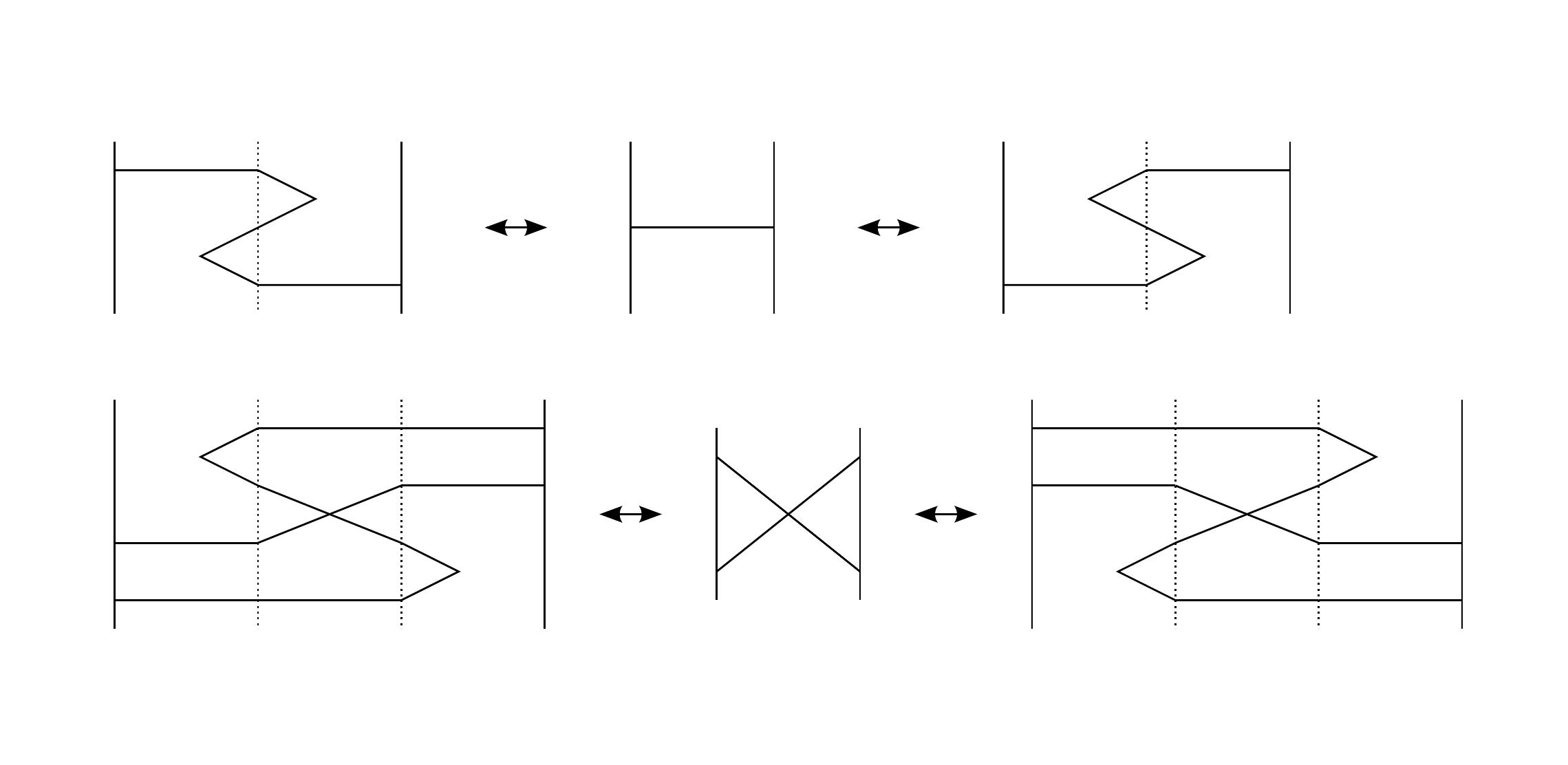}}
\caption{\label{fig.zzmove} The zig-zag move (top) and twisted zig-zag move (bottom) on (generic) Brauer diagrams.}
\end{figure}

Fact (B):\\
Two generic Brauer diagrams represent the same morphism in $\Br$
if and only if one is obtained from the other by a finite sequence of
planar isotopies of Brauer diagrams and the Reidemeister moves (RI),
(RII), (RIII) as shown in Figure \ref{fig.reidemoves}.\\
\begin{figure} 
\frame{\includegraphics[height=5.5cm]{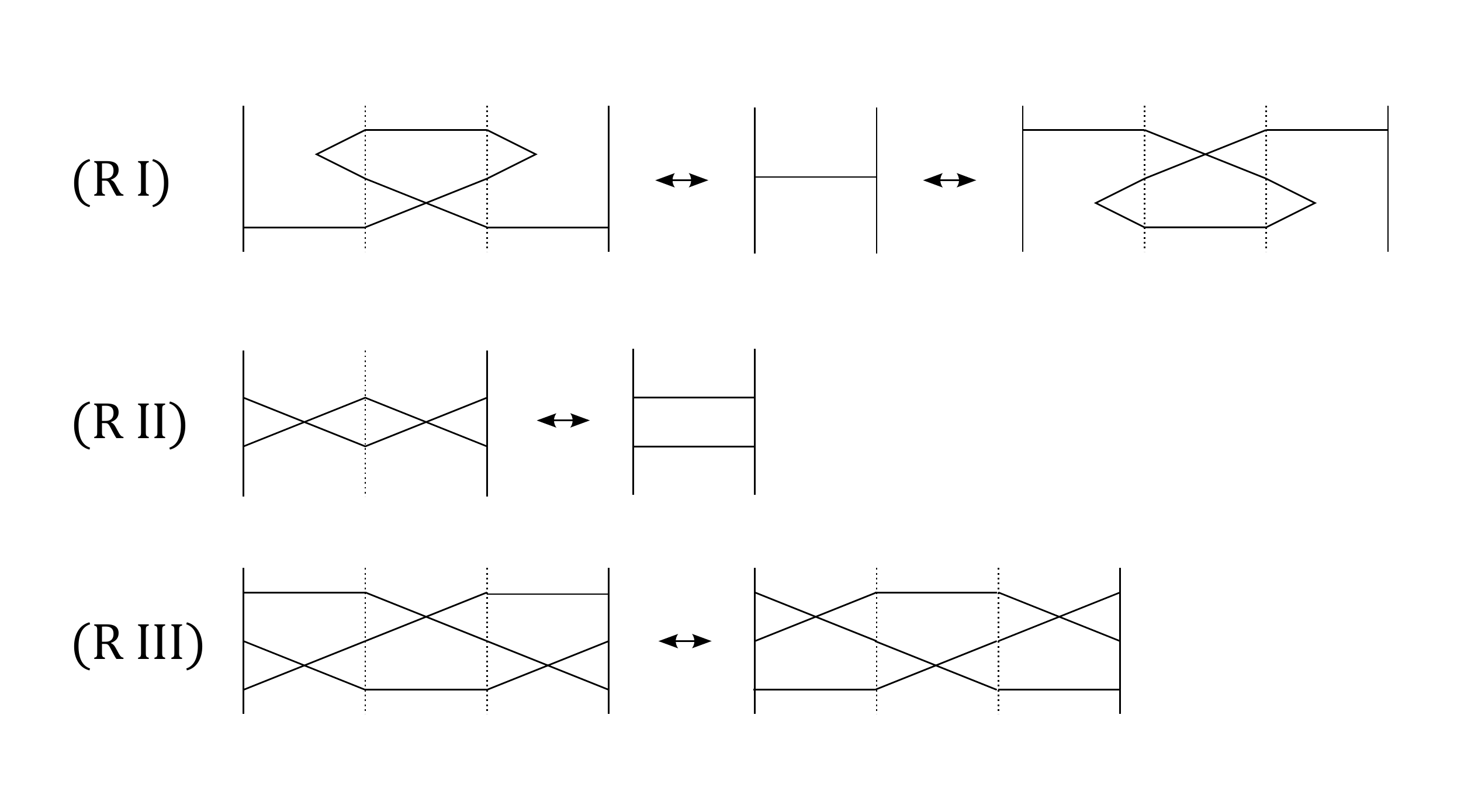}}
\caption{\label{fig.reidemoves} The Reidemeister moves on (generic) Brauer diagrams.}
\end{figure}

The proof of these two facts is essentially contained in \cite{reidemeisterknoten}.
(For tangles, the analogs of these two facts are
\cite[Lemma X.5.7, Theorem X.5.9]{kassel}, see also 
Lemma X.3.5 and Theorem X.3.7 in \emph{loc. cit.}) 
Note that the exchange moves of Figure \ref{fig.exmove} can be derived using only
free equivalences.
For example in case (4) of the figure, the word
$[b]\circ ([1]\otimes [1]\otimes [e])$ is freely equivalent (we write $\sim$) to
$(([1]\otimes [1]) \otimes [e]) \circ ([b]\otimes ([1]\otimes [1]))$ via the following
chain of free equivalences:
\begin{eqnarray*}
[b]\circ ([1]\otimes [1]\otimes [e]) & \sim &
([b] \otimes [\id_{[0]}]) \circ ([1]\otimes [1]\otimes [e]) \\
& \sim & ([b]\circ ([1]\otimes [1]))\otimes ([\id_{[0]}] \circ [e]) \\
& \sim & [b]\otimes [e] \\
& \sim & (([1]\otimes [1])\circ [b])\otimes ([e]\circ ([1]\otimes [1])) \\
& \sim & (([1]\otimes [1])\otimes [e])\circ ([b]\otimes ([1]\otimes [1])).
\end{eqnarray*} 
\end{proof}

\begin{remark}
Turaev \cite{turaevtanglesrmatrices}
considers oriented tangles in $\real^3$. These
can of course not generally be disentangled and consequently have
a more involved structure theory, which is reflected in a more
complicated representation theory, involving so-called enhanced
R-matrices. Turaev's Theorem 3.2 lists 6 generators
($\cap$ with the 2 possible orientations, $\cup$ with the 2 possible
orientations, and
$X_+, X_-$ corresponding to overpass/underpass), and 7 relations (10)--(16).
Now, our Brauer category is obtained from the tangle category by forgetting
orientations and allowing strands to pass through each other.
Forgetting orientations leads to one generator
$\cap = e_1$ and one generator $\cup = i_1$. Forgetting the overpass/underpass
information (and the orientation) leads to one generator $X=b_{1,1}$.
Rewriting Turaev's relations (10)--(16) accordingly in terms of $e_1, i_1$ and $b_{1,1}$,
one arrives at relations ($10'$)--($16'$), valid in $\Br$, which can indeed be derived from our relations
(B1)--(B5).
In fact, relations ($10'$) and ($11'$) are equal and agree with (B1).
Relation ($13'$) is (B4), ($14'$) is (B5), and ($15'$) is the first half of (B3).
Using four instances of (B2), we derive ($12'$):\\

\noindent $(e_1 \otimes 1\otimes 1)\circ (1\otimes e_1 \otimes 1\otimes 1\otimes 1)
  \circ (1\otimes 1\otimes b_{1,1} \otimes 1 \otimes 1) 
\circ (1\otimes 1\otimes 1 \otimes i_1 \otimes 1)\circ
     (1\otimes 1 \otimes i_1)$ \\
$=(e_1 \otimes 1\otimes 1)\circ (1\otimes ((e_1 \otimes 1\otimes 1)
  \circ (1\otimes b_{1,1} \otimes 1)
 \circ (1\otimes 1 \otimes i_1))\otimes 1)\circ (1\otimes 1 \otimes i_1)$ \\
$=(e_1 \otimes 1\otimes 1)\circ (1\otimes b_{1,1} \otimes 1)\circ (1\otimes 1 \otimes i_1)$ \\
$= b_{1,1}$ \\
$=(1 \otimes 1\otimes e_1)\circ (1\otimes b_{1,1} \otimes 1)\circ (i_1\otimes 1 \otimes 1)$ \\
$=(1 \otimes 1\otimes e_1)\circ (1\otimes ((1 \otimes 1\otimes e_1)
  \circ (1\otimes b_{1,1} \otimes 1)
 \circ (i_1\otimes 1 \otimes 1))\otimes 1)\circ (i_1\otimes 1 \otimes 1)$ \\
$=(1 \otimes 1\otimes e_1)\circ (1\otimes 1 \otimes 1\otimes e_1\otimes 1)
  \circ (1\otimes 1\otimes b_{1,1} \otimes 1 \otimes 1) 
\circ (1\otimes i_1\otimes 1 \otimes 1 \otimes 1)\circ
     (i_1\otimes 1 \otimes 1).$ \\

\noindent By (B2), the left hand side of ($16'$) is $b_{1,1}^2,$ which is $1\otimes 1$
by (B4).
\end{remark}
\begin{remark}
We do not claim that the relations provided in Proposition \ref{prop.tanggensrels} are 
minimal in any sense.
\end{remark}
To illustrate Proposition \ref{prop.tanggensrels}, we show how
(B1)--(B5) imply the symmetry equations
$e_1 \circ b_{1,1} = e_1,~ b_{1,1}\circ i_1 = i_1$:
\begin{eqnarray*}
e_1 \circ b_{1,1} & = &
e_1 \circ (1\otimes 1\otimes e_1)\circ
 (1\otimes b_{1,1} \otimes 1)\circ (i_1 \otimes 1 \otimes 1) \\
& = & (e_1 \otimes e_1)\circ
 (1\otimes b_{1,1} \otimes 1)\circ (i_1 \otimes 1 \otimes 1) \\
& = & e_1 \circ (e_1 \otimes 1\otimes 1)\circ
 (1\otimes b_{1,1} \otimes 1)\circ (i_1 \otimes 1 \otimes 1) \\
& = & e_1 \circ (((e_1 \otimes 1)\circ
 (1\otimes b_{1,1})\circ (i_1 \otimes 1)) \otimes 1) \\
& = & e_1 \circ (1\otimes 1)  =  e_1,
\end{eqnarray*}
similarly for the other symmetry equation. 
The loop-commutativity 
$1\otimes \lambda = \lambda \otimes 1$ can be derived using (B1)--(B5) and the above
symmetry equations
(and free equivalence, of course) as follows:
\[
\begin{array}{rcll}
1\otimes \lambda 
&=& 1\otimes (e \circ i) & (\ref{equ.lambdafact}) \\
&=& (1\otimes e)\circ (1\otimes i) & (\text{free}) \\
&=& (1\otimes e)\circ (1\otimes 1\otimes 1) \circ (1\otimes i) & (\text{free}) \\
&=& (1\otimes e)\circ (b^2 \otimes 1) \circ (1\otimes i) & (\text{B4}) \\
&=& (1\otimes e)\circ (b\otimes 1) \circ (b\otimes 1)\circ (1\otimes i) & (\text{free}) \\
&=& (1\otimes e)\circ (b\otimes 1) \circ (1\otimes 1\otimes 1) \circ (b\otimes 1)\circ (1\otimes i) & (\text{free}) \\
&=& (1\otimes e)\circ (b\otimes 1) \circ (1\otimes b^2) \circ (b\otimes 1)\circ (1\otimes i) & (\text{B4}) \\
&=& (1\otimes e)\circ (b\otimes 1) \circ (1\otimes b) \circ 
             (1\otimes b)\circ (b\otimes 1) \circ (1\otimes i) & (\text{free}) \\
&=& (1\otimes e)\circ (b\otimes 1) \circ [((e\otimes 1)\circ (1\otimes i))\otimes b] \\
 & &\circ [((1\otimes e)\circ (i\otimes 1))\otimes b]\circ (b\otimes 1) \circ (1\otimes i) & (\text{B1}) \\
&=& (e\otimes 1)\circ (1\otimes 1\otimes 1\otimes e)\circ (1\otimes 1\otimes b\otimes 1)\circ
         (1\otimes i\otimes 1\otimes 1)\circ (1\otimes b) \\
 & & \circ (1\otimes b) \circ (1\otimes e\otimes 1\otimes 1)\circ (1\otimes 1\otimes b\otimes 1)\circ
       (1\otimes 1\otimes 1\otimes i)\circ (i\otimes 1) & (\text{Lem. \ref{lem.freenormalform}}) \\
&=& (e\otimes 1)\circ [1\otimes ((1\otimes 1\otimes e)\circ (1\otimes b\otimes 1)\circ
         (i\otimes 1\otimes 1))] \circ (1\otimes b^2) \\
 & & \circ [1\otimes ((e\otimes 1\otimes 1)\circ (1\otimes b\otimes 1)\circ
       (1\otimes 1\otimes i))] \circ (i\otimes 1) & (\text{free}) \\
&=& (e\otimes 1)\circ [1\otimes b] \circ (1\otimes b^2) \circ [1\otimes b] \circ (i\otimes 1) & (\text{B2}) \\
&=& (e\otimes 1) \circ (i\otimes 1) & (\text{B4}) \\
&=& (ei)\otimes 1 & (\text{free}) \\
&=& \lambda \otimes 1.
\end{array}
\]
(In the above derivation, we simply wrote $b,e,i$ for $b_{1,1}, e_1, i_1$.)
As a final example, the equation $(1\otimes b)\circ (i\otimes 1) = (b\otimes 1)\circ (1\otimes i)$
(valid in $\Br$) can be derived using
\[
\begin{array}{rcll}
(1\otimes b)\circ (i\otimes 1)
& = & (1\otimes 1\otimes 1)\circ (1\otimes b)\circ (i\otimes 1) & (\text{free}) \\
& = & [1\otimes 1\otimes ((e\otimes 1)\circ (1\otimes i))]\circ (1\otimes b)\circ (i\otimes 1) & (\text{B1}) \\
& = & [((1\otimes 1\otimes e)\circ (1\otimes b \otimes 1)\circ (i\otimes 1\otimes 1))\otimes 1]
      \circ (1\otimes i) & (\text{free}) \\
& = & (b\otimes 1)\circ (1\otimes i) & (\text{B2}).
\end{array}
\]

\subsection{Representations of the Brauer Category}
\label{ssec.reptangle}

We shall use duality structures on vector spaces to construct linear representations
of $\Br$, i.e. symmetric strict monoidal functors $Y:\Br \to \vect$ which preserve
duality. Given a presentation of a strict monoidal category $\catc$ by generators
and relations, the following proposition constructs strict monoidal functors on
$\catc$.

\begin{prop} (cf. \cite[Prop. XII.1.4]{kassel}.) \label{prop.existfunctor}
Let $\catc$ and $\catd$ be strict monoidal categories.
Suppose that $\catc$ is generated by the morphisms $G$ and the relations $R$.
Let $F_0: \Ob \catc \to \Ob \catd$ be a map such that $F_0 (I)=I$ and
$F_0 (X\otimes Y)=F_0 (X)\otimes F_0 (Y)$ for all $X,Y \in \Ob \catc$.
Let $F_1: G\to \Mor (\catd)$ be a map such that
$\dom (F_1 (g)) = F_0 (\dom (g))$ and $\cod (F_1 (g)) = F_0 (\cod (g))$.
Suppose that any pair $(w_1, w_2)\in R$ yields equal morphisms in $\catd$
after replacing any symbol $g\in G$ of $w_1$ and $w_2$ by $F_1 (g)$ and any
symbol $1_X$ by $1_{F_0 (X)}$. 
Then there exists a unique strict monoidal functor $F: \catc \to \catd$ such that
$F(X)=F_0 (X)$ for all $X\in \Ob \catc$ and $F(g)=F_1 (g)$ for all
$g\in G$.
\end{prop}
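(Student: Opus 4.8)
The plan is to build $F$ by choosing, for each morphism of $\catc$, a representing word in the generators $G$, applying the ``substitution'' dictated by $F_0$ and $F_1$, and taking the realization in $\catd$; the bulk of the work is checking this is well-defined, and then the functoriality and strict-monoidality properties are essentially formal. First I would set up the substitution map on words: for a word $w$ in the alphabet $\{[g] \mid g\in G\}\cup\{[1_X] \mid X\in\Ob\catc\}$, define a new word $\widehat{w}$ over $\catd$ by replacing each $[g]$ with $[F_1(g)]$ and each $[1_X]$ with $[1_{F_0(X)}]$, and set $F(|w|) := |\widehat{w}|\in\Mor\catd$. Since $F_0$ is strict monoidal on objects and $\dom F_1(g)=F_0(\dom g)$, $\cod F_1(g)=F_0(\cod g)$, the word $\widehat{w}$ is composable exactly when $w$ is, and $F_0$ sends the domain/codomain of $|w|$ to the domain/codomain of $|\widehat{w}|$.

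The heart of the argument is well-definedness: if $|w_1|=|w_2|$ in $\catc$, then $|\widehat{w_1}|=|\widehat{w_2}|$ in $\catd$. By the definition of ``generated by $G$ and $R$'' (second bullet), $|w_1|=|w_2|$ holds if and only if $w_1\sim_R w_2$, i.e. $w_2$ is reached from $w_1$ by a finite chain of free equivalences and subword substitutions of pairs in $R$. So it suffices to treat a single step. For a free-equivalence step (associativity, unit cancellation, interchange for $\circ$ and $\otimes$), the substitution map $w\mapsto\widehat{w}$ visibly carries it to a free-equivalence step over $\catd$ — these are purely formal word manipulations that do not see the symbols — and free equivalence preserves the realization, so $|\widehat{w_1}|=|\widehat{w_2}|$. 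For an $R$-substitution step, replacing a subword $u_1$ by $u_2$ with $(u_1,u_2)\in R$, one has $\widehat{u_1}$ and $\widehat{u_2}$ as subwords of $\widehat{w_1}$ and $\widehat{w_2}$, and the hypothesis on $R$ is \emph{exactly} that $|\widehat{u_1}|=|\widehat{u_2}|$ in $\catd$; since realization is a monoidal-functorial operation on words (it respects $\circ$ and $\otimes$ of words), replacing equal subvalues leaves $|\widehat{w_1}|=|\widehat{w_2}|$. This is the step I expect to require the most care, mostly in being precise about subwords and about the fact that realization factors through free equivalence so that $R$-substitution genuinely only needs $|u_1|=|u_2|$ for \emph{that} pair rather than for all words realizing the same morphism.

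Granting well-definedness, $F$ is a functor: for objects, $F$ is just $F_0$, so $F(1_X)=|\widehat{[1_X]}|=1_{F_0(X)}$; for composition, pick words $w,w'$ with $|w'|$ composable after $|w|$, note $\widehat{(w'\circ w)}=\widehat{w'}\circ\widehat{w}$ as words, and apply realization to get $F(|w'|\circ|w|)=|\widehat{w'}|\circ|\widehat{w}|=F(|w'|)\circ F(|w|)$. Strict monoidality is identical with $\otimes$ in place of $\circ$, using $\widehat{(w\otimes w')}=\widehat{w}\otimes\widehat{w'}$ and $F_0(I)=I$, $F_0(X\otimes Y)=F_0(X)\otimes F_0(Y)$; the associators and unitors are identities on both sides (strict categories), so the coherence conditions are trivially met. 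Finally, $F(g)=|\widehat{[g]}|=|[F_1(g)]|=F_1(g)$ for $g\in G$ and $F(X)=F_0(X)$ by construction. Uniqueness is immediate: any strict monoidal functor $F'$ agreeing with $F_0$ on objects and $F_1$ on $G$ must satisfy $F'(|w|)=|\widehat{w}|$ for every word $w$ (induction on word length, using strict monoidality and the functor axioms), and since every morphism of $\catc$ is $|w|$ for some $w$ (first bullet), $F'=F$. I would end by citing \cite[Prop. XII.1.4]{kassel} for the parallel statement and remarking that the only modification here is bookkeeping the formal-symbol alphabet $\{[g]\}\cup\{[1_X]\}$ in place of a free monoidal category on a single set of arrows.
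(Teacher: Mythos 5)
Your proposal is correct and is essentially the argument the paper relies on: the paper gives no proof of Proposition \ref{prop.existfunctor}, deferring to \cite[Prop.\ XII.1.4]{kassel}, whose proof proceeds exactly as you do — define $F$ on a morphism by substituting $F_1(g)$ for $g$ and $1_{F_0(X)}$ for $1_X$ in a representing word, check well-definedness one free-equivalence or $R$-substitution step at a time (using that realization respects $\circ$, $\otimes$ and that $F_0(I)=I$, $F_0(X\otimes Y)=F_0(X)\otimes F_0(Y)$ handle the identity-type free equivalences), and get functoriality, strict monoidality and uniqueness formally.
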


We apply this Proposition to construct functors on $\Br$:
\begin{thm} \label{thm.functorytangvect}
Let $V$ be a finite dimensional real vector space and $(i,e)$ a duality structure
on $V$. Then there exists a unique symmetric strict monoidal functor
$Y: \Br \to \vect$ which satisfies $Y([1])=V$ and preserves duality, that is,
$Y(i_1)=i,$ $Y(e_1)=e.$
\end{thm}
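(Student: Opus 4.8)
The plan is to apply Proposition \ref{prop.existfunctor} with $\catc = \Br$, $\catd = \vect$, and the presentation of $\Br$ furnished by Proposition \ref{prop.tanggensrels} (generators $i_1, e_1, b_{1,1}$, relations (B1)--(B5)). First I would define $Y_0: \Ob \Br \to \Ob \vect$ on objects by $Y_0([0]) = I = \real$ and $Y_0([n]) = V^{\otimes n}$ (using the Schauenburg product), so that $Y_0([m]\otimes[n]) = Y_0([m+n]) = V^{\otimes(m+n)} = V^{\otimes m}\otimes V^{\otimes n} = Y_0([m])\otimes Y_0([n])$ on the nose, using strict associativity of $\otimes$ on $\vect$; note $Y_0(I) = I$ holds as well. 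Then I would define $Y_1$ on the three generators by $Y_1(i_1) = i: \real \to V\otimes V$, $Y_1(e_1) = e: V\otimes V\to \real$, and $Y_1(b_{1,1}) = b_{VV}: V\otimes V \to V\otimes V$, the Schauenburg braiding. The domain/codomain conditions of Proposition \ref{prop.existfunctor} are immediate from the definitions.

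The substance of the proof is verifying that the images of the five relations (B1)--(B5) hold in $\vect$. Relation (B1) (Zig-Zag) is exactly the zig-zag equation built into Definition \ref{def.dualitystructure}, together with its mirror form, which holds by Lemma \ref{lem.bothzigokbysymm} (since $(i,e)$ is a symmetric copairing/pairing pair). Relations (B4) (Reidemeister II, $b_{VV}\circ b_{VV} = 1$) and (B5) (Reidemeister III / Yang--Baxter) hold because $b$ is a symmetric braiding on the symmetric strict monoidal category $(\vect, \otimes, I, b)$ established in the preceding subsection — symmetry gives (B4), and the hexagon/naturality axioms for a braiding give (B5). Relations (B2) (Twisted Zig-Zag) and (B3) (Reidemeister I) express compatibility of the braiding with the duality morphisms; I would check these by the bead-sliding argument, i.e. using naturality of $b$ with respect to the maps $i$ and $e$ (these being morphisms in $\vect$) together with (B1). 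Concretely, (B3) says $(1_V\otimes e)\circ(b_{VV}\otimes 1_V)\circ(1_V\otimes i) = 1_V$: expand $b$, push it through $i$ by naturality to convert it into the "straight" copairing, then collapse using the zig-zag; the symmetry identity $e\circ b_{VV} = e$ and $b_{VV}\circ i = i$ (the vector-space analogues of the diagrams defining symmetric pairing/copairing, which are precisely the commuting triangles in Section \ref{ssec.linearduality}) make this a one-line computation. Relation (B2) follows the same way, using the hexagon identity once and then (B1). Once all five relations check out, Proposition \ref{prop.existfunctor} yields a unique strict monoidal functor $Y: \Br \to \vect$ with $Y([1]) = V$, $Y(i_1) = i$, $Y(e_1) = e$; uniqueness of $Y$ as a monoidal functor subject to these constraints is part of that proposition's conclusion, since every morphism of $\Br$ is a word in the generators.

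It remains to see that $Y$ is \emph{symmetric}, i.e. that $Y(b_{m,n}) = b_{Y_0([m]), Y_0([n])}$ for all $m,n$, not merely for $m=n=1$. This I would deduce from the fact that in any symmetric strict monoidal category the general braiding $b_{m,n}$ on tensor powers is built from the elementary braiding $b_{1,1}$ by the standard "brick-wall" composition (a consequence of the hexagon axioms and naturality), so it suffices that $Y$ is monoidal and sends $b_{1,1}$ to $b_{VV}$ — both of which we have. Equivalently, $Y$ preserving the generator $b_{1,1}$ forces it to preserve all $b_{m,n}$, hence $Y$ is a symmetric monoidal functor; and it preserves duality by construction since $Y(i_1) = i$, $Y(e_1) = e$ exhibit $Y([1]) = V$ as self-dual with the given structure. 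I expect the main obstacle to be the careful verification of (B2) and (B3) in $\vect$ — translating the diagrammatic "slide a cap/cup past a crossing" argument into algebraic identities among $i$, $e$, $b_{VV}$ — but this is exactly the content already packaged in the symmetric-pairing/copairing diagrams of Section \ref{ssec.linearduality} combined with the zig-zag equation, so no genuinely new computation is needed.
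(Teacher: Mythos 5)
Your proposal is correct and follows essentially the same route as the paper: invoke the presentation of $\Br$ from Proposition \ref{prop.tanggensrels} together with Proposition \ref{prop.existfunctor}, set $Y_0([n])=V^{\otimes n}$ (Schauenburg product), $Y_1(i_1)=i$, $Y_1(e_1)=e$, $Y_1(b_{1,1})=b$, and verify (B1)--(B5), with (B1) coming from the duality structure and Lemma \ref{lem.bothzigokbysymm} and (B4), (B5) from $b$ being a symmetric braiding. The only divergence is in checking (B2) and (B3): the paper computes in a basis using Proposition \ref{prop.ieinverse}, whereas your coordinate-free derivation from naturality of $b$, the symmetry identities $e\circ b=e$ and $b\circ i=i$, and the zig-zag equation does go through and is an acceptable substitute.
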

\begin{proof}
We define a map $Y_0: \Ob \Br \to \Ob \vect$ by setting
$Y_0 ([n])=V^{\otimes n},$ with the understanding that
$V^{\otimes 0} = I=\real,$ the one-dimensional vector space which is the
unit object of $\vect$. Then $Y_0 (I)=Y_0 ([0])=I$ and
\[ Y_0 ([m]\otimes [n]) = Y_0 ([m+n])=V^{\otimes (m+n)} =
  V^{\otimes m} \otimes V^{\otimes n} = Y_0 ([m])\otimes Y_0 ([n]). \]
Note that it is crucial here to use the strictly associative Schauenburg tensor
product, which also has strict units. According to Proposition \ref{prop.tanggensrels},
$\Br$ is generated by the morphisms $G=\{ i_1, e_1, b_{1,1} \}$ and the
relations (B1)--(B5). A function $Y_1: G\to \Mor (\vect)$ is given by
\[ Y_1 (i_1)=i,~ Y_1 (e_1)=e,~ \text{ and }~ Y_1 (b_{1,1})=b, \]
where $b:V\otimes V\to V\otimes V$ is the braiding automorphism
induced by $v\otimes w \mapsto w\otimes v$ which makes $\vect$ symmetric.
This definition of $Y_1$ is forced by the requirement that $Y$ be symmetric
and preserve duality. Domains and codomains are transformed compatibly under $Y_0$ and $Y_1$.
Thus by Proposition \ref{prop.existfunctor}, there exists a
unique strict monoidal functor $Y:\Br \to \vect$ such that
$Y([n])=Y_0 ([n])=V^{\otimes n}$ for all $[n]\in \Ob \Br$ and
$Y (g)=Y_1 (g)$ for all $g\in G,$ i.e. $Y(i_1)=i,$ $Y(e_1)=e,$ $Y(b_{1,1})=b$,
provided the following identities hold in $\vect$ for the duality structure $(i,e)$ and $b$:\\

\noindent Zig-Zag:
\[ (1\otimes e)\circ (i \otimes 1) = 1 = (e \otimes 1)\circ (1\otimes i), \]

\noindent Twisted Zig-Zag:
\[ (e \otimes 1 \otimes 1)\circ (1\otimes b \otimes 1)\circ
  (1\otimes 1\otimes i)= b = (1\otimes 1\otimes e)\circ
 (1\otimes b \otimes 1)\circ (i \otimes 1 \otimes 1), \]

\noindent Reidemeister I:
\[ (1\otimes e)\circ (b \otimes 1)\circ (1\otimes i)=1=
  (e \otimes 1)\circ (1\otimes b)\circ (i \otimes 1), \]

\noindent Reidemeister II:
\[ b \circ b = 1\otimes 1, \]

\noindent Reidemeister III (a.k.a. the Yang-Baxter equation):
\[ (b \otimes 1)\circ (1\otimes b)\circ (b \otimes 1) =
  (1\otimes b)\circ (b \otimes 1)\circ (1\otimes b). \]

The zig-zag
equations are satisfied by definition of a duality structure and by Lemma \ref{lem.bothzigokbysymm}.
The transposition
$b$ clearly satisfies Reidemeister II. Furthermore, $b$ is a well-known
solution of the Yang-Baxter equation so that Reidemeister III is satisfied as well.
Let us verify Reidemeister I: Let $e_1, \ldots, e_n$ be a basis of $V,$ 
$i(1) = \sum_{j,k} i_{jk} e_j \otimes e_k$ 
and $e_{jk} = e(e_j \otimes e_k)$.
For $v\in V$,
\[ (1\otimes e)(b\otimes 1)(1\otimes i)(v)=
  (1\otimes e)(b\otimes 1)\sum_{j,k} i_{jk} v\otimes e_j \otimes e_k \]
\[ =(1\otimes e)\sum i_{jk} e_j \otimes v\otimes e_k =
 \sum i_{jk} e(v\otimes e_k) e_j. \]
Thus the left Reidemeister I relation holds for $v=e_l$ if and only if
$\sum_{j,k} i_{jk} e_{lk} e_j = e_l,$
that is, if and only if $\sum_k i_{jk} e_{lk} = \delta_{jl}$
for all $j,l$. But this holds by Proposition \ref{prop.ieinverse}.
Similar routine calculations using this proposition will readily verify the 
right Reidemeister I equation and the twisted zig-zag
identities.
\end{proof}
\begin{remark}
The author has developed computer software that enumerates (generic) polygonal
Brauer diagrams of any size and computes the corresponding representation matrices,
given by a duality structure.
\end{remark}

In light of Proposition \ref{prop.tanggensrels} and Theorem 
\ref{thm.functorytangvect}, it is clear that the zig-zag identity must be
required in the definition of a duality structure $(i,e)$, if such a structure
is to yield representations of $\Br$. But the symmetry requirements on
$(i,e)$ are also forced by
\[ e=Y(e_1)=Y(e_1 \circ b_{1,1})=Y(e_1)\circ Y(b_{1,1})=e\circ b, \]
and
\[ i=Y(i_1)=Y(b_{1,1}\circ i_1)=Y(b_{1,1})\circ Y(i_1)=b\circ i. \]

The following observation on the nontriviality of images under $Y$ will
be required in Section \ref{sec.proidemcompletion} on profinite idempotent completions.
\begin{lemma} \label{lem.yhomtangnozeros}
If $V$ is a real vector space of finite positive dimension and $(i,e)$ any duality
structure on $V$, then $Y(\Hom_\Br ([m], [n]))$ does not contain the zero map
$0: V^{\otimes m} \to V^{\otimes n}.$
\end{lemma}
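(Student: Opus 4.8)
The plan is to show that every morphism $\phi\in\Hom_\Br([m],[n])$ is sent by $Y$ to a nonzero linear map $V^{\otimes m}\to V^{\otimes n}$. First I would isolate the ``loop'' contribution: a representative of $\phi$ is a disjoint union of some number $\ell\ge 0$ of circles together with a loop-free tangle $\phi_0$, so that $\phi=\lambda^{\otimes\ell}\otimes\phi_0$ up to free equivalence, and hence $Y(\phi)=Y(\lambda)^{\ell}\cdot Y(\phi_0)$ as a scalar multiple (since $\lambda:I\to I$ goes to an endomorphism of $\real$, i.e.\ a scalar). By the factorization $\lambda=e_1\circ i_1$ of (\ref{equ.lambdafact}) together with Theorem \ref{thm.functorytangvect}, $Y(\lambda)=e\circ i=\tr(i,e)=\dim V$, which is a \emph{positive} integer by Proposition \ref{prop.traceisdimension} and the hypothesis $\dim V>0$. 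Thus $Y(\phi)=(\dim V)^{\ell}\,Y(\phi_0)$, and it suffices to treat the loop-free case.

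For a loop-free morphism $\phi_0\colon[m]\to[n]$, I would use that $\phi_0$ is determined entirely by the perfect matching of the $m+n$ endpoints: each boundary point of $M[m]$ is either joined to another point of $M[m]$ (a ``cup''), to a point of $M[n]$ (a ``through strand''), or two points of $M[n]$ are joined (a ``cap''). Up to the symmetric monoidal structure one can therefore write $\phi_0$ as a composite, in some order, of tensor factors each equal to $i_1$, to $e_1$, to an identity $1_{[1]}$, and of braidings $b_{1,1}$ used to permute strands into position; concretely, $\phi_0 = \sigma''\circ(1_{[k]}^{\otimes}\otimes i_1^{\otimes a})\circ\sigma'\circ(e_1^{\otimes c}\otimes 1_{[k]}^{\otimes})\circ\sigma$ for suitable permutation-morphisms $\sigma,\sigma',\sigma''$ and numbers with $m=2c+k$, $n=2a+k$. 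Applying $Y$ and using Theorem \ref{thm.functorytangvect}, $Y(\phi_0)$ is the corresponding composite built from $i\colon\real\to V\otimes V$, $e\colon V\otimes V\to\real$, identities, and the transposition $b$ on $V\otimes V$. Now $e$ is nondegenerate and $i$ corresponds to $\operatorname{Mat}(e)^{-1}$ by Proposition \ref{prop.ieinverse}, so none of these building blocks is the zero map; the transpositions are isomorphisms. It remains to see that the \emph{composite} is nonzero.

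To finish I would evaluate $Y(\phi_0)$ on an explicit decomposable tensor. Choose a basis $e_1,\dots,e_n$ of $V$. Because the matching is a bijection on the ``free'' indices and $i(1)=\sum_{j,k}i_{jk}\,e_j\otimes e_k$ with $(i_{jk})$ invertible, one can trace a nonzero coefficient through: pick the input basis vector $e_{l_1}\otimes\cdots\otimes e_{l_m}$ whose indices on the $c$ cup-pairs are chosen so that each factor $e(e_{l}\otimes e_{l'})$ is nonzero — possible since $\operatorname{Mat}(e)$ is invertible, hence has no identically zero row — and read off that the coefficient of some basis vector in $V^{\otimes n}$ in $Y(\phi_0)(e_{l_1}\otimes\cdots\otimes e_{l_m})$ equals a product of nonzero $e$-values and nonzero $i$-coefficients, hence is nonzero. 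Therefore $Y(\phi_0)\ne 0$, and consequently $Y(\phi)\ne 0$. The main obstacle is the bookkeeping in the middle paragraph: writing an arbitrary loop-free Brauer morphism in the normal form ``caps, then braiding/permutation, then cups'' and checking that the scalar coming out of pairing up the cup-indices against the cap-indices through a permutation is genuinely nonzero rather than accidentally cancelling; this is where one needs the invertibility of $\operatorname{Mat}(i)$ and $\operatorname{Mat}(e)$ (Proposition \ref{prop.ieinverse}) in an essential way, and a clean way to organize it is to induct on the number of cups/caps, peeling off one $i_1$ or $e_1$ at a time and using the zig-zag relations to keep the remaining morphism loop-free.
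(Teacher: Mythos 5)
Your proposal is correct, and its skeleton (strip off loops using $Y(\lambda)=\tr(i,e)=\dim V>0$, reduce to a loop-free morphism, write it as a normal form sandwiched between permutation isomorphisms) is the same as the paper's. Where you genuinely diverge is in the decisive nonvanishing step. The paper chooses the normal form $\phi=\beta\circ\phi_0\circ\alpha$ with $\phi_0=1_{[m-2p]}\otimes e_1^{\otimes p}\otimes i_1^{\otimes q}$, i.e.\ the caps and cups appear as \emph{tensor factors} side by side rather than as composed layers; then $Y(\phi_0)=1\otimes e^{\otimes p}\otimes i^{\otimes q}$ is a Kronecker product of nonzero maps ($e\neq 0$ and $i\neq 0$ are forced by the zig-zag equation), hence nonzero, and pre/post-composing with the isomorphisms $Y(\alpha),Y(\beta)$ preserves this — no evaluation on vectors is ever needed. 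You instead use the ``caps layer, then cups layer'' factorization, which forces you to rule out vanishing of a genuine composite, and you do so by evaluating on a well-chosen basis vector; that argument does go through (on a fixed basis input the caps produce a single nonzero scalar, the cups tensor on the fixed nonzero vector $i(1)^{\otimes a}$, and no summation can cause cancellation in a single matrix entry), and your worry about accidental cancellation is in fact unfounded for exactly this reason. The trade-off: your route is more computational and needs the explicit coefficient-tracing (plus invertibility of $\operatorname{Mat}(i)$, $\operatorname{Mat}(e)$ via Proposition \ref{prop.ieinverse}, where the paper only needs $i,e\neq 0$ from the zig-zag identity), while the paper's choice of the side-by-side tensor normal form makes the nonvanishing immediate and keeps the whole proof structural.
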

\begin{proof}
We begin by observing that every loop-free morphism 
$\phi \in \Hom_\Br ([m], [n])$ of $\Br$ can be brought
into the following normal form: There are isomorphisms $\alpha$ and
$\beta$ such that $\phi = \beta \circ \phi_0 \circ \alpha$, where
$\phi_0$ has the form
\[ \phi_0 = 1_{[m-2p]} \otimes e_1^{\otimes p} \otimes i_1^{\otimes q}, \]
with $p,q\in \nat$ such that $2p\leq m,$ $2q\leq n$ and $m-2p = n-2q$.
Note that in $\vect$, the tensor product of any linear map and the zero map is again zero.
Also, the tensor product of two nonzero linear maps is again nonzero, which
can be seen by representing the maps by matrices and noting that the tensor
product corresponds to the Kronecker product of matrices.
Now, neither $Y(i_1)=i$ nor $Y(e_1)=e$ can be the zero map, for otherwise
at least one of $1_V \otimes i,$ $e\otimes 1_V$ would be zero and thus
$(e\otimes 1_V)\circ (1_V \otimes i)$ would be zero, a contradiction to the
zig-zag equation of the duality structure $(i,e)$ (and the positive dimensionality of $V$). 
Therefore,
\[ Y(\phi_0) = 1_{V^{\otimes (m-2p)}} \otimes e^{\otimes p} \otimes i^{\otimes q} \]
is not zero. As $\alpha$ and $\beta$ are isomorphisms, their images
$Y(\alpha)$ and $Y(\beta)$ are isomorphisms of positive-dimensional vector spaces, which implies that
$Y(\phi) = Y(\beta)\circ Y(\phi_0)\circ Y(\alpha)$ cannot be the zero map.
Finally, if $\psi: [m]\to [n]$ is any morphism, possibly containing loops, then write
it as $\psi = \lambda^{\otimes p} \otimes \phi$, with $\phi$ loop-free.
By Proposition \ref{prop.traceisdimension},
\[  Y(\lambda) = Y(e_1 \circ i_1) = ei =\tr (i,e)=\dim V>0, \]
from which we deduce that 
\[ Y(\psi) = Y(\lambda)^{\otimes p} \otimes Y(\phi) =
  (\dim V)^p Y(\phi) \]
is not zero.
\end{proof}
A particular duality structure on the 
two-dimensional vector space $V=\real^2$ is
\begin{equation} \label{equ.exie}
e(e_{11})=0,~ e(e_{12})=1,~ e(e_{21})=1,~ e(e_{22})=-1,~
i(1)=e_{11} + e_{12} + e_{21},
\end{equation}
where $e_{ij} = e_i \otimes e_j,$ and $e_1, e_2$ is the standard basis of
$\real^2$.
\begin{prop} \label{prop.expleloopfaithful}
The symmetric monoidal representation $Y:\Br \to \vect$ determined
by the duality structure (\ref{equ.exie}) is faithful on loops, that is,
if $\phi$ and $\psi$ are any two morphisms in $\Br$ such that
$Y(\phi)=Y(\psi),$ then $\phi$ and $\psi$ have the same number of loops.
\end{prop}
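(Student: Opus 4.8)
The plan is to exploit the fact that, under $Y$, the loop $\lambda$ contributes exactly the scalar $\tr(i,e)=\dim V = 2$ (Proposition \ref{prop.traceisdimension}), and to detect this factor by a $2$-adic valuation argument that relies crucially on the chosen duality structure being \emph{integral}. With respect to the standard basis of $V=\real^2$, the structure (\ref{equ.exie}) has $\operatorname{Mat}(i)=\left(\begin{smallmatrix}1&1\\1&0\end{smallmatrix}\right)$ and $\operatorname{Mat}(e)=\left(\begin{smallmatrix}0&1\\1&-1\end{smallmatrix}\right)$, both with entries in $\intg$, and the braiding $b$ is a permutation matrix. Since $Y$ sends composition to matrix multiplication and the monoidal product to the Kronecker product of matrices (cf. the proof of Lemma \ref{lem.yhomtangnozeros}), it follows that $Y(\phi_0)$ has an integral matrix for every \emph{loop-free} morphism $\phi_0$ of $\Br$. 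Moreover, if $\alpha$ is an isomorphism of $\Br$, then $\alpha^{-1}$ is also loop-free, so $Y(\alpha)$ and $Y(\alpha)^{-1}=Y(\alpha^{-1})$ are both integral; hence $\det Y(\alpha)=\pm 1$, and $Y(\alpha)$ stays invertible modulo $2$.

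Next I would pin the number of loops to a valuation of the image matrix. Any morphism $\phi\colon[m]\to[n]$ of $\Br$ can be written uniquely (since $\lambda$ is central) as $\phi=\lambda^{\otimes k}\otimes\phi_0$ with $k$ the number of loops of $\phi$ and $\phi_0$ loop-free. By (\ref{equ.lambdafact}) and Proposition \ref{prop.traceisdimension}, $Y(\lambda)=e\circ i=\tr(i,e)=2$ as an endomorphism of $\real$, so $Y(\phi)=Y(\lambda)^{\otimes k}\otimes Y(\phi_0)=2^{k}\,Y(\phi_0)$ as a map $V^{\otimes m}\to V^{\otimes n}$. The heart of the argument is then the claim that $Y(\phi_0)\not\equiv 0 \pmod 2$ for loop-free $\phi_0$, i.e.\ its matrix has an odd entry. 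Writing $\phi_0$ in the normal form of the proof of Lemma \ref{lem.yhomtangnozeros}, namely $\phi_0=\beta\circ(1_{[m-2p]}\otimes e_1^{\otimes p}\otimes i_1^{\otimes q})\circ\alpha$ with $\alpha,\beta$ isomorphisms, one reduces all matrices modulo $2$: the matrices of $i$ and $e$ remain nonzero over $\mbf_2$, hence so does their Kronecker product with identity matrices, and left/right multiplication by the mod-$2$-invertible $Y(\beta),Y(\alpha)$ preserves nonvanishing. (Equivalently: the mod-$2$ reduction of $(i,e)$ is a duality structure on $\mbf_2^{\,2}$, and the field-independent version of Theorem \ref{thm.functorytangvect} yields a symmetric strict monoidal functor $\Br\to\{\text{finite-dimensional }\mbf_2\text{-vector spaces}\}$ sending the loop-free $\phi_0$ to $Y(\phi_0)\bmod 2$, which is nonzero by the $\mbf_2$-analog of Lemma \ref{lem.yhomtangnozeros}.)

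To conclude, define for a nonzero integral matrix $A$ the quantity $\nu(A)$ to be the largest $k$ with $2^{k}\mid A_{st}$ for all entries. By the above, if $\phi$ has $k$ loops then $Y(\phi)=2^{k}Y(\phi_0)$ with $\nu(Y(\phi_0))=0$, so $\nu(Y(\phi))=k$; in particular $Y(\phi)$ is never zero and its loop count is recovered as $\nu(Y(\phi))$. Therefore $Y(\phi)=Y(\psi)$ forces $\phi$ and $\psi$ to have equal numbers of loops, as asserted.

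The step requiring genuine care is the mod-$2$ nonvanishing of $Y(\phi_0)$ — one must track that the integral matrices of $i$, $e$ and of the isomorphisms $\alpha,\beta$ all remain nonzero resp.\ invertible after reduction; the rest is bookkeeping, the conceptual point being simply that $2=\dim V$ is the only new "size" that a loop injects under $Y$.
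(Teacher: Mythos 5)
Your argument is correct, and it rests on the same arithmetic heart as the paper's proof --- namely that $Y(\lambda)=\dim V=2$, so loops are detected by divisibility by $2$, together with the normal form $\phi_0=\beta\circ(1\otimes e_1^{\otimes p}\otimes i_1^{\otimes q})\circ\alpha$ from Lemma \ref{lem.yhomtangnozeros} --- but the detection mechanism and the endgame are packaged differently. The paper proves the stronger intermediate statement that every loop-free morphism has a matrix with entries in $\{0,1,-1\}$, obtained by tracking Kronecker products and by observing that $Y(\alpha),Y(\beta)$ are permutation matrices (as products of images of adjacent transpositions); it then concludes by a two-stage contradiction (first that a loop on one side forces a loop on the other, then a cancellation of common loop factors $2^n$). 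You instead prove only what is needed, that $Y(\phi_0)\not\equiv 0\pmod 2$ for loop-free $\phi_0$, getting invertibility of $Y(\alpha)$ modulo $2$ from $\det Y(\alpha)=\pm1$ (integrality of $Y(\alpha)$ and $Y(\alpha^{-1})$) rather than from the permutation-matrix description, and you finish directly by reading off the loop count as the $2$-adic valuation $\nu(Y(\phi))=k$, which replaces the paper's contradiction argument with an explicit invariant. The steps that genuinely need checking all hold: the matrices of $i$ and $e$ from (\ref{equ.exie}) are nonzero mod $2$, Kronecker products of nonzero matrices over the field $\mathbb{F}_2$ are nonzero, and left or right multiplication by a mod-$2$-invertible matrix preserves nonvanishing; also your parenthetical alternative is legitimate, since $\operatorname{Mat}(i)$ and $\operatorname{Mat}(e)$ reduce to mutually inverse symmetric matrices over $\mathbb{F}_2$, giving a duality structure there (note, though, that the resulting $\mathbb{F}_2$-functor sends the loop to $0$, so it may only be invoked for the loop-free part, as you do). What your route buys is a slightly more conceptual and shorter conclusion, and it generalizes to any integral duality structure for which some prime divides $\dim V$ while $i$ and $e$ survive reduction mod that prime; what the paper's route buys is an elementary, completely explicit bound on the entries that avoids any reduction argument.
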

\begin{proof}
Again, we employ the normal form used in the proof of Lemma \ref{lem.yhomtangnozeros}:
Given any loop-free morphism $\phi$ of $\Br,$
there are isomorphisms $\alpha$ and
$\beta$ such that $\phi = \beta \circ \phi_0 \circ \alpha$, where
$\phi_0$ has the form
$\phi_0 = 1 \otimes e_1^{\otimes p} \otimes i_1^{\otimes q},$
with $1$ an identity morphism and $p,q\in \nat$.
For every $n=1,2,\ldots,$ we equip $V^{\otimes n}$ with the basis
$\{ e_{i_1} \otimes \cdots \otimes e_{i_n} ~|~ i_1, \ldots, i_n \in \{ 1,2 \} \}$.
With respect to this basis, linear maps $Y(\psi)$ can be written as matrices.
Recall that the matrix representative of a tensor product of linear maps is
the \emph{Kronecker product} of matrices. If $A=(a_{ij})$ is an $a\times a'$ matrix
and $B$ is a $b\times b'$ matrix, then their Kronecker product $A\otimes B$ is the
$ab \times a'b'$ block matrix with $(i,j)$-th block $a_{ij} B$.
Therefore, if all entries of $A$ and $B$ are from the set $\{0, 1, -1 \},$ then 
the entries of their Kronecker product $A\otimes B$ are also in the set
$\{ 0,1,-1 \}$. The matrix of $Y(e_1)$ is
\[ \begin{pmatrix} 0 & 1 & 1 & -1 \end{pmatrix}, \]
the matrix of $Y(i_1)$ is the transpose of
\[ \begin{pmatrix} 1 & 1 & 1 & 0 \end{pmatrix}, \]
and the matrix of $Y(b_{1,1})$ is the permutation matrix
\[ \begin{pmatrix} 1 & 0 & 0 & 1 \\ 0 & 0 & 1 & 0 \\
  0 & 1 & 0 & 0 \\ 0 & 0 & 0 & 1 \end{pmatrix}. \]
All three of these matrices have entries from the set $\{ 0,1,-1 \}$.
Hence the matrix of $Y(e_1^{\otimes p}) = Y(e_1)^{\otimes p}$, which
is the $p$-th Kronecker power of $Y(e_1)$, has entries in the set $\{ 0,1,-1\}$.
Furthermore, the matrix of $Y(i_1^{\otimes q}) = Y(i_1)^{\otimes q}$, which
is the $q$-th Kronecker power of $Y(i_1)$, has entries in the set $\{ 0,1,-1\}$.
Consequently, the matrix of 
\[ Y(\phi_0) =Y(1\otimes e_1^{\otimes p} \otimes i_1^{\otimes q}) = 1\otimes
Y(e_1^{\otimes p}) \otimes Y(i_1^{\otimes q}) \] 
has entries in the set $\{ 0,1,-1 \}$. 
Any permutation can be written as a product of \emph{adjacent}
transpositions. Thus the isomorphism $\alpha$ can be written as a composition
of morphisms of the form
\begin{equation} \label{equ.intermbraid}
1_V\otimes \cdots \otimes 1_V \otimes b_{1,1} \otimes 1_V \otimes \cdots \otimes 1_V. 
\end{equation}
As $Y(b_{1,1})$ is a permutation matrix, the matrix of the $Y$-image of a morphism
of the form (\ref{equ.intermbraid}) is a permutation matrix as well. Since permutation matrices
form a group under composition, $Y(\alpha)$ is a permutation matrix. In particular,
the entries of $Y(\alpha)$ lie in the set $\{ 0,1,-1 \}$. 
By the same token, $Y(\beta)$ is also a permutation matrix.
Multiplication of a matrix $A$ by a permutation matrix permutes the rows or columns of $A$.
Thus, the matrix of
\[ Y(\phi) = Y(\beta)\circ Y(\phi_0) \circ Y(\alpha) \]
has entries in the set $\{ 0,1,-1 \}$.
We summarize what we have shown so far: The matrix of $Y(\phi)$ for every loop-free
morphism $\phi$ of $\Br$ has entries in the set $\{ 0,1,-1 \}$.

Let us now prove that if $Y(\phi)=Y(\psi)$ and $\phi$ has a loop, then
$\psi$ also has a loop. As $\phi$ has a loop, we can write it as $\phi = \lambda \otimes \phi'$
for some $\phi'$. Set $\lh = Y(\lambda)$. In view of
\[ Y(\lambda) = Y(e_1 i_1) = Y(e_1)Y(i_1)=ei= \operatorname{Tr}(i,e), \]
the scalar $\lh$ is the trace of the duality structure $(i,e)$, $\lh =2$.
Hence, $Y(\phi) = \lh Y(\phi') = 2Y(\phi')$. If $\psi$ were loop-free, then the matrix
of $Y(\psi)$ would have to have all entries in $\{ 0,1,-1 \}$, contradicting
$Y(\psi)=2Y(\phi')$, since $Y(\phi')$ is a nonzero matrix with integer entries.
We conclude that $\psi$ has a loop as well.

Finally, suppose that $Y(\phi)=Y(\psi),$ but $\phi = \lambda^{\otimes (n+m)} \phi_0,$
$\psi = \lambda^{\otimes n} \psi_0$, $m>0,$ and $\phi_0, \psi_0$ are
loop-free. Then
\[ 2^n Y(\lambda^{\otimes m} \phi_0) = Y(\lambda^{\otimes n})
 Y(\lambda^{\otimes m} \phi_0) =
 Y(\phi) = Y(\psi) = Y(\lambda^{\otimes n})
 Y(\psi_0) = 2^n Y(\psi_0) \]
and thus $Y(\lambda^{\otimes m} \phi_0) = Y(\psi_0)$.
But as $\lambda^{\otimes m} \phi_0$ has at least one loop, this implies
that $\psi_0$ must have a loop, a contradiction. Therefore, $\phi$ and $\psi$
must have an equal number of loops.
\end{proof}

\section{Fold Maps}
\label{sec.foldmaps}

We recall here a number of concepts from differential topology which are relevant for
the present paper. Of central importance is the notion of a \emph{fold map}. 
For smooth manifolds $M$ and $N$, let $C^\infty (M,N)$ denote the
space of smooth maps from $M$ to $N$ endowed with the Whitney
$C^\infty$ topology.
Let $m$ denote the dimension of $M$ and $n$ the dimension of $N$.
Let $J^k (M,N)$ be the space of $k$-jets of smooth maps $M\to N$ and let
$\alpha: J^k (M,N)\to M,$ $\beta: J^k (M,N)\to N$ denote the source- and
target-map, respectively. The jet space is a smooth manifold of dimension
$\dim J^k (M,N) = m + n(1+ \dim P^k_m),$ where $P^k_m$ is the vector space
of polynomials in $m$ variables of degree at most $k$, which have zero
constant term. We have $J^0 (M,N)=M\times N$. 
Only the $1$-jet space will play a role in the present paper.
Elements in $J^1 (M,N)$ are represented by pairs $(x,f)$, where
$x\in M,$ $f\in C^\infty (M,N)$. Two pairs $(x,f)$ and $(x',f')$ represent
the same element of $J^1 (M,N)$ if and only if $x=x'$, 
$f(x)=f'(x)$ and $f, f'$ have first order contact at $x$, that is,
$D_x f = D_x f': T_x M \to T_{f(x)} N$. If $\sigma$ is the $1$-jet
represented by $(x,f)$, then $\alpha (\sigma)=x$ is the source of
$\sigma$ and $\beta (\sigma)= f(x)$ is the target of $\sigma$.
The projection
$\alpha \times \beta: J^1 (M,N)\to M\times N$ makes $J^1 (M,N)$ into the
total space of a vector bundle over $M\times N$ with fiber
$\Hom (\real^m, \real^n).$ Let $j^k f: M\to J^k (M,N),$ 
$(j^k f)(x) = [(x,f)],$
denote the $k$-jet extension of a smooth map $f:M\to N$.

Let $\sigma$ be a jet in $J^1 (M,N)$ and $f\in C^\infty (M,N)$ a smooth
representative of $\sigma$. Let $x$ be the source of $\sigma$ and $y=f(x)$ the
target. Then, by definition of a $1$-jet, 
the induced linear map $D_x f: T_x M \to T_y N$ depends
only on $\sigma,$ not on the choice of representative $f$. Put
$\operatorname{rank} \sigma = \operatorname{rank} (D_x f)$ and
$\operatorname{corank} \sigma = \min (m,n) - \operatorname{rank} \sigma$.
The subsets
\[ S_r = \{ \sigma \in J^1 (M,N) ~|~ \operatorname{corank} \sigma = r \},~
  r=0,1,2,\ldots, \]
are submanifolds of $J^1 (M,N)$. Moreover, the restriction
$(\alpha \times \beta)|: S_r \to M\times N$ is a subfiber-bundle of
$\alpha \times \beta: J^1 (M,N)\to M\times N$ with fiber
\[ L^r (\real^m, \real^n) = \{ A\in \Hom (\real^m, \real^n) ~|~
 \operatorname{corank} A=r \}. \]
Thus, with $q=\min (m,n),$ the codimension of $S_r$ in $J^1 (M,N)$
is $(m-q+r)(n-q+r)$. The full-rank set $S_0$ is open in $J^1 (M,N)$.

Morse theory is concerned with the case $N=\real$. As $\dim \real =1,$
the jet manifold is given by $J^1 (M,\real) = S_0 \cup S_1,$
$S_r = \varnothing$ for $r\geq 2$.
Let $S(f)$ denote the singular set (set of critical points) of a 
smooth function $f: M\to \real$. A point $x\in M$ is in $S(f)$ precisely
when $(j^1 f) (x)\in S_1$.
Such a critical point $x$ is nondegenerate (i.e. the Hessian of $f$ is nonsingular
at $x$) if and only if
$j^1 f:M\to J^1 (M,\real)$ is transverse to $S_1$ at $x$.
Consequently, the singular set of a Morse function $f$ can be expressed
as the transverse preimage $S(f) = (j^1 f)^{-1} (S_1)$ of a universal
singularity locus $S_1 \subset J^1 (M,\real)$.
The following terminology is due to R. Thom.
\begin{defn} \label{def.excellent}
A Morse function $f: M\to \real$ is called \emph{excellent}
if no two critical points lie on the same level, that is,
$f(x)\not= f(y)$ for any two distinct critical points $x,y$ of $f$.
\end{defn}
A standard tool from Morse theory allows us to approximate a given Morse function, 
without changing the location of the singular points, by an excellent Morse function, 
see \cite[Lemma 2.8, p. 17]{milnorsiebensond}.
The well-known ``Morse-Lemma'' asserts that near a singular point
$x\in S(f)$ of a Morse function $f:M\to \real,$ one can choose local
coordinates $x_1, \ldots, x_m$ centered at $x$ such that $f$ is given by
\begin{equation} \label{equ.morselemma}
(x_1,\ldots, x_m)\mapsto f(x) - (x^2_1 + \ldots + x^2_i) + x^2_{i+1} +
 \ldots + x^2_m
\end{equation}
in these coordinates. The number $i$ is invariantly associated with the
critical point $x$ and is called the \emph{index} of $x$. 
\begin{defn} \label{def.specialgenfn}
Following \cite{burletderham} and established use in the literature, a Morse
function $f:M^m\to \real$ is called \emph{special generic}, if at every critical point, $f$
attains a local minimum or maximum, that is, the index of every critical point is
$0$ or $m$.
\end{defn}
According to Reeb \cite{reeb}, a closed connected $m$-dimensional manifold $M$
that admits a special generic function is homeomorphic (but not necessarily diffeomorphic)
to the $m$-sphere $S^m$.

We shall arrive at the notion of fields to be used on cobordisms in our
field theory by ``suspending'' Morse functions: Here, the
suspension of $M$ is the cylinder $W = \real \times M$ (or $[0,1] \times M$)
and the suspension of (\ref{equ.morselemma}) is
\begin{equation} \label{equ.foldlocal}
(t, x_1,\ldots, x_m)\mapsto (t,-(x^2_1 + \ldots + x^2_i) + x^2_{i+1} +
 \ldots + x^2_m), 
\end{equation}
a map $F: W\to \real^2$ into the plane (assuming $f(x)=0$).
What is an invariant definition of maps that have this form locally?
Let $W$ be an $n$-manifold, $n\geq 2$. Let $S(F)\subset W$ denote the
singular set of a smooth map $F:W\to \real^2$.
The $1$-jet manifold $J^1 (W,\real^2)$ has dimension
\[ \dim J^1 (W,\real^2) = n+2(1+\dim P^1_n)=3n+2. \]
As $\dim \real^2 =2,$
the jet manifold is now given by $J^1 (W,\real^2) = S_0 \cup S_1 \cup S_2,$
$S_r = \varnothing$ for $r\geq 3$.
The codimension of $S_1$ in $J^1 (W,\real^2)$ is
$(n-2+1)(2-2+1)=n-1$. Thus, if $j^1 F:W\to J^1 (W,\real^2)$ is
transverse to $S_1$, then the inverse image
$S_1 (F) := (j^1 F)^{-1} (S_1)$ is a codimension $(n-1)$ submanifold
of $W$, that is, $\dim S_1 (F)=1.$ Note that by definition $\dim \ker D_x F =n-1$
for all $x\in S_1 (F)$ (regardless of whether $j^1 F$ is transverse to $S_1$ or not).
The codimension of $S_2$ in $J^1 (W,\real^2)$ is
$2n$. Thus, if $j^1 F:W\to J^1 (W,\real^2)$ is
transverse to $S_2$, then the inverse image
$S_2 (F) := (j^1 F)^{-1} (S_2)$ is a codimension $2n$ submanifold
of $W$, that is, $S_2 (F)=\varnothing$ and $S(F)=S_1 (F)$. This means
that there are no singular points $x$ with $D_x F =0$.
\begin{defn}
A smooth map $F:W\to \real^2$ is called a \emph{fold map}, if
$j^1 F$ is transverse to $S_1$, $S(F)=S_1 (F),$ and for all 
$x\in S(F),$ $T_x S(F) + \ker D_x F = T_x W.$
\end{defn}
The transversality condition ensures that $S_1 (F)$ is a smoothly
embedded submanifold of $W$ and that the normal Hessian is nondegenerate. 
In particular, in the context of this definition, the tangent space
$T_x S(F)$ is well-defined, since $S(F)=S_1 (F)$ is a smoothly embedded
submanifold of $W$. The third condition involving the kernel of the Jacobian
excludes cusps:
Let $F$ be a fold map. Since for any $x\in S(F),$ $\ker D_x F$ has
dimension $n-1,$ we actually have a direct sum decomposition
$T_x S(F) \oplus \ker D_x F = T_x W;$ a nonzero vector
tangent to $S(F)$ cannot be in the kernel of the Jacobian map
$D_x F$. We conclude that $F|_{S(F)}$ is an immersion.
\begin{examples}
The stable Whitney cusp $F:W=\real^2 \to \real^2,$ $F(t,x)=(t,xt+x^3),$ satisfies
$S(F)=S_1 (F)$ and $j^1 F$ is transverse to $S_1$ but not
$T_{(t,x)} S(F) + \ker D_{(t,x)} F = T_{(t,x)} W$:
At $(t,x)=(0,0),$ $F|_{S(F)}$ has a singularity.
The map $F: \real^2 \to \real^2,$ $F(t,x)=(t,0),$
has singular set $S(F)=\real^2$ but $S_2 (F)=\varnothing$.
Thus $S(F)=S_1 (F)$ and $T_{(t,x)} S(F) + \ker D_{(t,x)} F = T_{(t,x)} \real^2$
but $j^1 F$ is not transverse to $S_1$.
Now let $F$ be the map $F:\real^2 \to \real^2,$ $F(t,x)=(t^2, x^2).$
Its singular set $S(F) = S_1 (F)\sqcup S_2 (F)$ is the union of the two coordinate
axes, with $S_2 (F) = \{ (0,0) \},$ the origin. A calculation using the
intrinsic derivative of Porteous shows that $j^1 F$ is transverse to $S_1$
(but obviously not to $S_2$). The condition
$T_{(t,x)} S_1 (F) + \ker D_{(t,x)} F = T_{(t,x)} \real^2,$ $(t,x)\in S_1 (F),$ is satisfied
and there are no cusp singularities in $S_1 (F)$.
All three conditions are satisfied for (\ref{equ.foldlocal}).
\end{examples}

If $F$ is a fold map, then we refer to its singular set $S(F)$
also as its \emph{fold locus} or \emph{fold lines}.
Points $x$ in $S(F)$ are called \emph{fold points}. Let $x$ be
such a point. As $F|_{S(F)}$ is an immersion, its image is locally
a $1$-manifold, so we can choose coordinates $y_1, y_2$ centered
at $F(x)$ so that $F(S(F))$ is locally given by $\{ y_2 =0 \}.$
Since $F|: S(F)\to \{ y_2 =0 \}$ is a local diffeomorphism near $x$,
there are local coordinates $t, x_1,\ldots, x_{n-1}$ centered at $x$
such that $t=y_1 \circ f$ and $S(F)$ is given near $x$ by
$\{ x_1 = \cdots = x_{n-1} =0 \}$. In these coordinates, $F$ takes
the form
\[ (t, x_1, \ldots, x_{n-1})\mapsto (t,
 \sum_{i,j} h_{ij} x_i x_j), \]
where the $h_{ij}$ are smooth functions. The transversality condition
$j^1 F \pitchfork S_1$ at $x$ implies that the $(n-1)\times (n-1)$ matrix
$(h_{ij} (0))$ is nonsingular. Thus one achieves a normal form for fold maps
analogous to the statement of the Morse Lemma for Morse functions, see
\cite[Thm. 4.5, p. 88]{golubguillstable}:
\begin{prop} \label{prop.foldmapnormalform}
Let $W$ be an $n$-dimensional manifold, $n\geq 2,$ $F:W\to \real^2$
a fold map, and $x$ a singular point of $F$. Then there exist local
coordinates $t,x_1,\ldots, x_{n-1}$ centered at $x$ and coordinates
$y_1, y_2$ centered at $F(x)$ so that $F$ is given by
\[ (t, x_1,\ldots, x_{n-1})\mapsto (t,-(x^2_1 + \ldots + x^2_i) + x^2_{i+1} +
 \ldots + x^2_{n-1}) \]
in these coordinates, for some $0\leq i \leq n-1$.
\end{prop}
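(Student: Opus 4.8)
The plan is to reduce the statement to a Morse Lemma with parameters, the parameter being the ``time'' coordinate $t$ along the fold line. Most of the preparation has already been carried out in the paragraph preceding the statement: since $F|_{S(F)}$ is an immersion one may choose coordinates $y_1, y_2$ centered at $F(x)$ so that $F(S(F))$ is locally $\{y_2 = 0\}$, and coordinates $t, x_1, \ldots, x_{n-1}$ centered at $x$ with $t = y_1 \circ F$ and $S(F) = \{x_1 = \cdots = x_{n-1} = 0\}$, in which $F$ takes the form $(t, x_1, \ldots, x_{n-1}) \mapsto (t, g(t, x))$. Here $g(t, 0) = 0$ because the fold line is carried into $\{y_2 = 0\}$, and $\partial g / \partial x_k(t, 0) = 0$ for all $k$ because every point of $S(F)$ is a critical point of $F$. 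Applying Hadamard's lemma twice in the $x$-variables gives $g(t, x) = \sum_{i,j} h_{ij}(t, x)\, x_i x_j$ with $h_{ij} = h_{ji}$ smooth, the matrix $(h_{ij}(t, 0))$ being (up to the factor $2$) the Hessian of $g(t, \cdot)$ at the origin; the transversality hypothesis $j^1 F \pitchfork S_1$ at $x$ is exactly the assertion that $(h_{ij}(0, 0))$ is nonsingular.

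Nonsingularity is an open condition, so $(h_{ij}(t, 0))$ remains nonsingular for $t$ in a small interval, and its number of negative eigenvalues is locally constant in $t$; after shrinking the chart I may therefore take it to be a single constant $i$ with $0 \le i \le n - 1$. Now I would invoke the parametrized Morse Lemma in the form of \cite[Thm. 4.5, p. 88]{golubguillstable}: because $g$ is smooth, $g(t, 0) = 0$, $D_x g(t, 0) = 0$ and $D_x^2 g(t, 0)$ is nonsingular of fixed index $i$ for all small $t$, there is a smooth $t$-family of coordinate changes $x \mapsto \varphi(t, x)$, each fixing the origin, after which $g = -(x_1^2 + \cdots + x_i^2) + x_{i+1}^2 + \cdots + x_{n-1}^2$. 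The proof of that theorem is the inductive diagonalization (completing the square) argument of the classical Morse Lemma performed with smooth dependence on the parameter. Crucially, the coordinate change it produces has the product form $(t, x) \mapsto (t, \varphi(t, x))$, so the first component of $F$ stays equal to $t = y_1 \circ F$ and the target coordinates $y_1, y_2$ require no change; this yields precisely the asserted normal form.

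The substance of the argument is thus entirely contained in the cited parametrized Morse Lemma, and what remains for me to verify is bookkeeping: that the parametrized coordinate change does not disturb the first ($t$-) component --- guaranteed by the product form above --- and that the index $i$ cannot jump along the fold line inside the chart --- guaranteed by the local-constancy observation. I expect no real obstacle beyond keeping this bookkeeping straight, in particular making sure that the new source coordinates are still centered at $x$ and the new target coordinates still centered at $F(x)$.
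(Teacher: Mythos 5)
Your proposal is correct and follows essentially the same route as the paper: the preparatory paragraph puts $F$ in the form $(t,x)\mapsto (t,\sum_{i,j}h_{ij}(t,x)x_ix_j)$ with $(h_{ij}(0))$ nonsingular, and the normal form is then obtained from the parametrized Morse Lemma of \cite[Thm. 4.5, p. 88]{golubguillstable}, exactly as the paper does. Your additional bookkeeping (Hadamard's lemma, local constancy of the index along the fold, and the product form $(t,x)\mapsto(t,\varphi(t,x))$ of the coordinate change) just makes explicit what that citation supplies.
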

The number $\tau (x) = \max (i, n-1-i)$ is called the
\emph{absolute index} of the fold point $x$. The absolute index
is invariantly defined and is constant along every connected component
of $S(F)$, which will be an essential feature in the proof of
Theorem \ref{thm.iexoticsphere} on exotic spheres. 
If $\tau (x)=n-1,$ then $x$ is called a \emph{definite} fold point.
\begin{defn}
Again following \cite{burletderham}, a fold
map $F:W\to \real^2$ is called \emph{special generic}, if $S(F)$ consists
entirely of definite fold points.
\end{defn}
Cobordism groups of special generic maps have been introduced by
O. Saeki in \cite{saekitopspecgen}. In \cite{saekihtpyspheres}, it is shown that for $n\geq 6,$
the oriented cobordism group of special generic functions on $n$-manifolds
is isomorphic to the group $\Theta_n$ of homotopy $n$-spheres introduced in \cite{kervairemilnor}.
Saeki's methods, based on the notion of Stein factorization, play a central role
when we establish in Section \ref{sec.exoticspheres} that our TFT distinguishes
exotic smooth spheres from standard spheres.

\section{Monoids, Semirings, and Semimodules}
\label{sec.monssemirings}

We recall some foundational material on monoids, semirings and semimodules
over semirings. Such structures seem to have appeared first in Dedekind's study of ideals in a
commutative ring: one can add and multiply two ideals, but one cannot subtract them.
The theory has been further developed by H. S. Vandiver, S. Eilenberg, A. Salomaa,
J. H. Conway and others.
A monoid $(M,\cdot)$ is \emph{idempotent} if $m\cdot m =m$ for
all elements $m\in M$. The Boolean monoid $(\bool,+)$, $\bool=\{ 0,1 \},$ is idempotent:
$0$ is the neutral element and $1+1=1$ in $\bool$. 

Roughly, a semiring is a ring without additive inverses. More precisely,
a \emph{semiring} is a set $S$ together with two binary operations $+$ and $\cdot$
and two elements $0,1\in S$ such that $(S,+,0)$ is a commutative monoid,
$(S,\cdot, 1)$ is a monoid, the multiplication $\cdot$ distributes over the addition
from either side, and $0$ is absorbing, i.e. $0\cdot s = 0 = s \cdot 0$ for every $s\in S$.
For instance, the natural numbers $S=\nat = \{ 0,1,2,\ldots \}$ form a semiring under
standard addition and multiplication.
If the monoid $(S,\cdot,1)$ is commutative, the semiring $S$ is called commutative.
The semiring is called \emph{idempotent}, if $(S,+,0)$ is an idempotent monoid.
By distributivity, this holds if and only if $1+1=1$. The Boolean monoid $\bool$ becomes
an idempotent commutative semiring by defining $1\cdot 1 =1$. A morphism of
semirings sends $0$ to $0$, $1$ to $1$ and respects addition and multiplication.

Let $S$ be a semiring. A \emph{(formal) power series} with coefficients in $S$ is
a function $a:\nat \to S$. It is customary, and convenient, to write such a function
as
\[ \sum_{i=0}^\infty a(i)q^i, \]
where $q$ is a formal variable acting as a bookkeeping device. The element $a(i)$
is referred to as the \emph{coefficient} of $q^i$. Let $S[[q]]$ be the set of all
power series with coefficients in $S$. Write $0$ for the power series $a$ with
$a(i)=0$ for all $i$. Write $1$ for the power series $a$ with $a(0)=1$ and 
$a(i)=0$ for all $i>0$. Define an addition on power series by $a+b=c$, where
$c(i)=a(i)+b(i)$ for all $i$. Define a multiplication on power series by the
Cauchy product, that is, $a\cdot b=c$ where $c(i) = \sum_{n+m=i} a(n)b(m)$.
Then $(S[[q]], +, \cdot, 0, 1)$ is a semiring, the \emph{semiring of power series
over $S$.} If $S$ is idempotent, then so is $S[[q]]$. We shall particularly make use
of the idempotent commutative semiring $\bool[[q]]$. In a similar manner, using
finite sums, one defines the \emph{polynomial semiring} $S[q]$.

Given a ring $R$ and a group $G$, one may form the group ring $R[G]$.
Similarly, given a semiring $S$ and a monoid $M$, one can form the
\emph{monoid semiring} $S[M]$. Its elements are functions $a:M \to S$ such that
$a(m)\not= 0$ only for a finite number of $m\in M$. Again, such a function
is customarily written as a finite sum
\[ a(m_1)m_1 + \cdots + a(m_k)m_k. \]
(In order for this notation to work, the monoid operation on $M$ should be
written multiplicatively, even when $M$ is commutative.)
The sum of $a,b\in S[M]$ is $c:M\to S$ with $c(m)=a(m)+b(m)$ for all $m$.
The product $a\cdot b$ in $S[M]$ is $c:M\to S$ with
$c(m) = \sum_{pq=m} a(p)b(q).$ In other words,
\[ \left( \sum_{i=1}^k a(m_i)m_i \right) \left(\sum_{j=1}^l b(n_j)n_j \right) = 
 \sum_{i=1}^k \sum_{j=1}^l (a(m_i) b(n_j))(m_i n_j). \]
Equipped with these operations, $S[M]$ is a semiring whose zero is the
function $a(m)=0$ for all $m$, and whose one is the function $a(1_M)=1_S$
and $a(m)=0$ for all $m\not= 1_M$. (Here, $1_M$ denotes the neutral element
of $M$ and $1_S$ the one-element of the semiring $S$.) For example, if
$S=\nat$ is the semiring of natural numbers 
and $M=\nat$ is the additive monoid of natural numbers, we obtain
the monoid semiring $\nn$. Here, one should write $M=\nat$ multiplicatively,
that is, we write $M=\{ \tau^i ~|~ i\in \nat \}$. It is then clear that
$\nn$ is isomorphic to the polynomial semiring $\nat [\tau]$.

Let $S$ be a semiring. A \emph{left $S$-semimodule} is a commutative monoid
$(M,+,0_M)$ together with a scalar multiplication $S\times M \to M,$
$(s,m)\mapsto sm,$ such that for all $r,s\in S,$ $m,n\in M,$ we have
$(rs)m=r(sm),$ $r(m+n)=rm+rn,$ $(r+s)m=rm+sm,$ $1m=m,$ and
$r0_M = 0_M = 0m.$ Right semimodules are defined similarly using
scalar multiplications $M\times S \to M,$ $(m,s)\mapsto ms$.
Given semirings $R$ and $S$, an \emph{$R$-$S$-bisemimodule} is a commutative
monoid $(M,+,0)$, which is both a left $R$-semimodule and a right $S$-semimodule
such that $(rm)s=r(ms)$ for all $r\in R,$ $s\in S,$ $m\in M$. (Thus the notation
$rms$ is unambiguous.)
An \emph{$R$-$S$-bisemimodule homomorphism} is a homomorphism
$f:M\to N$ of the underlying additive monoids such that $f(rms) =rf(m)s$ for all $r,m,s$. 
If $R=S$, we shall also speak of an $S$-bisemimodule.
Every semimodule $M$ over a commutative semiring $S$ can and will be
assumed to be both a left and right semimodule with $sm=ms$.
In fact, $M$ is then a bisemimodule, as for all $r,s\in S,$ $m\in M,$
\[ (rm)s = s(rm) = (sr)m = (rs)m = r(sm) = r(ms). \]
A semimodule $M$ is called \emph{idempotent} if the underlying
additive monoid $(M,+,0_M)$ is idempotent.
If $\phi: S\to T$ is a morphism of semirings, then
$T$ becomes a left $S$-semimodule by setting $st = \phi(s)\cdot t$, using the
multiplication in $T$. Let $\phi: \nat \to \bool$ be the morphism of semirings
uniquely specified by $\phi(0)=0$ and $\phi(1)=1$.
Via $\phi,$ $\bool$ is a $\nat$-semimodule. 
By setting $\phi(\tau)=q,$ the map $\phi$ extends to a semiring morphism
$\phi: \nat [\tau] \to \bool[[q]]$. Explicitly,
\[ \phi (n_0 + n_1 \tau + \cdots +n_k \tau^k)=
   \phi(n_0) + \phi(n_1) q + \cdots + \phi(n_k) q^k \in \bool[[q]]. \]
Via this morphism, $\bool[[q]]$ is a semimodule over $\nat [\tau]$.

If $A$ is a set, let $\fm (A)$ denote the free commutative monoid generated
by $A$. Its elements are finite formal linear combinations
$\sum_{i=1}^k \alpha_i a_i,$ $\alpha_i\in \nat,$ $a_i \in A$. The addition of two such
linear combinations is defined in the obvious way.

\begin{lemma} \label{lem.antausemimod}
Suppose that $A$ is equipped with an action $\nat \times A\to A$ of the 
monoid $(\nat,+)$, $(\tau^i, a)\mapsto \tau^i a.$ Then 
\[ \sum_i m_i \tau^i \cdot \sum_j \alpha_j a_j = \sum_{i,j} (m_i \alpha_j)
  (\tau^i a_j),~ m_i, \alpha_j \in \nat, a_j \in A, \]
makes $\fm (A)$ into an $\nat [\tau]$-semimodule.
\end{lemma}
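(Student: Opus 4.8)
The plan is to verify the semimodule axioms directly, reducing each one to the already-known monoid-action axioms for $\nat\times A\to A$ together with the distributive and associative laws in the semiring $\nat[\tau]$ and the free commutative monoid $\fm(A)$. First I would record that the formula given is forced to be well-defined and additive: since $\fm(A)$ is the free commutative monoid on $A$, an element is a finite sum $\sum_j\alpha_j a_j$ with the $a_j\in A$ distinct and $\alpha_j\in\nat$, and the displayed rule is bilinear in the obvious sense, so it extends uniquely to a biadditive map $\nat[\tau]\times\fm(A)\to\fm(A)$. Concretely, writing a general scalar as $s=\sum_i m_i\tau^i$ and a general element as $x=\sum_j\alpha_j a_j$, the product $s\cdot x=\sum_{i,j}(m_i\alpha_j)(\tau^i a_j)$, where one collects terms with the same base element $\tau^i a_j\in A$ (two pairs $(i,j)$ and $(i',j')$ can give the same element of $A$, but this only affects how coefficients are grouped, not the value in $\fm(A)$).

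Next I would check $(r+s)x=rx+sx$ and $r(x+y)=rx+ry$: both follow immediately from biadditivity, since $+$ on $\nat[\tau]$ and on $\fm(A)$ are computed coefficientwise and the rule is bilinear. The axiom $1\cdot x=x$ is the statement that $\tau^0$ acts as the identity, which holds because $\nat\times A\to A$ is a monoid action (the neutral element $\tau^0=1$ of $(\nat,+)$ fixes every $a\in A$) and the coefficient $1\in\nat$ acts trivially on $\nat$-multiples. The absorbing axioms $0\cdot x=0$ and $r\cdot 0=0$ are immediate from the formula, as a zero coefficient $m_i\alpha_j=0$ kills the corresponding term.

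The only axiom requiring genuine computation is associativity of the scalar action, $(rs)x=r(sx)$, and this is where I expect the bookkeeping to be heaviest. Write $r=\sum_i m_i\tau^i$, $s=\sum_k n_k\tau^k$, $x=\sum_j\alpha_j a_j$. On one side, $rs=\sum_\ell\big(\sum_{i+k=\ell}m_i n_k\big)\tau^\ell$ by the Cauchy product, so $(rs)x=\sum_{\ell,j}\big(\sum_{i+k=\ell}m_i n_k\alpha_j\big)(\tau^\ell a_j)$. On the other side, $sx=\sum_{k,j}(n_k\alpha_j)(\tau^k a_j)$, and applying $r$ gives $\sum_{i}\sum_{k,j}(m_i n_k\alpha_j)\big(\tau^i(\tau^k a_j)\big)$. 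The two expressions agree term-by-term once one uses that $\nat\times A\to A$ is a monoid action, so $\tau^i(\tau^k a_j)=\tau^{i+k}a_j$, and that multiplication in $\nat$ is associative and commutative, so the scalar coefficients match after reindexing by $\ell=i+k$. Since $\fm(A)$ is commutative, the order in which these terms are summed is irrelevant, and the identity holds. This completes the verification that $\fm(A)$ is an $\nat[\tau]$-semimodule.
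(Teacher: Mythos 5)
Your verification is correct, and it is exactly the routine axiom check the paper has in mind: the paper states this lemma without proof, treating it as an easy consequence of the monoid-action axioms ($\tau^0 a=a$, $\tau^i(\tau^k a)=\tau^{i+k}a$) together with distributivity and associativity in $\nat$, which is precisely what you carry out. Your attention to the two minor subtleties — well-definedness when distinct pairs $(i,j)$ yield the same element $\tau^i a_j$ of $A$, and the reindexing $\ell=i+k$ in the associativity check — covers everything that needs saying.
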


Let $S$ be any semiring, not necessarily commutative.
Regarding the tensor product of a right $S$-semimodule $M$ and 
a left $S$-semimodule $N$, one
has to exercise caution because even when $S$ is commutative,
two nonisomorphic tensor products,
both called \emph{the} tensor product of $M$ and $N$ and both written
$M\otimes_S N$, exist in the literature. A map $\phi: M\times N\to A$ into
a commutative additive monoid $A$ is called \emph{middle $S$-linear}, if
it is biadditive, $\phi (ms,n)=\phi (m,sn)$ for all $m,s,n$, and $\phi (0,0)=0$.
For us, an (algebraic) \emph{tensor product of
$M$ and $N$} is a commutative monoid $M\otimes_S N$ (written additively) satisfying the
following (standard) universal property: $M \otimes_S N$ comes
equipped with a middle $S$-linear map
$M\times N \to M\otimes_S N$ such that
given any commutative monoid $A$ and 
middle $S$-linear map $\phi: M\times N \to A$, there exists a unique monoid homomorphism
$\psi: M\otimes _S N \to A$ such that
\begin{equation} \label{equ.uniproptens}
\xymatrix{
M\times N \ar[r]^{\phi} \ar[d] & A \\
M\otimes_S N \ar@{..>}[ru]_{\psi}
} \end{equation}
commutes. The existence of such a tensor product is shown for example in
\cite{katsovtensor}, \cite{katsov}. To construct it, take $M\otimes_S N$ to be the quotient 
monoid $\fm (M\times N) /\sim$, where $\sim$ is the congruence relation on $\fm (M\times N)$
generated by all pairs of the form
\[ ((m+m', n), (m,n)+(m',n)),~ ((m,n+n'), (m,n)+(m,n')),~
  ((ms,n), (m,sn)), \]
$m,m'\in M,$ $n,n' \in N,$ $s\in S$. 
If one of the two 
semimodules, say $N$, is idempotent, then $M\otimes_S N$ is idempotent as well,
since $m\otimes n + m\otimes n = m\otimes (n+n) = m\otimes n$.
If $M$ is an $R$-$S$-bisemimodule and $N$ an $S$-$T$-bisemimodule, then
the monoid $M\otimes_S N$ as constructed above becomes an $R$-$T$-bisemimodule
by declaring
\[ r\cdot (m\otimes n) = (rm)\otimes n,~ (m\otimes n)\cdot t = m\otimes (nt). \]
If in diagram (\ref{equ.uniproptens}), the monoid $A$ is an $R$-$T$-semimodule
and $M\times N\to A$ satisfies
\[ \phi (rm,n) = r\phi(m,n),~ \phi (m,nt)=\phi(m,n)t \]
(in addition to being middle $S$-linear;
let us call such a map \emph{$R_S T$-linear}), then the uniquely determined monoid map 
$\psi: M\otimes_S N\to A$ is an $R$-$T$-bisemimodule homomorphism, for
\[ \psi (r(m\otimes n)) = \psi ((rm)\otimes n) = \phi (rm,n)=r\phi(m,n)=r\psi(m\otimes n) \]
and similarly for the right action of $T$.
If $R=S=T$ and $S$ is commutative, the above means that
the commutative monoid
$M\otimes_S N$ is an
$S$-semimodule with $s(m\otimes n) = (sm) \otimes n = m\otimes (sn)$ and
the diagram (\ref{equ.uniproptens}) takes place in the category of $S$-semimodules.

The tensor product of \cite{takahashi} and \cite{golan} --- let us here write it
as $\otimes'_S$ --- satisfies a different
universal property. A semimodule $C$
is called \emph{cancellative} if $a+c=b+c$ implies $a=b$ for all $a,b,c\in C$.
A nontrivial idempotent semimodule is never cancellative.
Given an arbitrary right $S$-semimodule $M$ and an arbitrary left $S$-semimodule $N$, 
the product $M\otimes'_S N$ is always
cancellative.
Thus if one of $M, N$ is idempotent, then $M\otimes'_S N$ is
trivial, being both idempotent and cancellative. Since in our applications, we desire
nontrivial tensor products of idempotent semimodules, the product $\otimes'_S$
will not be used in this paper. \\

We continue to assume that the set $A$ is equipped with an $\nat$-action.
Applying $\otimes_{\nat [\tau]}$ to
the semimodules $\fm (A)$ and $\bool[[q]]$ over the commutative semiring
$\nat [\tau]$, we obtain the $\nat [\tau]$-semimodule
\[ Q(A) = \fm (A)\otimes_{\nat [\tau]} \bool[[q]]. \]
Since $\bool[[q]]$ is idempotent, so is $Q(A)$. Any element of $Q(A)$ can be
written in the form
\[ \sum_{i=1}^k a_i \otimes b_i,~ a_i \in A,~ b_i \in \bool[[q]]. \]
To see this, we note first that elements of the tensor product can be written
as
\[ \sum_{i=1}^k m_i \otimes b_i,~ m_i \in \fm (A),~ m_i \not= 0,~ b_i \in \bool[[q]]. \]
Every $m_i$ has the form $m_i = \sum_{j=1}^{k_i} \alpha_{ij} a_{ij},$
$\alpha_{ij} \in \nat - \{ 0 \},$ $a_{ij}\in A,$ $k_i \geq 1$. Hence
\[ \sum_i m_i \otimes b_i = \sum_i (\sum_j \alpha_{ij} a_{ij})\otimes b_i
 = \sum_{i,j} a_{ij} \otimes (\alpha_{ij} b_i) = \sum_{i,j} a_{ij}\otimes b_i, \]
using the idempotency of $\bool[[q]]$.

The key feature of the state (semi)modules (to be introduced in Section \ref{quantization})
that will allow us to form well-defined state sums is 
their completeness.
Thus let us recall the notion of a complete monoid, semiring, etc.
as introduced by S. Eilenberg on p. 125 of \cite{eilenbergalm}; see also
\cite{karner}, \cite{krob88}.
A \emph{complete monoid} is a commutative monoid $(M,+,0)$ together with
an assignment $\sum$, called a \emph{summation law}, which assigns to every
family $(m_i )_{i\in I}$, indexed by an arbitrary set $I$, an element
$\sum_{i\in I} m_i$ of $M$ (called the \emph{sum} of the $m_i$), such that
\[ \sum_{i\in \varnothing} m_i =0,~
  \sum_{i\in \{ 1 \}} m_i = m_1,~
 \sum_{i\in \{ 1,2 \}} m_i = m_1 + m_2, \]
and for every partition $I = \bigcup_{j\in J} I_j,$
\[ \sum_{j\in J} \left( \sum_{i\in I_j} m_i \right) = \sum_{i\in I} m_i. \]
Note that these axioms imply that if $\sigma: J\to I$ is a bijection, then
\[ \sum_{i\in I} m_i = \sum_{j\in J} \sum_{i\in \{ \sigma (j) \}} m_i =
 \sum_{j\in J} m_{\sigma (j)}. \]
Also, since a cartesian product $I\times J$ comes with two canonical
partitions, namely $I\times J = \bigcup_{i\in I} \{ i \}\times J =
  \bigcup_{j\in J} I\times \{ j \},$ one has
\begin{equation} \label{equ.sumijmonoid}
\sum_{(i,j)\in I\times J} m_{ij} = \sum_{i\in I} \sum_{j\in J} m_{ij} =
 \sum_{j\in J} \sum_{i\in I} m_{ij} 
\end{equation}
for any family $(m_{ij} )_{(i,j)\in I\times J}$. This is the analog of
Fubini's theorem in the theory of integration.
Given $(M,+,0)$, the summation law $\sum$, if it exists, is not in general
uniquely determined by the addition, as examples in \cite{goldstern85} show.
For a semiring to be complete one requires that $(S,+,0,\sum)$ be a
complete monoid and adds the infinite distributivity requirements
\[ \sum_{i\in I} ss_i = s(\sum_{i\in I} s_i),~
  \sum_{i\in I} s_i s = (\sum_{i\in I} s_i)s. \]
Note that in a complete semiring, the sum over any family of zeros must be
zero. If $(s_{ij} )_{(i,j)\in I\times J}$ is a family in a complete semiring
of the form $s_{ij} = s_i t_j,$ then, using (\ref{equ.sumijmonoid}) together
with the infinite distributivity requirement,
\begin{equation} \label{equ.sumoverproductfamily}
\sum_{(i,j)\in I\times J} s_i t_j = \sum_{i\in I} \sum_{j\in J} s_i t_j =
 \big( \sum_{i\in I} s_i \big) \big( \sum_{j\in J} t_j \big). 
\end{equation}
A semiring is \emph{zerosumfree}, if $s+t=0$ implies $s=t=0$ for all
$s,t$ in the semiring. The so-called ``Eilenberg-swindle'' shows that
every complete semiring is zerosumfree.
In a ring we have additive inverses, which means that a nontrivial ring
is never zerosumfree and so cannot be endowed with an infinite summation
law that makes it complete. This shows that giving up additive inverses,
thereby passing to semirings that are not rings, is an essential prerequisite
for completeness and in turn essential for the construction of our
topological field theories.
Complete semimodules are defined analogously: A semimodule $M$ over a
commutative semiring $S$ is called complete if its underlying additive
monoid is equipped with a summation law that makes it complete as a
commutative monoid and the infinite distribution requirement
\[ \sum_{i\in I} s m_i = s(\sum_{i\in I} m_i) \]
holds for every $s\in S$ and every family $(m_i )_{i\in I}$ in $M$.
The Boolean semiring $\bool$ is complete (using the obvious summation law). 
If $S$ is a complete semiring, then the
semiring $S[[q]]$ of formal power series becomes a complete semiring
by transferring the summation law on $S$ pointwise to $S[[q]]$, see
\cite{karner}. Hence, $\bool[[q]]$ is a complete semiring.
If $\phi: S\to T$ is a morphism of semirings and $T$ is complete
as a semiring, then $T$ is automatically complete as an
$S$-semimodule because
\[ s\sum_{i\in I} t_i = \phi (s)\sum_{i} t_i =
  \sum_i \phi (s)t_i = \sum_i st_i \]
for all elements $s\in S$ and families $(t_i )_{i\in I}$ in $T$.
Applying this to the morphism of semirings 
$\nat [\tau] \to \bool [[q]]$, we see that $\bool [[q]]$ is complete
as an $\nat [\tau]$-semimodule. For numerous other examples of complete
semirings, see \cite{banagl-postft}.

Let $R,S$ be any semirings, not necessarily commutative.
An (associative, unital) \emph{$R$-$S$-semialgebra} is a semiring $A$ which is in addition
an $R$-$S$-bisemimodule such that for all $a,b\in A,$ $r\in R,$ $s\in S$, one has
$r(ab)=(ra)b,~ (ab)s=a(bs).$
(Note that we refrain from using the term ``$R$-$S$-bisemialgebra'' for such a
structure, since ``bialgebra'' refers to something completely different,
namely a structure with both multiplication and comultiplication.)
If $R=S$, we shall also use the term \emph{two-sided $S$-semialgebra}
for an $S$-$S$-semialgebra.
If $S$ is commutative, then a two-sided $S$-semialgebra $A$ with $sa=as$ is simply
a semialgebra over $S$ in the usual sense, as
\[ (sa)b=s(ab)=(ab)s=a(bs)=a(sb). \]
A morphism of $R$-$S$-semialgebras is a morphism of semirings which is in addition
a $R$-$S$-bisemimodule homomorphism.

A \emph{partially ordered semiring} is a quadruple
$(S,+,\cdot, \leq)$, where $(S,+,\cdot)$ is a semiring and
$(S,\leq)$ a partially ordered set such that for all $s,t,u\in S,$
\[ 0\leq s \text{ and} \]
\[ s\leq t \text{ implies } s+u\leq t+u,~ su\leq tu,~ us\leq ut. \]
Every partially ordered semiring is zerosumfree, since
$s+t=0$ implies $0\leq s \leq s+t=0.$ An idempotent semiring
always possesses a unique partial order, namely $s\leq t$ if and only if
$s+t=t$. (Note that $s\leq t$ implies
$t\leq s+t\leq t+t=t$; cf. \cite[Thm. 5.2 and Exercise 5.4]{kuichsalomaa}.)
An important formal observation is that in an idempotent semiring,
the addition can always be expressed as a supremum,
\[ s+t = \sup \{ s,t \}. \]
To see this, we note that $s,t\geq 0$ and the monotonicity of addition
imply $s,t\leq s+t$, which shows the inequality $\sup \{ s,t \} \leq s+t$.
On the other hand, $s,t\leq \sup \{ s,t \}$ and monotonicity together with idempotence yield
\[ s+t \leq s + \sup \{ s,t \} \leq \sup \{ s,t \} + \sup \{ s,t \} = \sup \{ s,t \}. \]
An idempotent complete monoid $(M,+,0, \sum)$ is
\emph{continuous} (cf. \cite{krob88}, \cite{sakarovitch}, \cite{goldstern85},
\cite{karner}) if for all families $(m_i )_{i\in I},$ $m_i \in M$, and
for all $c\in M$, $\sum_{i\in F} m_i \leq c$ for all finite $F\subset I$
implies $\sum_{i\in I} m_i \leq c$.
In other words: The supremum of
\[ \left\{ \sum_{i\in F} m_i ~|~ F\subset I,~ F \text{ finite} \right\} \subset M \]
exists and equals $\sum_{i\in I} m_i$. Requiring this of the underlying additive monoid
of an idempotent complete semiring, we arrive at the concept of a continuous semiring.
This concept can be extended to non-idempotent semirings by
requiring the existence of a certain natural partial order (of which the above idempotent 
partial order is a special case) and will not be further discussed here.
Almost all complete semirings that are important in applications are
continuous. The idempotent complete semirings $\bool$ and $\bool [[q]]$
are both continuous (\cite[p. 157]{karner}). The (idempotent, complete) value semiring $Q$ used
in our quantization will be shown to be continuous in
Proposition \ref{prop.qcontinuous}.

\begin{prop} \label{prop.contidemsemiring}
Let $(M,+,0,\sum)$ be a continuous, idempotent, complete monoid. Then the value of a sum
over an arbitrary family of elements in $M$ depends only on the underlying
set of the family. That is, if $(m_i)_{i\in I}$ and $(n_j)_{j\in J}$ are
families in $M$, then
\[ \sum_{i\in I} m_i = \sum_{j\in J} n_j \]
when $\{ m_i ~|~ i\in I \} = \{ n_j ~|~ j\in J \}$ as subsets of $M$.
\end{prop}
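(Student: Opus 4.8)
The plan is to identify $\sum_{i\in I} m_i$ with the supremum, in the canonical partial order $s\leq t\Leftrightarrow s+t=t$, of the \emph{value set} $\{m_i \mid i\in I\}$. Once this is done, the proposition is immediate, since the supremum of a subset of $M$ depends only on that subset, and $\{m_i \mid i\in I\}=\{n_j\mid j\in J\}$ by hypothesis.

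First I would record two elementary facts about the idempotent order. As already observed in the text, idempotence gives $s+t=\sup\{s,t\}$; by an obvious induction on $|F|$ this extends to finite families, so $\sum_{i\in F}m_i=\sup\{m_i\mid i\in F\}$ for every finite $F$ (with the convention $\sum_{i\in\varnothing}m_i=0=\sup\varnothing$, using that $0$ is the least element). In particular $a+b=\sup\{a,b\}\geq a$, so applying the partition axiom for $\sum$ to $I=F\cup(I\setminus F)$ yields
\[
\textstyle\sum_{i\in I}m_i \;=\; \sum_{i\in F}m_i + \sum_{i\in I\setminus F}m_i \;\geq\; \sum_{i\in F}m_i
\]
for every finite $F\subseteq I$. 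Thus $\sum_{i\in I}m_i$ is an upper bound for $\{\sum_{i\in F}m_i\mid F\subseteq I \text{ finite}\}$, and in particular (taking singletons $F=\{i\}$) for $\{m_i\mid i\in I\}$.

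Next I would invoke continuity to see that $\sum_{i\in I}m_i$ is the \emph{least} upper bound of $\{m_i\mid i\in I\}$. Suppose $c\in M$ satisfies $m_i\leq c$ for all $i\in I$. Then for every finite $F\subseteq I$ one has $\sum_{i\in F}m_i=\sup\{m_i\mid i\in F\}\leq c$ by monotonicity of addition together with idempotence, so the continuity hypothesis gives $\sum_{i\in I}m_i\leq c$. Hence $\sum_{i\in I}m_i=\sup\{m_i\mid i\in I\}$; in particular this supremum exists. Applying this to both families and writing $A=\{m_i\mid i\in I\}=\{n_j\mid j\in J\}$, we get $\sum_{i\in I}m_i=\sup A=\sum_{j\in J}n_j$, as desired.

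There is no genuine obstacle here; the single point deserving a moment's care is that the supremum implicitly produced by the definition of continuity is a priori the supremum of the set of all finite partial sums, which is larger than $A$. These two suprema nevertheless coincide, because every finite partial sum is itself the supremum of a finite subset of $A$, while every element of $A$ occurs among the one-term partial sums — so any upper bound of $A$ is an upper bound of all finite partial sums and conversely. I would state this observation explicitly so that the chain $\sum_{i\in I}m_i=\sup\{\sum_{i\in F}m_i\mid F\text{ finite}\}=\sup A$ is fully justified.
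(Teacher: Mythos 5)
Your proof is correct, but it is not the route the paper takes for this proposition. You identify $\sum_{i\in I}m_i$ with $\sup\{m_i\mid i\in I\}$ in the canonical idempotent order: the partition axiom gives that the total sum dominates every finite partial sum (hence every $m_i$), and continuity gives that it is the least such upper bound; the statement is then immediate since the supremum depends only on the value set. The paper's own proof of this proposition is more "combinatorial": it first shows, using idempotence and continuity, that the sum of a constant family equals that constant, $\sum_K m = m$, and then applies the partition axiom to the partition of $I$ into the fibers $I_m=\{i\mid m_i=m\}$ (and likewise for $J$), so that both sums collapse to $\sum_{m\in[I]}m$ over the common value set. Your argument is in substance the proof the paper gives later for Proposition \ref{prop.eilensummaslovint} (the identification of the Eilenberg sum with the Maslov idempotent integral $\sup f(X)$), and the paper explicitly remarks that the present proposition is an immediate consequence of that result; you have simply proved the stronger sup-characterization first, stated for monoids rather than semirings, which is legitimate since only the additive structure is used. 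What each approach buys: yours yields the more informative order-theoretic description of the summation law in one stroke, while the paper's direct proof stays entirely inside the partition-axiom calculus and needs only the special case of constant families. Your closing remark distinguishing the supremum of all finite partial sums from the supremum of the value set, and checking that they coincide, is exactly the point that needs care, and you handle it correctly.
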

\begin{proof}
Let $m\in M$ be any element and $K$ an arbitrary index set. If $F\subset K$
is a finite subset, then $\sum_F m=m,$ since $M$ is idempotent.
Thus, as $M$ is continuous,
\begin{equation} \label{equ.sumconstel}
\sum_K m = \sup \left\{ \sum_F m ~|~ F\subset K,~ F \text{ finite} \right\}
  = \sup \{ m \} = m. 
\end{equation}
Given two families  $(m_i)_{i\in I}$ and $(n_j)_{j\in J}$ in $M$, we define
subsets $[I], [J]\subset M$ by
\[ [I] = \{ m_i ~|~ i\in I \},~  [J] = \{ n_j ~|~ j\in J \} \]
and consider the partitions
\[ I = \bigcup_{m\in [I]} I_m,~ J = \bigcup_{n\in [J]} J_n, \]
where
\[ I_m = \{ i\in I ~|~ m_i =m \},~ J_n = \{ j\in J ~|~ n_j = n \}. \]
Then, assuming $[I] = [J]$, the partition axiom for the infinite summation
law together with (\ref{equ.sumconstel}) imply
\begin{eqnarray*}
\sum_{i\in I} m_i & = & 
  \sum_{m\in [I]} (\sum_{i\in I_m} m_i ) =
  \sum_{m\in [I]} (\sum_{i\in I_m} m) =
  \sum_{m\in [I]} m \\
& = &  \sum_{n\in [J]} n =
    \sum_{n\in [J]} (\sum_{j\in J_n} n) =
   \sum_{n\in [J]} (\sum_{j\in J_n} n_j) =  \sum_{n\in J} n_j. 
\end{eqnarray*}
\end{proof}

In the case of a continuous, idempotent, complete semiring $S$, the summation law
agrees with the notion of idempotent integral in idempotent analysis, as developed
by the Russian school around V. P. Maslov, see e.g. \cite{litmasshpizidemfunctana}.
Maslov and his collaborators define the idempotent integral of a function
$f:X\to S,$ where $X$ is any set, to be $\int_X f(x)dx := \sup f(X)$.
The analogy to the ordinary Riemann integral is revealed by replacing ordinary
addition in a Riemann sum by
$x \oplus_h y = h\log (e^{x/h} + e^{y/h}),$ replacing ordinary multiplication by
$x \odot y = x+y,$ and letting the parameter $h$ (playing the role of Planck's constant)
tend to $0$.
\begin{prop} \label{prop.eilensummaslovint}
Let $S$ be a continuous, idempotent, complete semiring with summation law $\sum$ and
let $X$ be any set. Then the image $f(X)$ of any function $f:X\to S$ possesses a 
supremum and the corresponding Eilenberg sum and Maslov idempotent integral are
equal,
\[ \sum_{x\in X} f(x) = \int_X f(x)dx. \]
\end{prop}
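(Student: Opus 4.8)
The plan is to work with the canonical partial order carried by the idempotent semiring $S$ --- namely $s\le t$ iff $s+t=t$ --- under which finite addition becomes a binary supremum, and then to use continuity to identify the a~priori infinitary Eilenberg sum with the supremum of its finite partial sums. I will use repeatedly the elementary facts (all established in the preceding discussion) that $a\le a+b$ for all $a,b$ (since $a+(a+b)=(a+a)+b=a+b$), that $m_1+\cdots+m_k=\sup\{m_1,\dots,m_k\}$ for finite collections, and that continuity of $S$ says exactly: for every family $(m_i)_{i\in I}$ the set $\{\sum_{i\in F}m_i : F\subset I\text{ finite}\}$ possesses a supremum, and this supremum equals $\sum_{i\in I}m_i$.

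First I would show that $t:=\sum_{x\in X}f(x)$ is an upper bound for $f(X)$. Fix $x_0\in X$ and apply the partition axiom to the partition $X=\{x_0\}\cup (X\setminus\{x_0\})$, obtaining $t=f(x_0)+\sum_{x\in X\setminus\{x_0\}}f(x)$, whence $f(x_0)\le t$. As $x_0$ was arbitrary, $f(x)\le t$ for every $x\in X$.

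Next I would show that $t$ is the \emph{least} upper bound. Let $c\in S$ be any upper bound of $f(X)$, so $f(x)\le c$ for all $x\in X$. For a finite $F\subset X$, the partial sum $\sum_{x\in F}f(x)$ is the ordinary finite sum (by the singleton and pairwise axioms together with the partition axiom), hence equals $\sup\{f(x):x\in F\}\le c$. By continuity, $t=\sup\{\sum_{x\in F}f(x):F\subset X\text{ finite}\}\le c$. Therefore $f(X)$ admits a supremum and $\sup f(X)=t=\sum_{x\in X}f(x)$. Since Maslov's idempotent integral is by definition $\int_X f(x)\,dx=\sup f(X)$, this is precisely the claimed identity. (Alternatively, one may first invoke Proposition \ref{prop.contidemsemiring} to replace the family $(f(x))_{x\in X}$ by the family $(v)_{v\in f(X)}$ and run the same argument; the direct version above avoids even that.)

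I do not expect a genuine obstacle here; the single point that must not be skipped is the essential use of continuity in the second step. Without continuity, the summation law could in principle produce a value strictly above $\sup f(X)$ (indeed the existence of $\sup f(X)$ is not automatic for a merely complete, non-continuous idempotent semiring), so it is exactly continuity that both guarantees the supremum exists and pins it down as the Eilenberg sum. The remainder is a routine unwinding of the order-theoretic description of idempotent addition and of the completeness axioms.
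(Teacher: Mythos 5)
Your proof is correct and follows essentially the same route as the paper's: establish that the Eilenberg sum is an upper bound for $f(X)$ via the partition axiom and idempotence, then use the finite-sum-equals-supremum identity together with continuity to show it is the least upper bound. The only cosmetic difference is that you derive $f(x_0)\le t$ from $a\le a+b$, whereas the paper shows $f(x_0)+E=E$ directly by duplicating the term $f(x_0)$ with idempotence; these are the same observation.
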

\begin{proof}
Set $E = \sum_{x\in X} f(x)$. For any $x_0 \in X,$
\[ f(x_0) + E = f(x_0) + f(x_0) + \sum_{x\in X- \{ x_0 \}} f(x) =
   f(x_0) + \sum_{x\in X- \{ x_0 \}} f(x) = E, \]
by idempotence and using the partition axiom of the summation law.
Thus $E$ is an upper bound for the image $f(X)$. We claim that it is in fact the
least upper bound: Let $B$ be any upper bound for $f(X)$. Recall that the supremum
satisfies the associativity
\[ \sup \{ \sup \{ y_1, y_2 \}, y_3 \} = \sup \{ y_1, y_2, y_3 \} =
  \sup \{ y_1, \sup \{ y_2, y_3 \} \}. \]
Hence inductively, and using $y_1 + y_2 = \sup \{ y_1, y_2 \}$ in $S$, one finds
for any finite subset $\{ x_1, \ldots, x_k \} \subset X$,
\[ f(x_1)+\cdots +f(x_k) = \sup \{ f(x_1),\ldots, f(x_k) \} \leq B. \]
Therefore, by the continuity of $S$,
\[ E = \sup \{ \sum_{x\in F} f(x) ~|~ F\subset X,~ F \text{ finite} \} \leq B. \] 
Thus $\sup f(X) = \int_X f$ exists and is equal to $E$.
\end{proof}
Though we did provide a direct proof of Proposition \ref{prop.contidemsemiring}, that
proposition is also an immediate consequence of Proposition \ref{prop.eilensummaslovint}.

\section{Function Semimodules and Their Tensor Products}
\label{sec.funcompltensorprod}

Let $S$ be a semiring, not necessarily commutative. Given a set $A$, let
$\fun_S (A) = \{ f:A\to S \}$
be the set of all $S$-valued functions on $A$. If $S$ is understood,
we will also write $\fun (A)$ for $\fun_S (A)$.
\begin{prop} \label{prop.funsemimod}
Using pointwise addition and multiplication, $\fun_S (A)$ inherits the
structure of a two-sided $S$-semialgebra from the operations of $S$. 
If $S$ is complete (as a semiring), then $\fun_S (A)$ is complete as a semiring and as
an $S$-bisemimodule. If $S$ is commutative, then $\fun_S (A)$ is a commutative
$S$-semialgebra.
\end{prop}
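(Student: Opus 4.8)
The plan is to verify each asserted property by transporting the corresponding structure on $S$ to $\fun_S(A)$ pointwise, i.e. by evaluating at an arbitrary point $a\in A$ and invoking the already-known structure of $S$ at that point. First I would define the operations: for $f,g\in\fun_S(A)$ set $(f+g)(a)=f(a)+g(a)$ and $(fg)(a)=f(a)g(a)$, with $0$ the constant function $a\mapsto 0$ and $1$ the constant function $a\mapsto 1$. The left and right $S$-actions are $(sf)(a)=s\cdot f(a)$ and $(fs)(a)=f(a)\cdot s$. Since two functions agree iff they agree at every point, every identity that holds in $S$ (commutativity of $+$, associativity of $+$ and $\cdot$, two-sided distributivity, absorption of $0$, the bisemimodule axioms $r(fs)=(rf)s$, and $r(fg)=(rf)g$, $(fg)s=f(gs)$) lifts verbatim to $\fun_S(A)$, because evaluation at $a$ turns each such identity into the already-valid identity $(\ldots)(a)=(\ldots)(a)$ in $S$. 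This establishes that $\fun_S(A)$ is a two-sided $S$-semialgebra.

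Next I would treat completeness. Assume $S$ is a complete semiring, with summation law $\sum$. For a family $(f_i)_{i\in I}$ in $\fun_S(A)$ define the pointwise sum by $\bigl(\sum_{i\in I}f_i\bigr)(a)=\sum_{i\in I}f_i(a)$, the latter sum taken in $S$. The monoid-completeness axioms --- the empty, one-element, and two-element cases, and the partition/associativity law $\sum_{j\in J}\sum_{i\in I_j}f_i=\sum_{i\in I}f_i$ --- all reduce, upon evaluation at $a$, to the corresponding axioms for $\sum$ in $S$, hence hold. The infinite distributivity requirements $\sum_{i}sf_i=s\sum_i f_i$ and $\sum_i f_i s=(\sum_i f_i)s$, and the bisemimodule version $\sum_i sf_i=s\sum_i f_i$ for the left (and symmetrically the right) action, likewise follow pointwise from the infinite distributivity in $S$. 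Thus $\fun_S(A)$ is complete as a semiring and as an $S$-bisemimodule. Finally, if $S$ is commutative, then $(fg)(a)=f(a)g(a)=g(a)f(a)=(gf)(a)$ for all $a$, so $fg=gf$; since a commutative semiring that is a semimodule over itself is automatically a semialgebra over itself (as recalled in Section \ref{sec.monssemirings}), $\fun_S(A)$ is a commutative $S$-semialgebra.

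I do not anticipate a genuine obstacle here: the entire statement is an instance of the general principle that a product $\prod_{a\in A}S$ of copies of an algebraic structure, with componentwise operations, inherits every equationally-defined (and every ``sum-of-a-family''-defined) property of $S$. The only point requiring a word of care is the completeness clause, where one must check that the pointwise-defined summation law genuinely satisfies Eilenberg's partition axiom rather than merely being additive; but this is immediate from the observation that a partition $I=\bigcup_{j\in J}I_j$ is used identically at every coordinate $a\in A$, so the identity is inherited coordinatewise. No appeal to continuity, idempotence, or the tensor-product machinery of the section is needed for this proposition.
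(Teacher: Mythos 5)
Your proposal is correct and follows essentially the same route as the paper: the paper also defines the operations and the summation law pointwise and observes that all semialgebra, completeness, and bisemimodule axioms are inherited coordinatewise from $S$ (the paper in fact only spells out the completeness clause, which you verify in the same pointwise manner). No discrepancies to report.
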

\begin{proof}
We only discuss the completeness.
If $S$ is a complete semiring, then an infinite summation law in $\fun_S (A)$
can be introduced by
$\big( \sum_{i\in I} f_i \big) (a) = \sum_{i\in I} f_i (a),$
$f_i \in \fun_S (A)$. With this law, $(\fun_S (A),+,0,\sum)$ is a complete
monoid, $\fun_S (A)$ is complete as a semiring and
complete as an $S$-bisemimodule.
\end{proof}
It is clear that if $S$ is idempotent, then $\fun_S (A)$ is idempotent.
Furthermore,
if a complete, idempotent semiring $S$ is continuous, then $\fun_S (A)$ is continuous.
In Section \ref{sec.proidemcompletion}, we will introduce
the composition semiring $Q^c$ and the monoidal semiring $Q^m$ of the profinite idempotent
completion $Q$ of $Y(\Mor (\Br))$. These semirings will turn out to be complete, idempotent and continuous
(Propositions \ref{prop.qccomplidemp}, \ref{prop.qmcomplidemp}, \ref{prop.qcontinuous}).
Hence,
the state-(semi)modules $Z(M) = \fun_Q (\Fa (M))$ to be introduced in
Section \ref{quantization} are continuous.

Let $B$ be another set. Then, regarding $\fun_S (A)$ and $\fun_S (B)$ as
$S$-bisemimodules, the algebraic tensor product $\fun_S (A)\otimes_S \fun_S (B)$ is
defined. It is an $S$-bisemimodule such that given any $S$-bisemimodule $M$ and
$S_S S$-linear map $\phi: \fun (A)\times \fun (B)\to M,$ there exists a unique
$S$-$S$-bisemimodule homomorphism $\psi: \fun (A)\otimes_S \fun (B)\to M$ such that
\[ \xymatrix@R=15pt{
\fun (A)\times \fun (B) \ar[r]^>>>>>{\phi} \ar[d] & M \\
\fun (A)\otimes_S \fun (B) \ar[ru]_{\psi}
} \]
commutes. The $S$-bisemimodule $\fun (A\times B)$ comes naturally
equipped with an $S_S S$-linear map
\[ \beta: \fun (A)\times \fun (B) \longrightarrow \fun (A\times B),~
  \beta (f,g) = ((a,b)\mapsto f(a)\cdot g(b)). \]
(If $S$ is commutative, then $\beta$ is $S$-bilinear.)
Thus, taking $M = \fun_S (A\times B)$ in the above diagram,
there exists a unique
$S$-$S$-bisemimodule homomorphism $\mu: \fun (A)\otimes_S \fun (B)\to \fun (A\times B)$ such that
\begin{equation} \label{equ.funatimesb} 
\xymatrix@R=15pt{
\fun (A)\times \fun (B) \ar[r]^>>>>>{\beta} \ar[d] & \fun (A\times B) \\
\fun (A)\otimes_S \fun (B) \ar[ru]_{\mu}
} \end{equation}
commutes. In the commutative setting, this homomorphism was studied in \cite{banagl-tensor}, where we
showed that it is generally neither surjective nor injective when $S$ is not
a field. If $A$ and $B$ are finite, then $\mu$ is an isomorphism.
If $A,B$ are infinite but $S$ happens to be a field, then $\mu$ is still injective,
but not generally surjective. This is
the reason why the functional analyst completes the tensor product $\otimes$ using
various topologies available, arriving at products $\hotimes$.
For example, for compact Hausdorff spaces $A$ and $B$, let $C(A),C(B)$ 
denote the Banach spaces of all complex-valued continuous functions on
$A, B,$ respectively, endowed with the supremum-norm, yielding the topology of
uniform convergence. Then the image of 
$\mu: C(A)\otimes C(B) \to C(A\times B)$, while not all of $C(A\times B),$
is however dense in $C(A\times B)$ by the Stone-Weierstra{\ss} theorem.
After completion, $\mu$ induces an isomorphism
$C(A) \hotimes_\epsilon C(B) \cong C(A\times B)$
of Banach spaces, where $\hotimes_\epsilon$ denotes the so-called
$\epsilon$-tensor product or injective tensor product of two locally convex
topological vector spaces. For $n$-dimensional Euclidean space $\real^n$,
let $L^2 (\real^n)$ denote the Hilbert space of square integrable functions
on $\real^n$. Then $\mu$ induces an isomorphism
$L^2 (\real^n) \hotimes L^2 (\real^m) \cong L^2 (\real^{n+m}) =
L^2 (\real^n \times \real^m),$ where $\hotimes$ denotes the Hilbert space
tensor product, a completion of the algebraic tensor product $\otimes$ of two Hilbert spaces.
For more information on topological tensor products see 
\cite{schatten}, \cite{grothendiecktensor}, \cite{treves}.
In \cite{banagl-tensor} we show that even over the smallest complete (in particular
zerosumfree) and additively idempotent commutative semiring, namely the Boolean semiring $\bool$,
and for the smallest infinite cardinal $\aleph_0$, modeled by a countably infinite set $A$,
the map $\mu$ is not surjective, which means that in the context of the present paper,
some form of completion must be used as well.
However, there is an even more serious complication which arises over semirings:
In marked contrast to the
situation over fields, the canonical map $\mu$ ceases to be \emph{injective} in general
when one studies functions with values in a semiring $S$.
Given two infinite sets $A$ and $B$, we construct explicitly 
in \cite{banagl-tensor}
a commutative, additively idempotent
semiring $S= S(A,B)$ such that $\mu: \fun_S (A) \otimes \fun_S (B) \to 
\fun_S (A\times B)$ is
not injective. This has the immediate consequence that in 
performing functional analysis over a semiring which is not a field, one
cannot identify the function $(a,b)\mapsto f(a)g(b)$ on $A\times B$ with $f\otimes g$ for 
$f\in \fun_S (A)$, $g\in \fun_S (B)$. Thus the algebraic tensor product is not the correct device
to formulate positive topological field theories.

In the boundedly complete idempotent \emph{commutative} setting, Litvinov, Maslov and Shpiz have
constructed in \cite{litmasshpiztensor} a tensor product, let us here write it as $\hotimes$, which 
for bounded functions does not exhibit the above deficiencies
of the algebraic tensor product. 
This is an idempotent analog of topological tensor products in the sense of Grothendieck.
Now, the present paper needs such an analog even over noncommutative semirings,
since the aforementioned semirings $Q^c, Q^m$ are generally not commutative.
We shall introduce such a completed tensor product $\hotimes$ here, building on the
methods of \cite{litmasshpiztensor}, but using notation which is more in line with the one used
in the rest of our paper.

Let $S$ be any semiring.
Let $M$ be a right $S$-semimodule and $N$ a left $S$-semimodule.
We assume both of these semimodules to be complete, idempotent and continuous.
We shall define a complete tensor product $M \hotimes_S N$.
The \emph{support} of $f\in \fun_S (A)$ is
\[ \supp (f) = \{ a\in A ~|~ f(a)\not= 0 \}. \]
Let $\bool,$ as always, denote the Boolean semiring.
Given $f\in \fun_\bool (M\times N)$, we define a function $f_N:N\to M$ by
\[ f_N (n) = \sum \{ m\in M ~|~ f(m,n)=1 \}, \]
using the completeness of $M$. 
Similarly,  $f_M:M\to N$ is given by
\[ f_M (m) = \sum \{ n\in N ~|~ f(m,n)=1 \}, \]
using the completeness of $N$.
\begin{defn}
A \emph{tensor} is a function $f:M\times N \to \bool$ such that
for all $m\in M,$ $n\in N,$ $s\in S,$
\begin{enumerate}
\item $f(0,0)=1$,
\item $f(ms,n)=f(m,sn)$,
\item If $m\leq f_N (n)$ or $n\leq f_M (m),$ then $f(m,n)=1$.
\end{enumerate}
\end{defn}
If $f$ is a tensor, then 
$f(m,0)=f(m\cdot 0,0)=f(0,0)=1,~ \text{all } m\in M,$
and similarly $f(0,n)=1$ for all $n\in N$.
The function which is identically $1$ is a tensor.
As a set, we define $M\hotimes_S N$ to be the subset of $\fun_\bool (M\times N)$ consisting
of all tensors.
In general, $M\hotimes_S N$ is not closed under the standard addition $+$
on $\fun_\bool (M\times N)$. Thus, we need to give $M\hotimes_S N$ a different addition $\oplus$.
First, we define the product $\prod_{i\in I} f_i \in \fun_\bool (M\times N)$ 
over an arbitrary nonempty family $(f_i)_{i\in I}$ of functions
$f_i \in \fun_\bool (M\times N)$ to be
\[ (\prod_{i\in I} f_i)(m,n) =
 \begin{cases} 1,& \text{if } f_i (m,n)=1 \text{ for all i} \\
  0,& \text{otherwise.}
 \end{cases} \]
This product is clearly independent of the order of the factors.
If every $f_i$ is a tensor, then $\prod_{i\in I} f_i$ is a tensor.
Any given $f\in \fun_\bool (M\times N)$ generates a tensor $f^\otimes$ by
\[ f^\otimes = \prod \{ g\in M\hotimes_S N ~|~ \supp (g)\supset \supp (f) \}. \]
Note that the family over which the product
is taken is indeed nonempty, since it contains the function which is identically $1$.
By definition we have 
$\supp (f^\otimes)\supset \supp (f)$
and if $g$ is a tensor such that $\supp (g)\supset \supp (f)$, then $\supp (g)\supset \supp (f^\otimes)$.
If $f$ is already a tensor, then $f^\otimes =f$. For tensors $f,g$, we define 
\[ f\oplus g := (f+g)^\otimes,~ \bz := 0^\otimes. \]
It is not necessarily true that $\bz (m,n)=0$ for $m,n$ both nonzero.
In fact, it is not possible to give a universal formula for $\bz$; rather, $\bz$ does depend
on the semimodules and ground semiring.
If $f,g\in \fun_\bool (M\times N),$ then
$(f^\otimes + g)^\otimes = (f+g)^\otimes.$
More generally, if $(f_i)_{i\in I}$ is a family of functions $f_i\in \fun_\bool (M\times N),$ 
then
$\left( \sum_{i\in I} f_i^\otimes \right)^\otimes = \left( \sum_{i\in I} f_i \right)^\otimes.$
Using these formulae, one readily shows:
\begin{lemma}
The triple $(M\hotimes_S N,\oplus, \bz)$ is a commutative, idempotent monoid.
The summation law 
\[ \bigoplus_{i\in I} f_i := (\sum_{i\in I} f_i)^\otimes \]
on $M\hotimes_S N$ makes $(M\hotimes_S N, \oplus,\bz)$ into a complete monoid,
where the summation law $\sum$ on the right hand side is induced by the summation law
of $\bool$.
\end{lemma}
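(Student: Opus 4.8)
The plan is to reduce everything to the two closure identities recorded just before the statement: for $f,g\in\fun_\bool(M\times N)$ one has $(f^\otimes+g)^\otimes=(f+g)^\otimes$, and more generally $\bigl(\sum_{i\in I}f_i^\otimes\bigr)^\otimes=\bigl(\sum_{i\in I}f_i\bigr)^\otimes$ for any family $(f_i)_{i\in I}$, together with the already-noted facts that $f^\otimes=f$ whenever $f$ is a tensor and that a product of tensors is a tensor (so $f^\otimes$ is indeed a tensor, as the defining product is over a nonempty family). All underlying arithmetic takes place in $\fun_\bool(M\times N)$, where the addition $+$ is pointwise — hence commutative, idempotent (so $f+f=f$), with neutral element $0$ — and which, because $\bool$ is a complete semiring, is itself a complete semiring, in particular a complete monoid under the pointwise summation law (Proposition \ref{prop.funsemimod}).

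First I would check the monoid axioms. Commutativity of $\oplus$ is immediate from commutativity of $+$: $f\oplus g=(f+g)^\otimes=(g+f)^\otimes=g\oplus f$. Idempotence: $f\oplus f=(f+f)^\otimes=f^\otimes=f$. The element $\bz=0^\otimes$ is neutral: $f\oplus\bz=(f+0^\otimes)^\otimes=(f+0)^\otimes=f^\otimes=f$ by the first closure identity, and $\bz\oplus f=f$ by commutativity. For associativity, $(f\oplus g)\oplus h=\bigl((f+g)^\otimes+h\bigr)^\otimes=(f+g+h)^\otimes$ by the first closure identity, while $f\oplus(g\oplus h)=\bigl(f+(g+h)^\otimes\bigr)^\otimes=\bigl((g+h)^\otimes+f\bigr)^\otimes=(g+h+f)^\otimes=(f+g+h)^\otimes$, where commutativity of $+$ is used to move the closed factor into the left slot before applying the identity. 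Hence $(M\hotimes_S N,\oplus,\bz)$ is a commutative idempotent monoid.

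Next I would verify that $\bigoplus_{i\in I}f_i:=\bigl(\sum_{i\in I}f_i\bigr)^\otimes$ is a summation law. The empty, one-element, and two-element cases are immediate from $\sum_{i\in\varnothing}f_i=0$, $\sum_{i\in\{1\}}f_i=f_1$, $\sum_{i\in\{1,2\}}f_i=f_1+f_2$, $f_1^\otimes=f_1$, and the definitions of $\bz$ and $\oplus$. For the partition axiom, given $I=\bigcup_{j\in J}I_j$,
\[ \bigoplus_{j\in J}\Bigl(\bigoplus_{i\in I_j}f_i\Bigr)=\Bigl(\sum_{j\in J}\bigl(\sum_{i\in I_j}f_i\bigr)^\otimes\Bigr)^\otimes=\Bigl(\sum_{j\in J}\sum_{i\in I_j}f_i\Bigr)^\otimes=\Bigl(\sum_{i\in I}f_i\Bigr)^\otimes=\bigoplus_{i\in I}f_i, \]
where the first equality is the definition, the second is the generalized closure identity applied to the family $g_j=\sum_{i\in I_j}f_i$, and the third is the partition axiom for the pointwise summation law on the complete monoid $\fun_\bool(M\times N)$. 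This establishes that $(M\hotimes_S N,\oplus,\bz)$ is a complete monoid.

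I do not expect a serious obstacle: the entire content sits in the two closure identities proved just above (which in turn rest on the minimality of $\supp(f^\otimes)$ among tensor supports containing $\supp(f)$, and on products of tensors being tensors). The only point that needs a moment's care is that the stated identity $(f^\otimes+g)^\otimes=(f+g)^\otimes$ is asymmetric — the closed factor must be on the left — so in both the associativity computation and the partition axiom one first invokes commutativity of $+$ in $\fun_\bool(M\times N)$ to position the closed factor where the identity applies.
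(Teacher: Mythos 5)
Your proof is correct and follows exactly the route the paper intends: the paper states the two closure identities $(f^\otimes+g)^\otimes=(f+g)^\otimes$ and $\bigl(\sum_i f_i^\otimes\bigr)^\otimes=\bigl(\sum_i f_i\bigr)^\otimes$ and then leaves the verification as "one readily shows," which is precisely the reduction you carry out, including the careful handling of the asymmetry of the first identity via commutativity of pointwise addition. Nothing is missing.
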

The idempotent addition $+$ on $\fun_\bool (M\times N)$ generates a partial order
$\leq$ given by $f\leq g$ if and only if $f+g=g$. Similarly, the idempotent addition
$\oplus$ on $M\hotimes_S N$ generates a partial order $\preceq$ given by
$f\preceq g$ if and only if $f\oplus g=g$.
We shall see that on the tensor product $M\hotimes_S N$ these two partial orders
agree. If
$f,g \in \fun_\bool (M\times N)$ are any functions, then $f\leq g$ if and only if
$\supp (f)\subset \supp (g)$.
In particular, $f\leq f^\otimes$
for any function $f\in \fun_\bool (M \times N)$. Using this one verifies:
\begin{lemma} \label{lem.leqispreceqontensors}
Let $f,g\in M\hotimes_S N$ be tensors. Then $f\leq g$ if and only if $f\preceq g$.
\end{lemma}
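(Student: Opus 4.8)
The plan is to translate both relations into statements about supports and then invoke two facts already established just before the lemma: first, that on $\fun_\bool (M\times N)$ one has $h_1 \leq h_2$ if and only if $\supp (h_1)\subset \supp (h_2)$; and second, that $h\leq h^\otimes$ for every function $h\in \fun_\bool (M\times N)$. Also crucial is the fact that $g^\otimes =g$ because $g$ is a tensor.

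First I would prove the forward implication. Assume $f\leq g$, so $\supp (f)\subset \supp (g)$, and hence $f+g = g$ as functions in $\fun_\bool (M\times N)$, using idempotence of $\bool$. Applying $(-)^\otimes$ to both sides and using that $g$ is already a tensor gives
\[ f\oplus g = (f+g)^\otimes = g^\otimes = g, \]
which is exactly the assertion $f\preceq g$.

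Conversely, assume $f\preceq g$, i.e.\ $(f+g)^\otimes = g$. Since $h\leq h^\otimes$ for any $h$, taking $h = f+g$ yields $f+g \leq (f+g)^\otimes = g$, and therefore $\supp (f+g)\subset \supp (g)$. As $\supp (f)\subset \supp (f)\cup \supp (g) = \supp (f+g)$, we conclude $\supp (f)\subset \supp (g)$, that is, $f\leq g$.

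I do not expect any genuine obstacle: the statement is essentially a bookkeeping consequence of the definitions of $\oplus$ and $(-)^\otimes$ together with the idempotence of $\bool$. The only points requiring a modicum of care are to use the hypothesis that $g$ is a \emph{tensor} (so that $g^\otimes =g$ can be invoked), and to note the evident identity $\supp (f+g) = \supp (f)\cup \supp (g)$ in $\fun_\bool (M\times N)$, which makes the passage between the two partial orders immediate.
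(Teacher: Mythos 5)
Your proof is correct and follows exactly the route the paper intends: it omits the verification but points to the two facts you use (the support characterization of $\leq$ on $\fun_\bool (M\times N)$ and $h\leq h^\otimes$), combined with $g^\otimes = g$ for tensors. The only superfluous step is the detour through supports in the forward direction, since $f\leq g$ already means $f+g=g$ by definition, so applying $(-)^\otimes$ immediately gives $f\oplus g = g$.
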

This lemma, together with the continuity of $\fun_\bool (M\times N)$, allows for a straightforward proof of continuity
of the tensor product:
\begin{lemma}
The idempotent complete monoid $(M\hotimes_S N, \oplus,\bz)$ is continuous.
\end{lemma}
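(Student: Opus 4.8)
The plan is to reduce continuity of $(M\hotimes_S N, \oplus, \bz)$ to the already-available continuity of the ambient function semimodule $\fun_\bool(M\times N)$, using Lemma \ref{lem.leqispreceqontensors} to move back and forth between the two partial orders $\leq$ and $\preceq$. So let $(f_i)_{i\in I}$ be a family of tensors and let $c$ be a tensor such that $\bigoplus_{i\in F} f_i \preceq c$ for every finite $F\subset I$; the goal is to deduce $\bigoplus_{i\in I} f_i \preceq c$.

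First I would note that for each finite $F$ one has $\sum_{i\in F} f_i \leq (\sum_{i\in F} f_i)^\otimes = \bigoplus_{i\in F} f_i$ in $\fun_\bool(M\times N)$, simply because $g\leq g^\otimes$ for every function $g$. Since $\bigoplus_{i\in F} f_i$ and $c$ are tensors, Lemma \ref{lem.leqispreceqontensors} converts the hypothesis $\bigoplus_{i\in F} f_i \preceq c$ into $\bigoplus_{i\in F} f_i \leq c$, and chaining the two inequalities gives $\sum_{i\in F} f_i \leq c$ for every finite $F$. Now I would invoke the continuity of $\fun_\bool(M\times N)$ — which holds because $\bool$ is a continuous, complete, idempotent semiring, so $\fun_\bool(M\times N)$ is too — to conclude $\sum_{i\in I} f_i \leq c$, i.e. $\supp(\sum_{i\in I} f_i)\subset \supp(c)$.

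The final step is to promote this to the tensor closure. Since $c$ is itself a tensor and $\supp(c)\supset \supp(\sum_{i\in I} f_i)$, the defining minimality property of $(-)^\otimes$ forces $\supp(c)\supset \supp\bigl((\sum_{i\in I} f_i)^\otimes\bigr)$, that is $(\sum_{i\in I} f_i)^\otimes \leq c$. By definition $\bigoplus_{i\in I} f_i = (\sum_{i\in I} f_i)^\otimes$, so $\bigoplus_{i\in I} f_i \leq c$, and one more application of Lemma \ref{lem.leqispreceqontensors} (both sides being tensors) yields $\bigoplus_{i\in I} f_i \preceq c$, which is exactly the continuity condition.

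The only point requiring a little care — and the crux of the argument — is the interplay between the two summation operations: one must verify that $\bigoplus_{i\in F} f_i$ dominates the plain sum $\sum_{i\in F} f_i$, and that applying $(-)^\otimes$ to the global plain sum cannot overshoot $c$. The first is immediate from $g\leq g^\otimes$; the second holds precisely because $c$ is a tensor and $f^\otimes$ is, by construction, the smallest tensor whose support contains $\supp(f)$. Once these two monotonicity facts are recorded, continuity follows formally from the continuity of $\fun_\bool(M\times N)$ together with Lemma \ref{lem.leqispreceqontensors}, with no new estimate concerning $M$, $N$ or $S$ required.
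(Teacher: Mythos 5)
Your argument is correct and is exactly the route the paper has in mind: it deduces continuity of $(M\hotimes_S N,\oplus,\bz)$ from the continuity of $\fun_\bool(M\times N)$, using $g\leq g^\otimes$, the minimality of the tensor closure, and Lemma \ref{lem.leqispreceqontensors} to pass between $\leq$ and $\preceq$ on tensors. No gaps; the paper leaves precisely this verification to the reader.
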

Given a pair $(m,n)\in M\times N,$ we put
$m\hotimes n := (\chi_{(m,n)})^\otimes \in M\hotimes_S N,$
where $\chi_{(m,n)}:M\times N \to \bool$ is the characteristic function of $(m,n)$.
We call the elements $m\hotimes n$ \emph{elementary tensors}.
Note that $0_M \hotimes n = \bz = m \hotimes 0_N$.
Since
\[ f^\otimes = \bigoplus_{(m,n)\in \supp (f)} m\hotimes n, \]
every tensor $f$ can be written as a (possibly infinite) sum 
of elementary tensors. 
\begin{defn}
A homomorphism $\widehat{\phi}: M\to N$ of complete monoids is called \emph{continuous},
if $\widehat{\phi}(\sum_{i\in I} m_i) = \sum_{i\in I} \widehat{\phi}(m_i)$ for every family
$(m_i)_{i\in I}$ in $M$. Let $P$ be any complete commutative monoid.
A middle $S$-linear map $\phi: M\times N \to P$ is called \emph{bicontinuous}, if
$\phi (m,-)$ is continuous for all $m\in M$ and $\phi (-,n)$ is continuous for all $n\in N$.
\end{defn}
We continue to assume that $M$ and $N$ are complete, idempotent and continuous over $S$.
Elementary tensors satisfy
\[ (m+m')\hotimes n = m\hotimes n \oplus m' \hotimes n,~
  m\hotimes (n+n') = m\hotimes n \oplus m\hotimes n',~
 (ms)\hotimes n = m\hotimes (sn) \]
for all $m,m'\in M,$ $n,n' \in N,$ $s\in S$.
More generally, if $(m_i)_{i\in I}$ is any family of elements in $M$ and
$(n_i)_{i\in I}$ is any family of elements in $N$, then
\[ (\sum_{i\in I} m_i)\hotimes n = \bigoplus_{i\in I} m_i \hotimes n,~
   m\hotimes (\sum_{i\in I} n_i) = \bigoplus_{i\in I} m \hotimes n_i. \]
Hence, the map
$M\times N \to M\hotimes_S N,~ (m,n)\mapsto m\hotimes n,$
is middle $S$-linear and bicontinuous.
\begin{lemma} \label{lem.ttensor}
Let $P$ be any commutative, complete, idempotent, continuous monoid.
If $\widehat{\phi}: M\times N \to P$ is middle $S$-linear and bicontinuous, and $p\in P$ is any element,
then $t:M\times N\to \bool$, given by
\[ t(m,n)= \begin{cases} 1,& \widehat{\phi} (m,n)\leq p \\
  0,& \text{ otherwise,}
\end{cases} \]
is a tensor.
\end{lemma}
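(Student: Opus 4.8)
The plan is to verify the three defining conditions of a tensor for $t$ one at a time; only condition (3) takes any real work.

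Condition (1) is immediate: middle $S$-linearity of $\widehat{\phi}$ forces $\widehat{\phi}(0,0)=0$ in $P$, and since $P$ is idempotent its canonical partial order has $0$ as least element, so $\widehat{\phi}(0,0)\leq p$ and hence $t(0,0)=1$. (Along the way one also gets $\widehat{\phi}(0,n)=0=\widehat{\phi}(m,0)$ using $0_M=m\cdot 0_S$, $0_S\cdot n=0_N$, which is convenient below.) Condition (2) is equally quick: middle $S$-linearity gives $\widehat{\phi}(ms,n)=\widehat{\phi}(m,sn)$, so $t(ms,n)=1$ exactly when $\widehat{\phi}(ms,n)\leq p$, exactly when $\widehat{\phi}(m,sn)\leq p$, exactly when $t(m,sn)=1$.

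For condition (3), suppose first that $m\leq t_N(n)$; the goal is $\widehat{\phi}(m,n)\leq p$, i.e.\ $t(m,n)=1$. By definition $t_N(n)=\sum\{m'\in M\mid \widehat{\phi}(m',n)\leq p\}$, which makes sense since $M$ is complete. First I would record that $\widehat{\phi}(-,n)\colon M\to P$ is an additive homomorphism (biadditivity plus $\widehat{\phi}(0,n)=0$), hence order-preserving because both $M$ and $P$ carry the canonical idempotent order, and that it is continuous by bicontinuity of $\widehat{\phi}$. Continuity lets me push $\widehat{\phi}(-,n)$ through the defining sum:
\[ \widehat{\phi}(t_N(n),n)=\sum\bigl\{\widehat{\phi}(m',n)\mid \widehat{\phi}(m',n)\leq p\bigr\}. \]
Each term of this sum is $\leq p$; because $P$ is idempotent every finite partial sum is a supremum of terms each $\leq p$ and so is itself $\leq p$; and because $P$ is continuous the whole (possibly infinite) sum is then $\leq p$ as well — this is exactly the mechanism used in the proof of Proposition \ref{prop.contidemsemiring}. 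Thus $\widehat{\phi}(t_N(n),n)\leq p$, and order-preservation of $\widehat{\phi}(-,n)$ with $m\leq t_N(n)$ gives $\widehat{\phi}(m,n)\leq \widehat{\phi}(t_N(n),n)\leq p$, so $t(m,n)=1$. The case $n\leq t_M(m)$ is symmetric, using continuity and order-preservation of $\widehat{\phi}(m,-)$ instead.

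The main (and essentially only) obstacle is the combinatorial step inside condition (3): one must be sure both that $\widehat{\phi}$ really commutes with the infinite sum defining $t_N(n)$ (resp.\ $t_M(m)$) — which is precisely what the bicontinuity hypothesis is there to guarantee — and that ``a sum of elements all $\leq p$ is again $\leq p$'' is valid in $P$, which needs idempotence (to replace finite sums by suprema) and continuity (to pass from finite to arbitrary sums) at the same time. Once these two points are in place, the order-preservation of $\widehat{\phi}$ in each slot is automatic, since any additive map between idempotent monoids respects the induced order, and the verification closes.
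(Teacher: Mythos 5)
The paper states this lemma without proof (it is left as a technical verification), so there is no written argument to compare against; judged on its own, your proof is correct and is evidently the intended one: bicontinuity lets you push $\widehat{\phi}(-,n)$ through the sum defining $t_N(n)$, idempotence plus continuity of $P$ give that a (possibly infinite) sum of elements $\leq p$ is again $\leq p$, and monotonicity of the additive map $\widehat{\phi}(-,n)$ with respect to the canonical idempotent orders closes the case $m\leq t_N(n)$, the case $n\leq t_M(m)$ being symmetric. Conditions (1) and (2) are immediate as you say; your parenthetical claim $\widehat{\phi}(0,n)=0=\widehat{\phi}(m,0)$ does follow, e.g.\ by specializing the identity $\widehat{\phi}(m\cdot 0_S,n)=\widehat{\phi}(m,0_S\cdot n)$ at $m=0$ (or $n=0$) and using $\widehat{\phi}(0,0)=0$, or from bicontinuity applied to the empty family, though it is not actually needed in the verification of condition (3).
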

This technical result is used in proving:
\begin{lemma} \label{lem.summidslinffotimes}
Let $P$ be any commutative, complete, idempotent, continuous monoid.
If $\widehat{\phi}: M\times N \to P$ is middle $S$-linear and bicontinuous, and 
$f\in \fun_\bool (M\times N)$ is any function, then
\[ \sum_{(m,n)\in \supp (f)} \widehat{\phi}(m,n) =
  \sum_{(m,n)\in \supp (f^\otimes)} \widehat{\phi}(m,n). \]
\end{lemma}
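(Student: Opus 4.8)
The plan is to prove the two inequalities between $\sum_{(m,n)\in\supp(f)}\widehat{\phi}(m,n)$ and $\sum_{(m,n)\in\supp(f^\otimes)}\widehat{\phi}(m,n)$ with respect to the idempotent partial order on $P$, and then combine them. Throughout I would use three elementary facts about idempotent complete monoids: each term of a sum is bounded above by the sum (apply the partition axiom to the decomposition of the index set into a singleton and its complement, then use idempotence), $x\leq x+y$ always holds, and a sum over an index set dominates the sum over any subset of it (same partition trick). None of these requires continuity.

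For the inequality ``$\sum_{\supp(f)}\widehat{\phi}\leq\sum_{\supp(f^\otimes)}\widehat{\phi}$'' I would just note that $\supp(f^\otimes)\supset\supp(f)$ by construction of $f^\otimes$, so this is an instance of the monotonicity above. The substantive direction is the reverse one. Here I would set $p:=\sum_{(m,n)\in\supp(f)}\widehat{\phi}(m,n)$, which lies in $P$ because $P$ is complete. Since each $\widehat{\phi}(m,n)$ with $(m,n)\in\supp(f)$ satisfies $\widehat{\phi}(m,n)\leq p$, I would then invoke Lemma~\ref{lem.ttensor} with this particular $p$: it gives that the indicator function $t:M\times N\to\bool$ with $t(m,n)=1$ exactly when $\widehat{\phi}(m,n)\leq p$ is a tensor, and by the previous sentence $\supp(t)\supset\supp(f)$. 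Because $t$ is a tensor whose support contains $\supp(f)$, the defining property of $f^\otimes$ (any such tensor has support containing $\supp(f^\otimes)$) yields $\supp(t)\supset\supp(f^\otimes)$; equivalently, $\widehat{\phi}(m,n)\leq p$ for \emph{every} $(m,n)\in\supp(f^\otimes)$, not merely for those in $\supp(f)$.

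To finish I would appeal to the continuity of $P$: every finite partial sum $\sum_{(m,n)\in F}\widehat{\phi}(m,n)$ over a finite $F\subset\supp(f^\otimes)$ equals $\sup_{(m,n)\in F}\widehat{\phi}(m,n)$, since $+$ is the supremum in an idempotent monoid, and hence is $\leq p$; the continuity axiom then forces $\sum_{(m,n)\in\supp(f^\otimes)}\widehat{\phi}(m,n)\leq p=\sum_{(m,n)\in\supp(f)}\widehat{\phi}(m,n)$. Combining with the first inequality gives the claimed equality. I expect the only genuine step to be the application of Lemma~\ref{lem.ttensor} --- it is precisely the device that promotes the trivial bound ``$p$ dominates the $\widehat{\phi}$-values on $\supp(f)$'' to the non-trivial bound ``$p$ dominates the $\widehat{\phi}$-values on the a priori much larger set $\supp(f^\otimes)$'' --- while everything else is bookkeeping with the summation axioms, idempotence, and continuity. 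The main point to be careful about is simply checking the hypotheses of Lemma~\ref{lem.ttensor}, which hold here because $\widehat{\phi}$ is middle $S$-linear and bicontinuous by assumption and $p\in P$.
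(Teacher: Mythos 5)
Your proposal is correct and follows exactly the route the paper intends: the paper states Lemma \ref{lem.summidslinffotimes} without writing out the argument but flags Lemma \ref{lem.ttensor} as the key device, and you use it precisely as intended, taking $p$ to be the sum over $\supp(f)$ so that the resulting tensor $t$ contains $\supp(f)$ and hence $\supp(f^\otimes)$ in its support, then finishing with monotonicity of sums over subsets and the continuity axiom. No gaps; the bookkeeping steps (term $\leq$ sum, monotonicity in the index set, finite sums as joins, antisymmetry of the canonical order) are all valid in an idempotent complete continuous monoid.
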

One uses this formula to establish the universal property of $\hotimes_S$:
\begin{prop} \label{prop.universalprop} (Universal Property.)
Let $P$ be any continuous, idempotent, complete, commutative monoid.
For each middle $S$-linear, bicontinuous map $\widehat{\phi}: M\times N \to P$, there
exists a unique continuous monoid homomorphism $\phi: M\hotimes_S N \to P$ such that
\[ \xymatrix@R=15pt{
M\times N \ar[r]^{\widehat{\phi}} \ar[d] & P \\
M\hotimes_S N \ar@{..>}[ru]_{\phi} &
} \]
commutes. 
\end{prop}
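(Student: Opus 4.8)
The plan is to construct $\phi$ by the obvious support-sum formula and then read off all of its properties from Lemma~\ref{lem.summidslinffotimes} and Proposition~\ref{prop.contidemsemiring}. For a tensor $f\in M\hotimes_S N$ --- which is literally a function $M\times N\to\bool$, so that $\supp(f)$ makes sense --- I set
\[
  \phi(f) = \sum_{(m,n)\in\supp(f)}\widehat\phi(m,n),
\]
an element of $P$ that is well-defined by the completeness of $P$; there is no quotient to worry about here, unlike in the algebraic tensor product. To see that the triangle commutes, recall that $m\hotimes n = (\chi_{(m,n)})^\otimes$ and that $\supp(\chi_{(m,n)})=\{(m,n)\}$; Lemma~\ref{lem.summidslinffotimes} applied to $f=\chi_{(m,n)}$ then gives $\phi(m\hotimes n)=\sum_{(m',n')\in\supp((\chi_{(m,n)})^\otimes)}\widehat\phi(m',n')=\widehat\phi(m,n)$, as desired.

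Next I would check that $\phi$ is a continuous homomorphism of complete monoids. Given any family $(f_i)_{i\in I}$ of tensors, we have $\bigoplus_{i\in I}f_i=(\sum_{i\in I}f_i)^\otimes$, so Lemma~\ref{lem.summidslinffotimes} (with $f=\sum_{i\in I}f_i$) yields
\[
  \phi\!\left(\bigoplus_{i\in I}f_i\right)
  = \sum_{(m,n)\in\supp(\sum_{i\in I}f_i)}\widehat\phi(m,n)
  = \sum_{(m,n)\in\bigcup_{i\in I}\supp(f_i)}\widehat\phi(m,n).
\]
On the other hand, by the partition axiom for the summation law of $P$, the quantity $\sum_{i\in I}\phi(f_i)=\sum_{i\in I}\bigl(\sum_{(m,n)\in\supp(f_i)}\widehat\phi(m,n)\bigr)$ is the sum of the family $(\widehat\phi(m,n))$ indexed by the \emph{disjoint} union $\bigsqcup_{i\in I}\supp(f_i)$. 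Since $P$ is continuous, idempotent and complete, Proposition~\ref{prop.contidemsemiring} guarantees that this last sum depends only on the underlying set of its values, which coincides with the set of values of the family indexed by the ordinary union $\bigcup_{i\in I}\supp(f_i)$. Hence the two sums agree, giving $\phi(\bigoplus_i f_i)=\sum_i\phi(f_i)$. Taking $I=\varnothing$ shows $\phi(\bz)=0_P$, taking $I$ two-element shows $\phi(f\oplus g)=\phi(f)+\phi(g)$, and the general case is precisely the continuity of $\phi$. Note that it is here that Lemma~\ref{lem.summidslinffotimes} does the essential work: a priori $\supp(\bz)$ or $\supp(f^\otimes)$ can be strictly larger than $\supp(0)$ or $\supp(f)$, and the lemma is exactly what allows one to ignore this phantom support in every sum.

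It remains to prove uniqueness. If $\psi\colon M\hotimes_S N\to P$ is any continuous monoid homomorphism with $\psi(m\hotimes n)=\widehat\phi(m,n)$ for all $(m,n)$, then for every tensor $f$ one has $f=f^\otimes=\bigoplus_{(m,n)\in\supp(f)}m\hotimes n$, whence, by continuity of $\psi$,
\[
  \psi(f)=\sum_{(m,n)\in\supp(f)}\psi(m\hotimes n)
  =\sum_{(m,n)\in\supp(f)}\widehat\phi(m,n)=\phi(f),
\]
so $\psi=\phi$. The one place demanding genuine care is the passage, in the homomorphism and continuity step, between sums over unions of supports and sums over disjoint unions of supports; this is exactly where Proposition~\ref{prop.contidemsemiring} is indispensable. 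Everything else is bookkeeping, the substantive content having already been packaged into Lemma~\ref{lem.summidslinffotimes}, so I do not anticipate any further obstacle.
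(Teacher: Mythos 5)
Your proposal is correct and follows the paper's own route: the paper defines $\phi$ by exactly the same support-sum formula $\phi(f)=\sum_{(m,n)\in\supp f}\widehat{\phi}(m,n)$ and leaves the verifications and uniqueness to the reader. Your filled-in verifications use precisely the tools the paper set up for this purpose (Lemma \ref{lem.summidslinffotimes} for the phantom-support issue and Proposition \ref{prop.contidemsemiring} for passing between unions and disjoint unions of supports), and they are carried out correctly.
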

\begin{proof}
Given $\widehat{\phi}$, we define $\phi$ on a tensor $f \in M\hotimes_S N$ as
\[ \phi (f) = \sum_{(m,n)\in \supp f} \widehat{\phi}(m,n) \]
and leave the required verifications, as well as uniqueness, to the reader.
\end{proof}

Let $R$ and $T$ be semirings and suppose that, in addition to our standing assumptions,
$M$ is an $R$-$S$-bisemimodule which is complete as a left $R$-semimodule, and
$N$ is an $S$-$T$-bisemimodule which is complete as a right $T$-semimodule.
Then the tensor product $M\hotimes_S N$ can be endowed with the structure of
an $R$-$T$-semimodule in a natural way, as we will show next.
Fix an element $r\in R$ and consider the middle $S$-linear, bicontinuous map
\[ \widehat{\alpha}:M\times N \longrightarrow M\hotimes_S N,~
   \widehat{\alpha}(m,n)=(rm)\hotimes n. \]
By the universal property, Proposition \ref{prop.universalprop}, there exists a unique
continuous homomorphism of additive monoids
$\alpha_r: M\hotimes_S N \rightarrow M\hotimes_S N$
such that $\alpha_r (m\hotimes n)=\widehat{\alpha}(m,n)=(rm)\hotimes n.$
We define a left scalar multiplication by
\[ R\times (M\hotimes_S N) \longrightarrow (M\hotimes_S N),~
  r\cdot f = \alpha_r (f). \]
Similarly, for $t\in T,$ there exists a unique
continuous homomorphism of additive monoids
$\beta_t: M\hotimes_S N \longrightarrow M\hotimes_S N$
such that $\beta_t (m\hotimes n)=\widehat{\beta}(m,n)=m\hotimes (nt).$
We define a right scalar multiplication by
\[ (M\hotimes_S N)\times T \longrightarrow (M\hotimes_S N),~
  f\cdot t = \beta_t (f). \]
The left and right scalar multiplications defined above make $M\hotimes_S N$
into an $R$-$T$-bisemimodule. Furthermore, $M\hotimes_S N$ is complete
as a left $R$-semimodule and complete as a right $T$-semimodule.
\begin{prop} (Bisemimodule Universality.)
Let $R,T$ be semirings and let $P$ be any idempotent $R$-$T$-bisemimodule equipped with
a summation law that makes it into a continuous complete left $R$-semimodule and into a continuous
complete right $T$-semimodule. For each $R_S T$-linear bicontinuous map
$\widehat{\phi}:M\times N\to P,$ there exists a unique $R$-$T$-bisemimodule homomorphism
$\phi: M\hotimes_S N\to P$ such that $\phi (m\hotimes n)=\widehat{\phi}(m,n)$ for all
$(m,n)\in M\times N$.
\end{prop}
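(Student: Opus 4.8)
The plan is to bootstrap from the monoid-level universal property already proved in Proposition \ref{prop.universalprop}. An $R_S T$-linear map is in particular middle $S$-linear, and the bicontinuity hypothesis is unchanged, so Proposition \ref{prop.universalprop} yields a \emph{unique} continuous homomorphism $\phi\colon M\hotimes_S N\to P$ of commutative monoids with $\phi(m\hotimes n)=\widehat{\phi}(m,n)$ for all $(m,n)\in M\times N$. What then remains is to check that this $\phi$ is compatible with the $R$- and $T$-actions and to record uniqueness; completeness of $M\hotimes_S N$ as a left $R$-semimodule and as a right $T$-semimodule was already established just before the statement.

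Fix $r\in R$. I would compare the two maps $M\hotimes_S N\to P$ given by $f\mapsto \phi(\alpha_r f)$ and $f\mapsto r\cdot\phi(f)$, where $\alpha_r$ is the continuous additive endomorphism of $M\hotimes_S N$ with $\alpha_r(m\hotimes n)=(rm)\hotimes n$ used to define the left action. Both are continuous monoid homomorphisms: the first as a composite of continuous maps; the second because $P$ is a complete left $R$-semimodule, so $p\mapsto rp$ is an additive map commuting with the summation law on $P$, whence it stays continuous after composition with the continuous $\phi$. On an elementary tensor the first gives $\phi((rm)\hotimes n)=\widehat{\phi}(rm,n)$, while the second gives $r\cdot\widehat{\phi}(m,n)=\widehat{\phi}(rm,n)$ by $R_S T$-linearity. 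Now the twisted pairing $(m,n)\mapsto \widehat{\phi}(rm,n)$ is again middle $S$-linear (using $(rm)s=r(ms)$ in $M$) and bicontinuous (using that $M$ is complete as a left $R$-semimodule), so Proposition \ref{prop.universalprop} applies to it and its uniqueness clause forces the two continuous homomorphisms above to coincide; hence $\phi(r\cdot f)=r\cdot\phi(f)$ for every tensor $f$. The right $T$-action is handled symmetrically with $\beta_t$ in place of $\alpha_r$, and the identity $(rf)t=r(ft)$ already holds in $M\hotimes_S N$, so $\phi$ is an $R$-$T$-bisemimodule homomorphism, and it is complete on each side since it commutes with the summation laws.

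For uniqueness, I would observe that any $R$-$T$-bisemimodule homomorphism $\psi$ with $\psi(m\hotimes n)=\widehat{\phi}(m,n)$ agrees with $\phi$ on all elementary tensors, hence on every finite $\oplus$-sum of elementary tensors; since every tensor $f$ equals $\bigoplus_{(m,n)\in\supp f}m\hotimes n$, agreement on all of $M\hotimes_S N$ is then precisely the assertion that $\psi$ respects this (possibly infinite) summation, i.e. that $\psi$ is continuous. The constructed $\phi$ is continuous, so uniqueness holds in the relevant class; I expect this to be the only point needing explicit care, and it is worth stating that the uniqueness assertion is understood for continuous bisemimodule homomorphisms (the map produced being one of them). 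Everything else — checking that the twisted pairings $(m,n)\mapsto\widehat{\phi}(rm,n)$ and $(m,n)\mapsto\widehat{\phi}(m,nt)$ satisfy the hypotheses of Proposition \ref{prop.universalprop}, and that composites of continuous maps are continuous — is mechanical, so no genuine obstacle is anticipated beyond that bookkeeping.
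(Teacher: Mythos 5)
Your argument is correct and is exactly the route the paper takes: its proof of this proposition consists of the single remark that it is straightforward from the monoidal universal property, i.e.\ from Proposition \ref{prop.universalprop}, which is precisely your bootstrap, with the scalar compatibility obtained (as you do) by comparing two continuous extensions of the twisted pairings $(m,n)\mapsto\widehat{\phi}(rm,n)$ and $(m,n)\mapsto\widehat{\phi}(m,nt)$. Your caveat that the uniqueness clause must be read among \emph{continuous} homomorphisms --- since a general bisemimodule homomorphism need not commute with the infinite sums required to pass from elementary tensors to arbitrary tensors --- is well taken and matches the convention already present in Proposition \ref{prop.universalprop}.
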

The proof is straightforward, relying on the monoidal universal property already established.
We come to the main result concerning the complete tensor product $\hotimes_S$.
Let $A$ and $B$ be sets. Let $S$ be a continuous, complete, idempotent semiring. Then
$\fun_S (A)$ and $\fun_S (B)$ are continuous, complete, idempotent $S$-bisemimodules.
\begin{thm}  \label{thm.funiso}
There is an isomorphism
\[ \fun_S (A)\hotimes_S \fun_S (B) \cong \fun_S (A\times B) \]
of $S$-bisemimodules.
\end{thm}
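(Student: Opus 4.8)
My plan is to realize the ``multiply the values'' map as the asserted isomorphism and to check it is one by exhibiting an explicit two-sided inverse. First I would introduce the completed-tensor analogue of the map $\mu$ of diagram~(\ref{equ.funatimesb}), namely the pairing
\[ \widehat{\beta}\colon \fun_S(A)\times \fun_S(B)\longrightarrow \fun_S(A\times B),\qquad
   \widehat{\beta}(f,g)(a,b)=f(a)\cdot g(b). \]
A pointwise computation, using infinite distributivity in the complete semiring $S$, shows that $\widehat{\beta}$ is middle $S$-linear, left $S$-linear in its first and right $S$-linear in its second variable (i.e.\ $S_SS$-linear), and bicontinuous; and $\fun_S(A\times B)$ is a continuous, idempotent, complete $S$-bisemimodule by Proposition~\ref{prop.funsemimod} and the remarks after it. Applying the universal property of $\hotimes_S$ (Proposition~\ref{prop.universalprop}) together with its bisemimodule refinement then gives a unique continuous $S$-bisemimodule homomorphism
\[ \widehat{\mu}\colon \fun_S(A)\hotimes_S \fun_S(B)\longrightarrow \fun_S(A\times B),\qquad
   \widehat{\mu}(f\hotimes g)=\widehat{\beta}(f,g). \]

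Next I would build the inverse, using that $\fun_S$ of a set carries a ``basis'' of characteristic functions. Write $\chi_a\in\fun_S(A)$, $\chi_b\in\fun_S(B)$, $\chi_{(a,b)}\in\fun_S(A\times B)$ for the pertinent characteristic functions, noting that $s\chi_a$ is unambiguous because $0,1\in S$ are central. Since $S$ is complete, $\fun_S(A\times B)$ is complete and every $h$ is the (possibly infinite) sum $h=\sum_{(a,b)}h(a,b)\chi_{(a,b)}$. I would then set
\[ \nu\colon \fun_S(A\times B)\longrightarrow \fun_S(A)\hotimes_S \fun_S(B),\qquad
   \nu(h)=\bigoplus_{(a,b)\in A\times B}\bigl(h(a,b)\chi_a\bigr)\hotimes \chi_b, \]
the infinite sum being legitimate because $\fun_S(A)\hotimes_S \fun_S(B)$ is a complete monoid. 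Using the infinite bilinearity of elementary tensors and the Fubini law~(\ref{equ.sumijmonoid}) for complete monoids, one verifies that $\nu$ is additive, continuous, and compatible with both scalar actions.

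Finally I would check $\widehat{\mu}\circ\nu=\id$ and $\nu\circ\widehat{\mu}=\id$. The first follows from continuity of $\widehat{\mu}$: $\widehat{\mu}(\nu(h))=\sum_{(a,b)}\widehat{\beta}(h(a,b)\chi_a,\chi_b)=\sum_{(a,b)}h(a,b)\chi_{(a,b)}=h$. For the second, every tensor equals a sum $\bigoplus_{(f,g)\in\supp(t)}f\hotimes g$ of elementary tensors and both maps are continuous, so it suffices to treat an elementary tensor $f\hotimes g$: expanding $f=\sum_a\chi_a f(a)$ and $g=\sum_b g(b)\chi_b$ by (infinite) bilinearity and middle $S$-linearity rewrites $f\hotimes g$ as $\bigoplus_{a,b}\bigl((f(a)g(b))\chi_a\bigr)\hotimes \chi_b$, which is exactly $\nu(\widehat{\mu}(f\hotimes g))$ since $\widehat{\mu}(f\hotimes g)$ is the function $(a,b)\mapsto f(a)g(b)$. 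Hence $\widehat{\mu}$ is a bijective $S$-bisemimodule homomorphism with inverse $\nu$, which proves the theorem. I expect the main obstacle to be purely organizational: justifying the rearrangements of infinite $\bigoplus$-sums via the Fubini law, and keeping the left/right scalar actions straight in the absence of commutativity of $S$ (ultimately harmless, since the coefficients $\chi_a(a')\in\{0,1\}$ are central). No conceptual difficulty should arise beyond the $\hotimes_S$ machinery already set up in this section.
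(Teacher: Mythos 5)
Your proposal is correct and follows essentially the same route as the paper: the forward map is induced from $(f,g)\mapsto\bigl((a,b)\mapsto f(a)g(b)\bigr)$ via the universal property of $\hotimes_S$, and the inverse is $h\mapsto\bigoplus_{(a,b)}(\chi_a\cdot h(a,b))\hotimes\chi_b$, exactly as in the paper's proof (which leaves the inverse verification to the reader, while you spell out the continuity/Fubini rearrangements and the harmless left/right placement of the central coefficients $\chi_a(a')\in\{0,1\}$).
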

\begin{proof}
A middle $S$-linear bicontinuous map
$\widehat{\alpha}: \fun_S (A)\times \fun_S (B) \rightarrow \fun_S (A\times B)$
is given by
$\widehat{\alpha} (F,G)(a,b) = F(a)G(b),~ (a,b)\in A\times B.$
By Proposition \ref{prop.universalprop}, $\widehat{\alpha}$ induces a unique continuous homomorphism
of additive monoids
\[ \alpha: \fun_S (A)\hotimes_S \fun_S (B) \longrightarrow \fun_S (A\times B) \]
such that $\alpha (F\hotimes G) = \widehat{\alpha} (F,G)$. In fact, $\alpha$ is an
$S$-bisemimodule homomorphism.

We denote the characteristic function of an element $a\in A$ by $\chi_a \in \fun_S (A)$.
Similarly we have $\chi_b \in \fun_S (B)$.
A continuous $S$-bisemimodule homomorphism
\[ \beta: \fun_S (A\times B) \longrightarrow \fun_S (A) \hotimes \fun_S (B) \]
is given by
\[ \beta (F) =  \bigoplus_{(a,b)\in A\times B} (\chi_a \cdot F(a,b))\hotimes \chi_b. \]
The maps $\alpha$ and $\beta$ are inverse to each other.
\end{proof}
As an application of Theorem \ref{thm.funiso},
we have for example
\[ \fun (A)\hotimes_S S \cong \fun (A) \hotimes_S \fun (\{ * \}) =
  \fun (A\times \{ * \}) \cong \fun (A). \]

For a fixed arbitrary semiring $S$, $\fun_S (-)$ is a contravariant functor $\fun_S: \mathbf{Sets}\to 
S\text{-}S\text{-}\mathbf{SAlgs}$ from the category of sets to the category of two-sided
$S$-semialgebras: A morphism $\phi:A\to B$ of sets induces a
morphism of two-sided $S$-semialgebras $\fun (\phi): \fun (B)\to \fun (A)$ by setting
$\fun (\phi)(f) = f\circ \phi$. 
Clearly, $\fun (\id_A) = \id_{\fun (A)}$ and
$\fun (\psi \circ \phi) = \fun (\phi)\circ \fun (\psi)$ for
$\psi: B\to C$.
The functor $\fun_S (-)$ is faithful. 

Let $S$ be a complete, idempotent and continuous semiring and let $A,B,C$ be sets.
By applying Theorem \ref{thm.funiso} twice, we may think of elements in
$\fun (A)\hotimes_S \fun (B)\hotimes_S \fun (C)$ as functions $A\times B \times C \to S$
on triples; similarly for an arbitrary finite number of factors.
A \emph{contraction}
\[ \gamma: \fun (A)\hotimes \fun (B)\hotimes \fun (B)\hotimes \fun (C)
 \longrightarrow \fun (A)\hotimes \fun (C) \]
can then be defined, using the summation law in $S$, by
\[ \gamma (f)(a,c) = \sum_{b\in B} f(a,b,b,c), \]
$f: A\times B\times B\times C \to S,$ $(a,c)\in A\times C$.
Given $f\in \fun (A)\hotimes \fun (B)$ and $g\in \fun (B)\hotimes \fun (C)$,
we shall also write
$\langle f,g \rangle = \gamma (f\hotimes g).$
This contraction appears in describing the behavior of our state sum
invariant under gluing of cobordisms. The proof of the following two statements is
straightforward.
\begin{prop} \label{prop.contractbilinear}
The contraction
\[ \langle -,- \rangle: (\fun (A)\hotimes \fun (B))\times
 (\fun (B)\hotimes \fun (C)) \longrightarrow
 \fun (A)\hotimes \fun (C) \]
is $S_S S$-linear.
\end{prop}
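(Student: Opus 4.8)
The plan is to pass from the completed tensor products to honest function semimodules by means of Theorem~\ref{thm.funiso}, and then to recognize the contraction as nothing but ``matrix multiplication over $S$''. Applying Theorem~\ref{thm.funiso} to the pairs $(A,B)$, $(B,C)$, $(A,C)$, and once more to $(A\times B,\,B\times C)$, we identify $\fun(A)\hotimes\fun(B)$ with $\fun(A\times B)$, the analogous products with $\fun(B\times C)$ and $\fun(A\times C)$, and $\fun(A)\hotimes\fun(B)\hotimes\fun(B)\hotimes\fun(C)$ with $\fun(A\times B\times B\times C)$. Under these identifications the element $f\hotimes g$ of $f\in\fun(A\times B)$ and $g\in\fun(B\times C)$ becomes the function $(a,b,b',c)\mapsto f(a,b)\,g(b',c)$ --- this is precisely the map $\widehat\alpha$ appearing in the proof of Theorem~\ref{thm.funiso} --- and, since the isomorphisms of that theorem are isomorphisms of $S$-bisemimodules, the operation $\oplus$ on each tensor product is carried to the pointwise addition of functions. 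Consequently the contraction takes the explicit form
\[ \langle f,g\rangle(a,c) \;=\; \gamma(f\hotimes g)(a,c) \;=\; \sum_{b\in B} f(a,b)\,g(b,c), \]
the sum existing because $S$ is complete, and it suffices to verify that this formula defines an $S_S S$-linear map $\fun(A\times B)\times\fun(B\times C)\to\fun(A\times C)$.

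These verifications are then routine consequences of the axioms recalled in Section~\ref{sec.monssemirings}, carried out pointwise in $(a,c)$. Additivity in the first variable, $\langle f+f',g\rangle=\langle f,g\rangle+\langle f',g\rangle$, follows from right distributivity in $S$ together with the partition (Fubini-type) axiom for the summation law, splitting the index set into two copies of $B$; additivity in the second variable is symmetric, using left distributivity, and $\langle 0,0\rangle=0$ is immediate since a sum of zeros is zero. Left $S$-homogeneity, $\langle sf,g\rangle=s\,\langle f,g\rangle$, holds because the left scalar action on $\fun(A\times B)$ is pointwise, $(sf)(a,b)=s\,f(a,b)$, and $S$ obeys the infinite left-distributivity law $\sum_b s\,x_b=s\sum_b x_b$; right $S$-homogeneity $\langle f,g s\rangle=\langle f,g\rangle s$ is the mirror statement with infinite right-distributivity. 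Middle $S$-linearity $\langle fs,g\rangle=\langle f,sg\rangle$ reduces to associativity of multiplication in $S$: at $(a,c)$ both sides equal $\sum_b f(a,b)\,s\,g(b,c)$, using $(fs)(a,b)=f(a,b)\,s$ and $(sg)(b,c)=s\,g(b,c)$.

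The one point deserving genuine attention --- and the mild obstacle in an otherwise mechanical proof --- is the bookkeeping with the iterated use of Theorem~\ref{thm.funiso}: one must confirm that the four identifications are mutually compatible, so that $f\hotimes g$ does correspond to $(a,b,b',c)\mapsto f(a,b)g(b',c)$ and so that the target monoid operation $\oplus$ on $\fun(A)\hotimes\fun(C)$ is the pointwise addition on $\fun(A\times C)$; it is exactly this compatibility that licenses computing $\langle f,g\rangle$ ``pointwise'' and thereby reduces every required equation to a finitely-stated identity in the complete semiring $S$ (no continuity beyond the standing hypotheses being needed, as the only infinite summation --- that over $B$ --- is handled by the distributivity and partition axioms).
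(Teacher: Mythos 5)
Your proof is correct and follows exactly the route the paper intends: the paper declares this verification ``straightforward'' and omits it, and since the contraction is itself defined through the identifications of Theorem \ref{thm.funiso} (viewing tensors as functions on products and $\gamma$ as summation over the repeated $B$-coordinate), the pointwise check of biadditivity, middle $S$-linearity, and the outer $S$-homogeneities via the distributivity and partition axioms of the complete summation law is precisely the intended argument. Your attention to the compatibility of the iterated identifications (so that $f\hotimes g$ really corresponds to $(a,b,b',c)\mapsto f(a,b)g(b',c)$ and $\oplus$ to pointwise addition) is the right point to flag, and it is guaranteed by the fact that the isomorphisms of Theorem \ref{thm.funiso} are bisemimodule isomorphisms with $\alpha(F\hotimes G)(x,y)=F(x)G(y)$.
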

\begin{prop} \label{prop.contractassoc}
The contraction $\langle -,- \rangle$ is associative, that is, given
four sets $A,B,C,D$ and elements $f\in \fun (A)\hotimes \fun (B),$
$g\in \fun (B)\hotimes \fun (C)$ and $h\in \fun (C)\hotimes \fun (D)$,
the equation
\[ \langle \langle f,g \rangle, h \rangle = \langle f, \langle g,h \rangle \rangle \]
holds in $\fun (A)\hotimes \fun (D)$.
\end{prop}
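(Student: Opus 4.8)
The plan is to carry everything over to the function-space picture furnished by Theorem~\ref{thm.funiso}, where the contraction admits a concrete pointwise description, and then to deduce the asserted identity from infinite distributivity in the complete semiring $S$ together with the Fubini-type relation~(\ref{equ.sumijmonoid}). Iterating Theorem~\ref{thm.funiso} (and using that $\hotimes$ is associative, which follows from its universal property) identifies $\fun(A)\hotimes\fun(B)\hotimes\fun(B)\hotimes\fun(C)$ with $\fun(A\times B\times B\times C)$, and similarly for the longer products of factors occurring below; under these identifications the elementary tensor $f\hotimes g$ of $f\in\fun(A)\hotimes\fun(B)$ and $g\in\fun(B)\hotimes\fun(C)$ is the function $(a,b,b',c)\mapsto f(a,b)\,g(b',c)$. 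Since $\gamma(F)(a,c)=\sum_{b\in B}F(a,b,b,c)$ by definition, this yields
\[
\langle f,g\rangle(a,c)=\sum_{b\in B}f(a,b)\,g(b,c),\qquad
\langle g,h\rangle(b,d)=\sum_{c\in C}g(b,c)\,h(c,d).
\]

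With these formulas in hand the verification is immediate. For $(a,d)\in A\times D$,
\[
\langle\langle f,g\rangle,h\rangle(a,d)
=\sum_{c\in C}\langle f,g\rangle(a,c)\,h(c,d)
=\sum_{c\in C}\Bigl(\sum_{b\in B}f(a,b)\,g(b,c)\Bigr)h(c,d)
=\sum_{c\in C}\sum_{b\in B}f(a,b)\,g(b,c)\,h(c,d),
\]
the last step using infinite right-distributivity of $\cdot$ over the summation law of $S$. Dually,
\[
\langle f,\langle g,h\rangle\rangle(a,d)
=\sum_{b\in B}f(a,b)\,\langle g,h\rangle(b,d)
=\sum_{b\in B}f(a,b)\Bigl(\sum_{c\in C}g(b,c)\,h(c,d)\Bigr)
=\sum_{b\in B}\sum_{c\in C}f(a,b)\,g(b,c)\,h(c,d),
\]
now by infinite left-distributivity. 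By~(\ref{equ.sumijmonoid}), applied to the family $\bigl(f(a,b)\,g(b,c)\,h(c,d)\bigr)_{(b,c)\in B\times C}$, both iterated sums equal $\sum_{(b,c)\in B\times C}f(a,b)\,g(b,c)\,h(c,d)$, so the two functions agree at every point, which is the assertion. Note that the order $f\cdot g\cdot h$ of the factors is the same on both sides, so commutativity of $S$ is not needed.

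The one step that deserves attention is the passage in the first paragraph from elementary tensors to arbitrary tensors: one must know that $(f\hotimes g)(a,b,b',c)=f(a,b)g(b',c)$, and hence the pointwise formula for $\langle\cdot,\cdot\rangle$, remains valid for general tensors $f,g,h$ and not merely for the $\chi_a\hotimes\chi_b$. This is handled by writing each tensor as a (possibly infinite) $\oplus$-sum of elementary tensors, as is possible by the discussion preceding Proposition~\ref{prop.universalprop}, and invoking the middle $S$-linearity and bicontinuity of $\hotimes$ together with the $S_S S$-linearity of the contraction recorded in Proposition~\ref{prop.contractbilinear}; alternatively, both composites $\langle\langle-,-\rangle,-\rangle$ and $\langle-,\langle-,-\rangle\rangle$ are $S_S S$-multilinear and continuous in each argument, hence determined by their values on elementary tensors, for which the identity is evident. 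The remaining manipulations are the routine bookkeeping already set up around the definition of $\gamma$.
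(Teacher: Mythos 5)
Your proof is correct, and it is precisely the ``straightforward'' argument the paper leaves to the reader: identify everything with functions via Theorem~\ref{thm.funiso}, observe that $\langle f,g\rangle(a,c)=\sum_{b\in B}f(a,b)\,g(b,c)$, and conclude by associativity of multiplication, infinite distributivity, and the Fubini relation~(\ref{equ.sumijmonoid}). Your closing worry is in fact unnecessary: since $f$ and $g$ are themselves (identified with) single functions on $A\times B$ and $B\times C$, the element $f\hotimes g$ is an elementary tensor in $\fun(A\times B)\hotimes\fun(B\times C)$, so the pointwise formula $(f\hotimes g)(a,b,b',c)=f(a,b)\,g(b',c)$ holds directly from the definition of the isomorphism $\alpha$ in Theorem~\ref{thm.funiso}, with no further reduction to characteristic functions needed.
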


\section{Profinite Idempotent Completion}
\label{sec.proidemcompletion}

We shall describe how a subset of a vector space which is invariant under rescaling
by powers of a given scalar can be completed in a natural way to an idempotent,
complete $\nat[\tau]$-semimodule. We are mostly interested in applying this construction
to subsets that are finitely generated with respect to rescaling, i.e. ``projectively finite''.
We call this process \emph{profinite idempotent completion}. When the given subset
consists of linear maps, so that composition and tensor product are defined, the profinite idempotent
completion in fact carries two internal multiplications that make it into a (complete, idempotent, continuous) semiring.
One comes from the composition of linear maps (we call this the \emph{composition semiring} $Q^c$
of the profinite idempotent completion $Q$), the other comes from the tensor product of linear
maps (and will be called the \emph{monoidal semiring} $Q^m$ of $Q$.

\begin{defn}
Let $\lambda \in \real - \{ 0,1,-1 \}$ be a scalar. A subset $H\subset V- \{ 0 \}$ of a real vector space $V$
is called \emph{$\lambda$-profinite} (projectively finite) if there exists a finite
subset $H_0 \subset H$ such that every $h\in H$ is of the form $h = \lambda^n h_0$
for suitable $n\in \nat,$ $h_0 \in H_0$. In this context, we say that $H_0$ 
\emph{generates} $H$.
\end{defn}

Given a $\lambda$-profinite set, the set of \emph{reducible elements} of a generating set $H_0$
is defined to be
\[ H^{\operatorname{red}}_0 = \{ h \in H_0 ~|~ h = \lambda^n h_0 \text{ for some }
  n\in \nat,~ n>0,~ h_0 \in H_0 \}. \]
The \emph{minimal shell} $S(H_0)$ of $H_0$ is defined as the set $S(H_0) = H_0 - 
H^{\operatorname{red}}_0$ of irreducible elements.
\begin{lemma} \label{lem.sh0generates}
The minimal shell $S(H_0)$ generates $H$. Moreover, given $h\in H$, there exist
unique $n\in \nat,$ $h_0 \in S(H_0)$ with $h= \lambda^n h_0$.
\end{lemma}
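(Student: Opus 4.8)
The plan is to establish both assertions — that $S(H_0)$ still generates $H$, and the uniqueness of the representation $h = \lambda^n h_0$ — by exploiting the key fact that $\lambda$ is not $0, 1$ or $-1$, which is precisely what rules out the pathological coincidences $\lambda^n v = \lambda^m v$ with $n \neq m$ for a nonzero vector $v$.

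First I would prove that $S(H_0)$ generates $H$. Since $H_0$ generates $H$, it suffices to show every $h_0 \in H_0$ is of the form $\lambda^k s$ with $s \in S(H_0)$, $k \in \nat$. I would argue by a descent/minimality argument on $H_0^{\operatorname{red}}$: if $h_0 \in S(H_0)$ we are done with $k = 0$; otherwise $h_0 = \lambda^{n} h_1$ for some $n > 0$ and $h_1 \in H_0$, and I repeat the argument on $h_1$. This process must terminate because $H_0$ is finite and each step cannot cycle — a cycle would force $h_0 = \lambda^N h_0$ with $N > 0$, hence $(\lambda^N - 1) h_0 = 0$, contradicting $h_0 \neq 0$ (elements of $H \subset V - \{0\}$ are nonzero) together with $\lambda^N \neq 1$, which holds since $|\lambda| \neq 1$ and $\lambda \neq 0$. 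Concatenating the exponents picked up along the chain gives $h_0 = \lambda^k s$ for the terminal irreducible element $s \in S(H_0)$. Combined with $h = \lambda^m h_0$ for some $h_0 \in H_0$, this yields $h = \lambda^{m+k} s$ with $s \in S(H_0)$.

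Next I would prove uniqueness. Suppose $h = \lambda^n h_0 = \lambda^m h_0'$ with $h_0, h_0' \in S(H_0)$ and, without loss of generality, $n \geq m$. Then $\lambda^{n-m} h_0 = h_0'$ in $V$. If $n > m$, this exhibits $h_0'$ as $\lambda^{n-m} h_0$ with $n - m > 0$ and $h_0 \in H_0$, so $h_0' \in H_0^{\operatorname{red}}$, contradicting $h_0' \in S(H_0) = H_0 \setminus H_0^{\operatorname{red}}$. Hence $n = m$, and then $\lambda^0 h_0 = h_0'$, i.e. $h_0 = h_0'$. This disposes of uniqueness. (One should note that the exponent $n$ attached to a given $h \in H$ is intrinsic: it is the largest $k$ such that $\lambda^{-k} h$ still lies in $H$, though the argument above does not need this reformulation.)

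I do not expect any serious obstacle here; the statement is essentially bookkeeping once one isolates the single nontrivial input, namely that $(\lambda^N - 1)v \neq 0$ for $v \neq 0$ and $N \geq 1$, which is exactly why the definition excludes $\lambda \in \{0,1,-1\}$. The only point requiring a modicum of care is the termination of the descent in the first part, where one must rule out cycles rather than merely invoking finiteness of $H_0$; the cycle-exclusion is handled by the same nonvanishing fact.
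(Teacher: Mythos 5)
Your proof is correct and follows essentially the same route as the paper: reduce repeatedly through $H_0^{\operatorname{red}}$, with termination guaranteed because the chain cannot cycle (a cycle would give $(\lambda^N-1)h=0$ for $h\neq 0$, impossible since the only real roots of unity are $\pm 1$) and $H_0$ is finite, and uniqueness by observing that a positive exponent difference would make an element of $S(H_0)$ reducible, followed by cancellation of $\lambda\neq 0$. The paper phrases termination via pairwise distinctness of the chain elements and the cardinality bound $c$, but this is the same argument as your finiteness-plus-no-cycles formulation.
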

\begin{proof}
We prove first the claim of generation.
Let $c$ denote the cardinality of $H_0$, a finite set.
Given $h\in H,$ we can write $h=\lambda^{n_1} h_1,$ $n_1\in \nat,$ $h_1 \in H_0$.
If $h_1 \in S(H_0),$ we are done. Suppose that $h_1 \not\in S(H_0)$.
Then $h_1 \in H^{\operatorname{red}}_0$, whence there is an $n_2 \in \nat,$
$n_2 >0$, and an $h_2 \in H_0$ such that $h_1 = \lambda^{n_2} h_2$.
If $h_2 \in S(H_0),$ we are done, for then $h=\lambda^{n_1 + n_2} h_2.$
The vectors $h_1$ and $h_2$ cannot be equal, for if they were, then
$(1-\lambda^{n_2})h_2 =0$, which is impossible, as $h_2 \not=0$, $n_2 >0$ and
in the field of real numbers the only roots of unity are $\pm 1$.
So the set $\{ h_1, h_2 \}$ has cardinality $2$.
Suppose that $h_2 \not\in S(H_0)$, that is, $h_2 \in H^{\operatorname{red}}_0$.
Reducing $h_2,$ we obtain $h_3 \in H_0$ with $h_2 =\lambda^{n_3} h_3,$
$n_3 >0$. Either $h_3 \in S(H_0)$, in which case we are done, or $h_3$ is reducible
and $\{ h_1, h_2, h_3 \}$ has cardinality $3$. Continuing this reduction process,
we must arrive after at most $c$ reductions at an element $h_i \in S(H_0)$,
for otherwise we would have obtained a subset 
$\{ h_1, \ldots, h_{c+1} \} \subset H_0$ of cardinality $c+1$, which is impossible.
Since
\[ h= \lambda^{\sum_{j=1}^i n_j} h_i,~ h_i \in S(H_0), \]
the claim of generation is established. 

If $h= \lambda^n h_0 = \lambda^m h'_0,$ with $h_0, h'_0 \in S(H_0),$
then $m\geq n$ or $n\geq m$, say $m\geq n$. Writing $m=n+k,$
$k\in \nat,$ we have $h_0 = \lambda^k h'_0$. If $k$ were positive, then
$h_0 \in H^{\operatorname{red}}_0$, a contradiction. Thus $k=0$ and
consequently $m=n$. The equality $h_0 = h'_0$ is implied by
$\lambda^n h_0 = \lambda^m h'_0 = \lambda^n h'_0$ (and $\lambda \not= 0$).
\end{proof}
\begin{remark}
The above lemma becomes false if we allowed $\lambda =1$. We could then
take any finite set $H_0$ of nonzero vectors and $H=H_0$, which is $1$-profinite.
But $H^{\operatorname{red}}_0 =H_0$ and hence the minimal shell
$S(H_0)$ is empty and does not generate $H$.
\end{remark}
\begin{lemma}  \label{lem.uniqueminshell}
Any two generating sets $H_0, H'_0$ of $H$ have the same minimal shell
$S(H_0)=S(H'_0)$.
\end{lemma}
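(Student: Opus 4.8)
The plan is to show that the minimal shell is characterized intrinsically in terms of $H$ alone, not in terms of any particular generating set, from which independence is immediate. Concretely, I claim that for $h \in H$,
\[ h \in S(H_0) \iff h \neq \lambda^n h' \text{ for all } h' \in H \text{ and all } n \in \nat \text{ with } n > 0. \]
Call an element $h \in H$ \emph{$H$-irreducible} if it satisfies the right-hand side. The set of $H$-irreducible elements depends only on $H$ (and $\lambda$), so once the claimed equivalence is established for an arbitrary generating set $H_0$, we get $S(H_0) = \{ h \in H : h \text{ is } H\text{-irreducible}\} = S(H'_0)$ for any two generating sets $H_0, H'_0$.

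First I would prove the forward implication by contraposition: suppose $h \in H_0$ and $h = \lambda^n h'$ for some $h' \in H$ and some $n > 0$. Since $H_0$ generates $H$, write $h' = \lambda^m h_0$ with $m \in \nat$ and $h_0 \in H_0$ (we may even take $h_0 \in S(H_0)$ by Lemma \ref{lem.sh0generates}, though $H_0$ suffices here). Then $h = \lambda^{n+m} h_0$ with $n + m > 0$ and $h_0 \in H_0$, so $h \in H_0^{\operatorname{red}}$, i.e. $h \notin S(H_0)$. For the reverse implication, suppose $h \in H_0^{\operatorname{red}}$, so $h = \lambda^n h_0$ with $n > 0$ and $h_0 \in H_0 \subseteq H$; then $h$ is not $H$-irreducible. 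This gives exactly the contrapositive of the reverse direction. Hence $S(H_0)$ equals the set of $H$-irreducible elements, for \emph{any} generating set $H_0$, and the lemma follows.

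The one point requiring a little care — and the only place the hypothesis $\lambda \notin \{0,1,-1\}$ is used, exactly as in the remark following Lemma \ref{lem.sh0generates} — is implicit in the backward direction: one must make sure that the relation ``$h = \lambda^n h'$ with $n > 0$'' genuinely cannot hold with $h = h'$, since otherwise the notion of $H$-irreducibility would collapse. But $h = \lambda^n h$ with $h \neq 0$ and $n > 0$ forces $\lambda^n = 1$, impossible over $\real$ for $\lambda \neq \pm 1$ (and trivially for $\lambda = 0$). So $H$-irreducibility is a nonvacuous condition and the argument goes through. I do not expect any serious obstacle here; the content of the proof is simply recognizing that $H^{\operatorname{red}}_0$ can be rewritten using elements of $H$ rather than of $H_0$, which is precisely what the generation property of Lemma \ref{lem.sh0generates} buys us.
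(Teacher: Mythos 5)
Your strategy---characterizing the minimal shell intrinsically as the set of elements of $H$ that are irreducible with respect to all of $H$---is sound and is a genuinely different route from the paper's. The paper argues directly with the two generating sets: given $h\in S(H'_0)$, it writes $h=\lambda^n h_0$ with $h_0\in S(H_0)$ and then $h_0=\lambda^m h'_0$ with $h'_0\in S(H'_0)$ (both times invoking Lemma \ref{lem.sh0generates}), and forces $n=m=0$ from the definition of reducibility, concluding $h=h_0\in S(H_0)$; symmetry gives the other inclusion. Your version bypasses Lemma \ref{lem.sh0generates} altogether and uses only the definition of a generating set, which is a small economy and arguably more transparent, since it exhibits the shell as a set determined by $H$ alone.

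As written, though, the backward direction of your equivalence has a gap. The contrapositive of ``$H$-irreducible $\Rightarrow h\in S(H_0)$'' is ``$h\notin S(H_0)\Rightarrow h$ is not $H$-irreducible'', and $h\notin S(H_0)$ splits into two cases: $h\in H^{\operatorname{red}}_0$, which you handle, and $h\in H-H_0$, which you do not. The second case cannot be dropped: without it you have only identified $S(H_0)$ with the $H$-irreducible elements \emph{lying in} $H_0$, and that set is not a priori independent of the choice of $H_0$; you still need to know that every $H$-irreducible element of $H$ belongs to every generating set. The repair is one line, by the same device you use in the forward direction: if $h\in H-H_0$, write $h=\lambda^n h_0$ with $h_0\in H_0$ by generation; $n=0$ would give $h=h_0\in H_0$, a contradiction, so $n>0$ and $h$ is not $H$-irreducible. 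With that inserted, your characterization, and hence the lemma, follows. (A side remark: the hypothesis $\lambda\notin\{0,1,-1\}$ is not what your argument actually hinges on---if some $h$ satisfied $h=\lambda^n h$, your characterization would simply make every shell empty and the lemma would still hold; the scalar condition is what Lemma \ref{lem.sh0generates} needs, and your route does not invoke that lemma.)
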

\begin{proof}
Let $h$ be an element of $S(H'_0)$. Since $h\in H$ and $S(H_0)$
generates $H$ by Lemma \ref{lem.sh0generates}, there are
$h_0 \in S(H_0)$ and $n\in \nat$ with $h=\lambda^n h_0$.
Since $h_0 \in H$ and $S(H'_0)$ also generates $H$, there are
$h'_0 \in S(H'_0)$ and $m\in \nat$ with $h_0 = \lambda^m h'_0$.
Therefore, $h = \lambda^{n+m} h'_0$. If $n+m$ were positive,
then $h \in H^{'\operatorname{red}}_0$, a contradiction to
$h\in S(H'_0)$. Thus $m+n=0$, so in particular $n=0$. Hence
$h=h_0$, from which we conclude that $h\in S(H_0)$. We have shown
that $S(H'_0)\subset S(H_0)$. The reverse inclusion follows from symmetry.
\end{proof}
In light of Lemma \ref{lem.uniqueminshell}, we may define the
\emph{minimal shell $S(H)$ of $H$} to be the minimal shell 
$S(H)=S(H_0)$ of any (finite) generating set $H_0$.

\begin{defn}
A subset $H\subset V$ of a real vector space $V$ is called \emph{$\lambda$-scalable}
if for every $h\in H,$ the product $\lambda h$ is again in $H$.
\end{defn}
A $\lambda$-scalable set $H$ is naturally equipped with an action
$\tau^i h = \lambda^i h\in H$ of the monoid $\nat$. Hence,
by Lemma \ref{lem.antausemimod}, $\fm (H)$ is an $\nat [\tau]$-semimodule.
Thus the $\nat [\tau]$-semimodule
\[ Q(H) = \fm (H) \otimes_{\nat [\tau]} \bool[[q]] \]
is defined. It was already pointed out in Section \ref{sec.monssemirings}
that $Q(H)$ is idempotent. We omit the easy proof of the following lemma.

\begin{lemma}  \label{lem.fmhnormalform}
Let $H \subset V$ be a $\lambda$-scalable $\lambda$-profinite set and
$S(H) = \{ s_1, \ldots, s_c \}$ its minimal shell.
Given $a\in \fm (H),$ there exist unique polynomials
$p_\sigma = p_\sigma (\tau)\in \nat [\tau],$ $\sigma =1,\ldots, c,$
such that $a = \sum_{\sigma =1}^c p_\sigma s_\sigma.$
\end{lemma}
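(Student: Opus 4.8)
The plan is to reduce the statement to the structure of $\fm(H)$ as a free commutative monoid on $H$, combined with the unique factorization of elements of $H$ provided by Lemma \ref{lem.sh0generates}. First I would recall that $\fm(H)$ consists of finite formal sums $\sum_{i} \alpha_i h_i$ with $\alpha_i \in \nat$ and $h_i \in H$, and that the $\nat[\tau]$-semimodule structure is the one from Lemma \ref{lem.antausemimod}, so that $\tau^n \cdot h = \lambda^n h \in H$ (using that $H$ is $\lambda$-scalable). By Lemma \ref{lem.sh0generates}, every $h \in H$ can be written \emph{uniquely} as $h = \lambda^{n(h)} s_{\sigma(h)}$ for some $n(h) \in \nat$ and some index $\sigma(h) \in \{1,\ldots,c\}$; here $S(H) = S(H_0)$ is well-defined by Lemma \ref{lem.uniqueminshell}. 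This unique factorization is the engine of the whole argument.

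For existence, I would take an arbitrary $a = \sum_{i=1}^k \alpha_i h_i \in \fm(H)$ and rewrite each $h_i = \lambda^{n_i} s_{\sigma_i} = \tau^{n_i} \cdot s_{\sigma_i}$. Then $a = \sum_{i=1}^k \alpha_i \tau^{n_i} \cdot s_{\sigma_i} = \sum_{i=1}^k (\alpha_i \tau^{n_i}) \cdot s_{\sigma_i}$ by the semimodule action of Lemma \ref{lem.antausemimod}. Collecting together all indices $i$ with $\sigma_i = \sigma$ for each fixed $\sigma \in \{1,\ldots,c\}$ and setting $p_\sigma(\tau) = \sum_{i : \sigma_i = \sigma} \alpha_i \tau^{n_i} \in \nat[\tau]$ (with $p_\sigma = 0$ if no $i$ has $\sigma_i = \sigma$) yields $a = \sum_{\sigma=1}^c p_\sigma \cdot s_\sigma$, as desired.

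For uniqueness, suppose $\sum_{\sigma=1}^c p_\sigma s_\sigma = \sum_{\sigma=1}^c p'_\sigma s_\sigma$ in $\fm(H)$, where $p_\sigma = \sum_n a_{\sigma,n} \tau^n$ and $p'_\sigma = \sum_n a'_{\sigma,n}\tau^n$. Expanding the action, the left side equals $\sum_{\sigma,n} a_{\sigma,n}(\lambda^n s_\sigma)$ and the right side $\sum_{\sigma,n} a'_{\sigma,n}(\lambda^n s_\sigma)$, both viewed as elements of the free commutative monoid $\fm(H)$. The key point is that the elements $\lambda^n s_\sigma \in H$, as $(\sigma,n)$ ranges over $\{1,\ldots,c\}\times \nat$, are \emph{pairwise distinct} generators of $\fm(H)$: this is exactly the uniqueness clause of Lemma \ref{lem.sh0generates} (if $\lambda^n s_\sigma = \lambda^m s_{\sigma'}$ then $n=m$ and $\sigma = \sigma'$). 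Since $\fm(H)$ is free on $H$, coefficients of distinct generators must agree, so $a_{\sigma,n} = a'_{\sigma,n}$ for all $\sigma,n$, hence $p_\sigma = p'_\sigma$ for all $\sigma$.

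The only mild subtlety — and the step I would be most careful about — is making sure the bookkeeping in the existence part respects the semimodule axioms rather than just formal manipulation: one must use $(\alpha \tau^n)\cdot s = \alpha(\tau^n \cdot s) = \alpha(\lambda^n s)$ and additivity of the action over $\nat[\tau]$, all of which are guaranteed by Lemma \ref{lem.antausemimod}. Everything else is routine, which is presumably why the paper calls the proof easy and omits it.
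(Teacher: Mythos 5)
Your proof is correct, and since the paper explicitly omits the proof of Lemma \ref{lem.fmhnormalform} (``We omit the easy proof\dots''), your argument supplies exactly the intended routine reasoning: the unique factorization $h=\lambda^n s_\sigma$ from Lemma \ref{lem.sh0generates} combined with the freeness of $\fm(H)$ and the action formula of Lemma \ref{lem.antausemimod}. Both the existence step (collecting terms by shell element) and the uniqueness step (pairwise distinctness of the generators $\lambda^n s_\sigma$, hence matching coefficients in the free commutative monoid) are carried out correctly, so there is nothing to add.
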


Let $H \subset V$ be a $\lambda$-scalable $\lambda$-profinite set and
$S(H) = \{ s_1, \ldots, s_c \}$ its minimal shell. We shall use 
Lemma \ref{lem.fmhnormalform} to construct an $\nat [\tau]$-linear isomorphism
\[ \phi: Q(H) \longrightarrow \bigoplus_{\sigma =1}^c \bool[[q]]. \]
This will show in particular that $Q(H)$ is nonzero (for nonempty $H$)
and that a number of power series valued invariants can be extracted
from it. Let $\{ \beta_1, \ldots, \beta_c \}$ be the canonical basis of
$\bigoplus_{\sigma =1}^c \bool[[q]]$ regarded as a free $\bool[[q]]$-semimodule.
Given any $a\in \fm (H)$ and $b\in \bool[[q]]$, there are unique
polynomials $p_\sigma \in \nat [\tau]$ such that 
$a = \sum_\sigma p_\sigma s_\sigma$ by Lemma \ref{lem.fmhnormalform},
and we can define a $\nat [\tau]$-bilinear map
\[ \phi_0: \fm (H) \times \bool[[q]] \longrightarrow  \bigoplus_{\sigma =1}^c \bool[[q]],~
   \phi_0 (a,b) = \sum_\sigma (p_\sigma b)\beta_\sigma. \]
By the universal property of the algebraic tensor product of semimodules, there exists a
unique $\nat [\tau]$-linear map
\[ \phi:  Q(H) = \fm (H) \otimes_{\nat [\tau]} \bool[[q]] \longrightarrow  
   \bigoplus_{\sigma =1}^c \bool[[q]] \]
such that
\[ \xymatrix{
\fm (H) \times \bool[[q]] \ar[r]^{\phi_0} \ar[d] & \bigoplus_{\sigma =1}^c \bool[[q]] \\
Q(H) \ar@{..>}[ru]_{\phi}
} \]
commutes. The map $\phi$ is surjective: 
Any element $\sum_\sigma b_\sigma \beta_\sigma,$ $b_\sigma \in \bool[[q]],$ is in the image
of $\phi$, as
\[ \phi (\sum_\sigma s_\sigma \otimes b_\sigma) =
 \sum_\sigma \phi (s_\sigma \otimes b_\sigma) = 
 \sum_\sigma \phi_0 (s_\sigma, b_\sigma) = \sum_\sigma b_\sigma \beta_\sigma. \]
\begin{lemma} \label{lem.phiiso}
The map
\[ 
\psi: \bigoplus_\sigma \bool[[q]] \longrightarrow Q(H),~
\sum_\sigma b_\sigma \beta_\sigma \mapsto \sum_\sigma s_\sigma \otimes b_\sigma,
\]
is $\nat [\tau]$-linear and an inverse for $\phi$. In particular, $\phi$ and $\psi$ are
isomorphisms of $\nat [\tau]$-semimodules.
\end{lemma}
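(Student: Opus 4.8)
The plan is to verify directly that $\psi$ is a well-defined $\nat[\tau]$-linear map and that $\psi$ and $\phi$ are mutually inverse, using the normal form established in Lemma \ref{lem.fmhnormalform}. First I would check that $\psi$ is well-defined and $\nat[\tau]$-linear. The elements $\{\beta_1,\ldots,\beta_c\}$ form a free basis of $\bigoplus_\sigma \bool[[q]]$ over $\bool[[q]]$, so any element has a \emph{unique} expression $\sum_\sigma b_\sigma \beta_\sigma$ with $b_\sigma \in \bool[[q]]$; hence the assignment $\sum_\sigma b_\sigma\beta_\sigma \mapsto \sum_\sigma s_\sigma \otimes b_\sigma$ is unambiguous. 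Additivity is immediate from distributivity in $Q(H)$, and $\nat[\tau]$-linearity follows because $\tau$ acts on the $\bool[[q]]$-coordinates via the semiring morphism $\nat[\tau]\to\bool[[q]]$ on both sides (multiplication by $q$ in $\bool[[q]]$), and the tensor product $\otimes_{\nat[\tau]}$ is balanced: $s_\sigma \otimes (qb_\sigma) = (\tau s_\sigma)\otimes b_\sigma$, which is exactly what $\psi$ applied to $\tau\cdot(\sum b_\sigma\beta_\sigma)$ produces.

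Next I would compute $\phi\circ\psi$. Applying $\phi$ to $\psi(\sum_\sigma b_\sigma\beta_\sigma) = \sum_\sigma s_\sigma\otimes b_\sigma$ and using $\nat[\tau]$-linearity of $\phi$ together with the defining relation $\phi(s_\sigma\otimes b) = \phi_0(s_\sigma,b) = b\,\beta_\sigma$ (here the normal form of $s_\sigma\in\fm(H)$ is just $s_\sigma$ itself, i.e.\ $p_\sigma = 1$ and $p_{\sigma'} = 0$ for $\sigma'\neq\sigma$), one gets $\sum_\sigma b_\sigma\beta_\sigma$ back. This is precisely the surjectivity computation already recorded in the text just before the lemma, so $\phi\circ\psi = \id$.

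For the other composite $\psi\circ\phi$, I would argue on a general element of $Q(H)$. By the discussion in Section \ref{sec.monssemirings} (idempotency of $\bool[[q]]$), every element of $Q(H)$ can be written as $a\otimes b$ is not quite enough, but more usefully: writing a general element as $\sum_i a_i\otimes b_i$ and then, via Lemma \ref{lem.fmhnormalform} applied to each $a_i$, as $\sum_i (\sum_\sigma p_{i\sigma}s_\sigma)\otimes b_i = \sum_\sigma s_\sigma\otimes(\sum_i p_{i\sigma}b_i)$, one sees that every element of $Q(H)$ has the form $\sum_\sigma s_\sigma\otimes c_\sigma$ for suitable $c_\sigma\in\bool[[q]]$ (pushing the polynomials $p_{i\sigma}\in\nat[\tau]$ across the tensor product and into $\bool[[q]]$). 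For such an element, $\phi(\sum_\sigma s_\sigma\otimes c_\sigma) = \sum_\sigma c_\sigma\beta_\sigma$, and then $\psi(\sum_\sigma c_\sigma\beta_\sigma) = \sum_\sigma s_\sigma\otimes c_\sigma$, the original element. Hence $\psi\circ\phi = \id$.

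The main obstacle is the last step: one must be careful that the expression $\sum_\sigma s_\sigma\otimes c_\sigma$ for an arbitrary element of $Q(H)$ is handled correctly, i.e.\ that pushing an $\nat[\tau]$-coefficient $p_{i\sigma}(\tau)$ through the balanced tensor product to become $p_{i\sigma}(q)\in\bool[[q]]$ is legitimate and that the resulting $c_\sigma = \sum_i p_{i\sigma}(q)b_i$ is well-defined in $\bool[[q]]$. This is exactly the content of the balancing relations defining $\otimes_{\nat[\tau]}$ together with Lemma \ref{lem.antausemimod}, so no genuine difficulty arises; it is a matter of bookkeeping. An alternative, cleaner route is simply to observe that $\phi$ was already shown surjective, and that $\psi$ is a right inverse to $\phi$ (the $\phi\circ\psi=\id$ computation); since $\phi$ is surjective and admits a right inverse, to conclude it is bijective it then suffices to check $\psi$ is also a left inverse on generators of $Q(H)$ of the form $s_\sigma\otimes b$, where both composites are transparently the identity, and to invoke $\nat[\tau]$-linearity and the fact that such elements generate $Q(H)$ as established above.
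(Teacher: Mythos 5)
Your verification is correct: the paper itself omits the proof (``The verification is straightforward''), and your direct check is exactly the intended routine argument — $\nat[\tau]$-linearity via the balancing relation $s_\sigma\otimes(qb)=(\tau s_\sigma)\otimes b$, the computation $\phi\circ\psi=\id$ using that each $s_\sigma$ is its own normal form, and $\psi\circ\phi=\id$ by rewriting an arbitrary element of $Q(H)$ as $\sum_\sigma s_\sigma\otimes c_\sigma$ via Lemma \ref{lem.fmhnormalform}. No gaps; your closing remark correctly notes that the elements $s_\sigma\otimes b$ generate $Q(H)$ additively, which is what makes the check on generators suffice.
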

The verification is straightforward.
\begin{cor} \label{cor.qhmncompleteness}
The semimodule $Q(H)$ is complete with respect to the infinite
summation law induced by the summation law in $\bool[[q]]$.
\end{cor}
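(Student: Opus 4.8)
The plan is to transport the completeness of $\bool[[q]]$ across the isomorphism $\phi: Q(H) \to \bigoplus_{\sigma=1}^c \bool[[q]]$ established in Lemma \ref{lem.phiiso}. First I recall that $\bool[[q]]$ is a complete semiring (noted in Section \ref{sec.monssemirings}, following \cite{karner}), hence in particular a complete commutative monoid; consequently the finite direct sum $\bigoplus_{\sigma=1}^c \bool[[q]]$ is a complete monoid under the coordinatewise summation law $\sum_{i\in I}(b_i^{(1)},\dots,b_i^{(c)}) = \big(\sum_{i\in I} b_i^{(1)},\dots,\sum_{i\in I} b_i^{(c)}\big)$, since the partition axiom and the normalization axioms are checked coordinate by coordinate. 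Moreover this summation law is $\nat[\tau]$-linear in the evident sense, because scalar multiplication by $\nat[\tau]$ acts diagonally and $\bool[[q]]$ is complete as an $\nat[\tau]$-semimodule (again noted in Section \ref{sec.monssemirings}).

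Next I define the candidate summation law on $Q(H)$ by pulling back along $\phi$: for a family $(a_i)_{i\in I}$ in $Q(H)$, set
\[
\sum_{i\in I} a_i := \psi\Big(\sum_{i\in I} \phi(a_i)\Big),
\]
where $\psi = \phi^{-1}$ is the inverse from Lemma \ref{lem.phiiso} and the inner sum is taken in $\bigoplus_{\sigma=1}^c \bool[[q]]$. Since $\phi$ and $\psi$ are mutually inverse $\nat[\tau]$-linear bijections, verifying that this $\sum$ satisfies Eilenberg's axioms for a complete monoid is immediate: the empty-sum, one-element and two-element axioms follow from $\psi(0)=0$, $\psi$ additive, and $\psi(\phi(a))=a$; the partition axiom
\[
\sum_{j\in J}\Big(\sum_{i\in I_j} a_i\Big) = \psi\Big(\sum_{j\in J}\phi\psi\big(\textstyle\sum_{i\in I_j}\phi(a_i)\big)\Big) = \psi\Big(\sum_{j\in J}\sum_{i\in I_j}\phi(a_i)\Big) = \psi\Big(\sum_{i\in I}\phi(a_i)\Big) = \sum_{i\in I} a_i
\]
uses $\phi\psi = \mathrm{id}$ and the partition axiom already available in $\bigoplus_\sigma \bool[[q]]$. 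Finally, the infinite distributivity requirement $\sum_{i\in I} p\,a_i = p\,(\sum_{i\in I} a_i)$ for $p\in\nat[\tau]$ follows from the $\nat[\tau]$-linearity of $\phi$ and $\psi$ together with the corresponding distributivity in $\bigoplus_\sigma \bool[[q]]$. Hence $Q(H)$ is a complete $\nat[\tau]$-semimodule, and by construction its summation law is the one induced from $\bool[[q]]$.

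There is no real obstacle here — the entire content is the bookkeeping observation that completeness is a property transported along an isomorphism of semimodules, and that the isomorphism $Q(H)\cong\bigoplus_{\sigma=1}^c\bool[[q]]$ has already been produced in Lemma \ref{lem.phiiso}. The one point that deserves a sentence of care is checking that the summation law so obtained on $Q(H)$ genuinely agrees with "the infinite summation law induced by the summation law in $\bool[[q]]$" as stated in the corollary; this is exactly the defining formula above, read through the identification $Q(H)=\fm(H)\otimes_{\nat[\tau]}\bool[[q]]$, under which $\psi$ sends $\sum_\sigma b_\sigma\beta_\sigma$ to $\sum_\sigma s_\sigma\otimes b_\sigma$, so a family of elements of the form $\sum_\sigma s_\sigma\otimes b_\sigma^{(i)}$ sums to $\sum_\sigma s_\sigma\otimes(\sum_i b_\sigma^{(i)})$, with the inner sum taken in $\bool[[q]]$.
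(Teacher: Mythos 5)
Your proposal is correct and follows essentially the same route as the paper: both transfer the coordinatewise summation law of $\bigoplus_{\sigma=1}^c \bool[[q]]$ (complete because $\bool[[q]]$ is a complete $\nat[\tau]$-semimodule) to $Q(H)$ via the isomorphism $\psi$ of Lemma \ref{lem.phiiso}, arriving at exactly the explicit formula (\ref{equ.sumlawonqh}). Your write-up merely spells out the axiom-checking that the paper leaves implicit.
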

\begin{proof}
In Section \ref{sec.monssemirings}, it was pointed out that $\bool[[q]]$
is complete as a $\nat [\tau]$-semimodule. If $\{ M_i \}_{i\in I}$ is any family
of complete $S$-semimodules over a semiring $S$, then their product
$\prod_{i\in I} M_i$ is a complete $S$-semimodule. Hence
$\bigoplus_{\sigma =1}^c \bool [[q]]$ is a complete $\nat [\tau]$-semimodule. Via the isomorphism
$\psi$ of Lemma \ref{lem.phiiso}, the summation law of $\bigoplus_{\sigma}
\bool [[q]]$ is transferred to a summation law in $Q(H),$ with respect
to which $Q(H)$ is then complete as a $\nat [\tau]$-semimodule. Explicitly, if $\{ f_i \}_{i\in I}$ is any
family of elements in $Q(H),$ $f_i = \sum_{\sigma} s_\sigma \otimes
b_{\sigma i},$ $b_{\sigma i} \in \bool [[q]],$ then the summation law
is given by the formula
\begin{equation} \label{equ.sumlawonqh}
\sum_{i\in I} f_i = \sum_{\sigma =1}^c s_\sigma \otimes
 \big( \sum_{i\in I} b_{\sigma i} \big). 
\end{equation}
\end{proof}

Let $V$ be a real vector space of finite dimension $\dim V\geq 2$ and $(i,e)$ a duality
structure on $V$. Let $Y: \Br \to \vect$ be the symmetric monoidal functor
given by Theorem \ref{thm.functorytangvect}, determined by $V$ and the
duality structure (and the braiding $b:V^{\otimes 2} \to V^{\otimes 2}$).
Recall that $\lambda: [0]\to [0]$
denotes the loop endomorphism of $\Br$. Applying $Y$ to the loop, we
obtain a scalar
\[ \lh = Y(\lambda)\in \Hom_\vect (Y[0], Y[0]) = \Hom_\vect (\real, \real)=\real. \]
In view of
\[ Y(\lambda) = Y(e_1 i_1) = Y(e_1)Y(i_1)=ei= \operatorname{Tr}(i,e), \]
this scalar $\lh$ is the trace of the duality structure $(i,e)$.
Therefore, by Proposition \ref{prop.traceisdimension},
$\lh =\dim V\geq 2$. In particular, $\lh$ is an admissible scalar in $\real - \{ 0,1,-1 \}$.
Given objects $[m],[n]$ of $\Br$, we define a subset $H_{m,n}$ of the
vector space $\Hom_\vect (V^{\otimes m}, V^{\otimes n})$ by
\[ H_{m,n} = Y(\Hom_\Br ([m], [n])). \]
This set is nonempty only if $m+n$ is even.
By Lemma \ref{lem.yhomtangnozeros}, $H_{m,n}$ does indeed not
contain $0 \in \Hom_\vect (V^{\otimes m}, V^{\otimes n}).$
Let $\OP_{m,n}$ (the ``open'' morphisms) denote the set of all loop-free morphisms
$[m]\to [n]$ in $\Br$.
\begin{lemma}
For every $m,n\in \nat,$ $m+n$ even, the set $H_{m,n}$ is $\lh$-scalable and
$\lh$-profinite.
\end{lemma}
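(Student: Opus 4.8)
The plan is to show that the loop endomorphism $\lambda$ is the sole source both of the rescaling and of any infinitude in $\Hom_\Br([m],[n])$. First I would recall from the discussion immediately preceding the lemma that $\lh = Y(\lambda) = \dim V \ge 2$, so $\lh \in \real - \{0,1,-1\}$ and the notions ``$\lh$-scalable'' and ``$\lh$-profinite'' are meaningful for subsets of $\Hom_\vect(V^{\otimes m}, V^{\otimes n})$; moreover Lemma~\ref{lem.yhomtangnozeros} ensures $0 \notin H_{m,n}$, so indeed $H_{m,n} \subset \Hom_\vect(V^{\otimes m}, V^{\otimes n}) - \{0\}$, as the definition of $\lh$-profinite requires.

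For $\lh$-scalability: given $h \in H_{m,n}$, choose $\phi \in \Hom_\Br([m],[n])$ with $Y(\phi) = h$. Since $\lambda \colon [0] \to [0]$ and $\lambda$ commutes with every morphism of $\Br$, the tensor product $\lambda \otimes \phi$ is again a morphism $[m] \to [n]$ in $\Br$. By strict monoidality of $Y$ (Theorem~\ref{thm.functorytangvect}), $Y(\lambda \otimes \phi) = Y(\lambda) \otimes Y(\phi)$, and tensoring a linear map with the scalar endomorphism $Y(\lambda)$ of $\real$ is exactly multiplication by $\lh$; hence $\lh\, h = Y(\lambda \otimes \phi) \in H_{m,n}$.

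For $\lh$-profiniteness I would take the generating set to be $H_0 := Y(\OP_{m,n}) \subset H_{m,n}$. The key point is that $\OP_{m,n}$ is finite: a loop-free morphism $[m] \to [n]$ is represented by a compact $1$-manifold with no closed components, i.e. a disjoint union of embedded arcs, and in codimension $\ge 2$ such a configuration can always be disentangled, so it is determined up to the relevant isotopy by the perfect matching it induces on the $m+n$ boundary points. There are only finitely many such matchings (namely $(m+n-1)!!$ when $m+n$ is even), so $\OP_{m,n}$, and hence $H_0$, is finite; and $H_0 \subset H_{m,n}$ since loop-free morphisms are in particular morphisms. Finally, by the normal-form decomposition already invoked in the proof of Lemma~\ref{lem.yhomtangnozeros}, every $\phi \in \Hom_\Br([m],[n])$ can be written $\phi = \lambda^{\otimes k} \otimes \phi_0$ with $k \in \nat$ and $\phi_0 \in \OP_{m,n}$; applying $Y$ yields $Y(\phi) = \lh^{\,k}\, Y(\phi_0)$ with $Y(\phi_0) \in H_0$. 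Thus $H_0$ generates $H_{m,n}$ in the sense of the definition, proving $\lh$-profiniteness.

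The only step requiring genuine care is the finiteness of $\OP_{m,n}$; everything else is a formal consequence of the monoidality of $Y$ together with loop-commutativity in $\Br$. I do not expect this to be a serious obstacle, since the classification of loop-free Brauer morphisms by perfect matchings is precisely the ``disentangling'' property of $\Br$ emphasized in Section~\ref{ssec.tangles}.
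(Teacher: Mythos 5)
Your proof is correct and follows essentially the same route as the paper: scalability via $\lh h = Y(\lambda\otimes\phi)$, and profiniteness via the generating set $H_0 = Y(\OP_{m,n})$, the normal form $\phi = \lambda^{\otimes k}\otimes\phi_0$ with $\phi_0$ loop-free, and the count $(m+n-1)!!$ of loop-free matchings. The preliminary remarks about $\lh = \dim V \ge 2$ and $0\notin H_{m,n}$ are likewise exactly the observations the paper makes just before the lemma.
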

\begin{proof}
Given $h\in H_{m,n}$, the product $\lh h$ is again in $H_{m,n}$, since
there is a morphism $\phi: [m]\to [n]$ with $h=Y(\phi)$ and we have
\[ \lh h = Y(\lambda)\otimes Y(\phi) = Y(\lambda \otimes \phi). \]
This shows that $H_{m,n}$ is $\lh$-scalable. Let $H_0 \subset H_{m,n}$
be the set $H_0 = Y(\OP_{m,n})$. Any $h\in H_{m,n}$ can be written
as $h=Y(\phi)$ with $\phi = \lambda^{\otimes p} \otimes \phi'$ and
$\phi'$ loop-free. Then
\[ h = Y(\lambda^{\otimes p} \otimes \phi') =
 \lh^p Y(\phi') = \lh^p h_0, \]
where $h_0 = Y(\phi')$ is in $H_0$. Thus $H_0$ generates $H_{m,n}$.

It remains to be shown that $H_0$ is finite. This follows from the fact that
$\OP_{m,n}$ is a finite set of cardinality $(m+n-1)!!$, since there are
$m+n-1$ choices of joining a first point of $(0\times M[m] \times 0 \times 0)
\sqcup (1\times M[n]\times 0 \times 0)$ to any other point in that $0$-manifold,
then $m+n-3$ choices of joining the next unconnected point, etc.
\end{proof}
\noindent Thus the idempotent $\nat [\tau]$-semimodule
\[ Q(H_{m,n}) = \fm (H_{m,n}) \otimes_{\nat [\tau]} \bool[[q]] \]
is defined. The fact that the Boolean semiring, whose only nonzero value is $1$, appears here is
a reflection of the fact that the modulus of the integrand $e^{iS}$
appearing in the classical Feynman path integral is always
$|e^{iS}|=1$ and only the phase is relevant. Roughly, the terms in
$\fm (H_{m,n})$ play the role of the phase.
\begin{lemma} \label{lem.minshellshmn}
If the symmetric monoidal functor $Y$ induced by the duality structure $(i,e)$ 
is loop-faithful, then
the minimal shell $S(H_{m,n})$ consists of the $Y$-images of all
loop-free morphisms $[m]\to [n]$ in $\Br$, that is,
\[ S(H_{m,n})=Y(\OP_{m,n}). \]
\end{lemma}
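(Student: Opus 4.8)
The plan is to identify the minimal shell $S(H_{m,n})$ by showing that $Y(\OP_{m,n})$ is precisely the set of irreducible elements of the generating set $H_0 = Y(\OP_{m,n})$ itself. Recall from the preceding lemma that $H_0 = Y(\OP_{m,n})$ generates $H_{m,n}$ and is finite; hence by Lemma \ref{lem.uniqueminshell} it suffices to compute $S(H_0) = H_0 - H_0^{\operatorname{red}}$, and the claim reduces to showing $H_0^{\operatorname{red}} = \varnothing$, i.e. no $Y$-image of a loop-free morphism is of the form $\lh^n\,Y(\phi')$ with $n>0$ and $\phi'$ loop-free.

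First I would suppose, for contradiction, that $h = Y(\phi) \in H_0$ with $\phi \in \OP_{m,n}$ loop-free satisfies $h = \lh^n h_0$ for some $n>0$ and $h_0 = Y(\phi_0) \in H_0$ with $\phi_0 \in \OP_{m,n}$. Since $\lh = \dim V$ and, by the factorization $\lambda = e_1 \circ i_1$ and $Y(\lambda) = ei = \tr(i,e)$, we have $\lh^n h_0 = Y(\lambda^{\otimes n})^{\otimes}\,Y(\phi_0) = Y(\lambda^{\otimes n} \otimes \phi_0)$. Thus $Y(\phi) = Y(\lambda^{\otimes n} \otimes \phi_0)$, and here is where loop-faithfulness of $Y$ enters: by Proposition \ref{prop.expleloopfaithful} (the hypothesis of the present lemma), $Y(\phi) = Y(\psi)$ forces $\phi$ and $\psi$ to have the same number of loops. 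Applying this with $\psi = \lambda^{\otimes n} \otimes \phi_0$, which has exactly $n$ loops since $\phi_0$ is loop-free, we conclude that $\phi$ has exactly $n$ loops. But $\phi$ is loop-free, so $n = 0$, contradicting $n>0$.

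Therefore $H_0^{\operatorname{red}} = \varnothing$ and $S(H_{m,n}) = S(H_0) = H_0 = Y(\OP_{m,n})$, as claimed. The one point requiring a small amount of care is the bookkeeping of the two monoidal operations: the scalar multiplication by $\lh$ on $\Hom_\vect(V^{\otimes m}, V^{\otimes n})$ corresponds under $Y$ to tensoring a morphism of $\Br$ on the left (or right — loops commute with everything) by the loop $\lambda$, using that $Y$ is a strict monoidal functor and that $Y(\lambda)$, being an endomorphism of the unit object $\real$, acts as multiplication by the scalar $\lh$. I do not expect any genuine obstacle here; the argument is essentially a direct translation of loop-faithfulness into the language of the profinite idempotent completion, and the only thing to be vigilant about is that loop-faithfulness is genuinely needed — without it, distinct numbers of loops could in principle produce the same matrix, and the irreducible elements would fail to be exactly the loop-free morphisms.
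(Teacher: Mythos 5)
Your proposal is correct and follows essentially the same argument as the paper: reduce to showing $Y(\OP_{m,n})^{\operatorname{red}}=\varnothing$, then derive $Y(\phi)=Y(\lambda^{\otimes k}\otimes\phi_0)$ from a putative reduction and contradict loop-faithfulness, since $\phi$ is loop-free while $\lambda^{\otimes k}\otimes\phi_0$ has loops. The only cosmetic point is that loop-faithfulness is the stated hypothesis of the lemma itself (Proposition \ref{prop.expleloopfaithful} merely exhibits one duality structure with that property), but your use of it is exactly as in the paper.
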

\begin{proof}
Since $H_0 = Y(\OP_{m,n})$ was already shown to be a generating set,
it suffices to show that its subset $Y(\OP_{m,n})^{\operatorname{red}}$
of reducible elements is empty. Assume on the contrary that there
were a reducible element $y$, that is, there existed elements $y, y_0\in Y(\OP_{m,n})$ 
and an integer $k>0$ such that $y = \lh^k y_0$. Then $y=Y(\phi),$
$y_0 = Y(\phi_0)$ for loop-free morphisms $\phi, \phi_0$ in $\Br$ and thus
\[ Y(\phi) = \lh^k Y(\phi_0) = Y(\lambda)^k Y(\phi_0)
 = Y (\lambda^{\otimes k} \otimes \phi_0). \]
But $\phi$ is loop-free, while $\lambda^{\otimes k} \otimes \phi_0$ contains
at least one loop. This contradicts the assumption that $Y$ be  
faithful on loops.
\end{proof}
From now on, assume that $(i,e)$ is such that $Y$ is faithful on loops.
By Proposition \ref{prop.expleloopfaithful}, this is for instance the case
for the duality structure (\ref{equ.exie}).
It follows from Lemma \ref{lem.minshellshmn} and Lemma \ref{lem.phiiso} that an element
$f\in Q(H_{m,n})$ can be uniquely written as
\[ f = \sum_{y\in Y(\OP_{m,n})} y\otimes b_y,~ \hspace{.5cm} b_y\in \bool[[q]]. \]

The set $\OP_{m,m}$ contains the identity $1_{[m]}:[m]\to [m].$
Thus $1_{V^{\otimes m}} = Y(1_{[m]})$ is an element of $S(H_{m,m})$.
Tensoring with $1\in \bool[[q]],$ there is an element
$1_{V^{\otimes m}} \otimes 1 \in Q(H_{m,m})$.
For $m=0$, we have $1_I \otimes 1 \in Q(H_{0,0}),$ where
$I = V^{\otimes 0}$ is the unit object of $\vect$.
We shall next construct products $\cdot$, coming from the composition
$\circ$ of linear maps. 
Given $p\in \nat$, a $\nat [\tau]$-bilinear map
\begin{equation} \label{equ.prodqhmpqhpn}
Q(H_{m,p}) \times Q(H_{p,n}) \longrightarrow Q(H_{m,n}),~
(f,f') \mapsto f\cdot f',
\end{equation}
is given on
\[ f = \sum_{y\in Y(\OP_{m,p})} y\otimes b_y,~ 
  f' = \sum_{y'\in Y(\OP_{p,n})} y'\otimes b'_{y'}, \]
by
\begin{equation} \label{equ.prodmppn}
f\cdot f' = \sum_{y,y'} (y'\circ y)\otimes (b_y b'_{y'}). 
\end{equation}
Note that $y' \circ y$ is an element of $H_{m,n}$, since
$y=Y(h)$ for some $h\in \OP_{m,p}$ and $y' =Y(h')$ for some
$h'\in \OP_{p,n}$ and thus
$y'\circ y = Y(h')\circ Y(h) = Y(h'\circ h)$ with
$h'\circ h$ in $\Hom_\Br ([m],[n])$.
However, $y'\circ y$ will in general not lie in the minimal shell of $H_{m,n}$,
as $h'\circ h$ may have loops. For this reason, the formula
(\ref{equ.prodmppn}) cannot be used to compute triple products,
which is required in establishing associativity, for example.
The remedy is formula (\ref{equ.prodmppnrelaxed}) below.
\begin{lemma} \label{lem.prodmppnrelaxed}
The product of two elements $\sum_{i=1}^k h_i \otimes b_i \in
Q(H_{m,p})$ and $\sum_{j=1}^l h'_j \otimes b'_j \in
Q(H_{p,n})$ with $h_i \in H_{m,p},$ $h'_j \in H_{p,n}$
(not necessarily in the minimal shells)
is given by 
\begin{equation} \label{equ.prodmppnrelaxed}
\sum_{i,j} (h'_j \circ h_i)\otimes (b_i b'_j).
\end{equation}
\end{lemma}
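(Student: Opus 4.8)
The plan is to reduce the relaxed formula (\ref{equ.prodmppnrelaxed}) to the defining formula (\ref{equ.prodmppn}) by rewriting every factor in terms of minimal shell elements and transferring powers of $\lh$ into powers of $q$. First I would invoke Lemma \ref{lem.sh0generates}: each $h_i \in H_{m,p}$ has a unique expression $h_i = \lh^{n_i} s_i$ with $n_i \in \nat$ and $s_i \in S(H_{m,p}) = Y(\OP_{m,p})$, and likewise $h'_j = \lh^{n'_j} s'_j$ with $s'_j \in S(H_{p,n})$. Inside $Q(H_{m,p}) = \fm(H_{m,p})\otimes_{\nat[\tau]}\bool[[q]]$ the monoid $\nat$ acts on $\fm(H_{m,p})$ by $\tau^k x = \lh^k x$ and on $\bool[[q]]$ by $\tau \mapsto q$, so $h_i \otimes b_i = (\lh^{n_i} s_i)\otimes b_i = s_i \otimes (q^{n_i} b_i)$. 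Collecting terms with equal shell element and using $s\otimes b + s\otimes b' = s\otimes(b+b')$ gives
\[ \sum_{i=1}^k h_i \otimes b_i = \sum_{s \in S(H_{m,p})} s\otimes \beta_s, \qquad \beta_s := \sum_{i\,:\, s_i = s} q^{n_i} b_i \in \bool[[q]], \]
which is the normal form of this element (unique by Lemmas \ref{lem.minshellshmn} and \ref{lem.phiiso}); similarly $\sum_j h'_j \otimes b'_j = \sum_{s'\in S(H_{p,n})} s'\otimes \beta'_{s'}$ with $\beta'_{s'} = \sum_{j\,:\,s'_j=s'} q^{n'_j} b'_j$.

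Next I would apply the definition (\ref{equ.prodmppn}) of the product to these normal forms, obtaining
\[ \Big(\sum_i h_i\otimes b_i\Big)\cdot\Big(\sum_j h'_j\otimes b'_j\Big) = \sum_{s,s'} (s'\circ s)\otimes(\beta_s\, \beta'_{s'}). \]
On the other side, I would expand the relaxed formula directly. Since composition of linear maps is bilinear over scalars, $h'_j\circ h_i = \lh^{n_i+n'_j}(s'_j\circ s_i)$, and $s'_j\circ s_i = Y(h')\circ Y(h) = Y(h'\circ h) \in H_{m,n}$ by functoriality of $Y$ (with $h'\circ h \in \Hom_\Br([m],[n])$). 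Hence, moving the power of $\lh$ into the Boolean power series factor once more,
\[ \sum_{i,j}(h'_j\circ h_i)\otimes(b_i b'_j) = \sum_{i,j}(s'_j\circ s_i)\otimes\big(q^{n_i+n'_j} b_i b'_j\big) = \sum_{s,s'}(s'\circ s)\otimes\Big(\sum_{\substack{i\,:\,s_i=s\\ j\,:\,s'_j=s'}} q^{n_i} b_i\, q^{n'_j} b'_j\Big), \]
where in the last step the summands are grouped by the pair $(s_i,s'_j)$; this is legitimate because all summands with $s_i=s$ and $s'_j=s'$ share the first tensor factor $s'\circ s$. By distributivity in $\bool[[q]]$ the inner sum equals $\beta_s \beta'_{s'}$, so the right-hand side of the lemma coincides with the product computed above.

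The only point requiring care is the bookkeeping with the two $\nat[\tau]$-actions and the repeated use of the fact that $\fm(H)\otimes_{\nat[\tau]}\bool[[q]]$ absorbs sums in the power series slot; once the identities $(\lh^k x)\otimes b = x\otimes (q^k b)$ and $x\otimes b + x\otimes b' = x\otimes(b+b')$ are in hand and the decompositions from Lemma \ref{lem.sh0generates} are available, both sides collapse to the single expression $\sum_{s,s'}(s'\circ s)\otimes(\beta_s\beta'_{s'})$ and nothing deeper is needed. Note in particular that the argument is insensitive to whether the composites $s'\circ s$ lie in the minimal shell $S(H_{m,n})$: they are simply elements of $H_{m,n}$, hence legitimate entries in $\fm(H_{m,n})$, which is precisely the content of the lemma.
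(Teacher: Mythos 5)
Your argument is correct: reducing each factor to its unique normal form via $h=\lh^{n}s$ with $s$ in the minimal shell, moving $\lh^{n}$ across the tensor as $q^{n}$, applying the defining formula (\ref{equ.prodmppn}), and regrouping the expanded right-hand side by shell pairs is exactly the verification the paper leaves to the reader (no proof is printed for this lemma). All the ingredients you cite (Lemma \ref{lem.sh0generates}, the balancing relation $(\lh^{k}x)\otimes b=x\otimes(q^{k}b)$, biadditivity, and compatibility of composition with scalars) are available and used correctly, so nothing is missing.
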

Using Lemma \ref{lem.prodmppnrelaxed}, one proves readily:  
Given elements $f\in Q(H_{m,p}),$ $f' \in Q(H_{p,r}),$
$f''\in Q(H_{r,n}),$ we have the associativity relation
\begin{equation} \label{equ.mpprrnassoc}
(f\cdot f')\cdot f'' = f\cdot (f'\cdot f''). 
\end{equation}
By Corollary \ref{cor.qhmncompleteness}, the $Q(H_{m,n})$ are complete
$\nat [\tau]$-semimodules. With respect to the product
(\ref{equ.prodqhmpqhpn}), we have the distributive law
\begin{equation} \label{equ.hmppninfdistrright}
(\sum_{i\in I} f_i)\cdot f' = \sum_i (f_i \cdot f'),~
  f_i \in Q(H_{m,p}),~ f' \in Q(H_{p,n}), 
\end{equation}
as applications of formula (\ref{equ.sumlawonqh}) together with standard
properties of infinite summation laws show.
Similarly, distributivity from the left holds,
\begin{equation} \label{equ.hmppninfdistrleft}
f \cdot \sum_{i\in I} f'_i = \sum_i (f \cdot f'_i),~
  f \in Q(H_{m,p}),~ f'_i \in Q(H_{p,n}).
\end{equation}
For the product $Q(H_{m,n})\times Q(H_{n,n})\to Q(H_{m,n}),$
we have
\[ f\cdot (1_{V^{\otimes n}} \otimes 1) =
  \sum_{y\in Y(\OP_{m,n})} (1_{V^{\otimes n}} \circ y)\otimes (b_y \cdot 1) =
  \sum_{y\in Y(\OP_{m,n})} y\otimes b_y = f, \]
so that $1_{V^{\otimes n}} \otimes 1$ acts as a $1$-element with respect to
this product. Similarly, $1_{V^{\otimes m}} \otimes 1$ acts
as a $1$-element for the product
$Q(H_{m,m})\times Q(H_{m,n})\to Q(H_{m,n}),$
$(1_{V^{\otimes m}}\otimes 1)\cdot f' = f'.$

We set
\[ Q = Q(i,e) = \prod_{m,n\in \nat} Q(H_{m,n}) \]
in the category of $\nat [\tau]$-semimodules and call $Q$ the
\emph{profinite idempotent completion} of the set $Y(\Mor (\Br))$.
Using the natural embeddings $Q(H_{m,n}) \hookrightarrow Q$
given by sending $f$ to the family in $Q$ whose $(m,n)$-component is $f$
and all other components are $0$, we can and will think of  
$f\in Q(H_{m,n})$ as an element $f\in Q$. 
Since every $Q(H_{m,n})$ is additively idempotent and addition in $Q$
is done componentwise, $Q$ itself is additively idempotent.
We define the product of two elements $(f_{m,n}),(f'_{m,n})\in Q$
to be $(f_{m,n})\cdot (f'_{m,n}) = (f''_{m,n}),$ where the component
$f''_{m,n} \in Q(H_{m,n})$ is given by
\[ f''_{m,n} = \sum_{p\in \nat} f_{m,p} \cdot f'_{p,n}, \]
using the completeness of $Q(H_{m,n})$ (Corollary \ref{cor.qhmncompleteness}),
as well as the products declared above.
An element $0\in Q$ is given by the zero-family $0=(f_{m,n}),$ 
$f_{m,n}=0\in Q(H_{m,n})$ for all $m,n$. An element $1\in Q$ is given
by the family $1=(f_{m,n})$ with
\[ f_{m,n} = \begin{cases} 1_{V^{\otimes m}} \otimes 1,& 
  \text{ if } m=n,\\ 0,& \text{ if } m\not= n. \end{cases}
\]
Using (\ref{equ.mpprrnassoc}), (\ref{equ.hmppninfdistrright}) and (\ref{equ.hmppninfdistrleft}), one proves:
\begin{prop} \label{prop.qccomplidemp}
The tuple $Q^c =(Q,+,\cdot, 0,1)$ is a (generally noncommutative)
complete idempotent semiring, called the
\emph{composition semiring} of the
profinite idempotent completion of $Y(\Mor (\Br))$.
\end{prop}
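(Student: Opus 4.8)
The plan is to verify the semiring axioms for $Q^c$ one at a time, using throughout that $Q=\prod_{m,n\in\nat}Q(H_{m,n})$ is a product in the category of $\nat[\tau]$-semimodules, so that $+$, $0$, and the summation law are all computed componentwise, while the product is the convolution-type formula $(f\cdot f')_{m,n}=\sum_{p\in\nat}f_{m,p}\cdot f'_{p,n}$ assembled from the componentwise products (\ref{equ.prodqhmpqhpn}).

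First I would settle the additive structure together with the unit and absorbing axioms. By Corollary \ref{cor.qhmncompleteness} each $Q(H_{m,n})$ is a complete, additively idempotent $\nat[\tau]$-semimodule; a product of complete commutative monoids is complete under the componentwise summation law, and a product of idempotent monoids is idempotent, so $(Q,+,0,\sum)$ is a complete, additively idempotent commutative monoid (idempotence of $Q$ was already noted), and in particular a sum of zeros is zero. Since the componentwise zero is absorbing for each product (\ref{equ.prodqhmpqhpn}) --- immediate from the relaxed formula (\ref{equ.prodmppnrelaxed}) --- we get $(0\cdot f)_{m,n}=\sum_{p\in\nat}0\cdot f_{p,n}=0$ and similarly $f\cdot 0=0$. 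For the unit $1\in Q$, whose only nonzero components are $1_{V^{\otimes m}}\otimes 1$ on the diagonal, the $(m,n)$-component of $1\cdot f$ reduces to the single surviving term $(1_{V^{\otimes m}}\otimes 1)\cdot f_{m,n}=f_{m,n}$, using that $1_{V^{\otimes m}}\otimes 1$ is a left unit for the componentwise product (recorded just before the statement); hence $1\cdot f=f$ and, symmetrically, $f\cdot 1=f$.

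Next I would establish infinite distributivity; the two-element index case gives ordinary distributivity, and together with the previous step this makes $Q^c$ a complete semiring. For right distributivity I compute the $(m,n)$-component of $\big(\sum_{i\in I}f^{(i)}\big)\cdot g$:
\[
\sum_{p\in\nat}\Big(\sum_{i\in I}f^{(i)}_{m,p}\Big)\cdot g_{p,n}
 =\sum_{p\in\nat}\sum_{i\in I}\big(f^{(i)}_{m,p}\cdot g_{p,n}\big)
 =\sum_{i\in I}\sum_{p\in\nat}\big(f^{(i)}_{m,p}\cdot g_{p,n}\big)
 =\Big(\sum_{i\in I}f^{(i)}\cdot g\Big)_{m,n},
\]
where the first equality is the componentwise summation law, the second is (\ref{equ.hmppninfdistrright}), and the third is the Fubini-type identity (\ref{equ.sumijmonoid}) for Eilenberg summation. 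Left distributivity is the mirror argument using (\ref{equ.hmppninfdistrleft}).

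The only substantial point is associativity, and the main obstacle there is the careful interchange of iterated infinite sums. I would bring both triple products to a common double sum indexed by $\nat\times\nat$. Using (\ref{equ.hmppninfdistrright}) and (\ref{equ.sumijmonoid}),
\[
\big((f\cdot f')\cdot f''\big)_{m,n}
 =\sum_{r\in\nat}\Big(\sum_{p\in\nat}f_{m,p}\cdot f'_{p,r}\Big)\cdot f''_{r,n}
 =\sum_{(p,r)\in\nat\times\nat}\big(f_{m,p}\cdot f'_{p,r}\big)\cdot f''_{r,n},
\]
and dually, using (\ref{equ.hmppninfdistrleft}) and (\ref{equ.sumijmonoid}),
\[
\big(f\cdot(f'\cdot f'')\big)_{m,n}
 =\sum_{p\in\nat}f_{m,p}\cdot\Big(\sum_{r\in\nat}f'_{p,r}\cdot f''_{r,n}\Big)
 =\sum_{(p,r)\in\nat\times\nat}f_{m,p}\cdot\big(f'_{p,r}\cdot f''_{r,n}\big).
\]
These two families are equal term by term by the componentwise associativity (\ref{equ.mpprrnassoc}), so the sums coincide. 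It is worth emphasizing that (\ref{equ.mpprrnassoc}) is applied to genuine elements of the $Q(H_{-,-})$ whose intermediate products typically contain loops and hence terms lying outside any minimal shell --- this is exactly the situation the relaxed product formula (\ref{equ.prodmppnrelaxed}) was designed to handle, and it is what makes the term-by-term comparison legitimate. Beyond this bookkeeping of infinite sums, no further ideas are needed.
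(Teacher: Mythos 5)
Your proposal is correct and follows essentially the same route as the paper, which proves this proposition precisely by invoking the componentwise associativity (\ref{equ.mpprrnassoc}) and the infinite distributivity laws (\ref{equ.hmppninfdistrright}), (\ref{equ.hmppninfdistrleft}), leaving the componentwise bookkeeping (completeness and idempotence of the product $\prod_{m,n}Q(H_{m,n})$, absorption of $0$, the diagonal unit, and the Fubini interchange (\ref{equ.sumijmonoid})) to the reader. You have simply written out those details, and correctly.
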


We shall now construct a different product $\times$ on $Q$, coming from the
monoidal structure on $\vect$, i.e. the (Schauenburg) tensor product $\otimes$ of
linear maps.
Given $m,n,r,s\in \nat$, a $\nat [\tau]$-bilinear map
\begin{equation} \label{equ.prodqhmpqhpnmon}
Q(H_{m,n}) \times Q(H_{r,s}) \longrightarrow Q(H_{m+r,n+s}),~
(f,f') \mapsto f\times f',
\end{equation}
is given on
\[ f = \sum_{y\in Y(\OP_{m,n})} y\otimes b_y,~ 
  f' = \sum_{y'\in Y(\OP_{r,s})} y'\otimes b'_{y'}, \]
by
\begin{equation} \label{equ.prodmppnmon}
f\times f' = \sum_{y,y'} (y\otimes y')\otimes (b_y b'_{y'}),
\end{equation}
where $y\otimes y'$ is the Schauenburg tensor product of the linear
maps $y,y'$.
Note that $y \otimes y'$ is an element of $H_{m+r,n+s}$, since
$y=Y(h)$ for some $h\in \OP_{m,n}$ and $y' =Y(h')$ for some
$h'\in \OP_{r,s}$ and thus
$y\otimes y' = Y(h)\otimes Y(h') = Y(h\otimes h')$ with
$h\otimes h'$ in $\Hom_\Br ([m+r],[n+s])$.
Since $h$ and $h'$ have no loops, the product
$h\otimes h'$ has no loops. Thus $h\otimes h' \in \OP_{m+r,n+s}$
and $y\otimes y' \in S(H_{m+r,n+s})$.
For practical calculation, we record:
\begin{lemma} \label{lem.prodmppnrelaxedmon}
The $\times$-product of two elements $\sum_{i=1}^k h_i \otimes b_i \in
Q(H_{m,n})$ and $\sum_{j=1}^l h'_j \otimes b'_j \in
Q(H_{r,s})$ with $h_i \in H_{m,n},$ $h'_j \in H_{r,s}$
(not necessarily in the minimal shells)
is given by 
\begin{equation} \label{equ.prodmppnrelaxedmon}
\sum_{i,j} (h_i \otimes h'_j)\otimes (b_i b'_j)
\end{equation}
\end{lemma}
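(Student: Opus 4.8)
The plan is to mirror the argument for the composition product in Lemma \ref{lem.prodmppnrelaxed}: reduce the given representatives to minimal-shell form, apply the definition (\ref{equ.prodmppnmon}), and then check that the resulting element agrees with (\ref{equ.prodmppnrelaxedmon}) by tracking how powers of the scalar $\lh$ migrate across the tensor product $\otimes_{\nat [\tau]}$.

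First I would recall that, since $Y$ is faithful on loops, Lemma \ref{lem.minshellshmn} identifies $S(H_{m,n})$ with $Y(\OP_{m,n})$, and Lemma \ref{lem.sh0generates} (with Lemma \ref{lem.uniqueminshell}) says that each $h_i \in H_{m,n}$ has a unique expression $h_i = \lh^{a_i} y_i$ with $a_i \in \nat$ and $y_i \in Y(\OP_{m,n})$; similarly $h'_j = \lh^{c_j} y'_j$ with $y'_j \in Y(\OP_{r,s})$. Using $\nat [\tau]$-bilinearity of the tensor product together with the fact that $\tau$ acts on $\fm (H_{m,n})$ by multiplication with $\lh$ and on $\bool[[q]]$ by multiplication with $q$, one gets in $Q(H_{m,n})$ the identity $h_i \otimes b_i = (\tau^{a_i} y_i)\otimes b_i = y_i \otimes (q^{a_i} b_i)$, and likewise $h'_j \otimes b'_j = y'_j \otimes (q^{c_j} b'_j)$. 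Collecting equal shell elements and using idempotency of $\bool[[q]]$, exactly as in the normal-form discussion following Lemma \ref{lem.minshellshmn}, we may rewrite $\sum_i h_i \otimes b_i = \sum_{y \in Y(\OP_{m,n})} y \otimes B_y$ and $\sum_j h'_j \otimes b'_j = \sum_{y' \in Y(\OP_{r,s})} y' \otimes B'_{y'}$, where $B_y = \sum_{i:\, y_i = y} q^{a_i} b_i$ and $B'_{y'} = \sum_{j:\, y'_j = y'} q^{c_j} b'_j$.

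Next I would apply the definition (\ref{equ.prodmppnmon}) to these minimal-shell expansions, obtaining
\[ f\times f' = \sum_{y,y'} (y\otimes y')\otimes (B_y B'_{y'}). \]
Expanding the power-series product $B_y B'_{y'}$, distributing over the idempotent sum, and regrouping the double sum over $i,j$ according to $y=y_i$, $y'=y'_j$, this becomes $\sum_{i,j} (y_i \otimes y'_j)\otimes (q^{a_i + c_j} b_i b'_j)$. On the other hand, $h_i \otimes h'_j$ is the Schauenburg tensor product of linear maps, which by bilinearity (the Kronecker product is bilinear in the matrix entries) equals $\lh^{a_i + c_j}(y_i \otimes y'_j)$; pulling this scalar back across $\otimes_{\nat [\tau]}$ gives $(h_i \otimes h'_j)\otimes (b_i b'_j) = (y_i \otimes y'_j)\otimes (q^{a_i + c_j} b_i b'_j)$ in $Q(H_{m+r,n+s})$. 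Comparing the two expressions yields $f\times f' = \sum_{i,j} (h_i \otimes h'_j)\otimes (b_i b'_j)$, which is (\ref{equ.prodmppnrelaxedmon}).

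The only place where the hypotheses genuinely enter, and the single point needing a word of care, is that each $y_i \otimes y'_j$ really lies in the minimal shell $S(H_{m+r,n+s})$, so that the regrouped sum is a legitimate normal-form expression and the identity makes sense termwise. But this was already observed in the paragraph preceding the lemma: the monoidal product of two loop-free Brauer morphisms is again loop-free, so $h\otimes h' \in \OP_{m+r,n+s}$ whenever $h\in\OP_{m,n}$, $h'\in\OP_{r,s}$, whence $Y(h)\otimes Y(h') = Y(h\otimes h') \in S(H_{m+r,n+s})$. Thus the main (and only minor) obstacle is purely bookkeeping: tracking the $q$-powers through the term-collection step and the migration of $\lh$-powers across the tensor product, exactly as in the proof of Lemma \ref{lem.prodmppnrelaxed}.
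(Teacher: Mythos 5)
Your argument is correct: the paper records this lemma without proof (as it does the analogous composition-product Lemma \ref{lem.prodmppnrelaxed}), and your reduction to minimal-shell representatives --- writing $h_i=\lh^{a_i}y_i$, sliding $\lh$-powers across $\otimes_{\nat[\tau]}$ as $q$-powers, applying (\ref{equ.prodmppnmon}), and matching terms via $(\lh^{a_i}y_i)\otimes(\lh^{c_j}y'_j)=\lh^{a_i+c_j}(y_i\otimes y'_j)$ --- is exactly the intended argument. Your closing observation that $y_i\otimes y'_j$ lands in $S(H_{m+r,n+s})$ is already noted in the paper and is not even needed for the identity itself (nor is idempotency of $\bool[[q]]$ in the collection step, where bilinearity suffices), so the proof stands as written.
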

Given elements $f\in Q(H_{m,n}),$ $f' \in Q(H_{r,s}),$
$f''\in Q(H_{p,q}),$ we have the associativity relation
\begin{equation} \label{equ.mpprrnassocmon}
(f\times f')\times f'' = f\times (f'\times f''), 
\end{equation}
as can be seen from
\begin{eqnarray*}
(f\times f')\times f''
& = & (\sum_{y,y'} (y\otimes y')\otimes (b_y b'_{y'}))\times
  (\sum_{y''} y'' \otimes b''_{y''}) \\
& = & \sum_{y,y',y''} ((y\otimes y')\otimes y'')\otimes ((b_y b'_{y'})b''_{y''}) \\
& = & \sum_{y,y',y''} (y\otimes (y' \otimes y''))\otimes (b_y (b'_{y'} b''_{y''})) \\
& = & (\sum_y y\otimes b_y)\times (\sum_{y',y''} (y'\otimes y'')\otimes (b'_{y'}
  b''_{y''})) \\
& = & f\times (f'\times f'').
\end{eqnarray*}
It should be pointed out that the above calculation rests crucially on the
strict associativity $(y\otimes y')\otimes y''= y\otimes (y' \otimes y'')$ for linear
maps $y,y',y''$ in the strict monoidal category $(\vect, \otimes, I)$, with
$\otimes$ the Schauenburg tensor product. If we had used the ordinary
tensor product on $\vect$, the formula $(f\times f')\times f'' =
f\times (f' \times f'')$ would not hold. With respect to the product
(\ref{equ.prodqhmpqhpnmon}), we have the distributive law
\begin{equation} \label{equ.hmppninfdistrrightmon}
(\sum_{i\in I} f_i)\times f' = \sum_i (f_i \times f'),~
  f_i \in Q(H_{m,n}),~ f' \in Q(H_{r,s}), 
\end{equation}
as applications of formula (\ref{equ.sumlawonqh}) together with standard
properties of infinite summation laws show.
Similarly, distributivity from the left holds,
\begin{equation} \label{equ.hmppninfdistrleftmon}
f \times \sum_{i\in I} f'_i = \sum_i (f \times f'_i),~
  f \in Q(H_{m,n}),~ f'_i \in Q(H_{r,s}).
\end{equation}
For the product $Q(H_{m,n})\times Q(H_{0,0})\to Q(H_{m,n}),$
we have
\[ f\times (1_{I} \otimes 1) =
  \sum_{y\in Y(\OP_{m,n})} (y \otimes 1_{I})\otimes (b_y \cdot 1) =
  \sum_{y\in Y(\OP_{m,n})} y\otimes b_y = f, \]
so that $1_{I} \otimes 1$ acts as a $1$-element with respect to
this product. (Recall that $I$ denotes here the unit object of $\vect$.)
Similarly, $1_{I} \otimes 1$ acts
as a $1$-element for the product
$Q(H_{0,0})\times Q(H_{m,n})\to Q(H_{m,n}),$
$(1_{I}\otimes 1)\times f' = f'.$

We define the cross-product of two elements $(f_{m,n}),(f'_{m,n})\in Q$
to be $(f_{m,n})\times (f'_{m,n}) = (f''_{m,n}),$ where the component
$f''_{m,n} \in Q(H_{m,n})$ is given by
\[ f''_{m,n} = \sum_{\substack{p+r=m\\ q+s=n}} f_{p,q} \times f'_{r,s}, \]
using the products $\times$ declared above.
Since $p,q,r,s$ are nonnegative integers, this is a finite sum.
An element $1^\times \in Q$ is given by 
\[ 1^\times_{m,n} = \begin{cases} 1_{I} \otimes 1,& 
  \text{ if } m=n=0,\\ 0,& \text{ otherwise.} \end{cases}
\]
Using (\ref{equ.mpprrnassocmon}), (\ref{equ.hmppninfdistrrightmon}) and (\ref{equ.hmppninfdistrleftmon}), one verifies:
\begin{prop} \label{prop.qmcomplidemp}
The tuple $Q^m =(Q,+,\times, 0,1^\times)$ is a (generally noncommutative)
complete idempotent semiring, called the
\emph{monoidal semiring} of the profinite idempotent completion of $Y(\Mor (\Br))$.
\end{prop}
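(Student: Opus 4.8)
The plan is to mirror, \emph{mutatis mutandis}, the verification underlying Proposition~\ref{prop.qccomplidemp}, systematically replacing the composition product by the cross product $\times$. The one structural difference worth keeping in mind is that the sum defining the $(m,n)$-component of a cross product, namely $\sum_{p+r=m,\,q+s=n} f_{p,q}\times f'_{r,s}$, ranges over the \emph{finite} set of decompositions of $m$ and $n$ into nonnegative integers, whereas the analogous sum $\sum_{p\in\nat} f_{m,p}\cdot f'_{p,n}$ in the composition case is infinite; this makes every convergence argument below strictly easier than in the composition case.

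First I would record that $(Q,+,0)$ is a commutative, idempotent, complete monoid. Addition on $Q=\prod_{m,n\in\nat} Q(H_{m,n})$ is componentwise, and each $Q(H_{m,n})=\fm(H_{m,n})\otimes_{\nat[\tau]}\bool[[q]]$ is additively idempotent because $\bool[[q]]$ is (Section~\ref{sec.monssemirings}); hence $Q$ is additively idempotent. Completeness is inherited componentwise from Corollary~\ref{cor.qhmncompleteness}: a family $(f^{(i)})_{i\in I}$ in $Q$ is assigned the family whose $(m,n)$-component is the sum $\sum_{i\in I} f^{(i)}_{m,n}$ computed in $Q(H_{m,n})$ via formula~(\ref{equ.sumlawonqh}), and the singleton and partition axioms for $\sum$ hold on $Q$ because they hold in each factor.

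Next I would check the monoid axioms for $(Q,\times,1^\times)$. For the unit, expanding $(1^\times\times f)_{m,n}=\sum_{p+r=m,\,q+s=n} 1^\times_{p,q}\times f_{r,s}$ leaves only the term with $p=q=0$, equal to $(1_I\otimes 1)\times f_{m,n}=f_{m,n}$ by the component unit law established just before the statement; the right unit law is symmetric. For associativity, fix $m,n$; expanding $((f\times f')\times f'')_{m,n}$ and $(f\times(f'\times f''))_{m,n}$ via the (finite) componentwise formula turns each into a finite sum, over all triples of decompositions $p_1+p_2+p_3=m$, $q_1+q_2+q_3=n$, of $(f_{p_1,q_1}\times f'_{p_2,q_2})\times f''_{p_3,q_3}$ respectively $f_{p_1,q_1}\times(f'_{p_2,q_2}\times f''_{p_3,q_3})$; the component-level associativity~(\ref{equ.mpprrnassocmon}) identifies the two sums term by term, the reindexing being legitimate since all sums are finite (and by the Fubini-type identity~(\ref{equ.sumijmonoid})).

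Finally, distributivity, the absorbing property of $0$, and the infinite distributivity required of a complete semiring all reduce to their componentwise analogues. Right and left distributivity, $(\sum_{i\in I} g^{(i)})\times f'=\sum_{i\in I}(g^{(i)}\times f')$ and its mirror, follow on each component from~(\ref{equ.hmppninfdistrrightmon}), (\ref{equ.hmppninfdistrleftmon}), the finiteness of the decomposition sum, and the commutation of an infinite sum with a finite one in a complete semimodule; the same argument handles $\sum_i s\times s_i=s\times\sum_i s_i$ and $\sum_i s_i\times s=(\sum_i s_i)\times s$. That $0$ is absorbing is immediate from the $\nat[\tau]$-bilinearity of the component products~(\ref{equ.prodqhmpqhpnmon}). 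The only place demanding genuine care is the associativity reindexing in the previous step, and even there the work is purely formal once~(\ref{equ.mpprrnassocmon}) is available; I therefore do not anticipate a real obstacle here, the substantive content having already been invested in establishing~(\ref{equ.mpprrnassocmon}), (\ref{equ.hmppninfdistrrightmon}) and~(\ref{equ.hmppninfdistrleftmon}).
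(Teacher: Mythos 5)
Your proposal is correct and follows essentially the same route as the paper, which defines the cross-product componentwise by the finite sum over decompositions and then simply states that the semiring axioms are verified using (\ref{equ.mpprrnassocmon}), (\ref{equ.hmppninfdistrrightmon}) and (\ref{equ.hmppninfdistrleftmon}), exactly as you do, with completeness inherited componentwise from Corollary \ref{cor.qhmncompleteness}. Your expansion of associativity over triples of decompositions and the reduction of infinite distributivity to the component-level identities plus finite/infinite sum interchange is precisely the intended (and omitted) verification.
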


\begin{prop} \label{prop.qcontinuous}
The complete idempotent semirings $Q^c$ and $Q^m$ 
are both continuous.
\end{prop}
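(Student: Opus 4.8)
The plan is to reduce continuity of the semirings $Q^c$ and $Q^m$ to continuity of their common underlying additive monoid, and then to the already recorded continuity of $\bool[[q]]$. By definition, a complete idempotent semiring is continuous exactly when its underlying additive complete monoid is continuous. Since $Q^c$ and $Q^m$ both have underlying complete monoid $(Q,+,0,\sum)$, it suffices to prove that $(Q,+,0,\sum)$ is a continuous idempotent complete monoid.

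The key technical step is a general stability statement: if $(M_j)_{j\in J}$ is any family of continuous, idempotent, complete monoids, then the product $\prod_{j\in J} M_j$, equipped with componentwise addition and the componentwise summation law, is again continuous (it is clearly idempotent and complete). I would prove this by a direct componentwise argument. The idempotent partial order on the product is componentwise, i.e. $x\le y$ if and only if $x_j \le y_j$ for all $j$. So if $(m_i)_{i\in I}$ is a family in $\prod_j M_j$ and $c=(c_j)_j$ satisfies $\sum_{i\in F} m_i \le c$ for every finite $F\subset I$, then for each fixed $j$ one has $\sum_{i\in F}(m_i)_j \le c_j$ for all finite $F$, whence $\sum_{i\in I}(m_i)_j \le c_j$ by continuity of $M_j$; since this holds for every $j$, $\sum_{i\in I} m_i \le c$ in the product. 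In particular finite products, equivalently finite direct sums, of continuous idempotent complete monoids are continuous.

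It then remains to assemble the pieces. The semiring $\bool[[q]]$ is continuous (as recalled from Karner). By Lemma \ref{lem.phiiso} together with Corollary \ref{cor.qhmncompleteness}, each $Q(H_{m,n})$ is isomorphic, as a complete $\nat[\tau]$-semimodule — hence as an idempotent complete monoid with its summation law — to a finite direct sum of copies of $\bool[[q]]$; so the stability statement (finite index case) shows $Q(H_{m,n})$ is continuous. Since $Q=\prod_{m,n\in\nat} Q(H_{m,n})$ is formed in the category of $\nat[\tau]$-semimodules and therefore carries componentwise addition and the componentwise summation law — exactly the structure used in the proof of Corollary \ref{cor.qhmncompleteness} — applying the stability statement once more, now to the family $(Q(H_{m,n}))_{m,n\in\nat}$, shows that $Q$ is a continuous idempotent complete monoid. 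Hence $Q^c$ and $Q^m$ are continuous.

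I do not expect a genuine obstacle; the only point requiring mild care is verifying that the identifications in play — the isomorphism of Lemma \ref{lem.phiiso} and the product decomposition of $Q$ — do transport the summation laws and idempotent partial orders verbatim, so that the property ``continuous'' passes across them. This is precisely what the stability statement, combined with the fact that the map of Lemma \ref{lem.phiiso} is an isomorphism of complete semimodules, takes care of.
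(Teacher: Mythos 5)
Your proposal is correct and follows essentially the same route as the paper: both reduce continuity of $Q^c$ and $Q^m$ to continuity of the common underlying additive monoid $Q$, and then, via the componentwise structure of $Q=\prod_{m,n}Q(H_{m,n})$ and the $\bool[[q]]$-coordinates coming from Lemma \ref{lem.phiiso}, to the known continuity of $\bool[[q]]$. The paper merely carries out the componentwise verification directly with the unique coordinates $c^y_{m,n}$, $f^{i,y}_{m,n}$ instead of isolating your product-stability lemma, so the difference is one of packaging, not of substance.
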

\begin{proof}
The argument to be given involves only the additive monoid of $Q$ and the
infinite summation law on it and thus establishes simultaneously that both semirings
are continuous.
Let $\leq$ denote that natural unique partial order relation on the idempotent
semiring $Q$: $f\leq g$ iff $f+g=g$.
Let $(f^i)_{i\in I}$ be a family of elements in $Q$ and $c\in Q$ an upper bound such that
$\sum_{i\in F} f^i  \leq c$
for all finite $F\subset I$. Thus for all $m,n$,
\begin{equation} \label{equ.fiupperbound}
c_{m,n} + \sum_{i\in F} f^i_{m,n} = c_{m,n} \in Q(H_{m,n}). 
\end{equation}
For every $c_{m,n}$ there exist unique power series
$c^y_{m,n}\in \bool [[q]]$, $y\in Y(\OP_{m,n}),$ such that
\[ c_{m,n} = \sum_{y\in Y(\OP_{m,n})} y\otimes c^y_{m,n} \]
and for every $f^i_{m,n}$ there exist unique power series
$f^{i,y}_{m,n}\in \bool [[q]]$ such that
\[ f^i_{m,n} = \sum_{y\in Y(\OP_{m,n})} y\otimes f^{i,y}_{m,n}. \]
By equation (\ref{equ.fiupperbound}),
$\sum_y y\otimes (c^y_{m,n} + \sum_{i\in F} f^{i,y}_{m,n}) =
 \sum_y y\otimes c^y_{m,n}.$
By uniqueness,
\[ c^y_{m,n} + \sum_{i\in F} f^{i,y}_{m,n} = c^y_{m,n} \]
in $\bool [[q]]$ for every finite $F\subset I$, that is,
$\sum_{i\in F} f^{i,y}_{m,n} \leq c^y_{m,n}$ in $\bool [[q]]$.
Since $\bool [[q]]$ is continuous,
$\sum_{i\in I} f^{i,y}_{m,n} \leq c^y_{m,n}$, i.e.
$c^y_{m,n} + \sum_{i\in I} f^{i,y}_{m,n} = c^y_{m,n}$
for all $y,m,n$. Thus
\[ c_{m,n} + \sum_{i\in I} f^i_{m,n} =
\sum_y y\otimes (c^y_{m,n} + \sum_{i\in I} f^{i,y}_{m,n}) =
 \sum_y y\otimes c^y_{m,n} = c_{m,n} \]
and hence $c+\sum_{i\in I} f^i = c$, i.e.
$\sum_{i\in I} f^i \leq c$ in $Q$.
\end{proof}

\section{Quantization}
\label{quantization}

We shall define our topological field theory $Z$ in this section. 
We will specify the state-module $Z(M)$ for a closed
smooth $(n-1)$-manifold $M$ (see (\ref{equ.defstatemodule})) as well as an element
$Z_W \in Z(\partial W)$, the \emph{Zustandssumme}, for a compact smooth $n$-manifold $W$ with
boundary $\partial W$ (see (\ref{equ.statesumdef})).
Neither $M$ nor $W$ have to be oriented; thus $Z$ will be a 
``nonunitary'' theory. \\

\subsection{Embeddings and Cobordisms}
\label{sec.embandcob}

Fix an integer $D\geq 2n+1$. A closed $(n-1)$-dimensional manifold can be embedded
in $\real^{D-1}$ and then, after having made a choice of $k\in \nat$, into a slice 
$\{ k \} \times \real^{D-1} \subset \real \times \real^{D-1} = \real^D$. In the present
paper, we assume that closed $(n-1)$-manifolds $M$ are always embedded in $\real^D$
in such a way that every connected component $M_0$ of $M$ lies entirely in some slice
$\{ k_0 \} \times \real^{D-1},$ $k_0 \in \nat$. Given such an embedding, we let
$M(k) = M\cap \{ k \} \times \real^{D-1}$ be the part of $M$ that lies in the $k$-slice.
Every $M(k)$ is a finite disjoint union of connected components of $M$. By compactness,
$M(k)$ is empty for $k$ large enough. Let $s(M)\subset \nat$ denote the set of all $k$
such that $M(k)$ is nonempty.

\begin{defn}
Let $M,N \subset \real^D$ be two disjoint submanifolds. Then their disjoint union
$M\sqcup N$ is called \emph{well-separated}, if $s(M)\cap s(N)=\varnothing$.
In this case, we shall also write $M\sqcup_s N$.
\end{defn}
Thus, if $M$ and $N$ are well-separated, then there is no component of $M$ and no
component of $N$ such that both these components are contained in the same slice. \\

A compact smooth $n$-dimensional manifold $W$ with boundary
can be smoothly embedded into a closed halfspace of $\real^{2n+1}$ in such
a way that the boundary of $W$ lies in the bounding hyperplane and the
interior lies in the interior of the halfspace, see e.g. 
\cite[Theorem 1.4.3]{hirsch}. This fact motivates the following definition.
Recall that an integer $D$ has been fixed with $D\geq 2n+1$.
\begin{defn}
Let $M,N \subset \real^{D}$ be closed, smoothly embedded, $(n-1)$-dimensional 
manifolds, not necessarily
orientable. A \emph{cobordism} from $M$ to $N$ is a compact, smoothly
embedded $n$-dimensional manifold $W\subset [0,1] \times \real^{D}$
with boundary $\partial W = M\sqcup N,$ such that
\begin{itemize}
\item $M\subset \{ 0 \} \times \real^{D}, N\subset \{ 1 \} \times \real^{D},$
\item $W-\partial W \subset (0,1)\times \real^{D},$ 
\item near the boundary of $[0,1]\times \real^{D}$, the embedding is 
 the product embedding, that is, there exists $0< \epsilon <\frac{1}{2}$ such that
 $W\cap [0,\epsilon]\times \real^{D} = [0,\epsilon] \times M$ and
 $W\cap [1-\epsilon,1] \times \real^{D} = [1-\epsilon,1]\times N$, and 
\item every connected component $W_0$ of $W$ lies entirely in some slice
$[0,1] \times \{ k_0 \} \times \real^{D-1},$ $k_0 \in \nat$.
\end{itemize}
\end{defn}
We will refer to any $\epsilon$  with the above properties as a \emph{cylinder scale} of $W$.
We shall also refer to $M$, i.e. the part of $\partial W$ that is contained
in the hyperplane $0\times \real^{D}$, as the \emph{incoming} boundary
and to $N$,  i.e. the part of $\partial W$ that is contained
in the hyperplane $1\times \real^{D}$, as the \emph{outgoing} boundary.
Let
$W(k) = W\cap [0,1] \times \{ k \} \times \real^{D-1}$ be the part of $W$ that lies in the $k$-slice.
As for closed manifolds, every $W(k)$ is a finite disjoint union of connected components of $W$ and
$W(k)$ is empty for $k$ large enough. Let $s(W)\subset \nat$ denote the set of all $k$
such that $W(k)$ is nonempty. The incoming boundary of $W(k)$ is $M(k)$ and the outgoing
boundary of $W(k)$ is $N(k)$.
\begin{defn}
Let $W,W' \subset [0,1]\times \real^D$ be two disjoint cobordisms. Then their disjoint union
$W\sqcup W'$ is called \emph{well-separated}, if $s(W)\cap s(W')=\varnothing$.
In this case, we shall also write $W\sqcup_s W'$.
\end{defn}
Note that if $W$ and $W'$ are well-separated, we are generally not able to deduce
that $M,N\subset \real^D$ (forgetting the first coordinate of $[0,1] \times \real^D$)
are well-separated, or even disjoint. However, the incoming
boundary of $W$ and the incoming boundary of $W'$ are well-separated, and similarly
for the outgoing boundaries.
The embedding also enables us to chop $W$ into the slices
$W_t = W \cap (\{ t \} \times \real^{D}).$
The first coordinate of $\real^{D+1}$ defines a smooth function
$\omega: W\to [0,1],$ i.e. $\omega$ is the composition
\[ W \hookrightarrow [0,1]\times \real^{D}  
 \stackrel{\operatorname{proj}_1}{\longrightarrow} [0,1]. \]
We think of the first coordinate, $t$, of $[0,1] \times \real^D$ as time.
Thus cobordisms $W$ come equipped with time functions $\omega$.
The time slice $W_t$ can alternatively be described as
the preimage $W_t = \omega^{-1}(t).$ The formula
$W_t (k) = W(k)_t$
holds.

Let $\reg (W)$ be the set of regular values of $\omega$.
A subset $R\subset X$ of a topological space $X$ is called
\emph{residual}, if it contains the intersection of a countable
family of dense open sets. The Baire category theorem asserts that a residual
subset of a complete metric space $X$ is dense. The following simple facts
will be needed later:

\begin{lemma} \label{lem.restrofresidual}
If $R\subset [a,b]$ is residual and $(c,d) \subset [a,b]$ is an open subinterval,
then $R\cap (c,d)$ is residual in $(c,d)$ and thus also in $[c,d]$.
\end{lemma}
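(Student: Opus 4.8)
The statement to prove is Lemma~\ref{lem.restrofresidual}: if $R\subset [a,b]$ is residual and $(c,d)\subset [a,b]$ is an open subinterval, then $R\cap (c,d)$ is residual in $(c,d)$ and hence in $[c,d]$.

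The plan is to argue directly from the definition of ``residual'' given just above the statement, namely that a residual set contains a countable intersection of dense open sets. First I would fix the data: by hypothesis there are sets $U_n\subset [a,b]$, each open and dense in $[a,b]$, with $\bigcap_{n} U_n \subset R$. I then pass to the subspace $(c,d)$, equipped with its subspace topology inherited from $[a,b]$. The natural candidates in $(c,d)$ are the sets $V_n := U_n\cap (c,d)$. Each $V_n$ is open in $(c,d)$ since $U_n$ is open in $[a,b]$ and the subspace topology is generated by such intersections. The key point is that each $V_n$ is \emph{dense} in $(c,d)$: this is where one uses that $(c,d)$ is \emph{open} in $[a,b]$. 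Indeed, if $W$ is any nonempty open subset of $(c,d)$, then $W$ is also open in $[a,b]$ (composition of two inclusions of open sets), so by density of $U_n$ in $[a,b]$ we have $U_n\cap W\neq\varnothing$; but $U_n\cap W = V_n\cap W$ since $W\subset (c,d)$, so $V_n$ meets $W$. Hence $V_n$ is dense open in $(c,d)$.

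Next I would observe that $\bigcap_n V_n = \bigl(\bigcap_n U_n\bigr)\cap (c,d) \subset R\cap (c,d)$, so $R\cap(c,d)$ contains a countable intersection of dense open subsets of $(c,d)$, i.e.\ it is residual in $(c,d)$. For the final clause --- residuality in $[c,d]$ --- I would use the same bookkeeping but now viewing $(c,d)$ as a subset of $[c,d]$: the sets $V_n$ are open in $[c,d]$ as well, provided $c,d$ themselves are handled, but one does not even need them in $V_n$; what one needs is that each $V_n$, together with (or without) the two endpoints, remains dense and open in $[c,d]$. Concretely, set $V'_n := U_n\cap [c,d]$ (or simply note $V_n\cup\{c,d\}$ if one wants openness to fail harmlessly --- better: take $V'_n=V_n$, which is open in $[c,d]$ since $(c,d)$ is open in $[c,d]$, and dense in $[c,d]$ because $(c,d)$ is dense in $[c,d]$ and $V_n$ is dense in $(c,d)$, so $V_n$ is dense in the closure $[c,d]$). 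Then $\bigcap_n V'_n \subset R\cap(c,d)\subset R\cap[c,d]$, giving residuality in $[c,d]$.

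Honestly, there is no real obstacle here: the lemma is purely formal point-set topology, and the only step requiring a moment's care is the density argument for $V_n$ in $(c,d)$, where one must explicitly invoke that $(c,d)$ is open in $[a,b]$ so that open subsets of $(c,d)$ are open in $[a,b]$ --- without openness of the subinterval the claim would be false (a dense open set in $[a,b]$ need not meet an arbitrary closed subinterval in a dense set, e.g.\ if its complement contains that subinterval's interior). I would therefore present the proof in two short paragraphs: one setting up $U_n$ and verifying $V_n$ is dense open in $(c,d)$, and one concluding residuality in $(c,d)$ and then in $[c,d]$ by the transitivity of ``dense in'' along $(c,d)\subset[c,d]$.
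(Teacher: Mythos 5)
Your proof is correct, and it is the standard argument: the paper states this lemma as one of two "simple facts" and omits the proof entirely, so there is nothing to diverge from — restricting the witnessing dense open sets $U_n$ to $V_n=U_n\cap(c,d)$, checking density via openness of $(c,d)$ in $[a,b]$, and then passing to $[c,d]$ by density of $(c,d)$ in $[c,d]$ is exactly what is implicitly intended. One small caveat: your parenthetical claim that the statement "would be false" without openness of the subinterval, justified by a dense open set whose complement contains the subinterval's interior, is not right — the complement of a dense open set is nowhere dense, so it cannot contain the interior of any nondegenerate interval, and in fact residuality does restrict to nondegenerate \emph{closed} subintervals as well; openness is what makes the clean general subspace argument work, not what saves the statement from being false. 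This aside does not affect the validity of your proof.
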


\begin{lemma} \label{lem.unionofresidual}
Let $a,b$ be real numbers such that $0<a<b<1$. Suppose that
$R_0 \subset (0,a)$ is residual in $(0,a)$, $R_{ab} \subset (a,b)$
is residual in $(a,b)$, and $R_1 \subset (b,1)$ is residual in $(b,1)$.
Then the union $R_0 \cup R_{ab} \cup R_1$ is residual in $[0,1]$.
\end{lemma}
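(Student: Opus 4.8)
\textbf{Proof plan for Lemma \ref{lem.unionofresidual}.}
The plan is to recall the precise form of the definition: a set is residual in a space $X$ if it contains a countable intersection of dense open subsets of $X$. So I would start by fixing, for each of the three pieces, a witnessing presentation: write $R_0 \supset \bigcap_{j} U_j^0$ with each $U_j^0$ dense open in $(0,a)$, and similarly $R_{ab} \supset \bigcap_j U_j^{ab}$ with $U_j^{ab}$ dense open in $(a,b)$, and $R_1 \supset \bigcap_j U_j^1$ with $U_j^1$ dense open in $(b,1)$. The goal is to produce, for each $j$, a single dense open subset $V_j$ of $[0,1]$ such that $\bigcap_j V_j \subset R_0 \cup R_{ab} \cup R_1$.

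The natural candidate is $V_j = U_j^0 \cup U_j^{ab} \cup U_j^1 \cup \bigl( [0,1] \setminus [a,b] \bigr)^{\circ}$ — more carefully, I would take the union of the three given open sets together with the open set $[0,a) \cup (a,b) \cup (b,1) = [0,1] \setminus \{a,b\}$ intersected suitably; the point is just to fill in what happens near the two cut points $a$ and $b$ and at the endpoints. Concretely, set $V_j := U_j^0 \cup U_j^{ab} \cup U_j^1 \cup \{0,a,b,1\}^{c}\text{-neighborhoods}$ — but the cleanest choice is $V_j := U_j^0 \cup U_j^{ab} \cup U_j^1$, which is open in $[0,1]$ because each $U_j^\bullet$ is open in an open subinterval of $(0,1)$, hence open in $[0,1]$. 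I then need $V_j$ to be \emph{dense} in $[0,1]$: density on $(0,a)$, on $(a,b)$, and on $(b,1)$ is inherited from the three given sets, and density at the finitely many remaining points $0, a, b, 1$ is automatic since any neighborhood of such a point meets one of the open subintervals, where $V_j$ is already dense. Thus each $V_j$ is dense open in $[0,1]$.

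Finally I would check the inclusion $\bigcap_j V_j \subset R_0 \cup R_{ab} \cup R_1$. Take $x \in \bigcap_j V_j$. Then $x \ne a$ and $x \ne b$ is \emph{not} guaranteed, so I would split cases. If $x \in (0,a)$, then since $x \in V_j$ for all $j$ and $V_j \cap (0,a) = U_j^0$ (the sets $U_j^{ab}, U_j^1$ being disjoint from $(0,a)$), we get $x \in \bigcap_j U_j^0 \subset R_0$; similarly for $x \in (a,b)$ and $x \in (b,1)$. The remaining possibilities $x \in \{0, a, b, 1\}$ must be excluded: I would arrange this by additionally intersecting each $V_j$ (or at least $V_1$) with the open set $[0,1] \setminus \{0,a,b,1\}$, which is still dense open in $[0,1]$; then no $x \in \bigcap_j V_j$ can equal one of those four points. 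With that adjustment the case analysis is exhaustive and gives $x \in R_0 \cup R_{ab} \cup R_1$, proving that the union is residual in $[0,1]$.

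I do not expect a genuine obstacle here; the only subtlety is the bookkeeping at the boundary points $a$, $b$, $0$, $1$ — one must make sure that the three given residual sets carry no information there and that discarding those points (by intersecting with a cofinite, hence dense open, set) is harmless. Lemma \ref{lem.restrofresidual} is implicitly what licenses restricting attention to the open subintervals in the first place. Everything else is a routine manipulation of countable intersections of dense open sets, using that a finite union of dense open sets is dense open.
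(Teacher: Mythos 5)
Your argument is correct; the paper states this lemma as a ``simple fact'' without proof, and your construction supplies exactly the standard argument it leaves implicit: present each $R_\bullet$ as containing a countable intersection of sets dense open in its subinterval, observe that each such set is open in $[0,1]$, and check that the unions $V_j = U_j^0 \cup U_j^{ab} \cup U_j^1$ are dense in $[0,1]$ because their closures contain $[0,a]\cup[a,b]\cup[b,1]$. One small simplification: the extra intersection with $[0,1]\setminus\{0,a,b,1\}$ is unnecessary, since $V_j \subset (0,a)\cup(a,b)\cup(b,1)$ already excludes those four points, so any $x\in\bigcap_j V_j$ automatically lies in one of the three open subintervals and your case analysis closes without the adjustment (the only other bookkeeping worth a word is the harmless re-indexing of the three countable families by a common $j$).
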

The intersection of countably many residual sets is residual. The
Morse-Sard theorem (\cite[Thm. 3.1.3]{hirsch}) implies that
$\reg (W)$ is residual, and hence dense. For $t\in \reg (W),$
the slice $W_t$ is a smooth codimension one submanifold of $W$.
Note that $0,1\in \reg (W)$ and $W_0 =M,$ $W_1 =N$.
Moreover, our definition of a cobordism implies that there is an
$\epsilon >0$ such that $W_t = \{ t \} \times M$ for
$t\in [0,\epsilon]$ and $W_t = \{ t \} \times N$ for
$t\in [1-\epsilon,1]$. \\

We proceed to discuss the operation of gluing two cobordisms.
Let $W'$ be a cobordism from $M$ to $N$ and $W''$ a cobordism
from $N$ to $P$. Let $e_1 = (1,0,\ldots,0)\in \real^{D+1}$ be the first
standard basis vector.
The translate $W'' + e_1$ is embedded in 
$[1,2] \times \real^{D}$. Since the embedding of $W'$ near
$1 \times \real^{D}$ looks like the product embedding
$[1-\epsilon,1]\times N$ and the embedding of $W'' +e_1$ near
$1\times \real^{D}$ looks like the product embedding
$[1,1+\epsilon'] \times N$, the set-theoretic union
\[ \widetilde{W} = W' \cup (W'' + e_1) \]
carries a unique smooth structure that restricts to the smooth structures
on $W'$ and $W''$. This smooth manifold $\widetilde{W}$ is embedded
in $[0,2] \times \real^{D}$. Reparametrizing this embedding linearly
from $[0,2]$ to $[0,1]$, we obtain a cobordism 
$W \subset [0,1] \times \real^{D}$ from $M$ to $P$. We call this cobordism
the result of \emph{gluing $W'$ and $W''$ along $N$} and will write
$W = W' \cup_N W''$. 
Given cylinder scales $\epsilon$ and $\epsilon'$ for $W'$ and $W''$,
respectively, the natural cylinder scale for $W$ is by definition
$\epsilon_W = \smlhf \min (\epsilon, \epsilon').$
The formula
$W(k) = W'(k) \cup_{N(k)} W''(k)$
holds for every $k=0,1,2,\ldots$.
It should be pointed out that one cannot take this gluing operation as the
composition law of a cobordism category because it does not, among other issues,
satisfy associativity. To get a category that way, one would have to consider
isotopy classes of embeddings, which would however render time functions
ill-defined.\\

More generally, we shall also speak of cobordisms in $[a,b]\times \real^{D}$
for any $a<b$. All of the previous definitions generalize to such cobordisms,
with $a$ playing the role of $0$ and $b$ playing the role of $1$. \\

\subsection{Fold Fields}

Let $W$ be a cobordism from $M$ to $N$.
Given a fold map $F:W\to \real^2 =\cplx$,
we set $F(k) = F|:W(k)\to \cplx$ and $SF_t = S(F)\cap W_t$. 
(Recall that $S(F) \subset W$ is the singular set of $F$.)
The imaginary part of a complex number $z\in \cplx$ will be denoted by $\Ima (z)$.
\begin{defn}
We say that $F$ has \emph{generic imaginary parts over $t\in [0,1]$}, if 
$\Ima \circ F|: SF_t \to \real$ is injective.
\end{defn}
\noindent We put
\[ \genim (F) = \{ t\in [0,1] ~|~ F \text{ has generic imaginary parts over } t \} \]
and
\[ \pitchfork (F) = \{ t\in \reg (W) ~|~ S(F)\pitchfork W_t \}. \]
Here, the symbol $A \pitchfork B$ means transverse intersection of two submanifolds $A,B$.
Note that for $t\in \pitchfork (F)$, $SF_t$ is a compact $0$-dimensional manifold
and thus a finite set of points.
The set $\pitchfork (F)$ can be expressed in terms of regular values:
\begin{lemma} \label{lem.transregu}
Let $W$ be a cobordism with time function $\omega: W\to [0,1]$ and
let $F:W\to \cplx$ be a fold map. Let $\reg (\omega_S)$ be the set of regular
values of the restriction $\omega_S = \omega|: S(F)\to [0,1].$ Then
\[ \pitchfork (F) = \reg (\omega_S)\cap \reg (W). \]
\end{lemma}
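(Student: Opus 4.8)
The plan is to unwind both sides of the claimed equality $\pitchfork(F) = \reg(\omega_S)\cap\reg(W)$ in terms of the jet-transversality definition of a fold map and the definition of a regular value. The key observation is that for a fold map $F$ the singular set $S(F)$ is an embedded $1$-dimensional submanifold of $W$ (by the transversality part of the definition of a fold map, noted just after Definition of fold map), so the restriction $\omega_S = \omega|_{S(F)}$ is a genuine smooth map from a $1$-manifold to $[0,1]$ and ``regular value'' for it makes sense. I will show the two inclusions $\subseteq$ and $\supseteq$ separately, though in fact both directions come from the same pointwise computation.

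First I would fix $t\in[0,1]$ and analyze the condition $S(F)\pitchfork W_t$ inside $W$. Here $W_t = \omega^{-1}(t)$, and for $t\in\reg(W)$ this is a codimension-one submanifold with $T_xW_t = \ker(D_x\omega)$ at each $x\in W_t$. Transversality $S(F)\pitchfork W_t$ at a point $x\in S(F)\cap W_t$ means $T_xS(F) + T_xW_t = T_xW$, i.e. $T_xS(F) + \ker(D_x\omega) = T_xW$. Since $\omega$ has image in $\real$, $\ker(D_x\omega)$ has codimension at most one in $T_xW$; when $t\in\reg(W)$ it has codimension exactly one. Thus the transversality condition at $x$ holds if and only if $T_xS(F)\not\subseteq\ker(D_x\omega)$, which (as $S(F)$ is $1$-dimensional) is equivalent to $D_x\omega|_{T_xS(F)} = D_x\omega_S \neq 0$, i.e. to $x$ being a regular point of $\omega_S$. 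Therefore, for $t\in\reg(W)$, we have $S(F)\pitchfork W_t$ (as submanifolds of $W$) if and only if every point of $S(F)\cap W_t = \omega_S^{-1}(t)$ is a regular point of $\omega_S$, which is exactly the statement that $t\in\reg(\omega_S)$.

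Combining this with the defining formula $\pitchfork(F) = \{t\in\reg(W)\mid S(F)\pitchfork W_t\}$ gives the claim: $t\in\pitchfork(F)$ iff $t\in\reg(W)$ and (by the pointwise analysis) $t\in\reg(\omega_S)$, i.e. iff $t\in\reg(\omega_S)\cap\reg(W)$. One small point to handle carefully is the vacuous case: if $\omega_S^{-1}(t)=\varnothing$ then $t$ is trivially a regular value of $\omega_S$ and also $S(F)\pitchfork W_t$ holds vacuously, so both sides include such $t$ as long as $t\in\reg(W)$; this is consistent. Another point worth spelling out is that transversality of $S(F)$ and $W_t$ inside the ambient manifold $W$ is the correct notion here (rather than transversality in some larger space), which is why the identity $T_xW_t=\ker D_x\omega$ for regular $t$ does all the work.

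The main obstacle, such as it is, is purely bookkeeping: being careful that ``regular value of $\omega_S$'' is well-defined, which requires knowing $S(F)$ is a smooth embedded submanifold — but this is guaranteed by the fold-map hypothesis (the condition $j^1F\pitchfork S_1$ together with $S(F)=S_1(F)$), and is already invoked in the text immediately after the definition of a fold map to make $T_xS(F)$ meaningful. No deep input is needed; the proof is a one-line linear-algebra fact about when a line fails to lie in the kernel of a nonzero functional, dressed up in the language of transversality and regular values. I would present it in two or three sentences without belaboring the vacuous case.
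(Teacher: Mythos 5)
Your proof is correct, and it is the standard pointwise argument this lemma is meant to have: for $t\in\reg(W)$ one has $T_xW_t=\ker D_x\omega$ of codimension one, so transversality of the $1$-manifold $S(F)$ to $W_t$ at $x$ is exactly $D_x\omega|_{T_xS(F)}\neq 0$, i.e.\ regularity of $x$ for $\omega_S$. The paper states the lemma without proof, and your argument (including the vacuous case and the remark that the fold hypothesis makes $S(F)$ an embedded submanifold so that $\reg(\omega_S)$ is meaningful) is precisely the intended justification.
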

From this lemma and Brown's theorem, we deduce:
\begin{cor} \label{cor.pitchfresidual}
Let $F: W\to \cplx$ be a fold map on a cobordism $W$. Then $\pitchfork (F)$
is open and dense (and thus residual) in $[0,1]$.
\end{cor}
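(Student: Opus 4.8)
The statement to prove is Corollary~\ref{cor.pitchfresidual}: for a fold map $F: W \to \cplx$ on a cobordism $W$, the set $\pitchfork(F)$ is open and dense (hence residual) in $[0,1]$.

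\bigskip

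The plan is to combine Lemma~\ref{lem.transregu}, which identifies $\pitchfork(F) = \reg(\omega_S) \cap \reg(W)$, with the Morse--Sard/Brown theorem applied to the two smooth functions $\omega: W \to [0,1]$ and $\omega_S = \omega|_{S(F)}: S(F) \to [0,1]$. First I would recall that $S(F)$ is a smoothly embedded compact $1$-manifold in $W$ (this is part of the definition of a fold map: the transversality $j^1 F \pitchfork S_1$ makes $S(F) = S_1(F)$ an embedded submanifold, and $W$ is compact, so $S(F)$ is compact), and $W$ itself is a compact smooth $n$-manifold. Hence both $\omega$ and $\omega_S$ are smooth maps from compact manifolds to the interval. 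By Brown's theorem (the version of Sard's theorem asserting the set of regular values of a smooth map is residual, cf.\ \cite[Thm.\ 3.1.3]{hirsch} as already invoked in the text), $\reg(W) = \reg(\omega)$ is residual in $[0,1]$ and $\reg(\omega_S)$ is residual in $[0,1]$. Since the intersection of two residual sets is residual, $\pitchfork(F) = \reg(\omega_S) \cap \reg(W)$ is residual in $[0,1]$, and by the Baire category theorem it is dense.

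\bigskip

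For openness, I would argue as follows. The complement of $\reg(\omega_S)$ in $[0,1]$ is the set of critical values $\omega_S(\Crit(\omega_S))$. Since $S(F)$ is compact, $\Crit(\omega_S) \subset S(F)$ is closed, hence compact, so its image under the continuous map $\omega_S$ is compact, hence closed in $[0,1]$; thus $\reg(\omega_S)$ is open. Similarly $\reg(W) = \reg(\omega) = [0,1] \setminus \omega(\Crit(\omega))$ is open, using that $W$ is compact so $\Crit(\omega)$ is compact and its image is closed. Therefore $\pitchfork(F)$, being the intersection of two open sets, is open. (Alternatively, one can phrase openness directly: transversality $S(F) \pitchfork W_t$ together with $t$ being a regular value of $\omega$ is an open condition on $t$ because $SF_t$ is a finite set of points at which both transversality conditions hold, and small perturbations of $t$ preserve these; but the compactness argument via closed critical-value sets is cleaner and is what I would write.)

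\bigskip

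Finally I would note that any open dense subset of the complete metric space $[0,1]$ is residual by definition (it contains itself, a countable---indeed singleton---intersection of dense open sets), which gives the parenthetical ``thus residual'' in the statement; conversely residual plus open already implies dense via Baire, so the three conclusions are consistent. The main obstacle---really the only nontrivial point---is making sure $S(F)$ is genuinely a compact embedded submanifold so that $\omega_S$ is a smooth map on a manifold to which Sard applies; this is guaranteed by the definition of fold map and the compactness of $W$, and has in fact been spelled out in the text following Definition~\ref{def.foldmapnormalform} and in the discussion of fold loci, so no new work is required there. Everything else is a routine application of Sard's theorem and elementary point-set topology.
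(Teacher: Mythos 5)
Your proof is correct and follows the paper's route exactly: the paper also deduces the corollary from Lemma~\ref{lem.transregu} together with Brown's theorem, which is precisely the statement (regular values of a smooth map from a compact manifold form an open dense set) that you re-derive via Sard plus the compactness argument on critical values. Nothing further is needed.
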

Our TFT will operate with the following notion of fields:
\begin{defn} \label{def.foldfield}
A \emph{fold field} on $W$ is a fold map $F:W\to \cplx$
such that for all $k\in s(W),$\\
(1) $0,1 \in~ \pitchfork (F(k))\cap \genim (F(k))$, and \\
(2) $\genim (F(k))$ is residual in $[0,1]$.
\end{defn}
\begin{remark} \label{rem.giequivtransplusgi}
It follows from Corollary \ref{cor.pitchfresidual} that condition (2) is equivalent to \\

(2') $\pitchfork (F(k))\cap \genim (F(k))$ is residual in $[0,1]$. 
\end{remark}

A fold map $F:W\to \cplx$ is a fold field on $W$ if and only if
$F(k)$ is a fold field on $W(k)$ for all $k=0,1,\ldots$.
Fold fields on a cobordism embedded in 
$[a,b] \times \real^{D},$ $a<b,$ are defined similarly: just replace
$0$ by $a$ and $1$ by $b$ in the above definition. 
For a nonempty cobordism $W$, let $\Fa (W)\subset C^\infty (W,\cplx)$ be the space
of all fold fields on $W$. For $W$ empty, we agree that $\Fa (W) = \{ * \},$ a set with one element.
 In the following very special case, fold maps are automatically
fold fields:

\begin{lemma} \label{lem.timelocconst}
Let $F:W\to \cplx$ be a fold map on a cobordism $W$.
If the time function $\omega$ on $W$ is locally constant, then $F$ is a fold field.
\end{lemma}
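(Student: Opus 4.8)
The plan is to unwind the definition of a fold field and check that local constancy of $\omega$ forces conditions (1) and (2) of Definition \ref{def.foldfield} to hold trivially, essentially because the time slices and the singular set have no vertical interaction. First I would recall that, since $F$ is a fold field on $W$ if and only if $F(k)$ is a fold field on each $W(k)$, and since the hypothesis passes to each $W(k)$ (the restriction of a locally constant function to an open-closed piece is locally constant), it suffices to treat a single slice $W(k)$; for notational ease I would just argue on $W$ itself with $\omega$ locally constant. The key observation is that if $\omega: W\to[0,1]$ is locally constant then $\omega$ is constant on each connected component of $W$, hence takes only finitely many values $t_1,\dots,t_r$ by compactness; each $W_t$ is either empty or a union of components of $W$, and in particular $W_t$ is \emph{open} in $W$.

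Next I would verify transversality. For a locally constant $\omega$ every value is a regular value: at a point $x\in W_t$ the differential $D_x\omega=0$, and the condition for $t$ to be a regular value is vacuous since there are no points mapping to a regular value where the differential would need to be surjective — more precisely, $t$ is a regular value of $\omega$ iff $D_x\omega$ is surjective for all $x\in\omega^{-1}(t)$, but as $\omega$ is locally constant near each such $x$ and $\dim[0,1]=1>0$, this would fail \emph{unless} $\omega^{-1}(t)=\varnothing$ — so I must instead observe that the relevant notion here is transversality $S(F)\pitchfork W_t$, and $W_t$ being open in $W$ makes this automatic: an open submanifold is transverse to every submanifold. Thus $\reg(W)$, as used in the paper's conventions for cobordisms where $W_t$ is declared a codimension-one submanifold, should be read via Lemma \ref{lem.transregu}: $\pitchfork(F)=\reg(\omega_S)\cap\reg(W)$. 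Since $\omega_S=\omega|_{S(F)}$ is also locally constant, the same reasoning shows $\reg(\omega_S)$ and $\reg(W)$ each contain all of $[0,1]$ that is not one of the finitely many critical values, and in fact one checks directly that for $t\in\{t_1,\dots,t_r\}$ the intersection $S(F)\cap W_t$ is again open in $S(F)$, hence $0$-dimensional only if it is empty — so $SF_t=\varnothing$ for such $t$. For all other $t$, $W_t=\varnothing$, so again $SF_t=\varnothing$. Hence $SF_t=\varnothing$ for \emph{every} $t\in[0,1]$.

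With $SF_t=\varnothing$ for all $t$, the remaining conditions are immediate. The map $\Ima\circ F|:SF_t\to\real$ is a map out of the empty set, hence vacuously injective, so $F$ has generic imaginary parts over every $t$; that is, $\genim(F)=[0,1]$, which is certainly residual in $[0,1]$, giving condition (2). For condition (1), we need $0,1\in\pitchfork(F)\cap\genim(F)$. We have just seen $\genim(F)=[0,1]\ni 0,1$. For $\pitchfork(F)$: by Lemma \ref{lem.transregu} it equals $\reg(\omega_S)\cap\reg(W)$, and by the cobordism conventions $0,1\in\reg(W)$ always; moreover $0,1\in\reg(\omega_S)$ since near $t=0$ and $t=1$ the embedding of $W$ is a product $[0,\epsilon]\times M$ resp.\ $[1-\epsilon,1]\times N$, on which $\omega$ is the first coordinate and hence submersive (so $S(F)$ near those ends is of product form and $\omega_S$ is a submersion there) — alternatively, if one prefers, $SF_0=SF_1=\varnothing$ makes the transversality vacuous at the ends just as in the interior. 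Either way $0,1\in\pitchfork(F)$, establishing (1). Therefore $F$ is a fold field.

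The main obstacle I anticipate is purely bookkeeping: making precise the interplay between the paper's conventions on $\reg(W)$ (where $W_t$ is \emph{defined} to be a submanifold only for regular values) and the degenerate situation where $W_t$ is a full union of components. The cleanest route is to lean entirely on Lemma \ref{lem.transregu}, which expresses $\pitchfork(F)$ in terms of regular values of the genuine functions $\omega$ and $\omega_S$ on the manifolds $W$ and $S(F)$, and then to note that a locally constant function on a compact manifold has every non-attained value as a (vacuous) regular value while every attained value corresponds to an open — hence, if $0$-dimensional, empty — level set. Once $SF_t=\varnothing$ for all $t$ is established, conditions (1) and (2) collapse to triviality; no real analysis is needed beyond this structural observation, and I do not expect to have to reprove anything about fold maps themselves.
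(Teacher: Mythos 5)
Your proof contains a genuine error at its central step. The claim that $SF_t=\varnothing$ for \emph{every} $t\in[0,1]$ is false: local constancy of $\omega$ forces each connected component of $W$ to lie in a single slice $\omega^{-1}(t_j)$, and for such an attained value $t_j$ the set $SF_{t_j}=S(F)\cap W_{t_j}$ is the \emph{entire} fold locus of $F$ on those components --- generically a nonempty disjoint union of fold circles, i.e.\ a $1$-manifold. Your inference ``open in $S(F)$, hence $0$-dimensional only if it is empty, so $SF_{t_j}=\varnothing$'' is a non sequitur: nothing forces $SF_{t_j}$ to be $0$-dimensional, because $t_j\notin\reg(W)$ and hence $t_j\notin\pitchfork(F)$, so the finiteness remark about $SF_t$ does not apply there. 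As a consequence your conclusion $\genim(F)=[0,1]$ is also false in general: on a nonempty fold circle the continuous function $\Ima\circ F$ cannot be injective, so the attained values $t_j$ typically do \emph{not} lie in $\genim(F)$. What is true, and all that is needed, is that $\genim(F)$ contains $[0,1]-\{t_1,\ldots,t_r\}$, an open dense (hence residual) set --- this is exactly how the paper argues condition (2).

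The second problem is at the endpoints. Your justification that $0,1\in\reg(\omega_S)$ appeals to the collar $[0,\epsilon]\times M$, $[1-\epsilon,1]\times N$ on which $\omega$ is the first-coordinate projection and hence submersive; but that product structure is incompatible with $\omega$ being locally constant unless $M=N=\varnothing$. The key observation you are missing --- and the first line of the paper's proof --- is that local constancy of $\omega$ forces $\partial W=\varnothing$, so $W\subset(0,1)\times\real^D$ and $0,1$ are not in the image of $\omega$ at all. Then $W_0=W_1=\varnothing$, so $SF_0=SF_1=\varnothing$, transversality at the ends is vacuous, $0,1\in\reg(W)$ and $0,1\in\reg(\omega_S)$ vacuously, and condition (1) follows. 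With these two repairs (boundary is empty; $\genim(F)$ only contains the complement of the finitely many interior values $t_j$, which suffices for residuality) your argument collapses onto the paper's proof; as written, however, it rests on a false intermediate claim and on an appeal to a collar that the hypothesis rules out.
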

\begin{proof}
If $\omega$ is locally constant, then $\partial W$ is empty.
Hence $W_0 = \varnothing = W_1$ and so vacuously $S(F) \pitchfork W_0, W_1$.
Since $0,1$ are not in the image of $\omega$, $0,1\in \reg (W)$ and consequently
$0,1 \in~ \pitchfork (F)$. Moreover, as $SF_0 = \varnothing = SF_1,$
the restrictions $\Ima \circ F|_{SF_0},$ $\Ima \circ F|_{SF_1}$ are vacuously injective and we
conclude that $0,1\in \genim (F)$.

Let $\{ W[j] \}_{j\in J}$ be the distinct connected components of $W$.
As $W$ is compact, $J$ is finite.
Since $\omega|_{W[j]}$ is constant, there is a $t_j \in (0,1)$ such that $W[j] \subset \omega^{-1} (t_j)$.
If $t\in [0,1] - \{ t_j \}_{j\in J},$ then $W_t =\varnothing,$ $SF_t =\varnothing$ and thus
$\Ima \circ F|_{SF_t}$ is vacuously injective. This shows that $t\in \genim (F)$.
Hence $\genim (F)$ contains the open dense subset $[0,1] - \{ t_j \}_{j\in J}$ and so is residual in $[0,1]$.
\end{proof}

\subsection{The Action Functional: Fold Fields and the Brauer Category}
\label{ssec.actionfoldfieldsbrauer}

Recall that $\Br$ denotes the Brauer category as introduced in Section \ref{ssec.tangles}.
There is a natural function
\[ \mbs: \Fa (W) \longrightarrow \operatorname{Mor}(\Br) \]
that we shall describe next. 
We interpret this function as the (exponential of the) \emph{action functional} on our fields.
If $W$ is empty we define $\mbs (*) = 1_I,$ the identity on the
unit object $I=[0]$ of $\Br$. Next, suppose that $W$ is nonempty and entirely
contained in a slice $[0,1]\times \{ k \} \times \real^{D-1},$ i.e.
$W = W(k)$. Given $F\in \Fa (W),$ let
$m_S$ be the cardinality of $S(F)\cap M$ and let $n_S$ be the cardinality
of $S(F)\cap N$. The Brauer morphism $\mbs (F)$ will be a morphism
$\mbs (F):[m_S]\to [n_S].$ There is a canonical identification of
points of $S(F)\cap M$ with points of $M[m_S]$ given as follows:
By (1) of Definition \ref{def.foldfield}, $\Ima \circ F$ is injective on 
$S(F)\cap M = SF_0$ and therefore induces
a unique ordering $p_1, p_2, \ldots, p_{m_S}$ of the points of $S(F)\cap M$
such that 
\[ \Ima F(p_i) < \Ima F(p_j) \hspace{.3cm} \Longleftrightarrow \hspace{.3cm}
  i<j. \]
This gives a bijection $S(F)\cap M \cong M[m_S],$ $p_i \leftrightarrow i$.
Similarly for the outgoing boundary:
The function $\Ima \circ F$ is injective on $S(F)\cap N=SF_1$ and therefore induces
a unique ordering $q_1, q_2, \ldots, q_{n_S}$ of the points of $S(F)\cap N$
such that 
$\Ima F(q_i) < \Ima F(q_j) \Leftrightarrow i<j$.
This gives a bijection $S(F)\cap N \cong M[n_S],$ $q_i \leftrightarrow i$.
To construct the Brauer morphism $\mbs (F):[m_S]\to [n_S],$ connect the points of
$0\times M[m_S]\times 0\times 0$ and $1\times M[n_S]\times 0 \times 0$
by smooth arcs in $[0,1] \times \real^3$
in the following manner. Let $c$ be a connected component of the
compact $1$-manifold $S(F)$. We distinguish four cases.
If $\partial c = \{ p_i, p_j \}$, then connect $(0,i,0,0)$ to $(0,j,0,0)$ by an arc.
If $\partial c = \{ p_i, q_j \}$, then connect $(0,i,0,0)$ to $(1,j,0,0)$.
If $\partial c = \{ q_i, q_j \}$, then connect $(1,i,0,0)$ to $(1,j,0,0)$.
Finally, if $c$ is closed, i.e. $\partial c = \varnothing,$ then tensor with
the loop endomorphism $\lambda$.
Carrying this recipe out for every connected component $c$ of $S(F)$
completes the construction of $\mbs (F)$. Finally, if $W$ is nonempty
but otherwise arbitrary, we put
\[ \mbs (F) = \bigotimes_{k\in \nat} \mbs (F(k)). \]
(This tensor product is finite, as $W(k)$ is eventually empty.)

\begin{lemma} \label{lem.sfltsfgtissf}
Let $F$ be a fold field on $W$ and 
$t \in~ (0,1) \cap \bigcap_{a\in s(W)} (\pitchfork F(a)\cap \genim F(a)).$ 
Let $F_{\leq t}: W_{\leq t}\to \cplx$ and
$F_{\geq t}: W_{\geq t}\to \cplx$ denote the restrictions of $F$ to
$W_{\leq t} = W\cap [0,t]\times \real^{D},$
$W_{\geq t} = W\cap [t,1]\times \real^{D},$ respectively.
Then $F_{\leq t}$ is a fold field on $W_{\leq t}$, $F_{\geq t}$ is a fold
field on $W_{\geq t},$ and the Brauer morphism identity
\[ \mbs (F_{\geq t})\circ \mbs (F_{\leq t}) = \mbs (F) \]
holds.
\end{lemma}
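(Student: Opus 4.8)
The plan is to reduce the claim to the case of a single slice $W=W(k)$, since both the hypothesis on $t$ and the definition of $\mbs$ are slicewise, and then to analyze what the singular set $S(F)$ looks like near the time-$t$ level. First I would verify that $F_{\leq t}$ and $F_{\geq t}$ are fold fields on $W_{\leq t}$ and $W_{\geq t}$. Being fold maps is a local condition, so restricting to a submanifold-with-boundary on which the map is still of the required normal form (Proposition \ref{prop.foldmapnormalform}) preserves it; the only point needing care is the new boundary component sitting over $t$. Here the hypothesis $t\in \pitchfork F(a)$ for all $a\in s(W)$ is exactly what is needed: $S(F)\pitchfork W_t$ guarantees that $SF_t$ is a finite set of interior fold points meeting the level transversally, so near $t$ the embedding of $W$ in $[0,1]\times\real^D$ can be taken to be a product $[t-\delta,t+\delta]\times W_t$ compatible with the fold structure, and the restrictions are genuine cobordisms carrying fold maps. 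Condition (1) of Definition \ref{def.foldfield} for $F_{\leq t}$ at the endpoint $t$ is precisely $t\in \pitchfork F(k)\cap \genim F(k)$, which is part of the hypothesis; at the endpoint $0$ it is inherited from $F$. Condition (2), residuality of $\genim$, follows from Lemma \ref{lem.restrofresidual}: $\genim(F(k))\cap[0,t]$ is residual in $[0,t]$ because $\genim(F(k))$ is residual in $[0,1]$.

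Next I would establish the Brauer morphism identity. Fix $k$ and write $W=W(k)$, $F=F(k)$. Let $m_S,n_S$ be the cardinalities of $S(F)\cap M$, $S(F)\cap N$, and let $r$ be the cardinality of $SF_t$, which is finite since $t\in\pitchfork F$. Cutting $S(F)$ along the transverse level $W_t$ decomposes the compact $1$-manifold $S(F)$ into two compact $1$-manifolds-with-boundary, $S(F_{\leq t})=S(F)\cap W_{\leq t}$ and $S(F_{\geq t})=S(F)\cap W_{\geq t}$, glued along the $r$ points of $SF_t$. The ordering of the points of $SF_t$ induced by $\Ima\circ F$ (injective there by $t\in\genim F$) identifies $SF_t$ with $M[r]$, so $\mbs(F_{\leq t})$ is a morphism $[m_S]\to[r]$ and $\mbs(F_{\geq t})$ a morphism $[r]\to[n_S]$, and their composite is defined. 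The key combinatorial point is that a connected component $c$ of $S(F)$ either lies entirely in $W_{\leq t}$, entirely in $W_{\geq t}$, or is cut by $W_t$ into finitely many arcs; in the composite Brauer diagram, gluing the representatives of $\mbs(F_{\leq t})$ and $\mbs(F_{\geq t})$ along the $r$ intermediate endpoints reconstitutes exactly the endpoint-connection pattern and loop count of $c$. Going through the four cases of Definition \ref{def.foldfield}'s construction — $c$ with both endpoints in $M$, one in $M$ one in $N$, both in $N$, or $c$ closed — and tracking how each arc of $c\cap W_{\leq t}$ and $c\cap W_{\geq t}$ contributes, one sees that the endpoint connections of $\mbs(F)$ are recovered (arcs meeting only at intermediate points fuse), that closed components of $S(F)$ lying on one side contribute the same loop $\lambda$ to the corresponding factor, and that a closed component straddling $W_t$ is cut into arcs whose composite closes up into exactly one loop. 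Since morphisms in $\Br$ are determined entirely by endpoint connections and loop number, this yields $\mbs(F_{\geq t})\circ\mbs(F_{\leq t})=\mbs(F)$ slicewise.

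Finally, for general nonempty $W$ one assembles over $k$: by definition $\mbs(F)=\bigotimes_{k}\mbs(F(k))$, $\mbs(F_{\leq t})=\bigotimes_k\mbs(F_{\leq t}(k))$, $\mbs(F_{\geq t})=\bigotimes_k\mbs(F_{\geq t}(k))$, and since $(\psi\phi)\otimes(\psi'\phi')=(\psi\otimes\psi')\circ(\phi\otimes\phi')$ in the strict monoidal category $\Br$, the slicewise identities compose to the asserted one; the hypothesis $t\in\bigcap_{a\in s(W)}(\pitchfork F(a)\cap\genim F(a))$ is what makes the slicewise argument applicable for every $k$ at once. The main obstacle I expect is the bookkeeping in the closed-component-straddling-$W_t$ case: one must argue carefully that a circle in $S(F)$ crossing the level $2j$ times (necessarily an even number, by transversality and compactness of $[0,t]$-arcs) is cut into $j$ arcs on each side whose composite in $\Br$ is a single loop, neither more nor fewer, and that no spurious loops are created when arcs that were part of distinct components of $S(F_{\leq t})$ or $S(F_{\geq t})$ get glued; this is where the persistence-of-loops discussion following Proposition \ref{prop.tanggensrels} (loops are cancellative and persistent) is the right tool to pin down the exact loop count rather than merely a lower bound.
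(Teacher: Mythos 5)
Your proposal is correct and follows essentially the same route as the paper: verify the fold-field conditions for the restrictions using the hypothesis at the cut level and Lemma \ref{lem.restrofresidual}, then fix a slice, order the points of $SF_t$ by $\Ima\circ F$, trace each connected component of $S(F)$ through its transverse intersections with $W_t$ to match endpoint connections and loop counts, and finally tensor over slices using the interchange law in $\Br$. One small remark: the exact loop count for a closed component straddling $W_t$ comes directly from the arc-tracing bookkeeping (as in the paper), not from loop persistence/cancellativity, which only yields lower bounds.
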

\begin{proof}
Fix $a\in \nat$ with $W(a)\not= \varnothing$.
The set $\genim (F(a))$ is residual in $[0,1]$. 
Thus, by Lemma \ref{lem.restrofresidual},
\[ \genim (F(a)_{\leq t}) \cap (0,t) =
  \genim (F(a)) \cap (0,t) \]
is residual in $[0,t]$. Since 
$0,t \in \pitchfork (F(a)_{\leq t})\cap \genim (F(a)_{\leq t}),$
the map $F(a)_{\leq t}$ is a fold field on $W(a)_{\leq t}$.
Since this holds for any $a\in s(W),$ 
$F_{\leq t}$ is a fold field on $W_{\leq t}$.
A similar argument shows that 
$F_{\geq t}$ is a fold field on $W_{\geq t}.$ 

We turn to proving the Brauer morphism identity. Again, fix a 
natural number $a$.
The function $\Ima F(a)$ induces a unique ordering
\[ S(F(a)) \cap M(a) = \{ p_1,\ldots, p_m \} \]
such that $\Ima F(a)(p_i)<\Ima F(a)(p_j)$ if and only if $i<j$,
a unique ordering
\[ S(F(a)) \cap W(a)_t = \{ r_1,\ldots, r_k \} \]
such that $\Ima F(a)(r_i)<\Ima F(a)(r_j)$ if and only if $i<j$, and
a unique ordering
\[ S(F(a)) \cap N(a) = \{ q_1,\ldots, q_l \} \]
such that $\Ima F(a)(q_i)<\Ima F(a)(q_j)$ if and only if $i<j$.
Let $c$ be a connected component of $S(F(a))$ with 
nonempty boundary. There are three possible cases:
either $\partial c = \{ p_i, q_j \}$, or $\partial c = \{ p_i, p_j \}$
($i\not= j$), or $\partial c = \{ q_i, q_j \}$ ($i\not= j$).

Suppose that $\partial c = \{ p_i, q_j \}$.
We move along $c$, starting at the endpoint $p_i$.
As we move along $c$, let $r_{a(1)}$ be the first point on $c$
which lies on $W_t$. Note that such a point exists by the
intermediate value theorem. The segment of $c$ from $p_i$ to
$r_{a(1)}$ must be entirely contained in $W(a)_{\leq t},$ and is
a connected component $c_1$ of $S(F(a)_{\leq t})$.
We continue to move along $c$ in the same direction.
Passing $r_{a(1)},$ a sufficiently short segment of $c$ must
lie entirely in $W(a)_{\geq t},$ since $S(F(a))$ is transverse to $W(a)_t$.
If we encounter no point which lies on $W(a)_t$ again, then
the segment $c_2$ of $c$ from $r_{a(1)}$ to $q_j$ lies entirely
in $W(a)_{\geq t}$ and we stop. Otherwise, if there is a point
after $r_{a(1)}$ where $c$ and $W(a)_t$ intersect again, then let
$r_{a(2)}$ be the first such point, $a(2)\not= a(1)$. The segment
of $c$ from $r_{a(1)}$ to $r_{a(2)}$ is entirely contained in
$W(a)_{\geq t}$ and is a connected component $c_2$ of $S(F(a)_{\geq t})$.
Continuing in this fashion, we arrive at a finite list
$\{ c_1, c_2, \ldots, c_s \}$ such that
\begin{enumerate}
\item $s$ is even and $s-1\leq k$,
\item $c = \bigcup_{d=1}^s c_d,$
\item $c_{2d+1}$ is a connected component of $S(F(a)_{\leq t})$,
      $d=0,\ldots, \frac{s}{2}-1,$
\item $c_{2d}$ is a connected component of $S(F(a)_{\geq t})$,
      $d=1,\ldots, \frac{s}{2},$
\item $\partial c_1 = \{ p_i, r_{a(1)} \},$ 
        $\partial c_s = \{ r_{a(s-1)}, q_j \},$
\item $\partial c_d = \{ r_{a(d-1)}, r_{a(d)} \},$ 
       $d=2,\ldots, s-1,$
\item $a(d)\not= a(d'),$ whenever $d\not= d'$.
\end{enumerate}
Thus the Brauer morphism $\mbs (F(a)_{\leq t})$ connects
$(0,i,0,0)$ to $(1,a(1),0,0)$ (as $c_1$ is a connected component of
$\mbs (F(a)_{\leq t})$ with $\partial c_1 = \{ p_i, r_{a(1)} \}$),
$(1,a(2),0,0)$ to $(1,a(3),0,0)$ (as $c_3$ is a connected component of
$\mbs (F(a)_{\leq t})$ with $\partial c_3 = \{ r_{a(2)}, r_{a(3)} \}$),
$(1,a(4),0,0)$ to $(1,a(5),0,0)$, etc., and
$(1,a(s-2),0,0)$ to $(1,a(s-1),0,0)$ (as $c_{s-1}$ is a connected component of
$\mbs (F(a)_{\leq t})$ with $\partial c_{s-1} = \{ r_{a(s-2)}, r_{a(s-1)} \}$).
The Brauer morphism $\mbs (F(a)_{\geq t})$ connects
$(0,a(1),0,0)$ to $(0,a(2),0,0)$ (as $c_2$ is a connected component of
$\mbs (F(a)_{\geq t})$ with $\partial c_2 = \{ r_{a(1)}, r_{a(2)} \}$),
$(0,a(3),0,0)$ to $(0,a(4),0,0)$ (as $c_4$ is a connected component of
$\mbs (F(a)_{\geq t})$ with $\partial c_4 = \{ r_{a(3)}, r_{a(4)} \}$),
etc., and
$(0,a(s-1),0,0)$ to $(1,j,0,0)$ (as $c_s$ is a connected component of
$\mbs (F(a)_{\geq t})$ with $\partial c_s = \{ r_{a(s-1)}, q_j \}$).
The composition $\mbs (F(a)_{\geq t})\circ \mbs (F(a)_{\leq t})$ in $\Br$
is defined by translating (a representative of) $\mbs (F(a)_{\geq t})$ from
$[0,1]\times \real^3$ to $[1,2]\times \real^3,$ taking the union of
(a representative of) $\mbs (F(a)_{\leq t})$ with the translated copy of
$\mbs (F(a)_{\geq t})$, and reparametrizing $[0,2]\times \real^3$ back to
$[0,1]\times \real^3$. Thus $\mbs (F(a)_{\geq t})\circ \mbs (F(a)_{\leq t})$
connects $(0,i,0,0)$ to $(1,j,0,0)$. Since $c$ is a connected component
of $S(F(a))$ with $\partial c = \{ p_i, q_j \},$ $\mbs (F(a))$ also connects
$(0,i,0,0)$ to $(1,j,0,0)$. 

Suppose that $\partial c = \{ p_i, p_j \}$.
Moving along $c$ and proceeding as in the previous case, we obtain
a list
$\{ c_1, \ldots, c_s \}$ such that
\begin{enumerate}
\item $s$ is odd and $s-1\leq k$,
\item $c = \bigcup_{d=1}^s c_d,$
\item $c_{2d+1}$ is a connected component of $S(F(a)_{\leq t})$,
      $d=0,\ldots, \smlhf (s-1),$
\item $c_{2d}$ is a connected component of $S(F(a)_{\geq t})$,
      $d=1,\ldots, \smlhf (s-1),$
\item $\partial c_1 = \{ p_i, r_{a(1)} \},$ 
        $\partial c_s = \{ r_{a(s-1)}, p_j \},$
\item $\partial c_d = \{ r_{a(d-1)}, r_{a(d)} \},$ 
       $d=2,\ldots, s-1,$
\item $a(d)\not= a(d'),$ whenever $d\not= d'$.
\end{enumerate}
Thus the Brauer morphism $\mbs (F(a)_{\leq t})$ connects
$(0,i,0,0)$ to $(1,a(1),0,0)$,
$(1,a(2),0,0)$ to $(1,a(3),0,0)$, etc., and
$(0,j,0,0)$ to $(1,a(s-1),0,0)$.
The Brauer morphism $\mbs (F(a)_{\geq t})$ connects
$(0,a(1),0,0)$ to $(0,a(2),0,0)$,
$(0,a(3),0,0)$ to $(0,a(4),0,0)$, etc., and
$(0,a(s-2),0,0)$ to $(0,a(s-1),0,0)$.
Hence, $\mbs (F(a)_{\geq t})\circ \mbs (F(a)_{\leq t})$
connects $(0,i,0,0)$ to $(0,j,0,0)$. Since $c$ is a connected component
of $S(F(a))$ with $\partial c = \{ p_i, p_j \},$ $\mbs (F(a))$ also connects
$(0,i,0,0)$ to $(0,j,0,0)$. The third case $\partial c = \{ q_i, q_j \}$
is treated in a similar way. 

Now let $c$ be a component of $S(F(a))$ with empty boundary.
If $c$ does not intersect $W(a)_t$, then $c\subset W(a)_{\leq t}$ or
$c\subset W(a)_{\geq t}$, by the intermediate value theorem.
Thus $c$ contributes to precisely one of $\mbs (F(a)_{\leq t})$,
$\mbs (F(a)_{\geq t})$ by tensoring with the loop endomorphism 
$\lambda$. The closed component $c$ also contributes to
$\mbs (F(a))$ by tensoring with the loop endomorphism 
$\lambda$. Suppose that $c$ and $W(a)_t$ do intersect.
Let $r_{a(1)}$ be any point in the intersection.
We will move along $c$, starting at the point $r_{a(1)}$.
As $c$ is transverse to $W(a)_t$, we have a choice of either
moving into $W(a)_{\leq t}$ or into $W(a)_{\geq t}$. We choose to
move into $W(a)_{\leq t}$. Let $r_{a(2)}$ be the first point on $c$
after $r_{a(1)}$ that lies on $W(a)_t$, $a(1)\not= a(2)$.
The segment of $c$ from $r_{a(1)}$ to $r_{a(2)}$ is entirely
contained in $W(a)_{\leq t}$ and is a connected component
$c_1$ of $S(F(a)_{\leq t})$. We continue to move in the same
direction along $c$. Passing $r_{a(2)},$ a sufficiently short
segment of $c$ must lie entirely in $W(a)_{\geq t}$, since
$S(F(a))$ is transverse to $W(a)_t$. If we encounter no point in
$W(a)_t$ other than $r_{a(1)}$ anymore, then let
$c_2 \subset W(a)_{\geq t}$ be the segment of $c$ from
$r_{a(2)}$ to $r_{a(1)}$ and stop. Otherwise, let
$r_{a(3)}$, $a(3)\not\in \{ a(1), a(2) \},$ be the first point
on $c$ after $r_{a(2)}$ which lies on $W(a)_t$ again.
The segment of $c$ from $r_{a(2)}$ to $r_{a(3)}$ is
entirely contained in $W(a)_{\geq t}$ and is a connected
component of $S(F(a)_{\geq t})$. 
Continuing in this fashion, we arrive at a list
$\{ c_1, \ldots, c_s \}$ such that
\begin{enumerate}
\item $s$ is even and $s\leq k$,
\item $c = \bigcup_{d=1}^s c_d,$
\item $c_{2d+1}$ is a connected component of $S(F(a)_{\leq t})$,
      $d=0,\ldots, \frac{s}{2}-1,$
\item $c_{2d}$ is a connected component of $S(F(a)_{\geq t})$,
      $d=1,\ldots, \frac{s}{2},$
\item $\partial c_s = \{ r_{a(s)}, r_{a(1)} \},$
\item $\partial c_d = \{ r_{a(d)}, r_{a(d+1)} \},$ 
       $d=1,\ldots, s-1,$
\item $a(d)\not= a(d'),$ whenever $d\not= d'$.
\end{enumerate}
Thus the Brauer morphism $\mbs (F(a)_{\leq t})$ connects
$(1,a(1),0,0)$ to $(1,a(2),0,0)$,
$(1,a(3),0,0)$ to $(1,a(4),0,0)$, etc., and
$(1,a(s-1),0,0)$ to $(1,a(s),0,0)$.
The Brauer morphism $\mbs (F(a)_{\geq t})$ connects
$(0,a(2),0,0)$ to $(0,a(3),0,0)$,
$(0,a(4),0,0)$ to $(0,a(5),0,0)$, etc., and
$(0,a(s),0,0)$ to $(0,a(1),0,0)$.
Thus $\mbs (F(a)_{\geq t})\circ \mbs (F(a)_{\leq t})$
connects $(\tau,a(1),0,0)$ to $(\tau,a(1),0,0)$ (suitable $\tau$) and therefore,
$c_1,\ldots, c_s$ contribute to $\mbs (F(a)_{\geq t})\circ \mbs (F(a)_{\leq t})$
by tensoring with the loop endomorphism $\lambda$.
Since $c$ is a closed connected component
of $S(F(a))$, $\mbs (F(a))$ also contributes to $\mbs (F(a))$
by tensoring with the loop endomorphism $\lambda$. 

We have shown that
\[ \mbs (F(a)_{\geq t})\circ \mbs (F(a)_{\leq t}) = \mbs (F(a)) \]
for every $a=0,1,2,\ldots$. The desired Brauer morphism identity follows from
\begin{eqnarray*}
\mbs (F) & = & \bigotimes_a \mbs (F(a)) =
  \bigotimes_a \left( \mbs (F(a)_{\geq t}) \circ \mbs (F(a)_{\leq t}) \right) \\
& = & \Big\{ \bigotimes_a \mbs (F(a)_{\geq t}) \Big\} \circ
    \Big\{ \bigotimes_a \mbs (F(a)_{\leq t}) \Big\} = \mbs (F_{\geq t})\circ \mbs (F_{\leq t}).
\end{eqnarray*}
\end{proof}

Let $V$ be a real vector space of finite dimension $\dim V\geq 2$ and let $(i,e)$ be a duality
structure on $V$ such that the induced symmetric monoidal functor
$Y:\Br \to \vect$ is faithful on loops. (For instance, we may take $V=\real^2$ and
$(i,e)$ as in (\ref{equ.exie}) of Section \ref{ssec.reptangle}.)
Let $Q=Q(i,e)$ be the profinite idempotent completion
of $Y(\Mor (\Br))$. Via the composition
\[ \Fa (W) \stackrel{\mbs}{\longrightarrow} \Mor (\Br)
 \stackrel{Y}{\longrightarrow} \Mor(\vect), \]
every fold field $F\in \Fa (W)$ determines an element
$Y\mbs (F) \otimes 1$ in $Q(H_{\dom \mbs (F), \cod \mbs (F)}),$
and thus an element in $Q$, which we will denote simply by
$Y\mbs (F)\in Q$. 
 
For a nonempty, closed, smooth $(n-1)$-manifold $M\subset \real^{D}$ 
(not necessarily orientable), we have the trivial cobordism
from $M$ to $M$, i.e. the cylinder $[0,1]\times M \subset
[0,1]\times \real^{D},$ and we put
\[ \Fa (M) = \{ f\in \Fa([0,1]\times M) ~|~  
     \mbs (f)=1 \in \operatorname{Mor}(\Br) \}, \]
where $1$ denotes an identity morphism in $\Br$.
(Note that $[0,1] \times M$ indeed satisfies our standing embedding
conventions, since $M$ does; one has
$([0,1]\times M)(k) = [0,1]\times (M(k))$.)
For $M=\varnothing,$ we put $\Fa (\varnothing) = \{ * \},$
the one-element set. (Recall that the empty set is the unique initial object
in the category of sets, so there is a unique arrow $\varnothing \to X$
for every set $X$.)
These are the \emph{fields} associated to a closed $(n-1)$-manifold and will
act as boundary conditions. We shall write $I=[0,1]$ for the unit interval.

\begin{lemma} \label{lem.restrtomorse}
Let $M,N \subset \real^D$ be disjoint closed $(n-1)$-manifolds.
Then the map
\[ 
C^\infty (I\times (M\sqcup N),\cplx) \longrightarrow 
 C^\infty (I\times M,\cplx)\times C^\infty (I\times N,\cplx),~ 
f \mapsto  (f|_{I\times M}, f|_{I\times N})
\]
restricts to a map
$\Fa (M\sqcup N) \rightarrow \Fa (M)\times \Fa (N).$
\end{lemma}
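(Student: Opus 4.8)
\emph{Proof proposal.}
The plan is to verify, for $f\in\Fa(M\sqcup N)$, the three conditions defining membership of $f|_{I\times M}$ (and symmetrically $f|_{I\times N}$) in $\Fa(M)$: that $f|_{I\times M}$ is a fold map on the cylinder $I\times M$, that it satisfies conditions (1) and (2) of Definition \ref{def.foldfield}, and that its Brauer morphism $\mbs(f|_{I\times M})$ is an identity. Throughout I would use that $I\times(M\sqcup N)=(I\times M)\sqcup(I\times N)$ as embedded manifolds, so that $I\times M$ and $I\times N$ are open and closed submanifolds, are themselves cylinders over closed $(n-1)$-manifolds, hence cobordisms satisfying the standing embedding conventions, with slices $(I\times M)(k)=I\times M(k)$.

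First I would observe that the defining conditions of a fold map (transversality of $j^1F$ to $S_1$, the equality $S(F)=S_1(F)$, and $T_xS(F)\oplus\ker D_xF=T_xW$ at singular points) are local, so the restriction of a fold map to an open submanifold is again a fold map; hence $f|_{I\times M}$ and $f|_{I\times N}$ are fold maps. Since a connected component of $S(f(k))$ lies entirely in one of the disjoint open pieces $I\times M(k)$ or $I\times N(k)$, the singular set $S(f|_{I\times M}(k))$ is exactly the union of those components of $S(f(k))$ that lie in $I\times M(k)$.

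Next I would check conditions (1) and (2) slicewise, noting $s(I\times M)=s(M)\subseteq s(M\sqcup N)=s(I\times(M\sqcup N))$, so it suffices to transfer the corresponding properties of $f$ over each relevant slice $k$. For $t\in\{0,1\}$: $t$ is a regular value of the time function of any cobordism, and the transversality $S(f(k))\pitchfork W(k)_t$ restricts to $S(f|_{I\times M}(k))\pitchfork(I\times M(k))_t$ because $I\times M(k)$ is open, so $0,1\in\pitchfork(f|_{I\times M}(k))$. Injectivity of $\Ima\circ f(k)$ on the singular points over $t$ restricts to injectivity on the subset coming from $I\times M(k)$, whence $\genim(f(k))\subseteq\genim(f|_{I\times M}(k))$; in particular $0,1\in\genim(f|_{I\times M}(k))$, and since $\genim(f(k))$ is residual in $[0,1]$ so is the larger set. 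By the slicewise criterion for fold fields (a fold map is a fold field iff each of its slices is), $f|_{I\times M}$ is a fold field on $I\times M$, and likewise $f|_{I\times N}$ on $I\times N$.

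I expect the main obstacle to be showing $\mbs(f|_{I\times M})$ is an identity morphism, which is exactly the step where the hypothesis is ``disjoint'' rather than ``well-separated'', so that the orderings by imaginary part may interleave points of $M(k)$ and $N(k)$. Since $\mbs(f)=\bigotimes_k\mbs(f(k))$ is an identity, Lemma \ref{lem.idfactorstriv}, applied inductively over the finitely many nonempty slices (and using strict associativity of $\otimes$ in $\Br$), forces each $\mbs(f(k))$ to be an identity; hence every connected component of $S(f(k))$ is a through-strand joining the $i$-th incoming singular point of $f(k)$ to the $i$-th outgoing one in the orderings by $\Ima\circ f(k)$, with no loops and no cup- or cap-components. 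Fixing $k$, let $P$ be the set of indices $i$ for which the $i$-th incoming singular point lies on $M(k)$; since the through-strand at index $i$ stays inside the piece ($I\times M(k)$ or $I\times N(k)$) containing its incoming endpoint, the $i$-th outgoing singular point lies on $M(k)$ precisely when $i\in P$. Because $\Ima\circ f(k)$ orders all boundary singular points and its restriction induces the orderings used by $f|_{I\times M}(k)$, the $j$-th singular point of $f|_{I\times M}(k)$ on each of the two boundary components sits at the $j$-th smallest element of the same set $P$, and the through-strand there joins them; so $\mbs(f|_{I\times M}(k))$ is the identity on $[\,|P|\,]$. Tensoring over $k$ gives $\mbs(f|_{I\times M})=1$, and symmetrically $\mbs(f|_{I\times N})=1$, so $f\mapsto(f|_{I\times M},f|_{I\times N})$ carries $\Fa(M\sqcup N)$ into $\Fa(M)\times\Fa(N)$. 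Beyond this index bookkeeping, every step is routine once the local nature of the fold conditions and the slicewise definitions are in hand.
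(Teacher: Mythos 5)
Your proposal is correct and follows essentially the same route as the paper's proof: restriction of fold maps via locality of the fold conditions, the inclusion $\genim(f(k))\subseteq\genim(f|_{I\times M}(k))$ to transfer residuality and the boundary conditions, and Lemma \ref{lem.idfactorstriv} plus the observation that each through-strand of $S(f(k))$ stays in the piece containing its incoming endpoint (your index set $P$ is exactly the paper's order-preserving map $m:[l]\to[k]$) to conclude $\mbs(f|_{I\times M})=1$.
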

\begin{proof}
Let $f$ be a map in $\Fa (M\sqcup N)$. The restriction $\fim$
is a fold map, using fold charts for $f$. Since $S(f)=S(\fim)\sqcup
S(f|_{I\times N})$ is transverse to $0\times (M\sqcup N)$ and to
$1\times (M\sqcup N),$ the singular set $S(\fim)$ of the restriction
is transverse to $0\times M$ and to $1\times M,$ whence
$0,1\in \pitchfork (\fim)$. 
Let $k$ be a natural number. We have $(M\sqcup N)(k) = M(k)\sqcup N(k)$.
Let $t\in \genim (f(k))$. The injectivity of 
$\Ima f(k)|: S(f(k))\cap t\times (M\sqcup N)(k)\to \real$
implies the injectivity of the restriction $\Ima f(k)|: S(\fimk)\cap t\times M(k)
\to \real$. Thus $t\in \genim (\fimk)$ and we have established
the inclusion
\[ \genim (f(k))\subset \genim (\fimk). \]
Since $\genim (f(k))$ is residual in $[0,1]$, the superset
$\genim (\fimk)$ is residual as well. Thus we have shown
that $\fim$ is in $\Fa (I\times M)$. 

It remains to be shown that $\mbs (\fim)=1$. Again, we fix a
natural number $a$.
The function $\Ima f(a)$ induces uniquely an ordering of
\[ S(f(a))\cap 0\times (M\sqcup N)(a) = \{ p_1,\ldots, p_k \} \]
such that $\Ima f(a)(p_i)<\Ima f(a)(p_j)$ if and only if $i<j$.
As 
$\bigotimes_{a'\in \nat} \mbs (f(a'))=\mbs (f)=1,$
Lemma \ref{lem.idfactorstriv} implies that the tensor factor $\mbs (f(a))$
is the identity morphism. Therefore,
$S(f(a))\cap 1\times (M\sqcup N)(a)$ has the
same number $k$ of points,
\[ S(f(a))\cap 1\times (M\sqcup N)(a) = \{ q_1,\ldots, q_k \}, \]
which are ordered such that $\Ima f(a)(q_i)<\Ima f(a)(q_j)$ if and only if $i<j$.

Since $\mbs (f(a))=1$, the singular set $S(f(a))$ has precisely $k$
connected components $c_1,\ldots, c_k$, and these components
have endpoints $\partial c_i = \{ p_i, q_i \}$ for all $i$.
Filtering out those points that lie in $0\times M(a)$ yields
\[ \{ p_1,\ldots, p_k \} \cap 0\times M(a) =
 \{ p_{m(1)},\ldots, p_{m(l)} \}, \]
where $l\leq k$ and $m: [l]\to [k]$ is a function such that
$m(i)<m(j)$ precisely when $i<j$. As $c_{m(i)}$ is connected and one
of its endpoints, namely $p_{m(i)}$, lies in $I\times M(a)$, the entire path
$c_{m(i)}$ lies in $I\times M(a)$. In particular, its other endpoint $q_{m(i)}$
lies in $1\times M(a)$ and thus
\[ \{ q_{m(1)},\ldots, q_{m(l)} \} \subset \{ q_1,\ldots, q_k \} \cap 1\times M(a). \]
Conversely, if $q_j$ lies in $1\times M(a)$, then $p_j$, being the other endpoint
of $c_j$, lies in $0\times M(a)$ and consequently $j=m(i)$ for some $i\in [l].$
This shows that
\[ \{ q_{m(1)},\ldots, q_{m(l)} \} = \{ q_1,\ldots, q_k \} \cap 1\times M(a). \]
The singular set of the restriction is
$S(\fim(a)) = c_{m(1)} \sqcup \cdots \sqcup c_{m(l)}.$
Setting $p'_i = p_{m(i)}$ and $q'_i = q_{m(i)},$ we have
$\Ima \fim(a) (p'_i) < \Ima \fim(a) (p'_j)$ if and only if
$\Ima \fim(a) (p_{m(i)}) < \Ima \fim(a) (p_{m(j)})$ iff $m(i)<m(j)$
iff $i<j$. Similarly, $\Ima \fim(a) (q'_i) < \Ima \fim(a) (q'_j)$ iff $i<j$.
Since $\partial c_{m(i)} = \{ p'_i, q'_i \},$ the Brauer morphism
$\mbs (\fim(a))$ is the identity. Consequently,
$\mbs (\fim) = \bigotimes_a \mbs (\fim (a)) = \bigotimes_a 1 = 1$
is the identity.
\end{proof}

Fold fields satisfy the additivity axiom of \cite[p. 6]{freedlecnotes}:
\begin{lemma}[Additivity Axiom] \label{lem.addaxiomfields}
Given two disjoint, well-separated, closed $(n-1)$-manifolds $M,N\subset \real^{D}$, there is
a bijection
\[ \Fa (M\sqcup N) \cong \Fa (M)\times \Fa (N). \]
\end{lemma}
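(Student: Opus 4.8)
The plan is to realize the bijection by an explicit pair of mutually inverse maps: the restriction map, already supplied by Lemma \ref{lem.restrtomorse}, in one direction, and ``disjoint juxtaposition'' in the other — with the well-separation hypothesis entering exactly once, to guarantee that juxtaposition lands inside $\Fa(M\sqcup N)$. First I would dispose of the trivial cases: if $M$ or $N$ is empty the claim is immediate from $\Fa(\varnothing)=\{*\}$, so I may assume both nonempty. Writing $W=I\times(M\sqcup N)$, I note that since $M,N$ are disjoint compact submanifolds of $\real^D$, the pieces $I\times M$ and $I\times N$ are clopen in $W$ and $W=(I\times M)\sqcup(I\times N)$. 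Lemma \ref{lem.restrtomorse} gives a map $\rho\colon\Fa(M\sqcup N)\to\Fa(M)\times\Fa(N)$, $F\mapsto(F|_{I\times M},F|_{I\times N})$, so the work is to produce a two-sided inverse $\sigma$.

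For $\sigma$ I would set $\sigma(f,g)=f\sqcup g$, the smooth map $W\to\cplx$ restricting to $f$ on $I\times M$ and to $g$ on $I\times N$ (well-defined on the disjoint clopen pieces), and then check $f\sqcup g\in\Fa(M\sqcup N)$ in three steps. (i) $f\sqcup g$ is a fold map: the conditions defining a fold map are local, and near any point $f\sqcup g$ agrees with one of the fold maps $f,g$. (ii) $f\sqcup g$ is a fold field: recalling that a fold map is a fold field precisely when all of its slices $F(k)$ are fold fields, it suffices to handle each $k$; here well-separation enters, for $s(M)\cap s(N)=\varnothing$ means that for each $k$ at most one of $M(k),N(k)$ is nonempty, so $(f\sqcup g)(k)$ is either $f(k)$ or $g(k)$ (or $W(k)=\varnothing$), and each of these is a fold field on the relevant slice since $f\in\Fa(M)\subset\Fa(I\times M)$ and $g\in\Fa(N)\subset\Fa(I\times N)$. (iii) $\mbs(f\sqcup g)=1$: by definition $\mbs(f\sqcup g)=\bigotimes_{k}\mbs((f\sqcup g)(k))$, a finite tensor product, and from $\mbs(f)=\bigotimes_k\mbs(f(k))=1$ together with Lemma \ref{lem.idfactorstriv} (applied inductively to the finitely many factors) each $\mbs(f(k))$ — and likewise each $\mbs(g(k))$ — is an identity morphism of $\Br$; by the same slicewise dichotomy every factor $\mbs((f\sqcup g)(k))$ is then an identity, hence so is their tensor product.

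Finally I would observe that $\rho$ and $\sigma$ are mutually inverse: $\rho(\sigma(f,g))=(f,g)$ is immediate, and $\sigma(\rho(F))=F$ because a map on $W=(I\times M)\sqcup(I\times N)$ is determined by its two restrictions. The hard part — in fact the only nonformal point — is step (iii): without well-separation the singular points of $(f\sqcup g)(k)$ lying over $M(k)$ and over $N(k)$ could interleave in their imaginary parts, so that $\mbs((f\sqcup g)(k))$ would permute strands nontrivially and $\sigma$ would fail to be well-defined; it is exactly the hypothesis $s(M)\cap s(N)=\varnothing$ that precludes this.
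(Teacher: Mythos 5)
Your proposal is correct and follows essentially the same route as the paper: the restriction map of Lemma \ref{lem.restrtomorse} is inverted by disjoint juxtaposition, with well-separation giving the slicewise dichotomy $(f\sqcup g)(k)=f(k)$ or $g(k)$ and Lemma \ref{lem.idfactorstriv} forcing each $\mbs(f(k))$, $\mbs(g(k))$ to be an identity, so $\mbs(f\sqcup g)=1$. Your closing remark about what fails without well-separation matches the paper's own comment following the lemma.
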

\begin{proof}
We shall construct an inverse
\[ \rho': \Fa (M)\times \Fa (N) \longrightarrow \Fa (M\sqcup N) \]
to the restriction map
$\rho: \Fa (M\sqcup N) \longrightarrow \Fa (M)\times \Fa (N)$
of Lemma \ref{lem.restrtomorse}. Given $(g,h)\in \Fa (M)\times \Fa (N)$,
let $f=g\sqcup h:I\times (M\sqcup N) \to \cplx$. Then $f$ is a fold map. Fix a natural
number $k$. Since the disjoint union of $M$ and $N$ is well-separated,
we have $(M\sqcup N)(k) = M(k)$ or $(M\sqcup N)(k) = N(k)$, and correspondingly
$f(k)=g(k)$ or $f(k)=h(k)$. 
Therefore, $f(k)$ is a fold field on $I\times (M\sqcup N)$.
It remains to be shown that $\mbs (f)=1$. Since $\mbs (g)=1$ and $\mbs (h)=1$
and
\[ \mbs (g) =\bigotimes_k \mbs (g(k)),~ \mbs (h) =\bigotimes_k \mbs (h(k)), \]
we know that $\mbs (g(k))=1$ and $\mbs (f(k))=1$ for all $k$, 
invoking Lemma \ref{lem.idfactorstriv}.
For each $k$, $f(k)=g(k)$ or $f(k)=h(k)$ by well-separation.
In either of these cases, $\mbs (f(k))=1$. Hence
$\mbs (f) = \bigotimes_k \mbs (f(k)) = \bigotimes_k 1 = 1.$
Thus $f\in \Fa (M\sqcup N)$ and we can set $\rho' (g,h)=f$. It is clear that
$\rho$ and $\rho'$ are inverse to each other.
\end{proof}
If the disjoint union is not well-separated, then the above addivity axiom
need not hold, due to entanglement effects caused by the injectivity condition
required for fold fields.
\begin{remark} \label{rem.suspexcmorse}
If $g:M\to \real$ is an excellent Morse function on $M$, then
$\id \times g: [0,1]\times M \to [0,1]\times \real$ is a fold field
with $\mbs (\id \times g)=1 \in \Mor (\Br)$, hence an element
in $\Fa (M)$.
\end{remark}

\subsection{State Modules}
\label{ssec.statemodules}

Let $V$ be a real vector space of finite dimension $\dim V\geq 2$ and $(i,e)$ a duality
structure on $V$ such that the induced symmetric monoidal functor $Y$ is faithful on loops.
Let $Q = Q(i,e)$ be the corresponding profinite idempotent completion of
$Y(\Mor (\Br))$.
On closed $(n-1)$-manifolds $M$, the topological field theory $Z$ is defined to be
\begin{equation} \label{equ.defstatemodule}
Z(M) := \fun_Q (\Fa (M)). 
\end{equation}
This is the ``\emph{quantum Hilbert space}'', or \emph{state-module}, of our
theory. A \emph{state} is thus a map $z:\Fa (M)\to Q$.
Note that for the empty manifold,
\[ Z(\varnothing) = \fun_Q (\Fa (\varnothing)) =
\fun_Q (\{ \ast \}) = Q. \]
Recall from Section \ref{sec.proidemcompletion}
that the commutative monoid $Q$ comes with two multiplications yielding two
complete, idempotent, continuous semirings: the composition semiring
$Q^c$ and the monoidal semiring $Q^m$ (see Propositions
\ref{prop.qccomplidemp}, \ref{prop.qmcomplidemp}, \ref{prop.qcontinuous}).
By Proposition \ref{prop.funsemimod}, $Z(M)$ is thus a two-sided
$Q^c$-semialgebra and a two-sided $Q^m$-semialgebra, and $Z(M)$
is complete, idempotent and continuous. The complete tensor product
$\hotimes$ used below was introduced in Section \ref{sec.funcompltensorprod}
and we shall use the identification provided by Theorem \ref{thm.funiso}
without always explicitly mentioning it.

\begin{prop} \label{prop.zmonoidalonmodules}
The state-module of a disjoint and well-separated union of two closed $(n-1)$-manifolds 
$M$ and $N$ decomposes as a tensor product
\[ Z(M\sqcup N)  \cong Z(M) \hotimes Z(N). \]
\end{prop}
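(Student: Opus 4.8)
The plan is to combine the Additivity Axiom for fold fields (Lemma \ref{lem.addaxiomfields}) with the function-semimodule tensor isomorphism (Theorem \ref{thm.funiso}). First I would invoke Lemma \ref{lem.addaxiomfields}, which gives a bijection of sets $\Fa(M\sqcup N)\cong \Fa(M)\times \Fa(N)$, valid precisely because $M\sqcup N$ is well-separated. Applying the contravariant functor $\fun_Q(-)$ from Proposition \ref{prop.funsemimod} to this bijection yields an isomorphism of two-sided $Q$-semialgebras
\[ Z(M\sqcup N)=\fun_Q(\Fa(M\sqcup N))\cong \fun_Q(\Fa(M)\times \Fa(N)). \]
Here I should be slightly careful: $\fun_Q(-)$ is contravariant, but a bijection of sets is an isomorphism, and isomorphisms are sent to isomorphisms by any functor, so no issue arises with the direction of arrows.

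Next I would apply Theorem \ref{thm.funiso} with $S=Q$, $A=\Fa(M)$, $B=\Fa(N)$. This requires $Q$ to be a continuous, complete, idempotent semiring, which holds for both $Q^c$ and $Q^m$ by Propositions \ref{prop.qccomplidemp}, \ref{prop.qmcomplidemp}, \ref{prop.qcontinuous}; the underlying additive monoid $Q$ is the same in both cases, and the completeness and continuity used in the construction of $\hotimes$ and in Theorem \ref{thm.funiso} concern only this additive structure together with scalar multiplication. Theorem \ref{thm.funiso} then provides an isomorphism of $Q$-bisemimodules
\[ \fun_Q(\Fa(M)\times \Fa(N))\cong \fun_Q(\Fa(M))\hotimes_Q \fun_Q(\Fa(N)) = Z(M)\hotimes Z(N). \]
Composing the two displayed isomorphisms gives the desired $Z(M\sqcup N)\cong Z(M)\hotimes Z(N)$.

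There is essentially no hard obstacle here, since all the work has been front-loaded into Lemma \ref{lem.addaxiomfields} and Theorem \ref{thm.funiso}; the proof is a two-line composition. The only point deserving a word of care is that the statement claims a decomposition as semimodules (or bisemimodules) over the relevant semirings, not merely as monoids: one should note that the bijection of sets induces a map of function semialgebras respecting pointwise operations and scalar multiplication, and that the isomorphism of Theorem \ref{thm.funiso} is explicitly one of $S$-bisemimodules, so the composite respects both the $Q^c$- and $Q^m$-bisemimodule structures simultaneously. I would also remark, as the excerpt already signals, that the well-separation hypothesis is not decorative: without it Lemma \ref{lem.addaxiomfields} fails because of the entanglement caused by the injectivity condition on imaginary parts in Definition \ref{def.foldfield}, so the tensor decomposition genuinely needs $s(M)\cap s(N)=\varnothing$.

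\begin{proof}
Since $M$ and $N$ are disjoint and well-separated, the Additivity Axiom (Lemma \ref{lem.addaxiomfields}) yields a bijection of sets
\[ \Fa (M\sqcup N) \cong \Fa (M)\times \Fa (N). \]
Applying the functor $\fun_Q (-)$ of Proposition \ref{prop.funsemimod} to this bijection (a bijection being in particular an isomorphism, its image under any functor is again an isomorphism) gives an isomorphism of two-sided $Q^c$-semialgebras and of two-sided $Q^m$-semialgebras
\[ Z(M\sqcup N) = \fun_Q (\Fa (M\sqcup N)) \cong \fun_Q (\Fa (M)\times \Fa (N)). \]
The semiring $Q$ is complete, idempotent and continuous with respect to the operations of both $Q^c$ and $Q^m$ by Propositions \ref{prop.qccomplidemp}, \ref{prop.qmcomplidemp} and \ref{prop.qcontinuous}; the relevant structure for what follows is the common additive monoid $Q$ together with its scalar multiplication. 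Hence Theorem \ref{thm.funiso}, applied with $A=\Fa (M)$ and $B=\Fa (N)$, provides an isomorphism of $Q$-bisemimodules
\[ \fun_Q (\Fa (M)\times \Fa (N)) \cong \fun_Q (\Fa (M))\hotimes_Q \fun_Q (\Fa (N)) = Z(M)\hotimes Z(N). \]
Composing the two isomorphisms yields the asserted decomposition $Z(M\sqcup N)\cong Z(M)\hotimes Z(N)$.
\end{proof}
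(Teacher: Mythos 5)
Your proposal is correct and follows essentially the same route as the paper: the bijection $\Fa(M\sqcup N)\cong \Fa(M)\times\Fa(N)$ from Lemma \ref{lem.addaxiomfields} (using well-separation) followed by the tensor decomposition of function semimodules in Theorem \ref{thm.funiso}. Your additional remarks on contravariance, the completeness and continuity hypotheses, and the preservation of the bisemimodule structures are accurate refinements of the same two-step argument.
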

\begin{proof}
By the additivity axiom for fields on closed manifolds (Lemma \ref{lem.addaxiomfields}),
$\Fa (M\sqcup N) \cong \Fa (M)\times \Fa (N)$. Thus by Theorem \ref{thm.funiso},
\begin{align*}
Z(M\sqcup N) & =  \fun_Q (\Fa (M\sqcup N)) \cong  \fun_Q (\Fa (M)\times \Fa (N)) \\
& \cong  \fun_Q (\Fa (M))\hotimes \fun_Q (\Fa (N)) =  Z(M)\hotimes Z(N).
\end{align*}
\end{proof}
\begin{remark}
The assumption of well-separation is only needed to ensure
$\Fa (M\sqcup N)\cong \Fa (M)\times \Fa (N)$. For arbitrary disjoint $M$ and $N$
we always have the isomorphism
$Z(M)\hotimes Z(N) \cong \fun_Q (\Fa (M)\times \Fa (N)),$ regardless of whether
$M$ and $N$ are well-separated or not.
\end{remark}
Under the isomorphism of Theorem \ref{thm.funiso}, the canonical
middle $Q^c$-linear map
\[ \beta^c: Z(M)\times Z(N) \longrightarrow Z(M)\hotimes Z(N),~
  (z,z')\mapsto z\hotimes_{Q^c} z', \]
can be thought of as follows. Given $z:\Fa (M)\to Q$ and 
$z':\Fa (N)\to Q,$ define
$z'': \Fa (M)\times \Fa (N)\to Q$ on 
$(f_M, f_N)\in \Fa (M)\times \Fa (N)$ by
\begin{equation} \label{def.betacfirst}
 z''(f_M, f_N) = z(f_M)\cdot z'(f_N), 
\end{equation}
using the multiplication of $Q^c$.
Then $\beta^c (z,z') = z''$. We shall write $z\hotimes_c z' := \beta^c (z,z').$
There are various natural generalizations of this to more than two
manifolds, for example we shall also use
\[ \beta^c: (Z(M)\hotimes Z(N))\times (Z(M')\hotimes Z(N'))
 \longrightarrow Z(M)\hotimes Z(N)\hotimes Z(M')\hotimes Z(N') \]
for closed $(n-1)$-manifolds $M,N,M',N',$ given by
\begin{equation} \label{def.betacscnd}
\beta^c (\zeta, \zeta')(f_M, f_N, f_{M'}, f_{N'}) =
 \zeta (f_M, f_N) \cdot \zeta' (f_{M'}, f_{N'}), 
\end{equation}
where $\zeta \in Z(M)\hotimes Z(N),$ $\zeta' \in Z(M')\hotimes Z(N'),$
$f_M\in \Fa (M),$ $f_N \in \Fa (N),$ $f_{M'}\in \Fa (M')$
and $f_{N'} \in \Fa (N')$. We will of course also write
$\zeta \hotimes_c \zeta' = \beta^c (\zeta, \zeta')$.
If $M$ and $N$ are disjoint and well-separated and $M'$ and $N'$ are
disjoint and well-separated, then Proposition \ref{prop.zmonoidalonmodules}
provides bijections $Z(M\sqcup N)\cong Z(M)\hotimes Z(N)$ and
 $Z(M'\sqcup N')\cong Z(M')\hotimes Z(N')$. Definition \ref{def.betacfirst}
then yields a map
\[ \beta^c: Z(M\sqcup N)\times Z(M'\sqcup N') \longrightarrow
    Z(M\sqcup N)\hotimes Z(M'\sqcup N') \]
which is consistent with (\ref{def.betacscnd}): the diagram
\[ \xymatrix@R=15pt{
Z(M\sqcup N)\times Z(M'\sqcup N') \ar[d]_{\cong}
  \ar[r]^{\beta^c} & Z(M\sqcup N)\hotimes Z(M'\sqcup N')
 \ar[d]^{\cong} \\ 
(Z(M)\hotimes Z(N))\times (Z(M')\hotimes Z(N')) \ar[r]^{\beta^c} &
Z(M)\hotimes Z(N)\hotimes Z(M')\hotimes Z(N')
} \]
commutes.
Furthermore, there is a natural contraction map (considered abstractly in the last paragraph
of Section \ref{sec.funcompltensorprod})
\[ \gamma: Z(M)\hotimes Z(N)\hotimes Z(N)\hotimes Z(P)
 \longrightarrow Z(M)\hotimes Z(P), \]
which sends
\[ z:\Fa (M)\times \Fa (N) \times \Fa (N) \times \Fa (P)\to Q \] 
to the function $z':\Fa (M)\times \Fa (P)\to Q,$ given on
$(f_M, f_P)\in \Fa (M)\times \Fa (P)$ by
\[
 z'(f_M, f_P) = \sum_{u\in \Fa (N)} z(f_M, u, u, f_P), 
\]
using the completeness of $Q$.
Composing $\beta^c$ and the contraction $\gamma$, we obtain a
contraction product
\[ \langle \cdot, \cdot \rangle: (Z(M)\hotimes Z(N))\times (Z(N)\hotimes Z(P)) 
 \longrightarrow Z(M)\hotimes Z(P), \]
allowing us to multiply a state $z\in Z(M)\hotimes Z(N)$ and a state
$z'\in Z(N)\hotimes Z(P)$ to get a state
$\langle z,z' \rangle = \gamma (z\hotimes_c z') \in Z(M)\hotimes Z(P).$
This contraction product is a means of propagating states along cobordisms. \\  

Using the monoidal structure on $\Br$ and the corresponding product $\times$ on $Q^m$, 
the canonical middle $Q^m$-linear map
\[ \beta^m: Z(M)\times Z(N) \longrightarrow Z(M)\hotimes Z(N),~
  (z,z')\mapsto z \hotimes_{Q^m} z', \]
can be thought of as follows. Given $z:\Fa (M)\to Q$ and 
$z':\Fa (N)\to Q,$ define
$z'': \Fa (M)\times \Fa (N)\to Q$ on 
$(f_M, f_N)\in \Fa (M)\times \Fa (N)$ by
\begin{equation} \label{def.betamfirst}
 z''(f_M, f_N) = z(f_M) \times z'(f_N).  
\end{equation}
Then $\beta^m (z,z') = z''$ and we shall write $z\hotimes_m z' := \beta^m (z,z').$
Again, there are generalizations of this product to more than two
manifolds. In Theorem \ref{thm.statesumdisjunion}, we shall use
\[ \beta^m: (Z(M)\hotimes Z(N))\times (Z(M')\hotimes Z(N'))
 \longrightarrow Z(M)\hotimes Z(M')\hotimes Z(N)\hotimes Z(N') \]
given by
\begin{equation} \label{def.betamscnd}
\beta^m (\zeta, \zeta')(f_M, f_{M'}, f_N, f_{N'}) =
  \zeta (f_M, f_N) \times \zeta' (f_{M'}, f_{N'}), 
\end{equation}
where $\zeta \in Z(M)\hotimes Z(N),$ $\zeta' \in Z(M')\hotimes Z(N'),$
$f_M\in \Fa (M),$ $f_N \in \Fa (N),$ $f_{M'}\in \Fa (M')$
and $f_{N'} \in \Fa (N')$. We will also write
$\zeta \hotimes_m \zeta' = \beta^m (\zeta, \zeta')$. \\

\subsection{State Sums}
\label{ssec.statesums}

Subjecting fold fields on a cobordism to boundary conditions
requires a certain equivalence relation, which we shall describe now. These fields
with boundary conditions, together with our action functional, enter into the
definition of the state sum of a cobordism.

Let $X$ be a closed smooth manifold. We define an equivalence relation 
on the collection of smooth maps of the form $[a,b]\times X\to \cplx$
for some real numbers $a<b$.
\begin{defn} \label{def.equivalencesmmaps}
Two smooth maps $f:[a,b]\times X\to \cplx$ and
$f':[a',b']\times X\to \cplx$ are equivalent, written $f\approx f'$, if and only
if there exists a diffeomorphism $\xi: [a,b]\to [a',b']$ with $\xi (a)=a'$
such that $f(t,x) = f'(\xi (t),x)$ for all $(t,x)\in [a,b]\times X$. 
\end{defn}
It is readily verified that this relation is indeed reflexive, symmetric and transitive.
Note that $f\approx f'$ implies that $f(a,x)= f'(a',x)$ as functions
of $x$.
\begin{lemma} \label{lem.relapprox}
If $f\in \Fa ([a,b]\times X)$ and $f': [a',b']\times X\to \cplx$ is a smooth map
with $f'\approx f,$ then $f' \in \Fa ([a',b']\times X)$ and
$\mbs (f)=\mbs (f')$. 
\end{lemma}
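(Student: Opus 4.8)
The plan is to show that the equivalence relation $\approx$ preserves both membership in the space of fold fields and the Brauer morphism associated to a fold field, by transporting everything along the reparametrizing diffeomorphism $\xi$. First I would set up notation: write $W = [a,b]\times X$ and $W' = [a',b']\times X$, with time functions $\omega(t,x)=t$ and $\omega'(t',x)=t'$, and let $\Xi = \xi\times\id_X : W\to W'$, which is a diffeomorphism satisfying $\omega' \circ \Xi = \xi\circ\omega$ and $f = f'\circ\Xi$. Since $\Xi$ is a diffeomorphism and $f = f'\circ\Xi$, $f$ is a fold map if and only if $f'$ is: fold-ness is a condition on $j^1 f$ (transversality to $S_1$, equality $S(f)=S_1(f)$, and the non-cusp condition $T_x S(f) + \ker D_x f = T_x W$), and all of these are invariant under precomposition with a diffeomorphism. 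In particular $S(f') = \Xi(S(f))$, and $\Xi$ restricts to a diffeomorphism $S(f)\to S(f')$.

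Next I would verify the two defining conditions of a fold field (Definition \ref{def.foldfield}) transfer. Decomposing along slices: since $X$ is a single closed manifold, $W(k)$ is either all of $W$ or empty for each $k$, and correspondingly $f(k) = f$ or is vacuous (actually here $X$ is closed but not embedded with our slice conventions — I'd instead note $W=[a,b]\times X$ is handled as a single piece, or more carefully, run the argument componentwise over $s(W)$, which equals $s(W')$ since $\Xi$ preserves slices). For a fixed relevant $k$: the set $\pitchfork(F)$ is characterized by Lemma \ref{lem.transregu} as $\reg(\omega_S)\cap\reg(W)$, and since $\omega' \circ \Xi|_{S(f)} = \xi\circ\omega|_{S(f)}$ with $\xi$ a diffeomorphism, $\reg(\omega'_{S(f')}) = \xi(\reg(\omega_{S(f)}))$ and $\reg(W') = \xi(\reg(W))$; hence $\pitchfork(f'(k)) = \xi(\pitchfork(f(k)))$. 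Similarly $\genim$ transfers: $\Ima\circ f'(k)$ on $SF'_{t'}$ equals $\Ima\circ f(k)$ on $SF_t$ via the bijection $\Xi$ when $t' = \xi(t)$, so injectivity over $t$ is equivalent to injectivity over $\xi(t)$, giving $\genim(f'(k)) = \xi(\genim(f(k)))$. Since $\xi$ is a diffeomorphism of intervals, it maps residual sets to residual sets and maps $\{a,b\}$ to $\{a',b'\}$ (using $\xi(a)=a'$, hence $\xi(b)=b'$); therefore condition (1), $0,1$ — here $a,b$ — lying in $\pitchfork\cap\genim$, and condition (2), residuality of $\genim$, both hold for $f'$ exactly because they hold for $f$. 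This establishes $f'\in\Fa([a',b']\times X)$.

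Finally, for the equality $\mbs(f) = \mbs(f')$: the Brauer morphism is built purely from the combinatorial data of how connected components of the singular set connect the ordered endpoint sets $S(f)\cap (a\times X)$ and $S(f)\cap (b\times X)$, together with the count of closed components. The diffeomorphism $\Xi$ carries $S(f)$ to $S(f')$ bijectively, carries $S(f)\cap(a\times X)$ to $S(f')\cap(a'\times X)$ and $S(f)\cap(b\times X)$ to $S(f')\cap(b'\times X)$, induces a bijection on connected components preserving the partition into the four endpoint-type cases, and preserves closed components. Moreover the orderings of the boundary singular points are determined by $\Ima\circ f$ on $S(f)\cap(a\times X)$, which equals $\Ima\circ f'$ on $S(f')\cap(a'\times X)$ under $\Xi$ (since $f = f'\circ\Xi$ and $\Xi$ is the identity on the $X$-factor at the boundary, because $\xi$ fixes no constraint there beyond being a diffeomorphism — but the imaginary parts themselves are literally equal pointwise under the correspondence), so the induced orderings match. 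Hence the recipe producing $\mbs(f)$ and $\mbs(f')$ yields the same endpoint connections and the same loop count, so $\mbs(f) = \mbs(f')$ as morphisms in $\Br$. I expect the main obstacle to be purely bookkeeping: carefully checking that all the auxiliary sets ($\reg$, $\pitchfork$, $\genim$) are genuinely transported by $\xi$ rather than merely related, and handling the slice decomposition $f(k)$ cleanly — but there is no real analytic difficulty, since every ingredient is manifestly natural under precomposition with the diffeomorphism $\Xi = \xi\times\id_X$.
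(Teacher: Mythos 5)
Your proposal is correct and follows essentially the paper's route: the paper treats this lemma as a special case of Lemma \ref{lem.singtanglereparam}, whose proof is precisely your transport argument along the diffeomorphism $(t,x)\mapsto(\xi(t),x)$ — carrying $S(f)$ to $S(f')$, identifying $\pitchfork$ and $\genim$ via $\xi$ (hence residuality and the endpoint conditions, using $\xi(a)=a'$, $\xi(b)=b'$), and matching the Brauer data because imaginary parts agree pointwise under the correspondence, so orderings, endpoint pairings, and loop counts coincide. The slicewise bookkeeping over $k\in s(W)$ you mention is exactly how the paper handles it as well.
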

This lemma is in fact a special case of:
\begin{lemma} \label{lem.singtanglereparam}
Suppose that two cobordisms $W\subset [0,1]\times \real^{D}$ and
$W' \subset [a,b]\times \real^{D}$ are related by a diffeomorphism
$\alpha: W' \to W$ of the form $\alpha (t,x) = (\tau (t),x),$
$(t,x)\in W',$ $t\in [a,b],$ $x\in \real^{D},$ where
$\tau: [a,b]\to [0,1]$ is a diffeomorphism with $\tau (a)=0.$
Let $F$ be a fold field on $W$. 
Then $F\circ \alpha$ is a fold field
on $W'$ and we have the Brauer invariance
$\mbs (F\circ \alpha)=\mbs (F).$
\end{lemma}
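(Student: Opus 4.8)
The plan is to verify that fold fields, and the morphism $\mbs$, are invariant under reparametrization of the time direction. The key observation is that the defining conditions for a fold field (Definition \ref{def.foldfield}) and the construction of $\mbs$ depend on the cobordism only through its stratification by time slices $W_t = \omega^{-1}(t)$ and through the behavior of $\Ima \circ F$ on the singular set; none of this is sensitive to how the time axis is coordinatized, as long as the reparametrization is an orientation-respecting diffeomorphism (which is forced by $\tau(a)=0$ together with $\tau$ being a diffeomorphism $[a,b]\to[0,1]$).

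First I would record that $F\circ\alpha$ is a fold map: since $\alpha$ is a diffeomorphism, $S(F\circ\alpha) = \alpha^{-1}(S(F))$, and all the jet-transversality and non-cusp conditions of the definition of a fold map are diffeomorphism-invariant. Next I would relate the time functions: if $\omega'$ is the time function of $W'$ and $\omega$ that of $W$, then $\omega' = \tau^{-1}\circ\omega\circ\alpha$, so $\omega'^{-1}(t) = \alpha^{-1}(\omega^{-1}(\tau(t))) = \alpha^{-1}(W_{\tau(t)})$. Hence $\reg(W') = \tau^{-1}(\reg(W))$, the slice $W'_t$ is carried diffeomorphically by $\alpha$ onto $W_{\tau(t)}$, and $\pitchfork(F\circ\alpha) = \tau^{-1}(\pitchfork(F))$ because transversality of $S(F)$ to $W_{\tau(t)}$ is equivalent, under the diffeomorphism $\alpha$, to transversality of $S(F\circ\alpha)$ to $W'_t$. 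Likewise $\Ima\circ(F\circ\alpha)|_{S(F\circ\alpha)\cap W'_t}$ is just $\Ima\circ F|_{S(F)\cap W_{\tau(t)}}$ precomposed with the bijection $\alpha|$, so it is injective iff the latter is; thus $\genim(F\circ\alpha) = \tau^{-1}(\genim(F))$. Since $\tau$ is a diffeomorphism it preserves residual subsets (Lemma \ref{lem.restrofresidual} and its companions, or simply because a homeomorphism of intervals preserves the class of residual sets), and it fixes the endpoints (if $\tau(a)=0$ and $\tau$ is a diffeomorphism onto $[0,1]$ then $\tau(b)=1$). Therefore conditions (1) and (2) of Definition \ref{def.foldfield} transfer from $F$ to $F\circ\alpha$, slicewise over each $k\in s(W') = s(W)$, noting that $\alpha$ respects the slice decomposition $W'(k)\leftrightarrow W(k)$ since $\alpha$ only alters the first coordinate. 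This establishes that $F\circ\alpha\in\Fa(W')$.

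For the Brauer invariance $\mbs(F\circ\alpha) = \mbs(F)$, I would argue componentwise over $k$ and reduce to the case $W=W(k)$. The points $S(F)\cap M$ and their $\Ima\circ F$-ordering are carried by $\alpha|$ to $S(F\circ\alpha)\cap M'$ with the identical ordering (since the imaginary-part values are literally unchanged), and similarly for the outgoing boundary; and the connected components of $S(F\circ\alpha)$ are the $\alpha$-preimages of those of $S(F)$, with matching boundary incidences and matching ``closed versus with-boundary'' dichotomy. Running the four-case recipe that defines $\mbs$ produces, for $F\circ\alpha$, an arc system in $[0,1]\times\real^3$ with exactly the same endpoint connections and the same number of loops as for $F$; since a morphism of $\Br$ is determined precisely by this combinatorial data, $\mbs(F\circ\alpha)=\mbs(F)$. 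Taking the tensor product over $k$ gives the general statement.

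The main obstacle, such as it is, is bookkeeping rather than conceptual: one must be careful that ``$\tau(a)=0$'' is genuinely needed (an orientation-reversing reparametrization would swap incoming and outgoing boundaries and hence replace $\mbs(F)$ by a ``horizontally reflected'' morphism, which is in general a different element of $\Br$), and one must check that the residual-set conditions survive pushforward under $\tau$ uniformly in $k$, which is where Lemmas \ref{lem.restrofresidual} and \ref{lem.unionofresidual} (or a direct homeomorphism argument) are invoked. Once the dictionary $\omega'^{-1}(t)\leftrightarrow W_{\tau(t)}$ is set up, everything else is a routine transport of structure along $\alpha$. Lemma \ref{lem.relapprox} then follows as the special case $W = I\times X$, $W' = [a',b']\times X$ with $\alpha(t,x)=(\xi(t),x)$ after the harmless linear renormalization of $[a',b']$ to $[0,1]$.
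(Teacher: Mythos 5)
Your proposal is correct and follows essentially the same route as the paper's proof: transport of structure along $\alpha$, using that $\alpha$ only reparametrizes time (so slices satisfy $\alpha(W'_t)=W_{\tau(t)}$, giving $\pitchfork$ and $\genim$ correspondences under $\tau$, with residuality preserved since $\tau$ is a homeomorphism), and that imaginary parts of values on the singular set are unchanged, so the endpoint orderings, component incidences and loop counts — hence the Brauer morphism — coincide slicewise and tensor up over $k$.
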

\begin{proof}
Given $F\in \Fa (W)$, set $F' = F\circ \alpha$. Precomposing a fold map
with a diffeomorphism yields a fold map again. Thus $F'$ is a fold map.
Let us prove that $a,b$ are in $\pitchfork (F')$. Since $W'$ is a cobordism,
$a,b\in \reg (W')$. As $\alpha$ is a diffeomorphism, the relation
$\alpha S(F') = \alpha S(F\alpha) = S(F)$ holds. More generally, for
any natural number $k$, we have
$\alpha S(F'(k)) = S(F(k))$.
From $\alpha (a,x)=(0,x)$ for all $x\in W'_a$ and $\alpha (t,x)\not\in
0\times \real^D$ for $t>a$ we conclude that
$\alpha (W'_a)=W_0$. More generally, for any natural number $k$
and any $t' \in [a,b],$ we have $\alpha W'(k)_{t'} = W(k)_{\tau (t')}$.
Since $F$ is a fold field on $W$, $0$ lies in $\pitchfork (F)$, that is,
$S(F) \pitchfork W_0.$ Substituting, we get
$\alpha S(F')\pitchfork \alpha (W'_a)$. This is equivalent to
$S(F') \pitchfork W'_a$, as $\alpha$ is a diffeomorphism.
Therefore, $a\in \pitchfork (F')$. Similarly, $S(F)\pitchfork W_1$
together with $W_1 = \alpha (W'_b)$ implies that $S(F')$ is
transverse to $W'_b$, i.e. $b\in \pitchfork (F')$. 

Let $k$ be a natural number in $s(W)$. The equality
\begin{equation} \label{equ.taugenimfprimeisgenimf}
\tau \genim F' (k) = \genim F(k)
\end{equation}
holds.
As $0=\tau (a)$ and $1=\tau (b)$ are in $\genim F(k),$ it follows from
(\ref{equ.taugenimfprimeisgenimf}) that $a,b\in \genim F'(k).$
Also, $\genim F(k)$ is residual in $[0,1]$, as $F$ is a fold field.
By (\ref{equ.taugenimfprimeisgenimf}), this implies that
$\genim F'(k)$ is residual in $[a,b]$, $\tau$ being a homeomorphism.
We conclude that $F'$ is a fold field on $W'$. 

The function $\Ima F'(k)$ induces a unique ordering
$S(F'(k)) \cap W'(k)_a = \{ p'_1,\ldots, p'_m \}$
such that $\Ima F'(p'_i)<\Ima F'(p'_j)$ if and only if $i<j$,
and a unique ordering
$S(F'(k)) \cap W'(k)_b = \{ q'_1,\ldots, q'_l \}$
such that $\Ima F'(q'_i)<\Ima F'(q'_j)$ if and only if $i<j$.
With $p_i = \alpha (p'_i),$ $q_i = \alpha (q'_i),$ we have
$S(F(k))\cap W(k)_0 =  \{ p_1, \ldots, p_m \}$
(a set of cardinality $m$, since $\alpha$ is injective) and
$S(F(k))\cap W(k)_1 = \{ q_1, \ldots, q_l \}$
(a set of cardinality $l$). Thus both $\mbs (F(k))$ and $\mbs (F'(k))$
are morphisms $[m]\to [l]$ in $\Br$.
The ordering principle is preserved because
\[ \Ima F'(p'_i)=\Ima (F\alpha)(p'_i)=\Ima F(p_i),~
  \Ima F'(q'_i)=\Ima (F\alpha)(q'_i)=\Ima F(q_i), \]
so  that $\Ima F(p_i) < \Ima F(p_j)$ if and only if $i<j$
if and only if $\Ima F(q_i) < \Ima F(q_j)$.
In the Brauer morphism $\mbs (F(k))$, the point $(0,i,0,0)$ is
connected to the point $(0,j,0,0)$ by an arc if and only if
there exists a connected component $c=\alpha (c')$ of
$S(F(k))$ with boundary $\partial c = \{ p_i, p_j \}$.
Such a $c$ exists if and only if there is a connected component
$c'$ of $S((F\alpha)(k))$ with $\partial c' = \{ p'_i, p'_j \}$,
which is equivalent to $(0,i,0,0)$ and $(0,j,0,0)$
being connected by an arc in $\mbs (F'(k))$.
Similarly for connections of the type
$(1,i,0,0)$ to $(1,j,0,0)$ and $(0,i,0,0)$ to $(1,j,0,0)$.
Finally, the number of loop tensor factors in $\mbs (F(k))$
equals the number of loop tensor factors in $\mbs (F'(k))$,
as the number of closed connected components of
$S(F(k))$ equals the number of closed connected components
of $S(F'(k))$. This shows that $\mbs (F(k))=\mbs (F'(k))$ for every $k$.
Therefore,
$\mbs (F) = \bigotimes_k \mbs (F(k)) = \bigotimes_k \mbs (F'(k))
  = \mbs (F').$
\end{proof}

Let $W^n$ be a cobordism from $M$ to $N$. We shall define an element,
called \emph{state sum}, 
\[ Z_W \in Z(M)\hotimes Z(N). \]
If $M$ and $N$ are well-separated, then we may regard the state sum
as an element
\[ Z_W \in Z(\partial W) = Z(M\sqcup N) \]
by Proposition \ref{prop.zmonoidalonmodules}.
Let $\epsilon_W >0$ be a cylinder scale of $W$.
Given a boundary condition
$(f_M, f_N)\in \Fa (M)\times \Fa (N),$ we put
\[ \Fa (W; f_M, f_N) = \{ F\in \Fa(W) ~|~ \exists
 \epsilon (k), \epsilon'(k) \in (0,\epsilon_W): \hspace{2.5cm} \]
\[ \hspace{2.7cm}   F|_{[0,\epsilon(k)] \times M(k)}\approx f_M (k),~ 
   F|_{[1-\epsilon'(k),1]\times N(k)}\approx f_N (k), \forall k \}. \]
On $(f_M, f_N)$ the state sum $Z_W$ is then given by
\begin{equation} \label{equ.statesumdef} 
Z_W (f_M, f_N) := \sum_{F\in \Fa (W;f_M, f_N)} Y\mbs (F). 
\end{equation}
This sum uses the infinite summation law of the complete monoid $Q$ and thus
yields a well-defined element of $Q$.
\begin{remark}
This sum replaces in our context the notional path integral
\[ \int_{F\in \Fa (W,f)} e^{iS_W (F)} d\mu_W \]
used in classical quantum field theory. As a mathematical object,
this path integral is problematic, since in many situations of interest, an
appropriate measure $\mu_W$ has not been defined or is known not to exist.
The present paper utilizes the notion of completeness in semirings to bypass
measure theoretic questions on spaces of fields.
\end{remark}

\subsection{The State Sum of a Disjoint Union}

Let $W\subset [0,1] \times 0 \times \real^{D-1}$ be a cobordism from $M$ to $N$ 
and $W' \subset [0,1] \times 1 \times \real^{D-1}$ a disjoint cobordism from
$M'$ to $N'$. The disjoint union $W\sqcup W'$ possesses a state sum
$Z_{W\sqcup W'} \in Z(M\sqcup M')\hotimes Z(N\sqcup N').$
Since $W$ and $W'$ are well-separated, $M$ and $M'$ are well-separated
and $N$ and $N'$ are well-separated. Thus, by Proposition \ref{prop.zmonoidalonmodules},
there are isomorphisms 
$Z(M\sqcup M')\cong Z(M)\hotimes Z(M')$ and
$Z(N\sqcup N')\cong Z(N)\hotimes Z(N'),$ which combine to an isomorphism
\[ \rho: Z(M\sqcup M')\hotimes Z(N\sqcup N') \stackrel{\cong}{\longrightarrow} 
  Z(M)\hotimes Z(M')\hotimes Z(N)\hotimes Z(N') \]
given by
\[ \rho (z)(f_M, f_{M'}, f_N, f_{N'}) = z(f_M \sqcup f_{M'}, f_N \sqcup f_{N'}) \]
on $z:\Fa (M\sqcup M')\times \Fa (N\sqcup N')\to Q,$
$f_M\in \Fa (M),$ $f_N \in \Fa (N),$ $f_{M'}\in \Fa (M')$
and $f_{N'} \in \Fa (N')$. 

\begin{thm} \label{thm.statesumdisjunion}
Let $W\subset [0,1] \times 0 \times \real^{D-1}$ be a cobordism from $M$ to $N$ 
and $W' \subset [0,1] \times 1 \times \real^{D-1}$ a disjoint cobordism from
$M'$ to $N'$. Assume without loss of generality that $W$ and $W'$ are
given equal cylinder scales $\epsilon_{W} = \epsilon_{W'}$.
(If they are not equal, replace the larger one by the smaller one.)
Equip the disjoint union $W\sqcup W'$ with the cylinder scale
$\epsilon_{W\sqcup W'} = \epsilon_W = \epsilon_{W'}$.
Then the state sum of the disjoint union can be
computed as
\[ \rho (Z_{W\sqcup W'}) = Z_W \hotimes_m Z_{W'} \in Z(M)\hotimes Z(M')
  \hotimes Z(N)\hotimes Z(N'). \]
\end{thm}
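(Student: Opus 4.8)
The plan is to reduce the statement to a fiberwise computation over boundary conditions, exactly as the state sum is defined in (\ref{equ.statesumdef}). Fix a boundary condition $(f_M, f_{M'}, f_N, f_{N'}) \in \Fa(M)\times \Fa(M')\times \Fa(N)\times \Fa(N')$. By the definition of $\rho$, of $\beta^m$ (see (\ref{def.betamscnd})), and of the state sum, it suffices to prove the scalar identity
\[
\sum_{F\in \Fa(W\sqcup W';\, f_M\sqcup f_{M'},\, f_N\sqcup f_{N'})} Y\mbs(F)
\;=\;
\Big( \sum_{G\in \Fa(W;f_M,f_N)} Y\mbs(G) \Big) \times \Big( \sum_{G'\in \Fa(W';f_{M'},f_{N'})} Y\mbs(G') \Big)
\]
in $Q$. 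The right-hand side, by the infinite distributivity (\ref{equ.hmppninfdistrrightmon}), (\ref{equ.hmppninfdistrleftmon}) applied to the $\times$-product on $Q^m$ (more precisely, by the analog of (\ref{equ.sumoverproductfamily}) for the complete, idempotent semiring $Q^m$), equals $\sum_{(G,G')} (Y\mbs(G)\times Y\mbs(G'))$, the sum being over the product index set $\Fa(W;f_M,f_N)\times \Fa(W';f_{M'},f_{N'})$.

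First I would set up the bijection on field spaces. Since $W$ lies in the slice $[0,1]\times 0\times\real^{D-1}$ and $W'$ in $[0,1]\times 1\times\real^{D-1}$, the two cobordisms are disjoint and well-separated in the sense that $(W\sqcup W')(k)$ equals either $W(k)$ or $W'(k)$ for every $k$. Hence, just as in the proof of the Additivity Axiom (Lemma \ref{lem.addaxiomfields}), restriction and disjoint union give mutually inverse bijections
\[
\Fa(W\sqcup W') \;\cong\; \Fa(W)\times \Fa(W'),\qquad F \longleftrightarrow (F|_W,\, F|_{W'}) .
\]
I would then check that this bijection respects the boundary-condition subsets: $F|_{[0,\epsilon(k)]\times M(k)}\approx f_M(k)$ and $F|_{[1-\epsilon'(k),1]\times N(k)}\approx f_N(k)$ for the slices belonging to $W$, together with the analogous conditions for the slices belonging to $W'$, hold for $F$ if and only if $F|_W\in\Fa(W;f_M,f_N)$ and $F|_{W'}\in\Fa(W';f_{M'},f_{N'})$; this is where the common cylinder scale $\epsilon_{W\sqcup W'}=\epsilon_W=\epsilon_{W'}$ is used, since the slice-dependent scales $\epsilon(k),\epsilon'(k)$ are tested in the same interval $(0,\epsilon_{W\sqcup W'})$. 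Consequently the index set of the left-hand sum is in canonical bijection with the product index set of the right-hand sum.

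Next I would verify the pointwise compatibility of the action functional: for $F\in\Fa(W\sqcup W')$ with $F|_W = G$, $F|_{W'}=G'$, one has
\[
Y\mbs(F) \;=\; Y\mbs(G)\times Y\mbs(G') \quad\text{in } Q .
\]
Here $\mbs(F)=\bigotimes_k \mbs(F(k))$ by definition, and for each $k$ the slice $(W\sqcup W')(k)$ is either a slice of $W$ or of $W'$, so the tensor factors of $\mbs(F)$ are precisely those of $\mbs(G)$ interleaved with those of $\mbs(G')$; by strict associativity and (strict) commutativity of $\otimes$ on objects and by functoriality this rearranges to $\mbs(G)\otimes\mbs(G')$ in $\Mor(\Br)$. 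Applying the strict monoidal functor $Y$ gives $Y\mbs(F)=Y\mbs(G)\otimes Y\mbs(G')$ in $\vect$, and unwinding the definition of the $\times$-product on $Q^m$ via Lemma \ref{lem.prodmppnrelaxedmon} (noting $Y\mbs(G)$, $Y\mbs(G')$ are single elementary tensors $y\otimes 1$, $y'\otimes 1$, and $1\cdot 1 = 1$ in $\bool[[q]]$) identifies this with $Y\mbs(G)\times Y\mbs(G')\in Q$. Combining the index-set bijection of the previous paragraph with this pointwise identity and the distributive computation of the right-hand side completes the proof.

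I expect the main obstacle to be the bookkeeping in the second paragraph — matching up the slice-wise approximation conditions defining $\Fa(\,\cdot\,;f,f')$ across the disjoint union, and in particular confirming that the per-slice cylinder widths $\epsilon(k),\epsilon'(k)$ cause no trouble because all slices of $W\sqcup W'$ coincide with slices of exactly one of $W$, $W'$. Everything else is either a direct appeal to Lemma \ref{lem.addaxiomfields} (reused for embedded cobordisms in place of cylinders), to the strict monoidality of $Y$ and of $(\Br,\otimes,I)$, or to the standard Fubini-type identity (\ref{equ.sumoverproductfamily}) for products of families in a complete semiring, here $Q^m$.
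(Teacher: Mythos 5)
Your proposal is correct and follows essentially the same strategy as the paper's proof: fix a boundary condition, expand the right-hand side using the Fubini-type identity (\ref{equ.sumoverproductfamily}) in the complete semiring $Q^m$, identify the summands via Lemma \ref{lem.prodmppnrelaxedmon} and the monoidality of $Y$ (so that $Y\mbs(F)\times Y\mbs(F') = Y(\mbs(F)\otimes \mbs(F'))\otimes 1$), and match fields on $W\sqcup W'$ satisfying the boundary conditions with pairs of fields on $W$ and $W'$, using the common cylinder scale. Two small differences are worth noting. First, your finishing step uses a bijection of index sets together with invariance of the summation law under re-indexing; the paper instead only records the equality of the sets of Brauer morphisms $\mathfrak{S}(W\sqcup W') = \mathfrak{S}(W,W')$ and then invokes continuity and idempotence of $Q^m$ (Propositions \ref{prop.qcontinuous} and \ref{prop.contidemsemiring}). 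Your route is slightly more elementary and is valid here; the paper's route is the one that also works for the Gluing Theorem, where the correspondence between fields is not bijective. Second, be careful with the claim that the tensor factors of $\mbs(F)$ are those of $\mbs(G)$ ``interleaved'' with those of $\mbs(G')$ and can be rearranged using commutativity of $\otimes$ on objects: the tensor product in $\Br$ is noncommutative on \emph{morphisms}, so no such rearrangement is available in general. Fortunately none is needed under the hypotheses of the theorem, since $W=(W\sqcup W')(0)$ and $W'=(W\sqcup W')(1)$, so that $\mbs(F)=\mbs(F(0))\otimes\mbs(F(1))=\mbs(G)\otimes\mbs(G')$ with the factors already in the correct order, all other slices contributing the strict unit $1_{[0]}$; you should state this directly rather than appeal to a rearrangement.
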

\begin{proof}
It is convenient to set $\Fa (M_1,\ldots,M_m) = \Fa (M_1)\times \cdots
   \times \Fa (M_m)$
for closed $(n-1)$-manifolds $M_1, \ldots, M_m$.
On a boundary condition 
\[ (f_M, f_{M'}, f_N, f_{N'}) \in \Fa (M,M',N,N'), \]
the monoidal tensor product of the state sums $Z_{W}$ and $Z_{W'}$ is given by
\begin{eqnarray*}
 (Z_{W} \hotimes_m Z_{W'})  (f_M, f_{M'}, f_N, f_{N'}) & = &
   Z_{W} (f_M, f_N) \times Z_{W'}(f_{M'}, f_{N'}) \\
& = & \Big\{ \sum_{F\in \Fa (W; f_M, f_N)} Y\mbs (F) \Big\} \times
     \Big\{ \sum_{F'\in \Fa (W'; f_{M'}, f_{N'})} Y\mbs (F') \Big\}. 
\end{eqnarray*}
By equation (\ref{equ.sumoverproductfamily}) in Section \ref{sec.monssemirings}, the product of the sums 
equals the sum of the products,
\[ (Z_{W} \hotimes_m Z_{W'}) (f_M, f_{M'}, f_N, f_{N'}) =
   \sum_{(F,F') \in \Fa (W; f_M, f_N)\times \Fa (W'; f_{M'}, f_{N'})} 
   Y\mbs (F) \times Y\mbs (F'). \]
With $[m]=\dom \mbs (F),$ $[n]=\cod \mbs (F),$ 
$[r] = \dom \mbs (F')$ and $[s] = \cod \mbs (F')$, the relevant product in 
evaluating $Y\mbs (F)\times Y\mbs (F')$ is of type (\ref{equ.prodqhmpqhpnmon}),
\[ Q(H_{m,n})\times Q(H_{r,s})\longrightarrow Q(H_{m+r,n+s}). \]
By Lemma \ref{lem.prodmppnrelaxedmon} and since $Y$ is a monoidal functor,
\[ (Y\mbs (F)\otimes 1)\times (Y\mbs (F')\otimes 1) =
  (Y\mbs (F)\otimes Y\mbs (F'))\otimes (1\cdot 1) =
  Y(\mbs (F)\otimes \mbs (F'))\otimes 1. \]
Let $\mathfrak{S}(W,W')$ be the set
\begin{eqnarray*}
\mathfrak{S}(W,W')
& = & \{ \mbs (F)\otimes \mbs (F') ~|~ F \in \Fa (W; f_M,f_N),~
  F' \in \Fa (W'; f_{M'},f_{N'}) \} \\
& = & \{ \mbs(F)\otimes \mbs(F') ~|~ F\in \Fa (W),~ F' \in \Fa (W'):\\
& &           \exists \epsilon (0),\epsilon' (0) \in (0,\epsilon_{W}), 
                        \epsilon (1),\epsilon' (1) \in (0,\epsilon_{W'}), \\
& &         F|_{[0,\epsilon (0)]\times M} \approx f_M,~ 
              F|_{[1-\epsilon' (0),1]\times N} \approx f_N, \\
& &        F'|_{[0,\epsilon (1)]\times M'} \approx f_{M'},~ 
             F'|_{[1-\epsilon' (1),1]\times N'} \approx f_{N'} \}.
\end{eqnarray*}
The ($\rho$-image of the) state sum of the disjoint union $W\sqcup W'$ is given on
$(f_M, f_{M'}, f_N, f_{N'})$ by
\[ \rho (Z_{W\sqcup W'})(f_M, f_{M'}, f_N, f_{N'}) =
   Z_{W\sqcup W'} (f_M \sqcup f_{M'}, f_N \sqcup f_{N'}) =
\sum_{H\in \Fa (W\sqcup W';f_M \sqcup f_{M'}, f_N \sqcup f_{N'})}
  Y\mbs (H). \]
Let $\mathfrak{S}(W\sqcup W')$ be the set
\begin{eqnarray*}
\mathfrak{S}(W\sqcup W')
& = & \{ \mbs (H) ~|~ H\in \Fa (W\sqcup W';f_M \sqcup f_{M'}, f_N \sqcup f_{N'}) \} \\
& = & \{ \mbs(H) ~|~ H\in \Fa (W\sqcup W'):\\
& &           \exists \epsilon_+ (0),\epsilon_+ (1), 
                        \epsilon'_+ (0),\epsilon'_+ (1) \in (0,\epsilon_{W\sqcup W'}): \\
& &         H|_{[0,\epsilon_+ (0)]\times M} \approx f_M,~ 
              H|_{[1-\epsilon'_+ (0),1]\times N} \approx f_N, \\
& &        H|_{[0,\epsilon_+ (1)]\times M'} \approx f_{M'},~ 
             H|_{[1-\epsilon'_+ (1),1]\times N'} \approx f_{N'} \}.
\end{eqnarray*}
Let us prove that $\mathfrak{S} (W\sqcup W')
\subset \mathfrak{S}(W,W')$.
Given $\mbs (H)$ in $\mathfrak{S} (W\sqcup W'),$
we put
\[ \begin{array}{lll}
F = H|_W, & \epsilon (0) = \epsilon_+ (0), & \epsilon' (0)=\epsilon'_+ (0), \\
F' = H|_{W'}, & \epsilon (1) = \epsilon_+ (1), & \epsilon' (1)=\epsilon'_+ (1). 
\end{array} \]
Then
\[ \begin{array}{ll}
\epsilon (0) = \epsilon_+ (0) < \epsilon_{W\sqcup W'} = \epsilon_W, &
\epsilon' (0) = \epsilon'_+ (0) < \epsilon_{W\sqcup W'} = \epsilon_W, \\
\epsilon (1) = \epsilon_+ (1) < \epsilon_{W\sqcup W'} = \epsilon_{W'}, &
\epsilon' (1) = \epsilon'_+ (1) < \epsilon_{W\sqcup W'} = \epsilon_{W'}. 
\end{array} \]
The map $F$ is a fold field on $W$, since $F=H(0)$ and
$H(0) \in \Fa ((W\sqcup W')(0))=\Fa (W)$.
Similarly, $F'$ is a fold field on $W'$, since $F'=H(1)$ and
$H(1) \in \Fa ((W\sqcup W')(1))=\Fa (W')$.
The correct boundary conditions are satisfied, since
\[ \begin{array}{ll}
F|_{[0,\epsilon (0)]\times M} = H|_{[0,\epsilon_+ (0)]\times M} \approx f_M, &
F|_{[1-\epsilon' (0),1]\times N} = H|_{[1-\epsilon'_+ (0),1]\times N} \approx f_N, \\
F'|_{[0,\epsilon (1)]\times M'} = H|_{[0,\epsilon_+ (1)]\times M'} \approx f_{M'}, &
F'|_{[1-\epsilon' (1),1]\times N'} = H|_{[1-\epsilon'_+ (1),1]\times N'} \approx f_{N'}. 
\end{array} \]
Finally, the equation
$\mbs (H) = \mbs (H(0))\otimes \mbs (H(1)) = \mbs (F)\otimes \mbs (F')$
shows that $\mbs (H)\in \mathfrak{S} (W,W')$. 

Conversely, we shall show that $\mathfrak{S}(W,W')
\subset \mathfrak{S} (W\sqcup W')$.
Given $\mbs (F)\otimes \mbs (F')$ in $\mathfrak{S} (W,W'),$
we put
\[ \begin{array}{l}
H = F\sqcup F': W\sqcup W' \longrightarrow \cplx, \\
\epsilon_+ (0) = \epsilon (0), \epsilon_+ (1)=\epsilon (1), 
 \epsilon'_+ (0) = \epsilon' (0), \epsilon'_+ (1)=\epsilon' (1). 
\end{array} \]
Then
\[ \begin{array}{ll}
\epsilon_+ (0) = \epsilon (0) < \epsilon_W = \epsilon_{W\sqcup W'}, &
\epsilon_+ (1) = \epsilon (1) < \epsilon_{W'} = \epsilon_{W\sqcup W'}, \\
\epsilon'_+ (0) = \epsilon' (0) < \epsilon_W = \epsilon_{W\sqcup W'}, &
\epsilon'_+ (1) = \epsilon' (1) < \epsilon_{W'} = \epsilon_{W\sqcup W'}. 
\end{array} \]
As $F$ and $F'$ are fold fields on $W= (W\sqcup W')(0)$ and
$W' = (W\sqcup W')(1),$ respectively, and
$H(0)=F,$ $H(1)=F',$ the map $H$ is a fold field on $W\sqcup W'$.
The correct boundary conditions are satisfied, since
\[ \begin{array}{ll}
H|_{[0,\epsilon_+ (0)]\times M} = F|_{[0,\epsilon (0)]\times M}  \approx f_M, &
H|_{[1-\epsilon'_+ (0),1]\times N} = F|_{[1-\epsilon' (0),1]\times N}  \approx f_N, \\
H|_{[0,\epsilon_+ (1)]\times M'} = F'|_{[0,\epsilon (1)]\times M'} \approx f_{M'}, &
H|_{[1-\epsilon'_+ (1),1]\times N'} = F'|_{[1-\epsilon' (1),1]\times N'}  \approx f_{N'}. 
\end{array} \]
Finally, the equation
$\mbs (H) = \mbs (H(0))\otimes \mbs (H(1)) = \mbs (F)\otimes \mbs (F')$
shows that $\mbs (F) \otimes \mbs (F') \in \mathfrak{S} (W \sqcup W')$. 

Consequently, $\mathfrak{S}(W\sqcup W') =
\mathfrak{S}(W,W')$ and, applying the functor $Y$,
\[ Y(\mathfrak{S}(W\sqcup W')) = Y(\mathfrak{S}(W,W')). \]
By Proposition \ref{prop.qcontinuous}, the idempotent semiring $Q^m =(Q,+,\times)$ is continuous.
Thus we can conclude from Proposition \ref{prop.contidemsemiring}
that
\[  \sum_{H\in \Fa (W\sqcup W'; f_M \sqcup f_{M'}, f_N \sqcup f_{N'})} Y\mbs (H) =
   \sum_{(F,F') \in \Fa (W; f_M, f_N)\times \Fa (W'; f_{M'}, f_{N'})} 
   Y(\mbs (F) \otimes \mbs (F')). \]
\end{proof}

\subsection{The Gluing Theorem}

This section proves the Gluing Theorem \ref{thm.gluingmain}.
The proof falls naturally into two parts:
Let $W$ be a cobordism obtained from gluing a cobordism $W'$ along its
outgoing boundary $N$ to a cobordism $W''$ with incoming boundary $N$.
Then one must show that any fold field $H$ on $W$ can be broken up into a fold field
$F$ on $W'$ and a fold field $G$ on $W''$ in such a way that the Brauer morphism
of $H$ is the composition of the Brauer morphisms of $F$ and $G$, i.e.
$\mbs (H)=\mbs (G)\circ \mbs (F)$. This is done in Proposition \ref{prop.splithintofandg}.
Conversely, one must show that fold fields $F$ on $W'$ and $G$ on $W''$
that agree in a suitable sense near $N$, can be ``glued'' to a fold field $H$ on $W$
such that $\mbs (H) = \mbs (G)\circ \mbs (F)$. This is the content of Proposition
\ref{prop.gluefandgtogeth}. Throughout these processes, the boundary conditions
on $\partial W$ must remain unchanged.

\begin{prop} \label{prop.splithintofandg}
Let $W'$ be a cobordism from $M$ to $N$ with cylinder scale $\epsilon_{W'}$
and $W''$ a cobordism from
$N$ to $P$ with cylinder scale $\epsilon_{W''}$.
Let $W=W' \cup_N W''$ be the cobordism from $M$ to $P$
obtained by gluing $W'$ and $W''$ along $N$ and let $\epsilon_W =
\smlhf \min (\epsilon_{W'}, \epsilon_{W''})$ be its natural cylinder scale. 
Given a fold field $H: W\to \cplx$ with $\epsilon (a), \epsilon'(a)\in
(0,\epsilon_{W})$ and $h_M \in \Fa (M)$ and $h_P \in \Fa (P)$
such that $H|_{[0,\epsilon (a)]\times M(a)} \approx h_M (a),$
$H|_{[1-\epsilon' (a),1]\times P(a)} \approx h_P (a)$ for all $a\in s(W)$,
there exist fold fields
$F: W' \to \cplx$ and $G: W'' \to \cplx$ 
and $\epsilon_1 (a), \epsilon'_1 (a) \in (0,\epsilon_{W'})$, 
$\epsilon_2 (a), \epsilon'_2 (a) \in (0,\epsilon_{W''}),$ 
$u\in \Fa (N)$
such that \\
(1) $F|_{[0,\epsilon_1 (a)]\times M(a)} \approx H|_{[0,\epsilon_1 (a)]\times M(a)},$ 
$G|_{[1-\epsilon'_2 (a),1]\times P(a)} \approx H|_{[1-\epsilon'_2 (a),1]\times P(a)}$, \\
(2) $F|_{[1-\epsilon'_1 (a),1]\times N(a)} \approx u(a) \approx 
   G|_{[0,\epsilon_2 (a)]\times N(a)}$, and \\
(3) $\mbs (G)\circ \mbs(F) = \mbs(H) \in \operatorname{Mor}(\Br)$, \\
for all $a\in s(W)$.
\end{prop}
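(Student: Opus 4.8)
The plan is to produce $F$ and $G$ by a straightforward \emph{restriction} of $H$, once we reparametrize the two halves of $W$ back to the standard time interval $[0,1]$. Recall that $W=W'\cup_N W''$ was obtained by embedding $W'$ over $[0,1]$, embedding a translate $W''+e_1$ over $[1,2]$, gluing along $N$ over the hyperplane $\{1\}\times\real^D$, and then linearly reparametrizing $[0,2]\to[0,1]$. Let $\tau\colon[0,2]\to[0,1]$, $\tau(t)=t/2$, denote that reparametrization, so that $W'$ corresponds to the part of $W$ with time $\tau(t)\in[0,\smlhf]$ and $W''$ corresponds to time $\tau(t)\in[\smlhf,1]$. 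First I would choose a cutting time: by Corollary \ref{cor.pitchfresidual} the set $\pitchfork(H(a))\cap\genim(H(a))$ is residual in $[0,1]$ for each of the finitely many $a\in s(W)$, so their intersection with $(0,1)$ is residual and hence nonempty; pick $t_0$ in it. (A convenient choice is $t_0=\smlhf$ after a preliminary small perturbation, but any $t_0$ works and one can then reparametrize each half affinely back to $[0,1]$.) Write $H_{\leq t_0}$ and $H_{\geq t_0}$ for the restrictions of $H$ to $W_{\leq t_0}$ and $W_{\geq t_0}$.

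Next I would invoke Lemma \ref{lem.sfltsfgtissf}: it gives immediately that $H_{\leq t_0}$ is a fold field on $W_{\leq t_0}$, $H_{\geq t_0}$ is a fold field on $W_{\geq t_0}$, and
\[
\mbs(H_{\geq t_0})\circ\mbs(H_{\leq t_0})=\mbs(H).
\]
Now precompose each half with the affine diffeomorphism sending $[0,1]$ onto $[0,t_0]$ (resp. onto $[t_0,1]$), fixing the incoming endpoint; by Lemma \ref{lem.singtanglereparam} this again yields fold fields and does not change the Brauer morphisms. Identifying $W_{\leq t_0}$ (reparametrized) with $W'$ and $W_{\geq t_0}$ (reparametrized) with $W''$ via the gluing construction, set $F$ to be the resulting fold field on $W'$ and $G$ the resulting fold field on $W''$. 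Then (3) is exactly the displayed identity above transported through the reparametrizations. For the boundary conditions: near the incoming end of $W'$ the embedding of $W$ is still the product embedding on $[0,\epsilon_W]\times M$, so $F$ agrees (up to the relation $\approx$) with $H$ there; taking $\epsilon_1(a),\epsilon_1'(a),\epsilon_2(a),\epsilon_2'(a)$ to be suitable fractions of $\epsilon_W$ (scaled by the reparametrization factors, which are $\leq$ the ratios $\epsilon_W/\epsilon_{W'}$ etc.) lands them in $(0,\epsilon_{W'})$ resp. $(0,\epsilon_{W''})$, giving (1). To produce $u\in\Fa(N)$ in (2): near the cutting slice $W_{t_0}$ the restrictions $H_{\leq t_0}$ and $H_{\geq t_0}$ are \emph{not} a priori in product form, so I would first apply a small isotopy of $H$ supported near $W_{t_0}$ that makes $H$ a product $F'$ of the slice data on a collar $[t_0-\delta,t_0+\delta]\times W_{t_0}$ — this is possible because $t_0\in\pitchfork(H(a))$ for all $a$, so $S(H)$ meets $W_{t_0}$ transversally in finitely many points and one can straighten both $H$ and its singular set simultaneously in a tubular neighborhood — without changing $\mbs(H)$ (again by Lemma \ref{lem.singtanglereparam} applied to the ambient isotopy, or by direct inspection of the definition of $\mbs$, which only depends on the combinatorics of $S(H)$). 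Then $u(a)$ is defined to be this product (suspended slice) map on $[0,1]\times N(a)$, assembled over $a\in s(N)$; it lies in $\Fa(N)$ because the suspension of the Morse-type slice data is a fold field whose Brauer morphism is an identity by construction (cf. Remark \ref{rem.suspexcmorse} and Lemma \ref{lem.timelocconst}), and after the collar-straightening $F$ near its outgoing end and $G$ near its incoming end are both $\approx u$, giving (2).

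The main obstacle is the collaring step used to define $u$: one must arrange that, near the cutting slice, $H$ can be put into product form \emph{compatibly with its singular set}, i.e. that $S(H)$ near $W_{t_0}$ consists of $\ker D H$-lines transverse to the slices, so that the restriction to the collar is genuinely a suspension of the $0$-dimensional slice datum $S H_{t_0}$. Transversality $t_0\in\pitchfork(H(a))$ is exactly what is needed to do this, and the third clause in the definition of a fold map ($T_x S(F)\oplus\ker D_x F=T_xW$) guarantees that a tubular-neighborhood straightening of $S(H)$ can be chosen to also straighten the fold-chart normal form of $H$ from Proposition \ref{prop.foldmapnormalform}; one also needs the generic-imaginary-parts condition to persist, which it does on an open set around $t_0$ since $t_0\in\genim(H(a))$ and $SH_t$ varies continuously. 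I expect the bulk of the work, and the only place requiring genuine care rather than bookkeeping, to be verifying that this straightening can be done in the class of fold fields and leaves $\mbs$ unchanged; everything else is an application of Lemmas \ref{lem.sfltsfgtissf} and \ref{lem.singtanglereparam} together with tracking the various cylinder scales through the affine reparametrizations.
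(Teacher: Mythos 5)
Your overall skeleton (cut at a generic time, restrict $H$ to the two halves, reparametrize, and combine Lemmas \ref{lem.sfltsfgtissf} and \ref{lem.singtanglereparam}) is the same as the paper's, but two steps are not right as written. First, the choice of cut: it is not true that ``any $t_0$ works''. You need the middle slice to be $N$ and you need the two halves to be identified with the \emph{embedded} cobordisms $W'$ and $W''$ by diffeomorphisms of the form $(t,x)\mapsto(\lambda(t),x)$, since $F$ and $G$ must be fold fields on $W'$ and $W''$ themselves and Lemma \ref{lem.singtanglereparam} only applies to such maps. This forces $t_0$ to lie in the cylindrical band $[\smlhf-\kappa,\smlhf+\kappa]\times N$ over which the glued cobordism is a product (the paper takes $\kappa=\epsilon_W$), and it forces the time reparametrizations to be the identity outside that band rather than affine: an affine rescaling moves points of $W$ lying outside the product region, so its image is not $W'$, and for a general $t_0$ the slice $W_{t_0}$ is not even $N$, so $u$ would live on the wrong manifold. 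Since $\bigcap_{a}(\pitchfork H(a)\cap\genim H(a))$ is residual, hence dense, an admissible $t_0$ inside the band exists; no ``preliminary perturbation'' of $H$ is needed, and perturbing $H$ would anyway require a separate argument that $\mbs$ is unchanged.

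Second, and more seriously, the construction of $u$. You propose to isotope $H$ near $W_{t_0}$ into product form on a collar; this is exactly the step that needs proof, you do not supply it, and it is also unnecessary. The paper takes $u$ to be literally the restriction $H|_{J\times N}$ with $J=[t_0-\delta,t_0+\delta]$ (legitimate because $J\times N\subset W$ once $t_0$ is in the band), and the genuine content is showing $\mbs(u)=1$, i.e. $u\in\Fa(N)$. That requires (a) the absence of critical values of $\omega|_{S(H)}$ over $J$ (from openness of $\pitchfork(H)$), which yields a product structure of $S(H(a))$ over $J$, and (b) a quantitative estimate: choosing $\delta$ so small that along each strand the imaginary part of $H$ stays within $\alpha(a)/4$ of its value at $t_0$, where $\alpha(a)$ is the minimal separation of imaginary parts at $t_0$. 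This is what guarantees both that $t_0\pm\delta\in\genim H(a)$ and that the ordering of the strand endpoints at $t_0-\delta$ and $t_0+\delta$ agrees with the ordering at $t_0$, so that strand $i$ connects to strand $i$ and $\mbs(u)$ is the identity rather than a nontrivial permutation. Your appeal to ``genim persists on an open set around $t_0$'' does not give this: $\genim(H(a))$ is only residual, not open, and even genericity at the collar ends does not by itself exclude a permutation. Note also that condition (2) demands equality up to a time reparametrization ($\approx$), not up to isotopy, so if you did modify $H$ you would have to rebuild $F$ and $G$ from the modified map and recheck all conditions; the restriction approach, with the estimates above, avoids this entirely.
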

\begin{proof}
Let $H$ be a fold field on $W$ and $\epsilon (a), \epsilon'(a) \in
(0,\epsilon_W)$ such that $H|_{[0,\epsilon (a)]\times M(a)} \approx 
h_M (a),$ $H|_{[1-\epsilon'(a),1]\times P(a)} \approx 
h_P (a)$ for all $a\in s(W)$.
As $W'$ and $W''$ are both cylindrical near $N$,
the glued cobordism $W$ is cylindrical over a neighborhood of $t=\smlhf$,
that is, there exists a small $\kappa >0$ such that 
$\max_a \epsilon (a) \leq \smlhf -\kappa,$ $\smlhf +\kappa \leq 1- \max_a \epsilon' (a),$
the preimage of the
interval $[\smlhf -\kappa, \smlhf +\kappa]$ under the first coordinate
function $\omega: W\to [0,1]$ is the band
\[ B= \omega^{-1} [\smlhf -\kappa, \smlhf +\kappa] =
  [\smlhf -\kappa, \smlhf +\kappa] \times N, \]
and the restriction $\omega|_B:B\to [\smlhf -\kappa, \smlhf +\kappa]$ is 
given by $\omega|_B (t,x)=t,$ $t\in [\smlhf -\kappa, \smlhf +\kappa],$
$x\in N$. In fact we can and will take $\kappa = \epsilon_W = \smlhf 
\min (\epsilon_{W'}, \epsilon_{W''}).$

Recall that $s(W)$ is the set of all $a\in \nat$ such that the slice $W(a)$
is nonempty. For every $a\in s(W),$ the set
$\pitchfork H(a)\cap \genim H(a)$ is residual in $[0,1]$ by Remark \ref{rem.giequivtransplusgi}.
Thus the finite intersection
\[ \bigcap_{a\in s(W)} (\pitchfork H(a)\cap \genim H(a)) \]
is residual, and hence dense, in $[0,1]$. Therefore,
there exists a value
\[ t_0 \in (\smlhf -\kappa, \smlhf +\kappa) \cap
    \bigcap_{a\in s(W)} (\pitchfork H(a)\cap \genim H(a)). \]
In particular, $t_0 \in \pitchfork (H)$.
By Corollary \ref{cor.pitchfresidual}, $\pitchfork (H)$ is open in $[0,1]$.
Thus there is a $\delta_0 >0$ such that
$[t_0 - \delta_0, t_0 + \delta_0] \subset \pitchfork (H)\cap
(\smlhf -\kappa, \smlhf +\kappa).$ Using Lemma \ref{lem.transregu},
$t_0 \in \pitchfork (H) = \reg (\omega|_{S(H)})\cap \reg (W),$ so
$t_0$ is a regular value of $\omega|_{S(H)}$. Thus at every point
$x\in S(H)\cap W_{t_0} = S(H)\cap t_0 \times N,$ the differential
$D_x (\omega|_{S(H)})$ is onto, hence an isomorphism.
By the inverse function theorem, $\omega|_{S(H)}$ is a local
diffeomorphism near every $x\in S(H)\cap t_0 \times N.$
As $S(H(a))$ and $W(a)_{t_0}$ are transverse and of complementary
dimension in $W$, and by compactness, their intersection is a finite set
\[ S(H(a))\cap t_0 \times N(a) = \{ p^a_1,\ldots, p^a_{k(a)} \},~
  p^a_i \not= p^a_j \text{ for } i\not= j. \]
Let us write $z^a_i = H(p^a_i),$ $i=1,\ldots, k(a),$ for the images.
Since $t_0 \in \genim H(a),$ the function
$\Ima \circ H(a)|: S(H(a))\cap t_0 \times N(a) \to \real$ is injective, that is,
$\Ima z^a_i \not= \Ima z^a_j$ whenever $i\not= j$. Thus the number
\[ \alpha (a)= \min \{ | \Ima (z^a_i) - \Ima (z^a_j)| ~:~ 1\leq i<j \leq k(a) \} \]
is positive. We may assume that the indexing of the $p^a_i$
has been chosen so that $\Ima z^a_i < \Ima z^a_j$ iff $i<j$.
As $H$ is continuous, there exists for every $i$ a small
open neighborhood $U^a_i$ of $p^a_i$ such that
$|H(a)(x)- z^a_i |<\alpha (a)/4$ for all $x\in U^a_i$. We may assume that
the $U^a_i$ have been arranged so that they do not overlap,
$U^a_i \cap U^a_j =\varnothing$ for $i\not= j$. Since $\omega|_{S(H)}$ is a
local diffeomorphism near every $p^a_i$, there is an open neighborhood
$V^a_i \subset S(H)$ of $p^a_i$ and a corresponding open neighborhood
$\hat{V}^a_i \subset (t_0 - \delta_0, t_0 + \delta_0)$ of $t_0 = \omega (p^a_i)$
such that $\omega|: V^a_i \to \hat{V}^a_i$ is a diffeomorphism and $V^a_i \subset U^a_i$.
The finite intersection 
$\bigcap_{a\in s(W)} (\hat{V}^a_1 \cap \cdots \cap \hat{V}^a_{k(a)})$ is open and contains
$t_0$. Consequently, there exists a $\delta >0$ with
\[ [t_0 - \delta, t_0 + \delta]\subset 
  \bigcap_{a\in s(W)} (\hat{V}^a_1 \cap \cdots \cap \hat{V}^a_{k(a)}). \]
Note that
\[ [t_0 -\delta, t_0 + \delta] \subset (t_0 -\delta_0, t_0 + \delta_0 )
  \subset~ \pitchfork (H)\cap (\smlhf - \kappa, \smlhf + \kappa). \]
We put
\[ \wlh = W\cap [0,\smlhf]\times \real^{D},~
   \wgh = W\cap [\smlhf, 1]\times \real^{D} \]
and
\[ \hat{W}' = W\cap [0,t_0 + \delta]\times \real^{D},~
   \check{W}'' = W\cap [t_0 - \delta, 1]\times \real^{D}. \]
The diffeomorphism $[0,1]\times \real^{D} \to [0,\smlhf]
\times \real^{D}$ given by $(t,x)\mapsto (t/2,x)$ restricts to a
diffeomorphism $\phi_1: W' \stackrel{\cong}{\longrightarrow}
\wlh.$ Since $\smlhf -\kappa < t_0 +\delta,$ there exists a
diffeomorphism $\lambda_1: [0,\smlhf] \stackrel{\cong}{\longrightarrow}
[0, t_0 +\delta]$ with $\lambda_1 (t)=t$ for $t\in [0, \smlhf - \kappa]$.
As $W\cap [\smlhf -\kappa, \smlhf +\kappa]\times \real^{D} =
[\smlhf -\kappa, \smlhf +\kappa] \times N,$ the diffeomorphism
$[0,\smlhf]\times \real^{D} \to [0, t_0 +\delta]
\times \real^{D}$ given by $(t,x)\mapsto (\lambda_1 (t),x)$ restricts to a
diffeomorphism $\overline{\lambda}_1: \wlh \stackrel{\cong}{\longrightarrow}
\hat{W}'.$ Let $\psi_1: W' \stackrel{\cong}{\longrightarrow} \hat{W}'$ be the
composition $\psi_1 = \overline{\lambda}_1 \phi_1$. Define
$F: W' \to \cplx$ to be the composition
\[ \xymatrix{
W' \ar[rd]_F \ar[r]^{\cong}_{\psi_1} & \hat{W}' \ar[d]^{H|} \\
& \cplx.
} \]

Given $a\in s(W),$
we verify that there is an $\epsilon_1 (a) \in (0,\epsilon_{W'})$ such that
$F|_{[0,\epsilon_1 (a)]\times M(a)} \approx
H|_{[0,\epsilon (a)]\times M(a)}$ for a suitable $\epsilon_1 (a)$.
Take $\epsilon_1 (a)= 2\epsilon (a)$ and consider the diffeomorphism
$\mu (a): [0,\epsilon_1 (a)]\to [0,\epsilon (a)],$ $\mu (a)(t)=t/2$.
Note that this $\epsilon_1 (a)$ is an admissible choice, since
\[ 0< \epsilon_1 (a) = 2\epsilon (a) < 2\epsilon_W = \min 
 (\epsilon_{W'}, \epsilon_{W''})\leq \epsilon_{W'}. \]
For $t\in [0,\epsilon_1 (a)]$, we have $\frac{t}{2} \leq \smlhf -\kappa$,
for $\epsilon (a) \leq \smlhf -\kappa$ by our choice of $\kappa$. Hence
for such $t$ and $x\in M(a),$
\[ F(t,x)=H\overline{\lambda}_1 \phi_1 (t,x) = H\overline{\lambda}_1 
  (\smlhf t, x) = H(\smlhf t,x)=H(\mu (a)(t),x). \]
This establishes the claim. \\

Next, we shall show that $F:W' \to \cplx$ is a fold field.
First, $F$ is a fold map: The restriction of $H$ to $\hat{W'}$ is a fold
map by restricting the fold charts for $H$ to $\hat{W}'$. Then the
composition of a diffeomorphism with a fold map is again a fold map
because one can use the diffeomorphism to transport the fold
charts from its codomain to its domain. 

Fix an $a\in s(W)$.
We check that $0,1 \in \pitchfork F(a)\cap \genim F(a).$
As $\psi_1$ is a diffeomorphism, we have
$\psi_1 S((H\psi_1)(a)) = S(H(a)|_{\hat{W}'}).$ Since $0\in \pitchfork H(a),$
the singular set $S(H(a))$ is transverse to $0\times M(a)$.
Hence $\psi_1 S((H\psi_1)(a)) \pitchfork \psi_1 (0\times M(a)),$ as
$\psi_1 (0\times M(a))=0\times M(a).$ This implies
$S((H\psi_1)(a))\pitchfork 0\times M(a)$, i.e. $S(F(a))\pitchfork 0\times M(a)$.
Therefore $0\in \pitchfork F(a)$.
Since $t_0 +\delta \in \pitchfork (H)$, the manifold $S(H(a))$ is
transverse to $W(a)_{t_0 +\delta} = (t_0 +\delta)\times N(a).$
Then $\psi_1 S((H\psi_1)(a))\pitchfork \psi_1 (1\times N(a)),$ as
$\psi_1 (1\times N(a))=(t_0 +\delta)\times N(a)$.
Hence $S(F(a))\pitchfork 1\times N(a)$ and so $1\in \pitchfork F(a)$.\\

The function $\Ima \circ H(a)|: S(H(a))\cap 0\times M(a)\to \real$ is injective, for
we know that $0\in \genim H(a)$. The diffeomorphism $\psi_1$
induces a bijection
\[ S(F(a))\cap 0\times M(a) \stackrel{\psi_1|}{\longrightarrow}
 \psi_1 S(F(a))\cap \psi_1 (0\times M(a)) = S(H(a))\cap 0\times M(a). \]
Thus $\Ima \circ F(a)|_{S(F(a))\cap 0\times M(a)} =
\Ima \circ (H \circ \psi_1)(a)|_{S(F(a))\cap 0\times M(a)}$ is injective,
which shows that $0\in \genim F(a)$. In order to prove that
$1\in \genim F(a)$, our immediate next objective is to demonstrate
that $t_0 +\delta \in \genim H(a)$. As $t_0 + \delta \in \hat{V}^a_i$ and
$\omega|: V^a_i \to \hat{V}^a_i$ is a diffeomorphism, there exists a
unique point $q_i \in V^a_i$ with $\omega (q_i)=t_0 +\delta.$
Since $V^a_i \cap V^a_j =\varnothing$ for $i\not= j$, we have
$q_i \not= q_j$ for $i\not= j$. We obtain a subset
\[ \{ q_1,\ldots, q_k \} \subset S(H(a))\cap (t_0 +\delta)\times N(a) \]
of cardinality $k=k(a)$. We claim that in fact these two sets are equal,
which can be seen as follows. Since
\[ [t_0 -\delta, t_0 +\delta]\subset~ \pitchfork H(a) = \reg (\omega|_{S(H(a))})
   \cap \reg W(a), \]
the restriction
\[ \omega|_{S(H(a))}: S(H(a))\cap [t_0 -\delta, t_0 +\delta]\times N(a) \longrightarrow
   [t_0 -\delta, t_0 +\delta] \]
has no critical points. By a standard result proven in treatments of
Morse theory (cf. \cite[Theorem 6.2.2]{hirsch}),
there exists a diffeomorphism
\[ d: (S(H(a))\cap t_0 \times N(a))\times [t_0 -\delta, t_0 +\delta]
  \stackrel{\cong}{\longrightarrow}
  S(H(a))\cap ([t_0 -\delta, t_0 +\delta]\times N(a)) \]
such that
\begin{equation} \label{equ.diffeod}
\xymatrix@C=3pt{
(S(H(a))\cap t_0 \times N(a))\times [t_0 -\delta, t_0 +\delta] \ar[rr]^d
\ar[rd]_{\operatorname{proj}_2} & &
S(H(a))\cap ([t_0 -\delta, t_0 +\delta]\times N(a))
\ar[ld]^{\omega|_{S(H(a))}} \\
& [t_0 -\delta, t_0 +\delta] &
} \end{equation}
commutes and $d$ is the identity on $(S(H(a))\cap t_0 \times N(a))\times \{ t_0 \}.$
In particular, the level sets of $\omega|_{S(H(a))}$
over $[t_0 -\delta, t_0 +\delta]$ are all diffeomorphic to each other.
So $S(H(a))\cap (t_0 +\delta)\times N(a)$ must have the same cardinality
as $S(H(a))\cap t_0 \times N(a),$ namely $k$. But since
$\{ q_1, \ldots, q_k \}$ already has cardinality $k$, we must have
\[ \{ q_1,\ldots, q_k \} = S(H(a))\cap (t_0 +\delta)\times N(a). \]
In order to establish $t_0 +\delta \in \genim H(a),$ we must
show that $\Ima \circ H(a)|: S(H(a))\cap (t_0 +\delta)\times N(a) \to \real$
is injective, that is, $\Ima H(a)(q_i)\not= \Ima H(a)(q_j)$ for $i\not= j$.
As $q_i \in V^a_i \subset U^a_i,$ we have the estimate
\[ | \Ima H(a)(q_i) - \Ima z^a_i | \leq |H(a)(q_i) - z^a_i | < \frac{\alpha (a)}{4}. \]
We bound $| \Ima H(a)(q_i) - \Ima H(a)(q_j)|$ away
from zero by arguing
\begin{equation*}
\begin{split}
| \Ima H(a)(q_i) - &\Ima H(a)(q_j)| \\
& = | \Ima H(a)(q_i) - \Ima z^a_i + \Ima z^a_i - \Ima z^a_j + \Ima z^a_j - \Ima H(a)(q_j)| \\
& \geq |\Ima z^a_i - \Ima z^a_j| - | \Ima H(a)(q_i) - \Ima z^a_i| - |\Ima H(a)(q_j) - \Ima z^a_j| \\
&  \geq \alpha (a) - \frac{\alpha (a)}{4} - \frac{\alpha (a)}{4} = \frac{\alpha (a)}{2} >0.
\end{split}
\end{equation*}
Thus $t_0 +\delta \in \genim H(a).$ The diffeomorphism $\psi_1$ induces
a bijection
\[ S(F(a))\cap 1\times N(a) \stackrel{\psi_1|}{\longrightarrow}
 \psi_1 S(F(a))\cap \psi_1 (1\times N(a)) = S(H(a))\cap (t_0 +\delta)\times N(a). \]
Hence $\Ima \circ F(a)|_{S(F(a))\cap 1\times N(a)} = \Ima \circ (H\circ
 \psi_1)(a)|_{S(F(a))\cap 1\times N(a)}$ is injective and we conclude that
$1\in \genim F(a)$. 

We move on to show that $\genim F(a)$ is
residual in $[0,1]$. Let
$h_1: [0,1]\to [0,\smlhf]$ denote the diffeomorphism
$h_1 (t)=\frac{t}{2}$. We shall use the commutative diagram
\begin{equation} \label{dia.wwwomomom} 
\xymatrix{
W' \ar@/^2pc/[rr]^{\psi_1}
\ar[r]^{\phi_1}_{\cong} \ar[d]_{\omega} &
  \wlh \ar[r]^{\overline{\lambda}_1}_{\cong} \ar[d]_{\omega} &
  \hat{W}' \ar[d]_{\omega} \\
[0,1] \ar[r]^{\cong}_{h_1} & [0,\smlhf] \ar[r]^{\cong}_{\lambda_1} &
  [0, t_0 +\delta]. 
} 
\end{equation}
We have $\lambda_1 h_1 \genim F(a) = \genim (H(a)|_{\hat{W}'}).$
As $H$ is a fold field on $W$, $\genim H(a)$ is residual in
$[0,1]$. By Lemma \ref{lem.restrofresidual},
$\genim H(a)\cap (0, t_0 +\delta)$ is residual in 
$(0,t_0 +\delta)$. Thus $\genim H(a)\cap [0, t_0 +\delta]$ is residual in 
$[0,t_0 +\delta]$. Using the above formula,
\[
\genim H(a) \cap [0,t_0 +\delta] = \genim (H(a)|_{\hat{W}'}) 
 = \lambda_1 h_1 \genim F(a).
\]
Since $\lambda_1 h_1$ is a homeomorphism, $\genim F(a)$
is residual in $[0,1]$. This finishes the proof that $F\in \Fa (W').$ \\

In a similar manner, we shall construct a fold field $G$ on $W''$:
The diffeomorphism $[0,1]\times \real^{D} \to [\smlhf,1]
\times \real^{D}$ given by $(t,x)\mapsto (\smlhf (t+1),x)$ restricts to a
diffeomorphism $\phi_2: W'' \stackrel{\cong}{\longrightarrow}
\wgh.$ Since $t_0 -\delta < \smlhf + \kappa,$ there exists a
diffeomorphism $\lambda_2: [\smlhf,1] \stackrel{\cong}{\longrightarrow}
[t_0 -\delta,1]$ with $\lambda_2 (t)=t$ for $t\in [\smlhf + \kappa,1]$.
As $W\cap [\smlhf -\kappa, \smlhf +\kappa]\times \real^{D} =
[\smlhf -\kappa, \smlhf +\kappa] \times N,$ the diffeomorphism
$[\smlhf,1]\times \real^{D} \to [t_0 -\delta,1]
\times \real^{D}$ given by $(t,x)\mapsto (\lambda_2 (t),x)$ restricts to a
diffeomorphism $\overline{\lambda}_2: \wgh \stackrel{\cong}{\longrightarrow}
\check{W}''.$ Let $\psi_2: W'' \stackrel{\cong}{\longrightarrow} \check{W}''$ be the
composition $\psi_2 = \overline{\lambda}_2 \phi_2$. Define
$G: W'' \to \cplx$ to be the composition
\[ \xymatrix{
W'' \ar[rd]_G \ar[r]^{\cong}_{\psi_2} & \check{W}'' \ar[d]^{H|} \\
& \cplx.
} \] 

Let $a\in s(W)$. Taking 
$\epsilon'_2 (a)= 2\epsilon' (a) \in (0,\epsilon_{W''})$, one verifies easily that
$G|_{[1-\epsilon'_2 (a),1]\times P(a)} \approx
H|_{[1-\epsilon'(a)]\times P(a)}$.
Arguments analogous to the ones used to show that $F$ is a fold field
also show that $G$ is a fold field on $W''$. The key points are that
$t_0 -\delta \in \pitchfork (H)$ (whence $0\in \pitchfork (G)$), and
$t_0 -\delta \in \genim H(a)$ for every $a\in s(W)$ 
(whence $0\in \genim G(a)$ for all such $a$). \\

Let us construct the required $u\in \Fa (N)$. 
Set $J =[t_0 -\delta, t_0 +\delta]$,
let $\mu: [0,1] \stackrel{\cong}{\longrightarrow} J$ be the
diffeomorphism $\mu (t) = 2\delta t + t_0 -\delta$ and
$\overline{\mu}: [0,1]\times N \stackrel{\cong}{\longrightarrow}
J\times N$ the diffeomorphism $\overline{\mu}(t,x)=(\mu (t),x)$.
Define $u$ to be the composition
\[ \xymatrix{
[0,1]\times N \ar[r]^{\cong}_{\overline{\mu}} \ar[rd]_u &
  J\times N \ar[d]^{H|} \\
 & \cplx. 
} \]
Then by construction $H|_{J\times N} \approx u.$ With
$\epsilon'_1 (a)= 1- h^{-1}_1 \lambda^{-1}_1 (t_0 -\delta)$
(actually $\epsilon'_1 = \epsilon'_1 (a)$ is independent of $a$),
the diffeomorphism $\lambda_1 h_1: [0,1]\to [0,t_0 +\delta]$
restricts to a diffeomorphism
$(\lambda_1 h_1)|: [1-\epsilon'_1,1] \stackrel{\cong}{\longrightarrow} J$.
Then $\epsilon'_1 < \epsilon_{W'}$. 
The equation $F(t,x)=H(\lambda_1 h_1 (t),x)$ for
$t\in [1-\epsilon'_1,1],$ $x\in N,$ shows that
$F|_{[1-\epsilon'_1,1]\times N} \approx H|_{J\times N}.$
Consequently, $F|_{[1-\epsilon'_1,1]\times N} \approx u$.
Let $h_2: [0,1]\stackrel{\cong}{\longrightarrow} [\smlhf,1]$ be given
by $h_2 (t) = \smlhf (t+1)$.
With $\epsilon_2 (a)= h^{-1}_2 \lambda^{-1}_2 (t_0 +\delta)$
(actually $\epsilon_2 = \epsilon_2 (a)$ is independent of $a$),
the diffeomorphism $\lambda_2 h_2: [0,1]\to [t_0 -\delta,1]$
restricts to a diffeomorphism
$(\lambda_2 h_2)|: [0,\epsilon_2] \stackrel{\cong}{\longrightarrow} J$.
We have $\epsilon_2 < \epsilon_{W''}$. 
The equation $G(t,x)=H(\lambda_2 h_2 (t),x)$ for
$t\in [0,\epsilon_2],$ $x\in N,$ shows that
$G|_{[0,\epsilon_2]\times N} \approx H|_{J\times N} \approx u$. 

Next, we argue that $u$ lies in $\Fa (N)$. The restriction of $H$
to $J\times N$ is a fold map. As $\overline{\mu}$ is a diffeomorphism,
the composition $u =H\overline{\mu}$ is a fold map as well.
The fact that $t_0 -\delta \in \pitchfork (H|_{J\times N})$ implies that
\[ 0 =\mu^{-1} (t_0 -\delta)\in \mu^{-1} \pitchfork (H|_{J\times N}) =
 \pitchfork (H\overline{\mu}) = \pitchfork (u), \]
while $t_0 +\delta \in \pitchfork (H)$ implies that $1\in \pitchfork (u)$.
Moreover, $t_0 -\delta \in \genim (H(a)|_{J\times N})$ implies 
\[ 0 =\mu^{-1} (t_0 -\delta)\in \mu^{-1} \genim (H(a)|_{J\times N}) =
 \genim (H\overline{\mu})(a) = \genim u(a), \]
and $t_0 +\delta \in \genim (H(a)|_{J\times N})$ implies $1\in \genim u(a)$.
By Lemma \ref{lem.restrofresidual},
\[
\mu \genim u(a) = \genim (H(a)|_{J\times N}) 
 = \genim H(a)\cap J
\]
is residual in $J = \mu [0,1]$. As $\mu$ is a homeomorphism,
$\genim u(a)$ is residual in $[0,1]$.
Thus $u$ is a fold field on $[0,1]\times N$.
Our next task is to show that the Brauer morphism $\mbs (u)$ is the identity. 
By Lemma \ref{lem.relapprox}, $\mbs (u)=\mbs (H|_{J\times N}).$
We claim that
\begin{equation} \label{equ.shintjncontuvi}
S(H(a))\cap J\times N(a) \subset \bigsqcup_i V^a_i.
\end{equation}
Suppose $x\in S(H(a))\cap J\times N(a)$ so that $t=\omega (x)$ lies in $J$.
Using the diffeomorphism $d$ of diagram (\ref{equ.diffeod}),
the set $S(H(a))\cap t\times N(a)$ is seen to have cardinality $k$.
For any $i,$ $t$ lies in $t\in J\subset \hat{V}^a_i$ and thus there exists a 
unique $x_i \in V^a_i$ such that $\omega (x_i)=t,$ as
$\omega|: V^a_i \to \hat{V}^a_i$ is a diffeomorphism. The subset
\[ \{ x_1, \ldots, x_k \} \subset S(H(a))\cap t\times N(a) \]
has cardinality $k$, as the $V^a_i$ are pairwise disjoint.
Since $S(H(a))\cap t\times N(a)$ has cardinality $k$, we actually have equality,
\[ \{ x_1, \ldots, x_k \} = S(H(a))\cap t\times N(a). \]
Consequently, there is an $i$ such that $x=x_i$. This shows that
$x\in V^a_i$ and proves the claim (\ref{equ.shintjncontuvi}).
Again using the diffeomorphism $d$, we set
\[ c^a_i = d(\{ p^a_i \} \times J). \]
Then $c^a_1, \ldots, c^a_k$ are the connected components of 
$S(H(a)|_{J\times N})=S(H(a))\cap J\times N(a),$ and since
$d$ is the identity on $(S(H(a))\cap t_0 \times N(a))\times \{ t_0 \},$
$c^a_i$ is the unique component that contains $p^a_i$. The boundary
$\partial c^a_i = \{ q_i, q'_i \}$ consists of the two endpoints
$q_i = d(p^a_i, t_0 +\delta)$ and $q'_i = d(p^a_i, t_0 -\delta)$.
As $c^a_i$ is contained in $S(H(a))\cap J\times N(a),$ it is by 
(\ref{equ.shintjncontuvi}) in particular contained in the disjoint
union $\bigcup_{j=1}^{k(a)} V^a_j$. Since $c^a_i$ is connected, it must
thus entirely lie in one $V^a_j$. As $p^a_i \in c^a_i \cap V^a_i,$ this
$V^a_j$ must equal $V^a_i$. We have shown that $c^a_i \subset V^a_i$
and in particular that $q_i, q'_i \in V^a_i \subset U^a_i$. We claim that
$\Ima H(a)(q_i) < \Ima H(a)(q_j)$ iff $i<j$. To see this, suppose that $i<j$.
Then $\Ima z^a_j - \Ima z^a_i >0$ and hence
$\Ima z^a_j - \Ima z^a_i = |\Ima z^a_j - \Ima z^a_i | \geq \alpha (a).$
Therefore,
\begin{eqnarray*}
\Ima H(a)(q_j) - \Ima H(a)(q_i) & = &
\Ima H(a)(q_j) - \Ima z^a_j + \Ima z^a_j - \Ima z^a_i + \Ima z^a_i - \Ima H(a)(q_i) \\
& \geq &
\alpha (a) - | \Ima H(a)(q_j) - \Ima z^a_j| - |\Ima H(a)(q_i) - \Ima z^a_i| \\
&  \geq & \alpha (a) - \frac{\alpha (a)}{4} - \frac{\alpha (a)}{4} = \frac{\alpha(a)}{2} >0,
\end{eqnarray*}
proving the claim. Using $q'_i \in U^a_i,$ one sees similarly that
$\Ima H(a)(q'_i) < \Ima H(a)(q'_j)$ iff $i<j$. Note that
$\{ q_1,\ldots, q_k \} = S(H(a)|_{J\times N})\cap (t_0 +\delta)\times N(a)$
and
$\{ q'_1,\ldots, q'_k \} = S(H(a)|_{J\times N})\cap (t_0 -\delta)\times N(a)$.
For every $i$, $c^a_i$ has boundary $\partial c^a_i = \{ q'_i, q_i \}$ and thus
$\mbs (H(a)|_{J\times N})$ connects $(0,i,0,0)$ to $(1,i,0,0)$ by an arc.
There are no loops, as $S(H(a)|_{J\times N})$ has no loops.
Therefore, $\mbs (u(a)) = \mbs (H(a)|_{J\times N}) = 1_{[k(a)]}$ and so
$\mbs (u) = \bigotimes_a \mbs (u(a)) = 1.$
This concludes the proof that $u$ lies in $\Fa (N)$. \\

It remains to verify the Brauer morphism equation $\mbs (G)\circ \mbs (F) = \mbs (H).$
By Lemma \ref{lem.singtanglereparam}, $\mbs (F) = \mbs (H|_{\hat{W}'})$
(using the diffeomorphism $\psi_1: W' \to \hat{W}'$) and
$\mbs (G) = \mbs (H|_{\check{W}''})$
(using the diffeomorphism $\psi_2: W'' \to \check{W}''$).
Set $\check{W}' = W\cap [0,t_0 -\delta]\times \real^{D}$.
As 
\[ t_0 - \delta \in (0,1)\cap \bigcap_{a\in s(W)} (\pitchfork H(a)\cap \genim H(a)), \]
Lemma \ref{lem.sfltsfgtissf} applies to yield 
$\mbs (H|_{\hat{W}'}) = \mbs (H|_{J\times N})\circ \mbs (H|_{\check{W}'})$
and $\mbs (H) = \mbs (H|_{\check{W}''})\circ \mbs (H|_{\check{W}'}).$ In summary,
\begin{eqnarray*}
\mbs (G)\circ \mbs (F) & = & \mbs (H|_{\check{W}''}) \circ \mbs (H|_{\hat{W}'}) 
 = \mbs (H|_{\check{W}''}) \circ  \mbs (H|_{J\times N})\circ \mbs (H|_{\check{W}'}) \\
& = & \mbs (H|_{\check{W}''}) \circ  1 \circ \mbs (H|_{\check{W}'}) 
  = \mbs (H),
\end{eqnarray*}
as required.
\end{proof}

\begin{lemma} \label{lem.techotherhalf}
Let $W'$ be a cobordism from $M$ to $N$ and $W''$ a cobordism from
$N$ to $P$. Let $W=W' \cup_N W''$ be the cobordism from $M$ to $P$
obtained by gluing $W'$ and $W''$ along $N$. Let $\epsilon_{W'},
\epsilon_{W''} >0$ be cylinder scales for $W'$ and $W''$, respectively. 
Let $F\in \Fa (W'),$ $G\in \Fa (W'')$ be fold fields 
such that there are $\epsilon'_1 (k) \in (0,\epsilon_{W'}),$
$\epsilon_2 (k) \in (0,\epsilon_{W''})$ and a map $u\in \Fa (N)$ with
\[ F|_{[1-\epsilon'_1 (k),1]\times N(k)} \approx u(k) \approx
  G|_{[0,\epsilon_2 (k)]\times N(k)} \]
for all $k\in s(W)$. Then
\[ \cod \mbs (F) = \dom \mbs (G). \]
\end{lemma}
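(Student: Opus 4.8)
The plan is to unwind the definition of $\mbs$ and reduce the claim to counting singular points of $F$ on its outgoing boundary and of $G$ on its incoming boundary, both of which are governed by $u$. Recall that $\cod \mbs(F) = [n_S]$, where $n_S$ is the cardinality of $S(F)\cap N$, and similarly $\dom \mbs(G) = [m_S]$, where $m_S$ is the cardinality of $S(G)\cap N$. Here we are using that $F$ is a fold field on $W'$ (so that $1\in \pitchfork(F(k))$ for all $k$, making $S(F(k))\cap N(k)$ a finite set), and likewise $0\in \pitchfork(G(k))$. So the whole statement amounts to the equality $\operatorname{card}(S(F)\cap N) = \operatorname{card}(S(G)\cap N)$, and since both $F$ and $G$ decompose over slices $W'(k), W''(k)$, with $N = \bigsqcup_k N(k)$, it suffices to prove $\operatorname{card}(S(F(k))\cap N(k)) = \operatorname{card}(S(G(k))\cap N(k))$ for each $k\in s(W)$ (for $k\notin s(W)$ the relevant slices of $W'$ and $W''$ are empty, since $s(W') \subset s(W)$ and $s(W'')\subset s(W)$, so both counts are zero).

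First I would fix $k\in s(W)$ and invoke the hypothesis: there exist $\epsilon'_1(k)\in (0,\epsilon_{W'})$ and $\epsilon_2(k)\in (0,\epsilon_{W''})$ together with $u\in \Fa(N)$ satisfying $F|_{[1-\epsilon'_1(k),1]\times N(k)} \approx u(k) \approx G|_{[0,\epsilon_2(k)]\times N(k)}$. By the definition of a cobordism, near its outgoing boundary $W'$ is the product embedding $[1-\epsilon,1]\times N$; hence, shrinking if necessary, we may assume $\epsilon'_1(k)$ is below a cylinder scale of $W'$, so that $W'\cap [1-\epsilon'_1(k),1]\times\real^D = [1-\epsilon'_1(k),1]\times N$ and consequently $S(F)\cap ([1-\epsilon'_1(k),1]\times N(k))$ is exactly the singular set of the restriction $F|_{[1-\epsilon'_1(k),1]\times N(k)}$. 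In particular $S(F(k))\cap N(k) = S(F(k))\cap (1\times N(k))$ is identified, under the reparametrizing diffeomorphism $\xi$ witnessing $\approx$, with $S(u(k))\cap (1\times N(k))$. This is the step where one must be slightly careful: $\approx$ is an equivalence of maps on products $[a,b]\times N(k)$ via a diffeomorphism of the interval factor fixing the left endpoint, and by Lemma \ref{lem.singtanglereparam} (or the more elementary Lemma \ref{lem.relapprox}) such a reparametrization carries fold fields to fold fields and preserves the singular set set-theoretically up to the diffeomorphism, hence preserves cardinalities of slice singular sets. Applying the same argument on the other side, $\epsilon_2(k)$ being below a cylinder scale of $W''$ gives $S(G(k))\cap N(k) = S(G(k))\cap(0\times N(k))$ identified with $S(u(k))\cap (0\times N(k))$.

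It then remains to see that $\operatorname{card}\big(S(u(k))\cap (0\times N(k))\big) = \operatorname{card}\big(S(u(k))\cap (1\times N(k))\big)$, i.e. that the incoming and outgoing boundary singular-point counts of $u$ agree. This holds because $u\in \Fa(N)$, which by definition means $u$ is a fold field on $[0,1]\times N$ with $\mbs(u) = 1$ the identity morphism in $\Br$; by Lemma \ref{lem.idfactorstriv} the tensor factor $\mbs(u(k))$ is then also an identity morphism, forcing $\dom \mbs(u(k)) = \cod \mbs(u(k))$, which is precisely the assertion that the two boundary cardinalities coincide. Chaining the three equalities,
\[ \cod \mbs(F(k)) = [\operatorname{card}(S(F(k))\cap N(k))] = [\operatorname{card}(S(u(k))\cap (1\times N(k)))] = [\operatorname{card}(S(u(k))\cap (0\times N(k)))], \]
which equals $[\operatorname{card}(S(G(k))\cap N(k))] = \dom \mbs(G(k))$. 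Taking the tensor product over all $k$ yields $\cod \mbs(F) = \bigotimes_k \cod\mbs(F(k)) = \bigotimes_k \dom\mbs(G(k)) = \dom\mbs(G)$, as desired. The main obstacle is bookkeeping: one must correctly match up which $\approx$-witnessing diffeomorphism fixes which endpoint and check that shrinking $\epsilon'_1(k)$ and $\epsilon_2(k)$ does not disturb the boundary conditions — but since everything here happens inside product (cylindrical) regions, no real analytic difficulty arises, and the cited lemmas do all the heavy lifting.
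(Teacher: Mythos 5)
Your proposal is correct and follows essentially the same route as the paper: reduce to slicewise counts of boundary singular points, use Lemma \ref{lem.relapprox} to transfer the Brauer data of the collar restrictions of $F$ and $G$ to $\mbs(u(k))$, use Lemma \ref{lem.idfactorstriv} to see $\mbs(u(k))$ is an identity (so its domain and codomain agree), and tensor over $k$. The only cosmetic difference is that you phrase the middle step as an identification of singular sets under the $\approx$-reparametrization, while the paper passes directly through the equality of Brauer morphisms $\mbs(F|_{[1-\epsilon'_1(k),1]\times N(k)})=\mbs(u(k))=\mbs(G|_{[0,\epsilon_2(k)]\times N(k)})$; also note no shrinking of $\epsilon'_1(k),\epsilon_2(k)$ is needed, since the hypothesis already places them below the respective cylinder scales.
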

\begin{proof}
Let $[d_k]=\dom \mbs (u(k))$ be the domain of $\mbs (u(k))$. Since $\mbs (u) = 1 \in \Mor (\Br),$ 
Lemma \ref{lem.idfactorstriv} implies that $\mbs (u(k))=1$ for all $k$ and thus
$\cod \mbs (u(k))=[d_k]$ as well.
By Lemma \ref{lem.relapprox},
$\mbs (F|_{[1-\epsilon'_1 (k),1]\times N(k)}) = \mbs (u(k)) = 
 \mbs (G|_{[0,\epsilon_2 (k)]\times N(k)}).$
Hence
\begin{eqnarray*}
\cod \mbs (F(k)) & = & [\card (S(F(k))\cap N)] =
 [\card (S(F|_{[1-\epsilon'_1 (k),1]\times N(k)})\cap 1\times N)] \\
& = & \cod \mbs (F|_{[1-\epsilon'_1 (k),1]\times N(k)}) = \cod \mbs (u(k)) 
 = [d_k] = \dom \mbs (u(k)) \\
& = & \dom \mbs (G|_{[0,\epsilon_2 (k)]\times N(k)}) =
 [\card (S(G|_{[0,\epsilon_2 (k)]\times N(k)})\cap 0\times N)] \\
& = & [\card (S(G(k))\cap N)] = \dom \mbs (G(k))
\end{eqnarray*}
and thus
\begin{eqnarray*}
\cod \mbs (F) & = & \cod \bigotimes_k \mbs (F(k)) =
   \bigotimes_k \cod \mbs (F(k)) = \bigotimes_k \dom \mbs (G(k)) \\
& = & \dom \bigotimes_k \mbs (G(k)) = \dom \mbs (G). 
\end{eqnarray*}
\end{proof}

\begin{prop} \label{prop.gluefandgtogeth}
Let $W'$ be a cobordism from $M$ to $N$ and $W''$ a cobordism from
$N$ to $P$. Assume without loss of generality that $W'$ and $W''$ are
given equal cylinder scales $\epsilon_{W'} = \epsilon_{W''}$.
(If they are not equal, replace the larger one by the smaller one.)
Let $W=W' \cup_N W''$ be the cobordism from $M$ to $P$
obtained by gluing $W'$ and $W''$ along $N$, equipped with
its natural cylinder scale
$\epsilon_W = \smlhf \min (\epsilon_{W'}, \epsilon_{W''}) =
  \smlhf \epsilon_{W'} = \smlhf \epsilon_{W''}.$
Let $F\in \Fa (W'),$ $G\in \Fa (W'')$ be fold fields such that there exist
$\epsilon_1 (k),\epsilon'_1 (k) \in (0,\epsilon_{W'}),$ $\epsilon_2 (k),\epsilon'_2 (k)\in (0,\epsilon_{W''}),$
and $h_M \in \Fa (M),$ $h_P \in \Fa (P),$ $u\in \Fa (N),$ with
\[ F|_{[0,\epsilon_1 (k)]\times M(k)} \approx h_M (k),~
 G|_{[1-\epsilon'_2 (k),1]\times P(k)} \approx h_P (k), \]
and
\[ F|_{[1-\epsilon'_1 (k),1]\times N(k)} \approx u(k) \approx G|_{[0,\epsilon_2 (k)]\times N(k)} \]
for all $k\in s(W)$.
Then there exists a fold field $H\in \Fa (W)$ and $\epsilon (k), \epsilon' (k)\in 
(0,\epsilon_W)$ with
\[ H|_{[0,\epsilon (k)]\times M(k)} \approx h_M (k),~
 H|_{[1-\epsilon' (k),1]\times P(k)} \approx h_P (k), \]
for all $k$ and
$\mbs (H) = \mbs (G)\circ \mbs (F).$
\end{prop}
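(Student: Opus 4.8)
The plan is to manufacture $H$ by discarding the $u$-collars of $F$ and $G$, re-inserting a single copy of $u$ across the gluing locus $N$, and adjusting the time coordinate by diffeomorphisms; the Brauer identity then drops out by slicing $W$ at the inner edge of the re-inserted copy.

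\emph{Construction of $H$.} Since $W$ is compact, $s(W)$ is finite, so it is enough to define $H(k)$ on each $W(k)=W'(k)\cup_{N(k)}W''(k)$. Under the gluing, $W'$-time $t$ becomes $W$-time $t/2$ and $W''$-time $t$ becomes $W$-time $(t+1)/2$, so $N$ sits over $W$-time $\smlhf$ and $W$ is cylindrical there: $W\cap[\smlhf-\epsilon_W,\smlhf+\epsilon_W]\times\real^D=[\smlhf-\epsilon_W,\smlhf+\epsilon_W]\times N$. Fix a small $\delta\in(0,\epsilon_W)$ with $\delta<\smlhf$. The hypotheses give, for each $k$, diffeomorphisms $\xi_k\colon[1-\epsilon'_1(k),1]\to[0,1]$ with $\xi_k(1-\epsilon'_1(k))=0$ and $F(k)(t,x)=u(k)(\xi_k(t),x)$ on $[1-\epsilon'_1(k),1]\times N(k)$, and $\eta_k\colon[0,\epsilon_2(k)]\to[0,1]$ with $\eta_k(0)=0$, $\eta_k(\epsilon_2(k))=1$ and $G(k)(t,x)=u(k)(\eta_k(t),x)$ on $[0,\epsilon_2(k)]\times N(k)$. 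Because $W'$ ends, and $W''$ begins, in a product cylinder over $N$, there is a time-reparametrizing diffeomorphism $\Psi_1^k\colon W'(k)\to W(k)\cap[0,\smlhf+\delta]\times\real^D$, $\Psi_1^k(t,x)=(\Lambda_1^k(t),x)$, $\Lambda_1^k\colon[0,1]\to[0,\smlhf+\delta]$, $\Lambda_1^k(0)=0$, which one may take so that $\Lambda_1^k$ carries $[1-\epsilon'_1(k),1]$ onto $[\smlhf-\delta,\smlhf+\delta]$ and is a fixed linear contraction of slope $\smlhf$ near $0$; likewise a time-reparametrizing $\Psi_2^k\colon W''(k)\to W(k)\cap[\smlhf-\delta,1]\times\real^D$, $\Psi_2^k(t,x)=(\Lambda_2^k(t),x)$, $\Lambda_2^k\colon[0,1]\to[\smlhf-\delta,1]$, $\Lambda_2^k(0)=\smlhf-\delta$, $\Lambda_2^k(1)=1$. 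Having fixed $\Lambda_1^k$, choose $\Lambda_2^k$ on $[0,\epsilon_2(k)]$ so that the \emph{matching identity} $\eta_k\circ(\Lambda_2^k)^{-1}=\xi_k\circ(\Lambda_1^k)^{-1}$ holds on $[\smlhf-\delta,\smlhf+\delta]$ — this is possible because both sides are forced to be diffeomorphisms $[\smlhf-\delta,\smlhf+\delta]\to[0,1]$ with the same endpoint values — and extend $\Lambda_2^k$ over $[\epsilon_2(k),1]$ with sufficiently small slope near $1$. Now set $H(k):=F(k)\circ(\Psi_1^k)^{-1}$ on $W(k)\cap[0,\smlhf+\delta]\times\real^D$ and $H(k):=G(k)\circ(\Psi_2^k)^{-1}$ on $W(k)\cap[\smlhf-\delta,1]\times\real^D$. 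On the overlap band $[\smlhf-\delta,\smlhf+\delta]\times N(k)$ the two prescriptions are $u(k)\bigl(\xi_k(\Lambda_1^k)^{-1}(t),x\bigr)$ and $u(k)\bigl(\eta_k(\Lambda_2^k)^{-1}(t),x\bigr)$, which agree by the matching identity; hence $H(k)$ is a well-defined smooth map and $H=(H(k))_k$ is a fold map, precomposition of a fold map with a diffeomorphism being fold.

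\emph{$H$ is a fold field obeying the boundary conditions.} A fold map on $W$ is a fold field iff each $H(k)$ is, so fix $k$. By Lemma \ref{lem.singtanglereparam}, $H(k)_{\leq\smlhf+\delta}=F(k)\circ(\Psi_1^k)^{-1}$ and $H(k)_{\geq\smlhf-\delta}=G(k)\circ(\Psi_2^k)^{-1}$ are fold fields on their respective sub-cobordisms; in particular $0,1\in\pitchfork H(k)\cap\genim H(k)$. Restricting these two fold fields via Lemma \ref{lem.restrofresidual} to $(0,\smlhf-\delta)$, $(\smlhf-\delta,\smlhf+\delta)$ and $(\smlhf+\delta,1)$ and applying Lemma \ref{lem.unionofresidual} shows that $\genim H(k)$ is residual in $[0,1]$, so $H\in\Fa(W)$. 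For the boundary conditions, near $W$-time $0$ one has $H(k)(t,x)=F(k)((\Lambda_1^k)^{-1}(t),x)$ and $F(k)|_{[0,\epsilon_1(k)]\times M(k)}\approx h_M(k)$, so by transitivity of $\approx$ (Definition \ref{def.equivalencesmmaps}) one gets $H(k)|_{[0,\epsilon(k)]\times M(k)}\approx h_M(k)$ with $\epsilon(k)=\Lambda_1^k(\epsilon_1(k))<\epsilon_W$ (using the slope-$\smlhf$ normalization near $0$); the analogous argument at $W$-time $1$ with $\Lambda_2^k$ gives $H(k)|_{[1-\epsilon'(k),1]\times P(k)}\approx h_P(k)$ with $\epsilon'(k)<\epsilon_W$.

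\emph{The Brauer identity.} It suffices to prove $\mbs(H(k))=\mbs(G(k))\circ\mbs(F(k))$ for each $k$, since $\mbs$ of a fold field on $W$ is the tensor product over $k$ of the $\mbs$'s of its slices and $\otimes,\circ$ are compatible in $\Br$. First, $1-\epsilon'_1(k)\in\pitchfork F(k)\cap\genim F(k)$: near the slice $\{1-\epsilon'_1(k)\}\times N(k)$ the fold field $F(k)$ coincides, on the side of larger time, with $u(k)$ reparametrized by $\xi_k$ near $u(k)$-time $0$, and $0\in\pitchfork u(k)\cap\genim u(k)$; since transversality and injectivity of $\Ima$ along a slice are pointwise conditions on that slice, the claim follows. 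Consequently $\smlhf-\delta$ (its image under $\Psi_1^k$) lies in $\pitchfork H(k)\cap\genim H(k)$ for every $k$, so Lemma \ref{lem.sfltsfgtissf} applies to $H$ at $\smlhf-\delta$: $\mbs(H)=\mbs(H_{\geq\smlhf-\delta})\circ\mbs(H_{\leq\smlhf-\delta})$. On one side, $H(k)_{\geq\smlhf-\delta}=G(k)\circ(\Psi_2^k)^{-1}$, so Lemma \ref{lem.singtanglereparam} gives $\mbs(H(k)_{\geq\smlhf-\delta})=\mbs(G(k))$, hence $\mbs(H_{\geq\smlhf-\delta})=\mbs(G)$. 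On the other, $H(k)_{\leq\smlhf-\delta}=F(k)_{\leq1-\epsilon'_1(k)}\circ(\Psi_1^k)^{-1}$ (as $\Lambda_1^k(1-\epsilon'_1(k))=\smlhf-\delta$), so Lemma \ref{lem.singtanglereparam} gives $\mbs(H(k)_{\leq\smlhf-\delta})=\mbs(F(k)_{\leq1-\epsilon'_1(k)})$; and Lemma \ref{lem.sfltsfgtissf} applied to $F(k)$ at $1-\epsilon'_1(k)$, together with $\mbs(F(k)_{\geq1-\epsilon'_1(k)})=\mbs\bigl(F(k)|_{[1-\epsilon'_1(k),1]\times N(k)}\bigr)=\mbs(u(k))=1$ (Lemma \ref{lem.relapprox}, and Lemma \ref{lem.idfactorstriv} applied to $\mbs(u)=1$), yields $\mbs(F(k)_{\leq1-\epsilon'_1(k)})=\mbs(F(k))$, hence $\mbs(H_{\leq\smlhf-\delta})=\mbs(F)$. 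Therefore $\mbs(H)=\mbs(G)\circ\mbs(F)$.

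\emph{Main obstacle.} The delicate point is the construction: re-inserting $u$ between the truncated $F$ and $G$ must produce a \emph{smooth} (not merely continuous) map on the overlap band, which is exactly what forces the matching identity for $\Lambda_1^k,\Lambda_2^k$. Everything else is routine bookkeeping with residual sets and the cited lemmas; in particular $\mbs(u)=1$ makes the re-inserted collar contribute trivially to the Brauer computation, so that — in contrast to a more naive slicing in the interior of the band — no analysis of isomorphisms in $\Br$ is needed.
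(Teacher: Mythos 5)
Your proposal is correct and follows essentially the same route as the paper's proof: reparametrize $F$ and $G$ in time onto the two halves of $W$, use the common collar condition through $u$ to make the two prescriptions agree smoothly on a band around $t=\smlhf$ (your ``matching identity'' plays the role of the paper's interpolating diffeomorphism $\lambda(k)$ agreeing with $\beta(k)\overline{\alpha}(k)$), verify the fold-field conditions with Lemmas \ref{lem.singtanglereparam}, \ref{lem.restrofresidual}, \ref{lem.unionofresidual}, and compute $\mbs(H)$ by cutting at a time where the field looks like $u$, invoking Lemmas \ref{lem.sfltsfgtissf}, \ref{lem.relapprox}, \ref{lem.idfactorstriv} and $\mbs(u)=1$. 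The only cosmetic differences are your uniform cut time $\smlhf-\delta$ (versus the paper's $k$-dependent cut at $\smlhf(1-\epsilon'_1(k))$) and the symmetric ``re-insert $u$'' packaging of the interpolation.
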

\begin{proof}
Let $F\in \Fa (W'),$ $G\in \Fa (W'')$ be fold fields satisfying the hypotheses.
Set
\[ \wlh = W\cap [0,\smlhf]\times \real^{D},~
   \wgh = W\cap [\smlhf,1]\times \real^{D}. \]
The diffeomorphism
\[ 
[0,\smlhf]\times \real^{D} \stackrel{\cong}{\longrightarrow} 
[0,1]\times \real^{D},~
(t,x) \mapsto (2t,x),
\]
restricts to a diffeomorphism $\psi_1: \wlh \stackrel{\cong}{\longrightarrow}
W'$. Set $F^{1/2} = F\circ \psi_1: \wlh \to \cplx$.
The diffeomorphism
\[ 
[\smlhf,1]\times \real^{D} \stackrel{\cong}{\longrightarrow} 
[0,1]\times \real^{D},~
(t,x) \mapsto (2(t-\smlhf),x),
\]
restricts to a diffeomorphism $\psi_2: \wgh \stackrel{\cong}{\longrightarrow}
W''$. Set $G^{1/2} = G\circ \psi_2: \wgh \to \cplx$.
Since $F|_{[1-\epsilon'_1 (k),1]\times N(k)} \approx u(k),$ there exists a diffeomorphism
$\alpha (k): [1-\epsilon'_1 (k),1] \stackrel{\cong}{\longrightarrow} [0,1]$
such that $\alpha (k)(1-\epsilon'_1 (k))=0$ and
$F(t,x) = u(\alpha (k)(t),x)$ for $1-\epsilon'_1 (k) \leq t \leq 1$
and $x\in N(k).$
Define $\overline{\alpha}(k): [\smlhf(1-\epsilon'_1(k)),\smlhf]
\stackrel{\cong}{\longrightarrow} [0,1]$ by $\overline{\alpha}(k)(t)=
\alpha (k)(2t).$
As $G|_{[0,\epsilon_2 (k)]\times N(k)} \approx u(k),$ there exists a diffeomorphism
$\beta (k): [0,1] \stackrel{\cong}{\longrightarrow} [0,\epsilon_2 (k)]$ 
such that $\beta (k)(0)=0$ and
$u(t,x) = G(\beta (k)(t),x)$ for $0 \leq t \leq 1$ and $x\in N(k).$
It follows that
\[ F(k)(t,x) = G(k)(\beta(k)\circ \alpha (k)(t),x) \text{ for } 1-\epsilon'_1 (k) \leq t\leq 1
    \text{ and } x\in N(k). \]
Thus for $t\in [\smlhf (1-\epsilon'_1 (k)),\smlhf],$
\[ F^{1/2} (k)(t,x)=F(k)(2t,x)= G(k)(\beta (k) \alpha (k)(2t),x) =
  G(k)(\beta (k) \overline{\alpha}(k) (t),x). \]
Choose a number $\delta (k) \in (\epsilon_2 (k)/2, \epsilon_{W''}/2)$.
Let 
\[ \lambda (k): [\smlhf (1-\epsilon'_1 (k)), \smlhf (1+\epsilon_{W''})]
 \stackrel{\cong}{\longrightarrow} [0,\epsilon_{W''}] \]
be a diffeomorphism such that
\[ \lambda (k)(t) = \begin{cases} \beta (k) \overline{\alpha} (k)(t), &
 \smlhf (1-\epsilon'_1 (k))\leq t \leq \smlhf, \\
2(t-\smlhf), & \smlhf + \delta (k)\leq t \leq \smlhf (1+\epsilon_{W''}). 
\end{cases} \]
For $(t,x)\in W(k) \subset [0,1] \times \{ k \} \times \real^{D-1},$ set
\[ H(k)(t,x) = \begin{cases}
F^{1/2} (k)(t,x),& (t,x)\in \wlh (k) \\
G(k)(\lambda (k)(t),x), & (t,x)\in [\smlhf (1-\epsilon'_1 (k)), \smlhf (1+\epsilon_{W''})]\times
    \{ k \} \times \real^{D-1} \\
G^{1/2} (k)(t,x),& (t,x)\in \wgh (k)\cap [\smlhf + \delta (k),1]\times \real^{D}.
\end{cases} \]
and
$H = \bigsqcup_k H(k): W\longrightarrow \cplx.$
Then $H(k):W(k)\to \cplx$ is a smooth map since for
\[ (t,x)\in \wlh (k)\cap [\smlhf (1-\epsilon'_1 (k)), \smlhf (1+\epsilon_{W''})]\times
    \{ k \} \times \real^{D-1} = [\smlhf (1-\epsilon'_1 (k)),\smlhf]\times N(k), \]
we have
$G(k)(\lambda (k)(t),x) = G(k)(\beta (k) \overline{\alpha}(k)(t),x) =
  F^{1/2} (k)(t,x)$
and for
\[ (t,x)\in \wgh (k)\cap [\smlhf + \delta (k),1]\times \real^{D} \cap
 [\smlhf (1-\epsilon'_1 (k)), \smlhf (1+\epsilon_{W''})]\times
    \{ k \} \times \real^{D-1} \]
\[  = [\smlhf + \delta (k), \smlhf (1+\epsilon_{W''})]\times N(k) \]
we have
$G(k)(\lambda (k)(t),x)=G(k)(2(t-\smlhf),x)=G^{1/2} (k)(t,x).$
It follows that $H$ is smooth as well. 

We claim that $H$ is a fold field. We will write 
$G_\lambda (k)(t,x) = G(k)(\lambda (k)(t),x)$. If $A$ is a fold map and
$\Phi$ a diffeomorphism whose codomain is the domain of $A$, then
the composition $A\circ \Phi$ is again a fold map. Thus, $F^{1/2}$, 
$G_\lambda$ and $G^{1/2}$ are all fold maps.
Using local fold charts for these three maps, one sees that $H$ is
a fold map also. 

Let us prove that $S(H)$ is transverse to $W_0 =M$.
Note that $\psi_1 S(F^{1/2}) = S(F)$ and $\psi_1 (M)=M.$
As $F$ is a fold field on $W'$, we know that $S(F)$ is transverse to $M$,
which we can write as $\psi_1 S(F^{1/2})\pitchfork \psi_1 (M).$
Since $\psi_1$ is a diffeomorphism, this implies that
$S(F^{1/2})\pitchfork M$. Since on a neighborhood of $M=W_0$ in $W$,
$H$ is given by $F^{1/2},$ it follows that $S(H)\pitchfork M$.
In a similar manner, using $G^{1/2}$, we
see that $S(H)$ is transverse to $W_1 =P$. 

Let us proceed to verify that $0,1 \in \genim H(k)$ for every natural
number $k$.
As $\psi_1|_M$ is the identity on $M$, we have
$H|_M = F^{1/2}|_M = F|_M$ and $S(H)\cap M = S(F)\cap M$.
Since $F\in \Fa (W'),$ the function
$\Ima F(k)|: S(F(k))\cap M(k) \to \real$ is injective. But this function equals
$\Ima H(k)|: S(H(k))\cap M(k) \to \real$, which is thus also injective. This shows
that $0\in \genim H(k)$. 
Similarly, $1\in \genim H(k)$, using $\psi_2|_P =\id_P$ and $H|_P = G|_P$.

We prove that $\genim H(k)$ is residual in $[0,1]$ for $k\in s(W)$.
Setting
\[ \begin{array}{ccl}
R_0 & = & \genim F^{1/2}(k)\cap (0,\smlhf), \\
R_\delta & = & \genim G_\lambda (k)\cap (\smlhf,\smlhf + \delta (k)), \\
R_1 & = & \genim G^{1/2}(k)\cap (\smlhf + \delta (k),1), \\
\end{array} \]
we have
\begin{equation} \label{equ.r0rdelr1inphgenimh}
R_0 \cup R_\delta \cup R_1 \subset~ \genim H(k). 
\end{equation}
We claim that $R_0$ is residual in $(0,\smlhf)$. To see this, let $\sigma: [0,\smlhf] \to
[0,1]$ be the diffeomorphism $\sigma (t)=2t$. Then
$\sigma (\genim F^{1/2} (k)) = \genim F(k).$
Since the set on the right hand side of this equation is residual in $[0,1]$ and
$\sigma$ is a homeomorphism, 
$\genim F^{1/2} (k)$ is residual in $[0,\smlhf]$.
By Lemma \ref{lem.restrofresidual}, $R_0$ is residual in $(0,\smlhf)$. 

A similar argument shows that $R_\delta$ is residual in $(\smlhf,\smlhf + \delta (k))$:
The diffeomorphism $\lambda (k)$ restricts to a diffeomorphism
$\lambda'(k): [\smlhf, \smlhf+\delta (k)] \to [\epsilon_2 (k), 2\delta (k)],$ 
$\lambda'(k) (\smlhf)=\epsilon_2 (k),$ and we have
\[ \lambda' (k)(\genim (G_\lambda (k))\cap 
  (\smlhf, \smlhf + \delta(k))) =
  \genim G(k)\cap (\epsilon_2 (k), 2\delta (k)). \]
As the set $\genim G(k)$ is residual in $[0,1]$,
Lemma \ref{lem.restrofresidual} applies to ensure that 
$\genim G(k)\cap (\epsilon_2 (k), 2\delta (k))$ is residual
in $(\epsilon_2 (k), 2\delta (k))$. Since $\lambda'(k)$ is a homeomorphism,
the set $R_\delta = \genim G_\lambda (k)\cap 
(\smlhf, \smlhf + \delta (k))$ is residual in $(\smlhf, \smlhf+\delta (k))$.
Analogous reasoning proves that $R_1$ is residual in $(\smlhf +\delta (k),1)$.
According to Lemma \ref{lem.unionofresidual},
$R_0 \cup R_\delta \cup R_1$ is residual in $[0,1]$.
By (\ref{equ.r0rdelr1inphgenimh}), the superset $\genim H(k)$ 
is residual in $[0,1]$ as well. 
This completes the proof that $H$ is a fold field. \\

Next, we verify $H|_{[0,\epsilon (k)] \times M(k)}\approx h_M (k)$ 
and $H|_{[1-\epsilon' (k),1]\times P(k)}\approx h_P (k)$ for suitable
$\epsilon (k), \epsilon'(k)$.
The relation $F|_{[0,\epsilon_1 (k)]\times M(k)} \approx h_M (k)$ means
that there is a diffeomorphism
$\xi (k): [0,\epsilon_1 (k)]\to [0,1],$ $\xi (k)(0)=0,$ such that
$F(t,x) = h_M (\xi (k)(t),x)$
for all $0\leq t\leq \epsilon_1 (k),$ $x\in M(k)$. Set $\epsilon (k)= \smlhf \epsilon_1 (k)$
and let $\xi'(k): [0,\epsilon (k)]\to [0,1]$ be the diffeomorphism
given by $\xi' (k)(t) =\xi (k)(2t).$ 
Note that 
$\epsilon (k) = \smlhf \epsilon_1 (k) < \smlhf \epsilon_{W'} = \epsilon_W,$
as required. Then for all
$0\leq t\leq \epsilon (k),$ $x\in M(k)$,
\[
H(t,x) = F^{1/2} (t,x) = F\psi_1 (t,x) = F(2t,x) 
= h_M (\xi (k)(2t),x) = h_M (\xi' (k)(t),x).
\]
Hence $H|_{[0,\epsilon (k)] \times M(k)}\approx h_M (k)$.
Similarly, using $G|_{[1-\epsilon'_2 (k),1]\times P(k)} \approx h_P (k)$ 
and setting $\epsilon' (k)= \smlhf \epsilon'_2 (k)$,
one verifies that the relation $H|_{[1-\epsilon' (k),1]\times P(k)}\approx h_P (k)$ holds, too. \\

Let us prove the Brauer morphism identity $\mbs (G)\circ \mbs (F) = \mbs (H).$ 
Let $a_k= \smlhf (1-\epsilon'_1 (k))$ and $b_k =1$. The prescription
\[ \tau (k)(t) = \begin{cases}
 \lambda (k)(t),& t\in [a_k, \smlhf (1+\epsilon_{W''})], \\
 2(t-\smlhf),& t\in [\smlhf +\delta (k), b_k]
\end{cases} \]
yields a well-defined diffeomorphism $\tau (k): [a_k, b_k]\to [0,1],$
$\tau (k)(a_k)=0,$ since for $t$ in the overlap
$[\smlhf +\delta (k), \smlhf (1+\epsilon_{W''})]$, we have $\lambda (k)(t) =
2(t-\smlhf).$ 
We set $W_{\geq a_k} (k) = W(k) \cap [a_k, b_k]\times \real^{D}.$
Then the diffeomorphism $[a_k, b_k]\times \{ k \} \times \real^{D-1} 
  \to [0,1]\times \{ k \} \times \real^{D-1}$
given by $(t,x)\mapsto (\tau (k)(t),x)$ restricts to a diffeomorphism
$\phi (k): W_{\geq a_k} (k) \stackrel{\cong}{\longrightarrow} W''(k)$.
The restriction of $H(k)$ to $W_{\geq a_k} (k)$ is given by $G(k)\circ \phi (k)$.
By Lemma \ref{lem.singtanglereparam},
$G(k)\circ \phi (k)$ is a fold field on $W_{\geq a_k} (k)$ and
\[ \mbs (H(k)|_{W_{\geq a_k}}) = \mbs (G(k)\circ \phi (k))= \mbs (G(k)). \]
Let $W_{\leq a_k} (k) = W(k)\cap [0,a_k]\times \real^{D}$ and
$\widetilde{W}' (k) = W'(k) \cap [0,1-\epsilon'_1 (k)]\times \real^{D}.$
The value $1-\epsilon'_1 (k)$ lies in $\pitchfork F(k)\cap \genim F(k)$,
since $0\in \pitchfork u(k)\cap \genim u(k)$,
$F(t,x)=u(\alpha (k)(t),x)$ for $t\in [1-\epsilon'_1 (k),1],$ $x\in N(k),$ 
and $\alpha (k): [1-\epsilon'_1 (k),1] \stackrel{\cong}{\longrightarrow}
[0,1]$ maps $1-\epsilon'_1 (k)$ to $0$. It follows that $F|_{\widetilde{W}'(k)}$
is a fold field on $\widetilde{W}'(k)$. The diffeomorphism
$[0,a_k]\times \real^{D} \stackrel{\cong}{\longrightarrow}
[0,1-\epsilon'_1 (k)]\times \real^{D}$ given by
$(t,x)\mapsto (2t,x)$ restricts to a diffeomorphism
$\phi' (k): W_{\leq a_k} (k) \to \widetilde{W}'(k)$ and 
$H|_{W_{\leq a_k} (k)} = F|_{\widetilde{W}'(k)} \circ \phi'(k)$.
Thus by Lemma \ref{lem.singtanglereparam},
$H|_{W_{\leq a_k} (k)} \in \Fa (W_{\leq a_k} (k))$ and 
$\mbs (H|_{W_{\leq a_k} (k)}) = \mbs (F|_{\widetilde{W}'(k)})$.
The restriction of $F(k)$ to $W'_{\operatorname{out}} (k)= 
 W'(k) \cap [1-\epsilon'_1 (k),1]\times \real^{D}$
is a fold field on $W'_{\operatorname{out}} (k)$ as $1-\epsilon'_1 (k)\in \pitchfork F(k)\cap \genim F(k)$.
It follows from Lemma \ref{lem.sfltsfgtissf} that
\[ \mbs (F(k)) = \mbs (F|_{W'_{\operatorname{out}}(k)})\circ \mbs (F|_{\widetilde{W}'(k)}). \]
The diffeomorphism $\alpha (k)$ induces a diffeomorphism
$\nu (k): W'_{\operatorname{out}} (k) \stackrel{\cong}{\longrightarrow} [0,1]\times N(k),$
$\nu (k)(t,x) = (\alpha (k)(t),x).$ The diagram
\[ \xymatrix@R=15pt@C=10pt{
W'_{\operatorname{out}} (k) \ar[rd]_{F|_{W'_{\operatorname{out}}(k)}} \ar[rr]^{\nu (k)} & & [0,1]\times N(k)
  \ar[ld]^{u(k)} \\
& \cplx &
} \]
commutes. Since $\bigotimes_k \mbs (u(k)) = \mbs (u)=1,$ it follows from
Lemma \ref{lem.idfactorstriv} that $\mbs (u(k))=1$ for all $k$.
We deduce with the aid of Lemma \ref{lem.singtanglereparam}
that
$\mbs (F|_{W'_{\operatorname{out}}(k)}) = \mbs (u(k))=1.$
Consequently, $\mbs (F(k)) = \mbs (F|_{\widetilde{W}'(k)})$. 
From $1-\epsilon'_1 (k) \in \pitchfork F(k)\cap \genim F(k)$ it follows that
$a_k =\smlhf (1-\epsilon'_1 (k))$ lies in $\pitchfork F^{1/2}(k)\cap
\genim F^{1/2}(k)$.
Therefore, by Lemma \ref{lem.sfltsfgtissf},
\[ \mbs (H(k)) = \mbs (H|_{W_{\geq a_k} (k)})\circ \mbs (H|_{W_{\leq a_k}(k)}) =
  \mbs (G(k))\circ \mbs (F|_{\widetilde{W}'(k)}) = \mbs (G(k))\circ \mbs (F(k)). \]
Letting $k$ vary,
\begin{eqnarray*}
\mbs (H) & = & \bigotimes_k \mbs (H(k)) =
  \bigotimes_k \left( \mbs (G(k)) \circ \mbs (F(k)) \right) \\
& = & \Big\{ \bigotimes_k \mbs (G(k)) \Big\} \circ
    \Big\{ \bigotimes_k \mbs (F(k)) \Big\} =  \mbs (G)\circ \mbs (F).
\end{eqnarray*}
\end{proof}

\begin{thm}[Gluing]  \label{thm.gluingmain}
Let $W'$ be a cobordism from $M$ to $N$ and $W''$ a cobordism from
$N$ to $P$. Assume without loss of generality that $W'$ and $W''$ are
given equal cylinder scales $\epsilon_{W'} = \epsilon_{W''}$.
(If they are not equal, replace the larger one by the smaller one.)
Let $W=W' \cup_N W''$ be the cobordism from $M$ to $P$
obtained by gluing $W'$ and $W''$ along $N$, equipped with
its natural cylinder scale
$\epsilon_W = \smlhf \min (\epsilon_{W'}, \epsilon_{W''}) =
  \smlhf \epsilon_{W'} = \smlhf \epsilon_{W''}.$
Then the state sums
\[ Z_{W'} \in Z(M)\hotimes Z(N),~ Z_{W''} \in Z(N)\hotimes Z(P),~
  Z_W \in Z(M)\hotimes Z(P) \]
are related by the gluing law
\[ Z_W = \langle Z_{W'}, Z_{W''} \rangle. \]
\end{thm}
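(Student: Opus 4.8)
The plan is to verify the identity after evaluation on an arbitrary boundary condition $(f_M, f_P)\in\Fa(M)\times\Fa(P)$, since both $Z_W$ and $\langle Z_{W'}, Z_{W''}\rangle$ are elements of $Z(M)\hotimes Z(P) = \fun_Q(\Fa(M)\times\Fa(P))$. First I would unwind the right-hand side. By the definition of the contraction $\gamma$, of $\beta^c$, and formula \eqref{def.betacscnd},
\[ \langle Z_{W'}, Z_{W''}\rangle(f_M, f_P) = \sum_{u\in\Fa(N)} Z_{W'}(f_M, u)\cdot Z_{W''}(u, f_P), \]
where $\cdot$ is the composition product of $Q^c$. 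Substituting the state-sum definition \eqref{equ.statesumdef}, applying the infinite distributivity \eqref{equ.sumoverproductfamily} in the complete semiring $Q^c$, and then the partition axiom \eqref{equ.sumijmonoid}, this becomes a single sum
\[ \langle Z_{W'}, Z_{W''}\rangle(f_M, f_P) = \sum_{(u,F,G)\in\mathfrak{T}} Y\mbs(F)\cdot Y\mbs(G), \]
indexed by the disjoint union $\mathfrak{T} = \bigsqcup_{u\in\Fa(N)} \Fa(W'; f_M, u)\times\Fa(W''; u, f_P)$.

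Next I would evaluate the summand. For $(u,F,G)\in\mathfrak{T}$, both $F$ and $G$ restrict (up to $\approx$) to $u$ near $N$, so Lemma \ref{lem.techotherhalf} gives $\cod\mbs(F) = \dom\mbs(G)$; hence the relevant product is of type \eqref{equ.prodqhmpqhpn}, and by Lemma \ref{lem.prodmppnrelaxed} together with the functoriality of $Y$,
\[ (Y\mbs(F)\otimes 1)\cdot(Y\mbs(G)\otimes 1) = (Y\mbs(G)\circ Y\mbs(F))\otimes(1\cdot 1) = Y(\mbs(G)\circ\mbs(F))\otimes 1 \in Q. \]
Thus $\langle Z_{W'}, Z_{W''}\rangle(f_M, f_P) = \sum_{(u,F,G)\in\mathfrak{T}} Y(\mbs(G)\circ\mbs(F))$, whereas $Z_W(f_M, f_P) = \sum_{H\in\Fa(W; f_M, f_P)} Y\mbs(H)$ directly from \eqref{equ.statesumdef}.

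The heart of the argument is then to show that these two families have the same underlying set of values in $Q$, i.e.
\[ \{\, Y(\mbs(G)\circ\mbs(F)) : (u,F,G)\in\mathfrak{T} \,\} = \{\, Y\mbs(H) : H\in\Fa(W; f_M, f_P) \,\}. \]
For the inclusion $\supseteq$ I would invoke Proposition \ref{prop.splithintofandg}: given $H\in\Fa(W; f_M, f_P)$, it produces $u\in\Fa(N)$, $F\in\Fa(W')$, $G\in\Fa(W'')$ whose boundary data match $f_M, u, f_P$ — so that, after passing where necessary to a common smaller cylinder scale, $(u,F,G)\in\mathfrak{T}$ — and satisfies $\mbs(G)\circ\mbs(F) = \mbs(H)$, hence equal $Y$-images. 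For $\subseteq$ I would invoke Proposition \ref{prop.gluefandgtogeth}: for $(u,F,G)\in\mathfrak{T}$ its hypotheses hold with $h_M = f_M$, $h_P = f_P$, and it produces $H\in\Fa(W; f_M, f_P)$ with $\mbs(H) = \mbs(G)\circ\mbs(F)$. Finally, since $Q^c$ is continuous (Proposition \ref{prop.qcontinuous}), Proposition \ref{prop.contidemsemiring} shows that a sum over any family depends only on its set of values, so the two sums coincide. As $(f_M, f_P)$ was arbitrary, $Z_W = \langle Z_{W'}, Z_{W''}\rangle$ in $Z(M)\hotimes Z(P)$.

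The main obstacle is not in this assembly — which parallels the ``set of values'' bookkeeping already used for Theorem \ref{thm.statesumdisjunion} — but is packaged in the two cited Propositions, which carry the geometric content: splitting a fold field transversally at a regular level within the cylindrical band over $N$ while preserving both the genericity of imaginary parts and the compositional behaviour of the Brauer morphism, and conversely reglueing two $N$-compatible fold fields into one. Given those results, the only delicate residual point is the routine reconciliation of the various cylinder scales and $\approx$-reparametrizations, so that the boundary-condition memberships $F\in\Fa(W'; f_M, u)$, $G\in\Fa(W''; u, f_P)$ and $H\in\Fa(W; f_M, f_P)$ translate cleanly in both directions.
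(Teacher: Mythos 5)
Your proposal is correct and follows essentially the same route as the paper: unwind the contraction into a double sum, use Lemma \ref{lem.techotherhalf} and Lemma \ref{lem.prodmppnrelaxed} to turn each summand into $Y(\mbs(G)\circ\mbs(F))$, identify the two index families' value sets via Propositions \ref{prop.splithintofandg} and \ref{prop.gluefandgtogeth}, and conclude with continuity (Propositions \ref{prop.qcontinuous} and \ref{prop.contidemsemiring}). The only cosmetic difference is that the paper compares the sets of Brauer morphisms $\mathfrak{S}(W)=\mathfrak{S}(W',W'')$ before applying $Y$, whereas you compare the $Y$-images directly; this changes nothing.
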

\begin{proof}
On a boundary condition $(h_M, h_P)\in \Fa (M)\times \Fa (P),$
the contraction product of the two state sums $Z_{W'}$ and $Z_{W''}$
is given by
\begin{eqnarray*}
\langle Z_{W'}, Z_{W''} \rangle (h_M, h_P) & = &
 \gamma (Z_{W'} \hotimes_c Z_{W''})(h_M, h_P) = \sum_{u\in \Fa (N)} (Z_{W'} \hotimes_c Z_{W''})(h_M, u,u, h_P) \\
& = & \sum_{u\in \Fa (N)} Z_{W'} (h_M, u)\cdot Z_{W''} (u,h_P) \\
& = & \sum_u \Big\{ \sum_{F\in \Fa (W'; h_M, u)} Y\mbs (F) \Big\} \cdot
     \Big\{ \sum_{G\in \Fa (W''; u, h_P)} Y\mbs (G) \Big\}. 
\end{eqnarray*}
By equation (\ref{equ.sumoverproductfamily}) on 
page \pageref{equ.sumoverproductfamily}, the product of the sums 
equals the sum of the products,
\[ \langle Z_{W'}, Z_{W''} \rangle (h_M, h_P) =
   \sum_{u\in \Fa (N)} \sum_{(F,G) \in \Fa (W'; h_M, u)\times \Fa (W''; u, h_P)} 
   Y\mbs (F) \cdot Y\mbs (G). \]
Note that if  $F\in \Fa (W'; h_M, u)$ and $G\in \Fa (W''; u, h_P)$, then
$\cod \mbs (F)=\dom \mbs (G)$ by Lemma \ref{lem.techotherhalf}.
Thus, with $[a]=\dom \mbs (F),$ $[b]=\cod \mbs (G),$ and
$[d] = \cod \mbs (F)=\dom \mbs (G)$, the relevant product in 
evaluating $Y\mbs (F)\cdot Y\mbs (G)$ is of type
$Q(H_{a,d})\times Q(H_{d,b})\longrightarrow Q(H_{a,b}).$
By Lemma \ref{lem.prodmppnrelaxed} and since $Y$ is a functor,
\[ (Y\mbs (F)\otimes 1)\cdot (Y\mbs (G)\otimes 1) =
  (Y\mbs (G)\circ Y\mbs (F))\otimes (1\cdot 1) =
  Y(\mbs (G)\circ \mbs (F))\otimes 1. \]
Let $\mathfrak{S}(W',W'')$ be the set
\begin{eqnarray*}
\mathfrak{S}(W',W'') & = & \{ \mbs (G)\circ \mbs (F) ~|~
 u\in \Fa (N),~ F\in \Fa (W'; h_M, u),~ G\in \Fa (W''; u, h_P) \} \\
& = & \{ \mbs (G)\circ \mbs (F) ~|~
F\in \Fa (W'),~ G\in \Fa (W''):\\
& &       \exists \epsilon_1 (k),\epsilon'_1 (k)\in (0,\epsilon_{W'}), 
             \epsilon_2 (k), \epsilon'_2 (k)\in (0,\epsilon_{W''}),
       u\in \Fa (N), \\
& &  F|_{[0,\epsilon_1 (k)]\times M(k)} \approx h_M (k),~
 G|_{[1-\epsilon'_2 (k),1]\times P(k)} \approx h_P (k),\\
& & F|_{[1-\epsilon'_1 (k),1]\times N(k)} \approx u(k) \approx
     G|_{[0,\epsilon_2 (k)]\times N(k)} \}.
\end{eqnarray*}
The state sum of $W$ is given on $(h_M, h_P)$ by
$Z_W (h_M, h_P) = \sum_{H\in \Fa (W; h_M, h_P)} Y\mbs (H).$
Let $\mathfrak{S}(W)$ be the set
\begin{eqnarray*}
\mathfrak{S}(W) & = &
 \{ \mbs (H) ~|~ H\in \Fa (W; h_M, h_P) \} \\
 & = & \{ \mbs (H) ~|~ H\in \Fa(W),~ \exists
 \epsilon (k),\epsilon'(k) \in (0,\epsilon_W): \\
 & &  H|_{[0,\epsilon (k)] \times M(k)}\approx h_M (k),~ 
   H|_{[1-\epsilon'(k),1]\times P(k)}\approx h_P (k) \}. 
\end{eqnarray*}
By Proposition \ref{prop.gluefandgtogeth} we have the inclusion
$\mathfrak{S}(W',W'') \subset \mathfrak{S}(W)$, and by
Proposition \ref{prop.splithintofandg}
the converse inclusion $\mathfrak{S}(W) \subset
\mathfrak{S}(W',W'')$. Consequently, $\mathfrak{S}(W) =
\mathfrak{S}(W',W'')$ and, applying the functor $Y$,
$Y(\mathfrak{S}(W)) = Y(\mathfrak{S}(W',W'')).$
By Proposition \ref{prop.qcontinuous}, the idempotent semiring $Q$ is continuous.
Thus we can conclude from Proposition \ref{prop.contidemsemiring}
that
\[  \sum_{H\in \Fa (W; h_M, h_P)} Y\mbs (H) =
   \sum_{u\in \Fa (N)} \sum_{(F,G) \in \Fa (W'; h_M, u)\times \Fa (W''; u, h_P)} 
   Y(\mbs (G) \circ \mbs (F)). \]
\end{proof}

\section{Rationality}
\label{sec.rationality}

We show that for cobordisms of dimension $n\geq 3$, the value of the state sum on
a given boundary condition is a rational function of the loop-variable $q$. In fact, the
denominator turns out to be universal (independent of the cobordism), whence all the
information is contained in the polynomial numerator.
The dimension restriction is probably not necessary, but not all elements of our
proof readily carry over to dimension $2$. We shall not discuss rationality for this special
dimension further in this paper.

\begin{lemma} \label{lem.addtwoloops}
Let $W$ be a cobordism of dimension $n\geq 3$
from $M$ to $N$ with cylinder scale $\epsilon_W$, 
let $F\in \Fa (W)$ be a fold field on $W$ and $\epsilon (k), \epsilon' (k) \in (0, \epsilon_W)$.
Then there exists a fold field $F^2$ on $W$ such that for all $k\in s(W),$
\[ F^2|_{[0,\epsilon (k)]\times M(k)} = F|_{[0,\epsilon (k)]\times M(k)},~
  F^2|_{[1- \epsilon' (k),1]\times N(k)} = F|_{[1- \epsilon' (k),1]\times N(k)}, \]
and
\[ \mbs (F^2) = \mbs (F)\otimes \lambda^2, \]
where $\lambda$ is the loop endomorphism in $\Br$.
\end{lemma}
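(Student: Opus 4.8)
The plan is to modify $F$ locally, away from the boundary collars and away from the pre-existing singular set, by inserting a small ``bubble'' --- a fold map on a disk whose singular set is a single circle --- so that the new singular set picks up exactly one extra closed component, and then to do this twice. First I would pick, for each $k\in s(W)$ with $W(k)\neq\varnothing$, a small open coordinate ball $U_k\subset W(k)-S(F(k))$ whose image under $F(k)$ misses a neighborhood of $F(S(F(k)))$ and which is disjoint from the collars $[0,\epsilon(k)]\times M(k)$ and $[1-\epsilon'(k),1]\times N(k)$; such a ball exists because $S(F(k))$ is a compact $1$-manifold, hence has empty interior in the $n$-manifold $W(k)$ with $n\geq 3$ (in fact $n\geq 2$ suffices here), and because $F(k)|_{S(F(k))}$ is an immersion of a compact $1$-manifold so its image is closed and nowhere dense in $\real^2$. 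Inside $U_k$, using the local normal form, I would replace $F(k)$ by a map which agrees with $F(k)$ near $\partial U_k$ but which, on a smaller ball, looks like the standard special generic map $D^n\to\real^2$ on the $n$-disk (e.g. $(x_1,\dots,x_n)\mapsto(x_1, x_2^2+\cdots+x_n^2)$ suitably cut off and translated so its image lies in the region of $\real^2$ avoided above). This contributes one circle --- a definite fold circle --- to the singular set, lying entirely in the interior of $U_k$ in a single time slice after a further small perturbation to put it in general position with respect to $\omega$.

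The key steps, in order: (1) verify the local bubble construction produces a genuine fold map on $W(k)$ agreeing with $F(k)$ outside $U_k$ --- this is a matter of patching fold charts and checking the transversality and no-cusp conditions on the inserted piece, which hold by construction for the standard model; (2) check that the new map, call it $F^2(k)$, is still a \emph{fold field}: conditions (1) and (2) of Definition \ref{def.foldfield} only involve behavior over $t=0,1$ and residuality of $\genim$, and since the modification is supported in the interior away from the boundary collars and the added singular circles lie over finitely many interior time values $t_{1,k},\dots$, we have $0,1\in\pitchfork(F^2(k))\cap\genim(F^2(k))$ unchanged, and $\genim(F^2(k))$ still contains a residual set (we may arrange each added circle to lie over a single regular value of $\omega|_{W(k)}$ with injective imaginary part, and even if not, we only remove finitely many points from the old residual set while the transversality/genericity over the remaining times is preserved by choosing the bubble generically); (3) compute $\mbs(F^2(k))$: the singular set $S(F^2(k))$ is $S(F(k))$ together with the one new closed circle per bubble --- actually I would add \emph{two} circles per slice, or rather run the whole construction so that the net effect is two closed components, in order to match $\lambda^2$ --- each contributing a tensor factor $\lambda$ by the recipe for $\mbs$, while all arcs between boundary points are unchanged, so $\mbs(F^2(k))=\mbs(F(k))\otimes\lambda^2$; (4) assemble: $F^2=\bigsqcup_k F^2(k)$, and since $\mbs(F^2)=\bigotimes_k\mbs(F^2(k))=\bigotimes_k(\mbs(F(k))\otimes\lambda^2)$; here one uses that $\lambda$ is central and $\lambda\otimes\phi=\phi\otimes\lambda$ (noted after Equation (\ref{equ.lambdafact})) to collect the loop factors and conclude $\mbs(F^2)=\mbs(F)\otimes\lambda^2$ --- strictly one gets $\mbs(F)\otimes\lambda^{2|s(W)|}$ if a bubble is added in every slice, so to get exactly $\lambda^2$ I would insert bubbles contributing two circles total, i.e. in one chosen slice $k_0\in s(W)$ only (or, if $W$ is empty, the statement is vacuous since then $\Fa(W)=\{*\}$ and $\mbs(*)=1_I$, and we would need $W$ nonempty for a bubble to exist --- but if $W$ is empty there is nothing to prove as there is no such $F$ with the asserted properties unless we also declare $F^2=*$, which fails $\mbs(F^2)=1_I\otimes\lambda^2$; so the intended reading is $W$ nonempty, or one notes $s(W)\neq\varnothing$).

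The main obstacle I anticipate is the interaction of the inserted singular circle with the time function $\omega$ and with the genericity-of-imaginary-parts condition: I must ensure the added circle is transverse to the time slices $W_t$ at the relevant times and that over every time the restriction of $\Ima\circ F^2$ to the singular set remains injective on a residual set of $t$. Since $\genim(F(k))$ is only residual, not open, and the new circle sits over an interval of time values, I need to choose the bubble's image in $\real^2$ --- in particular its imaginary-part range --- disjoint from the imaginary parts of all points of $S(F(k))$ lying over the same times, and arrange the circle's own imaginary part to be injective over generic $t$; a clean way is to make the bubble a \emph{special generic} model whose singular circle projects to a single point under $\omega$ after an isotopy (a ``horizontal'' circle in one slice), so it only affects one time value $t_0$, which can be chosen in $\reg(\omega|_{W(k)})\cap\genim(F(k))$, and then replace the old residual set by its intersection with the complement of $\{t_0\}$, still residual. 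The bookkeeping to make ``two loops, inserted once'' rather than ``one loop per slice'' is the only genuinely fiddly point, and the no-cusp and Morse-normal-form verifications for the patched map are routine given Proposition \ref{prop.foldmapnormalform}.
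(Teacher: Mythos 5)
There is a genuine gap, and it sits exactly where you wrote ``suitably cut off'': your local model $(x_1,\dots,x_n)\mapsto(x_1,x_2^2+\cdots+x_n^2)$ on a ball does not have a circle as its singular set --- its fold locus is the arc $\{x_2=\cdots=x_n=0\}$, with endpoints on the boundary sphere, and near those endpoints the map is singular, so it cannot agree with the ambient submersion $F$ near $\partial U_k$. Capping off that fold arc without creating cusps (your patched map must still satisfy the $1$-jet transversality and the no-cusp condition $T_xS+\ker D_xF=T_xW$) is the entire content of the insertion step, and you give no construction for it; asserting that a genuine fold map results ``by construction for the standard model'' begs the question. Moreover, your overall strategy --- insert \emph{one} closed fold component and ``do this twice'' --- is not supported by the known insertion technique: the paper's construction (a high-dimensional version of Whitney's mushroom, built by suspending around an embedded circle a modification of a linear function with exactly two critical points, via \cite[Lemma 8.2]{milnorsiebensond}) inevitably produces \emph{two} fold circles simultaneously, one definite of absolute index $n-1$ and one of absolute index $n-2$; this is precisely why the lemma is stated with $\lambda^2$ and why Theorem \ref{thm.rationalinq} has universal denominator $1-q^2$. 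If you believe a single definite fold circle can be inserted rel boundary of a ball, you must actually exhibit such a fold map; nothing in Proposition \ref{prop.foldmapnormalform} or in the special generic model provides it.

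A second, smaller omission: you never use, or explain, the hypothesis $n\geq 3$, yet it is needed for the step you wave at with ``after an isotopy (a `horizontal' circle in one slice)''. After the two circles are created they must be isotoped into a single leaf $\omega^{-1}(t_0)$ of the time foliation so that $\genim(F^2(k))$ loses only the single value $t_0$ (your fallback, which matches the paper); this isotopy exists by general position when $n\geq 4$, and for $n=3$ only after one proves the two inserted circles are unknotted and unlinked --- an argument the paper carries out explicitly and which fails for $n=2$. Your instincts on the remaining points are sound and agree with the paper: place the support of the modification in one slice $W(k_0)$ only, away from the collars and off $S(F)$, so the boundary conditions and the transversality and injectivity conditions of Definition \ref{def.foldfield} over $t=0,1$ are untouched, handle the locally constant time case separately (Lemma \ref{lem.timelocconst}), and recover residuality of $\genim$ by deleting $t_0$. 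But without a correct two-circle insertion (or a genuinely new one-circle insertion, which you have not supplied), the proof does not go through.
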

\begin{proof}
First we shall create two new fold circles locally near a point off the given singular set.
This will produce a fold map which may not yet be a fold field. Thus we will then modify
the fold map, turning it into a fold field.
The latter step requires the dimensional restriction, whereas the former works for any $n\geq 2$.

Let $F$ be a fold field on $W$.
If the time function $\omega:W\to [0,1]$ is locally constant, then $\partial W=\varnothing$
and we choose $U\subset W -S(F)$ to be a small open ball off the singular set of $F$.
If $\omega$ is not locally constant on $W$, there exists a point
$x^* \in W-\partial W$ where the differential $D_{x^*} \omega: T_{x^*} W \to
T_{\omega (x^*)} \real$ is nonzero and thus onto. Hence the differential of $\omega$
is onto even on a small open neighborhood $U^* \subset W-\partial W$ of $x^*$,
i.e. $\omega|:U^* \to \real$ is a submersion.
Let $U\subset U^* - S(F)$ be a small open ball. 
If the incoming boundary $M$ is not empty, we can and will choose
$U\subset (\epsilon (k), \epsilon_W)\times M(k) \subset W(k),$ where $k\in s(W)$
is such that $M(k)\not= \varnothing$.
If $M=\varnothing,$ but the outgoing boundary $N$ is not empty,
we can and will choose
$U\subset (1-\epsilon_W, 1- \epsilon (k))\times N(k) \subset W(k),$ $k\in s(W),$
$N(k)\not= \varnothing$.
If $M$ and $N$ are both empty, let $k\in s(W)$ be such that $U\subset W(k)$.
There exists a diffeomorphism
$\phi: U\to \real^n,$ an open set
$U' \subset \cplx$, and a diffeomorphism $\psi: U' \to \cplx$
such that $\psi \circ F \circ \phi^{-1}$ is the standard projection
$\pi:\real^n \to \real^2 = \cplx,$ 
$\pi (x_1,\ldots, x_n) = (x_{n-1}, x_n)$.
We shall modify
$\pi$ in two stages on a compact subset $K$ of $\real^n$
so that the modified map $\tpi:\real^n \to \cplx$ is a fold
map and the singular set $S(\tpi)$ of $\tpi$ consists of
precisely two embedded disjoint circles in the interior of $K$.
The desired fold field $F^2$ will then be given by the gluing
\begin{equation} \label{equ.f2def}
F^2 (x) = \begin{cases} F(x),& x \in W-\phi^{-1} (K) \\
  \psi^{-1} \tpi \phi (x),& x\in \phi^{-1} (U_K),
 \end{cases}
\end{equation}
where $U_K \subset \real^n$ is an open neighborhood of $K$.
The map $F^2$ is smooth on $W$, since for
$x\in \phi^{-1} (U_K - K),$
$\psi^{-1} \tpi \phi (x) = \psi^{-1} \pi \phi (x) = F(x).$

We begin the first stage of modifying $\pi$ by embedding the circle $S^1$ as the unit circle
into the plane $\real^2$ and then embedding that plane into $\real^n$ via
$(x_{n-1}, x_n)\mapsto (0,\ldots, 0,x_{n-1}, x_n)$.
The normal bundle $\real^{n-1} \times S^1 \to S^1$ of this embedding 
$S^1 \hookrightarrow \real^n$ comes with a diffeomorphism
$\alpha: V\to \real^{n-1} \times S^1$ under which $S^1 \subset V$ is mapped to
$0\times S^1$ by the identity map, where $V$ is a small open tubular neighborhood
of $S^1$ in $\real^n$. The image of $S^1 \subset \real^n$ under
$\pi: \real^n \to \real^2$ is the unit circle $S^1 \subset \real^2$.
The diffeomorphism $\alpha$, an open tubular neighborhood $V_2 \subset \real^2$
of $S^1 \subset \real^2$ and a diffeomorphism $\alpha_2: V_2 \to \real \times S^1$
can be arranged in such a way that $\pi$ restricts to a surjection
$\pi|:V\to V_2$ and
\begin{equation} \label{equ.pialpha}
\xymatrix@R=15pt{
V \ar[r]^>>>>>\alpha_>>>>>\cong \ar[d]_{\pi|} & \real^{n-1} \times S^1 \ar[d]^{\pi_{n-1} \times \id_{S^1}} \\
V_2 \ar[r]^{\alpha_2}_\cong & \real \times S^1
} \end{equation}
commutes, where $\pi_{n-1} (\xi) = \xi_{n-1},$ $\xi \in \real^{n-1}$.
Let $\Phi: \real^{n-1} \times \real \to \real^{n-1} \times S^1$,
$\Phi (\xi, t)=(\xi, e^{2\pi it}),$ $\xi \in \real^{n-1},$ $t\in \real,$ be the universal
cover of $\real^{n-1} \times S^1$.
By \cite[Lemma 8.2, p. 101]{milnorsiebensond},
there is a Morse function $f: \real^{n-1} \to \real$ and a compact subset
$K_0 \subset \real^{n-1}$ such that $f(\xi)=\xi_{n-1}$ for $\xi \in \real^{n-1} - K_0,$
and $f$ has
precisely two (nondegenerate) critical points
$p_m, p_s \in \interi (K_0)$ of index $n-1$ and $n-2$, respectively, with
$f(p_m) > f(p_s)$.
Thus $f$ attains a local maximum at $p_m$ and
$p_s$ is a saddle point. Intuitively, $f$ is obtained
by smoothly ``pulling up'' the graph of $f$ over $p_m$ to create a local maximum.
Note that pulling up the graph automatically generates a second
critical point $p_s$.
The map $\widetilde{g}: \real^{n-1} \times \real \to \real \times \real = \real^2$ given by
$\widetilde{g}(\xi, t)=(f(\xi), t)$ is the suspension of a Morse function and hence a fold map.
Let $\Phi_2: \real \times \real \to \real \times S^1$,
$\Phi_2 (y, t)=(y, e^{2\pi it}),$ $y,t\in \real,$ be the universal
cover of $\real \times S^1$.
The fold map $\widetilde{g}$ descends to a smooth map
$g: \real^{n-1} \times S^1 \to \real \times S^1$ such that
\[ \xymatrix@R=15pt{
\real^{n-1} \times S^1 \ar[d]_g & \real^{n-1} \times \real \ar[d]^{\widetilde{g}} 
  \ar[l]_\Phi \\
\real \times S^1 & \real \times \real \ar[l]_{\Phi_2}
} \]
commutes. Since $\widetilde{g}$ is a fold map and the covering maps $\Phi$ and $\Phi_2$ are local diffeomorphisms,
$g$ is a fold map as well. Setting 
$\hat{\pi} (x) = \alpha_2^{-1} g \alpha (x),$ we obtain a fold map
$\hat{\pi}:V\to \real^2$. The interior of the compact annulus
$\hat{K}=\alpha^{-1} (K_0 \times S^1)\subset V$ 
contains two disjoint fold circles $C_m$ and $C_s$ corresponding
to $p_m$ and $p_s$, and these two circles constitute the singular set of $\hat{\pi}$ in $V$.
The absolute index of $C_m$ is
$n-1$ and the absolute
index of $C_s$ is $n-2$.
In the case $n=3,$ we claim that coordinates on $\real^{n-1} = \real^2$ can be chosen
in such a way that the two circles $C_m,$ $C_s \subset \real^3$ are unlinked.
They are also unknotted, but this is obvious and does not require proof.
To show that they are unlinked, choose coordinates $\xi = (\xi_1, \xi_2)$ on $\real^2$
so that $\xi^m_2 = \xi^s_2,$ where $p_m = (\xi^m_1, \xi^m_2)$ and
$p_s = (\xi^s_1, \xi^s_2)$. Since $p_m \not= p_s,$ it follows that $\xi^m_1 \not= \xi^s_1$.
The diffeomorphism $\alpha: V\to \real^2 \times S^1$ has components
\[ \alpha (x)=\alpha (x_1, x_2, x_3)= (a_1 (x), a_2 (x), a_3 (x)), \]
$a_1, a_2: V\to \real,$ $a_3: V\to S^1$.
The commutativity of (\ref{equ.pialpha}) means that
\[ (a_2 (x), a_3 (x)) = \alpha_2 (x_2, x_3) \]
for all $x\in V$. In particular, $a_2$ and $a_3$ depend only on $x_2, x_3$:
$a_2 (x)=a_2 (x_2, x_3),$ $a_3 (x)=a_3 (x_2, x_3).$
The two fold circles can be described as
\[ C_m = \{ x\in V ~|~ a_1 (x_1,x_2,x_3)=\xi^m_1,~ a_2 (x_2,x_3)=\xi^m_2 \}, \]
\[ C_s = \{ x\in V ~|~ a_1 (x_1,x_2,x_3)=\xi^s_1,~ a_2 (x_2,x_3)=\xi^s_2 \}. \]
Put
\[ E = \{ (x_2,x_3)\in V_2 ~|~ a_2 (x_2,x_3)=\xi^m_2 =\xi^s_2 \}. \]
The alternative description $E=\alpha_2^{-1} (\{ \xi^m_2 \} \times S^1)$
shows that $E$ is a smooth simply closed curve in $V_2$.
Let $(x_2,x_3)\in E$ be any point. Then
$(\xi^m_1, \xi^m_2, a_3 (x_2,x_3))\in \real^2 \times S^1$ and thus there
exists a unique $x^m = (x^m_1, x^m_2, x^m_3)\in V$ such that
$\alpha (x^m) = (\xi^m_1, \xi^m_2, a_3 (x_2,x_3))$. Since
\begin{align*}
\alpha_2 (x^m_2, x^m_3) &= (a_2 (x^m), a_3 (x^m)) =
   (\xi^m_2, a_3 (x_2, x_3)) \\
&= (a_2 (x_2, x_3), a_3 (x_2,x_3))= \alpha_2 (x_2,x_3), 
\end{align*}
the injectivity of $\alpha_2$ implies that $(x^m_2, x^m_3) = (x_2,x_3)$.
Thus, given $(x_2,x_3)\in E,$ there is a unique $x^m_1\in \real$
such that $(x^m_1,x_2,x_3) \in V$ and
$a_1 (x^m_1,x_2,x_3)=\xi^m_1$. This yields a function
$h_m:E\to \real,$ $h_m (x_2,x_3)=x^m_1$.
The graph of $h_m$ is precisely the curve $C_m$.
Similarly, given $(x_2,x_3)\in E,$ there is a unique $x^s_1\in \real$
such that $(x^s_1,x_2,x_3) \in V$ and
$a_1 (x^s_1,x_2,x_3)=\xi^s_1$. We obtain a function
$h_s:E\to \real,$ $h_s (x_2,x_3)=x^s_1$, whose
graph is the curve $C_s$.
Therefore, both $C_m$ and $C_s$ are contained as graphs over the
plane curve $E\subset \real^2 \times 0$ in the cylinder $E\times \real
\subset \real^3$ and hence cannot be linked.

We return to general $n\geq 3$.
For $x\in V-\hat{K},$ $\alpha (x)=(\xi, e^{2\pi it})$ with $\xi \in \real^{n+1} - K_0$.
Using Diagram (\ref{equ.pialpha}), we obtain for such $x$:
\begin{align*}
\hat{\pi}(x) &= \alpha_2^{-1} g \alpha (x) = \alpha_2^{-1} g(\xi, e^{2\pi it}) =
                  \alpha_2^{-1} (f(\xi), e^{2\pi it}) \\
&= \alpha_2^{-1} (\xi_{n-1}, e^{2\pi it}) = \alpha_2^{-1} (\pi_{n-1} \times \id_{S^1})(\xi, e^{2\pi it})
  = \alpha_2^{-1} (\pi_{n-1} \times \id_{S^1}) \alpha (x) \\
&= \pi (x).
\end{align*}
Therefore, using $\pi$ outside of $\hat{K}$, 
we can extend $\hat{\pi}$ to a fold map $\hat{\pi}: \real^n \to \real^2,$
which has precisely two fold circles $C_m,$ $C_s$ in the interior of $\hat{K}$ and no other singularities.
This completes stage $1$ of the modification of $\pi$. \\

If the time function $\omega$ is not locally constant, then
we begin the second stage by composing the restriction of 
$\omega: W\to [0,1]$ to $U$
with the diffeomorphism $\phi^{-1}: \real^n \to U$. 
On $U,$ $\omega$ is a submersion.
Thus $\widetilde{\omega} = \omega \phi^{-1}: \real^n \to
\real$ is a submersion, which defines a foliation
of $\real^n$ whose leaves are the connected components of the
level sets $\widetilde{\omega}^{-1} (t),$ 
$0 < t < 1$. The idea now is to
isotope the fold circles $C_m \sqcup C_s$ into a single leaf of this
foliation.
Fix $t_0 =\omega (x_0),$ where $x_0$ is any point in $U$, and let $\widetilde{S}$
be the $1$-manifold consisting of two disjoint smoothly embedded
circles in $\widetilde{\omega}^{-1} (t_0)$.
For $n\geq 4,$ there exists by general position a smooth
isotopy $\alpha: \widetilde{S} \times I \to \real^n$ with
$\alpha (y,0)=y$ and $\alpha (\widetilde{S} \times 1) =
S(\hat{\pi}).$ Such an isotopy exists also when $n=3,$ since
$C_m,$ $C_s$ are unknotted and unlinked (as is $\widetilde{S}$).
By the isotopy extension theorem, $\alpha$ extends to an
ambient isotopy $\widetilde{\alpha}: \real^n \times I \to
\real^n,$ $\widetilde{\alpha}_0 = \id,$ with compact support.
That is, there exists a compact $K\subset \real^n,$
$\hat{K} \cup \widetilde{S} \subset K,$ so that
$\widetilde{\alpha} (x,t)=x$ for all $x\in \real^n -K$
and all $t\in I$. Our final modification of $\pi$ is
\[ \tpi = \hat{\pi} \circ \widetilde{\alpha}_1: \real^n \to \cplx. \]
In the special case where $\omega$ is locally constant, we put $\tpi = \hat{\pi}$.
As $\tpi$ is the composition of a diffeomorphism and a fold map,
it is itself a fold map. The singular set is $S(\tpi)=\widetilde{S},$
two disjoint circles in $K$. For $x\in \real^n -K,$ we have
\[ \tpi (x) = \hat{\pi} \widetilde{\alpha}_1 (x) =
 \hat{\pi}(x) = \pi (x), \]
as $x\not\in \hat{K}$. \\

Now let $F^2: W\to \cplx$ be defined by (\ref{equ.f2def}).
Since $F(x)$ is a fold map for $x\in W- \phi^{-1}(K)$ and
$\psi^{-1} \tpi \phi (x)$ is a fold map for
$x\in \phi^{-1} (U_K),$ the glued map $F^2$ is a fold map
for all $x\in W$. If $\omega$ is locally constant, then
$F^2$ is automatically a fold field by Lemma \ref{lem.timelocconst}.
Suppose that $\omega$ is not locally constant.
If $x\in [0, \epsilon (a)]\times M(a)$ or
$x\in [1-\epsilon' (a),1]\times N(a)$ for some $a\in s(W),$
then $x\not\in U$ and in particular $x\not\in \phi^{-1}(K).$
Thus for such $x$, $F^2 (x) = F(x)$. The fold locus of $F^2$ is
\begin{equation} \label{equ.sf2}
S(F^2) = S(F) \sqcup \phi^{-1} (\widetilde{S}).
\end{equation}
Thus
\[ S(F^2) \cap [0,\epsilon (k)] \times M(k) =
  S(F) \cap [0,\epsilon (k)] \times M(k), \]
\[ S(F^2) \cap [1-\epsilon'(k),1] \times N(k) =
  S(F) \cap [1-\epsilon'(k),1] \times N(k) \]
and so $S(F^2)$ is transverse to $W_0$ and to $W_1$.
Since
\[ \Ima F^2 (a)| = \Ima F(a)|: S(F^2)\cap W_0 (a) =
  S(F)\cap W_0 (a) \longrightarrow \real \]
is injective, we have $0\in \genim (F^2 (a))$ for all $a$
and similarly $1\in \genim (F^2 (a))$ for all $a$.
We shall show next that $\genim (F^2 (a))$ is residual in 
$[0,1]$ for all $a$. If $a\not= k$, then $F^2 (a) = F(a)$
and thus $\genim (F^2 (a))=\genim (F(a))$. Since the latter
is residual, the former is so as well. Suppose that $a=k$
and let $t\in \genim (F(k)) - \{ t_0 \}.$ Then
$\phi^{-1}(\widetilde{S})\cap \omega^{-1}(t)$ is empty, since
\[
\phi (\phi^{-1}(\widetilde{S})\cap \omega^{-1}(t))
= \widetilde{S}\cap \phi \omega^{-1}(t) 
 =  \widetilde{S} \cap \widetilde{\omega}^{-1}(t)
 \subset \widetilde{\omega}^{-1} (t_0) \cap \widetilde{\omega}^{-1}(t)
  =\varnothing
\]
as $t\not= t_0$. Consequently,
\begin{eqnarray*}
SF^2 (k)_t & = & S(F^2 (k))\cap W_t = S(F^2 (k))\cap \omega^{-1}(t) \\
 & = & (S(F(k))\cup \phi^{-1} (\widetilde{S}))\cap \omega^{-1}(t) \\
 & = & (S(F(k))\cap \omega^{-1}(t))\cup (\phi^{-1}(\widetilde{S})\cap
   \omega^{-1}(t)) \\
& = & SF(k)_t \cup \varnothing = SF(k)_t.
\end{eqnarray*}
Since $U\cap S(F)=\varnothing$, we have $F^2|_{S(F)} = F|_{S(F)}.$
Therefore,
\[ \Ima F^2 (k)| = \Ima F(k)|: SF^2 (k)_t = SF(k)_t \longrightarrow \real \]
is injective and we conclude that $t\in \genim (F^2 (k))$. This shows
that
\[ \genim (F(k))\cap ([0,1] - \{ t_0 \}) \subset \genim (F^2 (k)). \]
The set $[0,1] - \{ t_0 \}$ is open and dense, in particular residual.
The set $\genim (F(k))$ is residual as $F$ is a fold field. Therefore,
the intersection $\genim (F(k))\cap ([0,1]- \{ t_0 \})$ is residual.
This implies that the superset $\genim (F^2 (k))$ is residual as well.
It follows that $F^2$ is a fold field on $W$. From equation
(\ref{equ.sf2}) we deduce that
\[ \mbs (F^2) = \mbs (F)\otimes \lambda^2, \]
which also holds when $\omega$ is locally constant.
\end{proof}
\begin{remark}
In the surface case $n=2$, the insertion of folds is discussed in 
\cite[Part D, Section 20]{whitneymapsofplane}.
Our construction of $\hat{\pi}$ in the above proof is a high-dimensional
generalization of Whitney's ``mushroom'' construction.
In particular, Whitney's fold insertion technique also produces two
circles simultaneously. 
\end{remark}

In the Boolean power series semiring $\bool [[q]],$ we set formally
\[ \frac{1}{1-q^2} := 1 + q^2 + q^4 + q^6 + \ldots. \]

\begin{thm} \label{thm.rationalinq}
Let $W$ be a cobordism of dimension $n\geq 3$ from $M$ to $N$. On any boundary condition
$(f_M, f_N)\in \Fa (M)\times \Fa (N),$ the state sum
$Z_W (f_M, f_N)$ is a rational function of $q$. In fact, there
exists a polynomial $p_{f_M, f_N} (q)$ in $q$ with linear coefficients
such that
\[ Z_W (f_M, f_N) = \frac{p_{f_M, f_N} (q)}{1-q^2}. \]
\end{thm}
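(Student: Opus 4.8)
The plan is to analyze the state sum $Z_W(f_M, f_N) = \sum_{F \in \Fa(W; f_M, f_N)} Y\mbs(F)$ by organizing the fold fields according to their underlying \emph{loop-free} structure. Recall from Section \ref{sec.proidemcompletion} that every morphism $\mbs(F) \in \Mor(\Br)$ can be written uniquely as $\lambda^{\otimes p} \otimes \phi$ with $\phi$ loop-free, and correspondingly $Y\mbs(F) = \lh^p \, Y(\phi)$, where $\lh = \dim V$. Since we work in $Q(H_{m,n})$ where $Y(\phi)$ lies in the minimal shell, the element $Y\mbs(F) \otimes 1 \in Q$ is recorded by attaching to the loop-free part $Y(\phi)$ a power series in $\bool[[q]]$; the contribution of a fold field with $p$ loops is the monomial $q^p$ attached to $Y(\phi)$ (this is exactly how the state sum bookkeeping works, via Equation (\ref{equ.statesumdef}) and the normal form $f = \sum_y y \otimes b_y$). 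Therefore, writing $Z_W(f_M,f_N) = \sum_{y \in Y(\OP_{m,n})} y \otimes b_y$ for appropriate $m,n$, the coefficient $b_y \in \bool[[q]]$ satisfies: the coefficient of $q^p$ in $b_y$ is $1$ if and only if there exists a fold field $F \in \Fa(W; f_M, f_N)$ whose Brauer morphism has exactly $p$ loops and loop-free part $\phi$ with $Y(\phi) = y$.

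The key observation driving rationality is Lemma \ref{lem.addtwoloops}: given any fold field $F$ on $W$ realizing a certain boundary condition, one can produce a new fold field $F^2$ with the \emph{same} boundary behavior near $M$ and $N$ (hence $F^2 \in \Fa(W; f_M, f_N)$ whenever $F$ is) and with $\mbs(F^2) = \mbs(F) \otimes \lambda^2$. Iterating, for every $j \geq 0$ there is a fold field realizing $\mbs(F) \otimes \lambda^{2j}$. Consequently, for each fixed loop-free part $y = Y(\phi)$, the set of loop-numbers $p$ that actually occur is closed under adding $2$. Thus if $p_0$ is the minimal occurring loop-number with loop-free part $y$ (when $y$ occurs at all), then all of $p_0, p_0+2, p_0+4, \dots$ occur. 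The finer point is to pin down exactly which $p$ occur: I would argue that the set of occurring $p$ for a fixed $y$, being a nonempty subset of $\nat$ closed under $+2$, and being contained in a single residue class mod $2$ determined by parity constraints on $m+n$ and the combinatorics of $\phi$ — is of the form $\{p_0, p_0+2, p_0+4, \dots\}$ together with possibly finitely many sporadic values of the opposite... no: since adding $2$ never changes parity, and since loops come in pairs from Lemma \ref{lem.addtwoloops} but the \emph{initial} $p_0$ for a given $y$ is whatever it is, the occurring set is precisely an eventual arithmetic progression with common difference $2$. Hence $b_y = q^{p_0} + q^{p_0+2} + \dots + (\text{finitely many extra terms}) = (\text{polynomial}) + q^{p_0}/(1-q^2)$, which after clearing denominators is $(\text{polynomial in } q)/(1-q^2)$.

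Next I would assemble these pieces. Since $\OP_{m,n}$ is finite (cardinality $(m+n-1)!!$, as shown), and the total state sum ranges over finitely many loop-free parts $y$, but \emph{also} over varying codomains — wait, the boundary conditions $f_M, f_N$ fix $\dom\mbs(F)$ and $\cod\mbs(F)$ once we know the number of singular points on $M$ and $N$; these are determined by $f_M$ and $f_N$ via the recipe in Section \ref{ssec.actionfoldfieldsbrauer}. So $m = m_S$ and $n = n_S$ are fixed by the boundary condition, and $Z_W(f_M, f_N) \in Q(H_{m,n})$ lives in a single summand with $\OP_{m,n}$ finite. Therefore $Z_W(f_M,f_N) = \sum_{y \in Y(\OP_{m,n})} y \otimes b_y$ is a finite sum, each $b_y = p_y(q)/(1-q^2)$ for a polynomial $p_y$, and the total is $p_{f_M,f_N}(q)/(1-q^2)$ where $p_{f_M,f_N}(q) = \sum_y y \otimes p_y(q)$ has coefficients that are $\nat[\tau]$-linear combinations (equivalently, under the identification, "linear" in the sense of the statement) of the shell elements $y$. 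Collecting, the common denominator $1-q^2$ is universal.

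The main obstacle I anticipate is the precise claim that the set of occurring loop-numbers $p$ for a fixed loop-free type $y$ is an \emph{eventual} arithmetic progression with difference $2$ — i.e., that there is no pathology where only, say, $p = 3$ and $p = 7$ occur but not $p=5$. Lemma \ref{lem.addtwoloops} gives closure under $p \mapsto p+2$, so from any occurring $p$ we get $p, p+2, p+4, \dots$; hence the occurring set is a union of such rays, which (being a subset of a fixed residue class mod $2$) is itself a single ray $\{p_{\min}, p_{\min}+2, \dots\}$ — so actually there is no pathology and $b_y = q^{p_{\min}}/(1-q^2)$ exactly, with no sporadic terms. I should double-check there is no parity obstruction forcing $y$ to occur with loops of \emph{both} parities (which cannot happen for a fixed $y$ since the total number of strand-endpoints $m+n$ has fixed parity and loops are closed components, so the parity of $p$ is not constrained by $y$ alone — meaning I must be careful: a priori both an even and an odd $p$ could occur for the same $y$, and then closure under $+2$ gives two interleaved rays whose union is all sufficiently large $p$, still rational with denominator $1-q^2$... actually $1-q$, but $1-q \mid 1-q^2$, so rewriting over $1-q^2$ still works). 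In all cases the denominator $1-q^2$ suffices, so the theorem holds; making this case analysis airtight is the one place requiring genuine care.
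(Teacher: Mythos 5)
Your proposal is correct and follows essentially the same route as the paper's proof: fix the domain and codomain of $\mbs(F)$ via the boundary conditions, expand $Z_W(f_M,f_N)$ over the finite minimal shell $Y(\OP_{\bar m,\bar n})$, and use Lemma \ref{lem.addtwoloops} to show the occurring loop-numbers for each fixed loop-free type form one ray (or two interleaved rays of opposite parity) closed under adding $2$, giving numerator $q^{r_y}$ or $q^{r_y}(1+q^{2s_y+1})$ over the universal denominator $1-q^2$. Your careful handling of the possible mixed-parity case is exactly the case distinction the paper makes.
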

\begin{proof}
As $f_M$ lies in $\Fa (M),$ we know that $\mbs (f_M)$ is
the identity morphism on some object $[\bar{m}]$ of $\Br$.
By Lemma \ref{lem.idfactorstriv}, $\mbs (f_M (k))$ is the identity
on some object $[m_k]$ of $\Br$.
Let $F\in \Fa (W; f_M, f_N)$ be a fold field satisfying the boundary conditions
$(f_M, f_N)$, that is, there are $\epsilon (k), \epsilon' (k) \in (0,\epsilon_W)$
such that
\[  F|_{[0,\epsilon (k)]\times M(k)} \approx f_M (k),~
  F|_{[1- \epsilon' (k),1]\times N(k)} \approx f_N (k),  \]
for all $k\in s(W)$.
By Lemma \ref{lem.relapprox},
$\mbs (F|_{[0,\epsilon (k)]\times M(k)})=\mbs (f_M (k)).$
Thus the domain of $\mbs (F)$ is
\[ \dom \mbs (F) = \dom \bigotimes_k \mbs(F(k)) = \dom
 \bigotimes_k \mbs (f_M (k)) = \bigotimes_k \dom \mbs (f_M (k)) =
  \left[ \sum_k m_k \right] = [\bar{m}] \]
for every $F\in \Fa (W; f_M, f_N)$. Using the boundary condition at $N$,
one sees similarly that there is one object $[\bar{n}]$ of $\Br$ which is the
codomain of every $\mbs (F),$ $F\in \Fa (W; f_M, f_N)$.
Hence, $Y\mbs (F)$ lies in $Q(H_{\bar{m}, \bar{n}}) \subset Q$ for all
fields $F\in \Fa (W; f_M, f_N)$. Therefore, the state sum
\[ Z_W (f_M, f_N) = \sum_{F\in \Fa (W;f_M, f_N)} Y\mbs (F) \]
lies in $Q(H_{\bar{m}, \bar{n}})$. Consequently, according to 
Lemma \ref{lem.phiiso}, there are uniquely determined power series
$b_y \in \bool [[q]]$ such that
\[ Z_W (f_M, f_N) = \sum_{y\in Y(\OP_{\bar{m}, \bar{n}})}
   y\otimes b_y. \]
This is a finite sum as $\OP_{\bar{m}, \bar{n}}$ is a finite set.
If $b_y$ is not zero, then it has the form
\[ b_y = q^{r_y} + \beta_1 (y) q^{r_y +1} + \beta_2 (y) q^{r_y +2}
  +\ldots,~ \beta_i (y)\in \bool. \]
Since $y\otimes q^{r_y}$ is then a summand of $Z_W (f_M, f_N),$
there exists a fold field $F\in \Fa (W; f_M, f_N)$ such that
$Y\mbs (F)=y\otimes q^{r_y}$. By Lemma \ref{lem.addtwoloops},
there is a field $F^2 \in \Fa (W; f_M, f_N)$ with
$\mbs (F^2) = \mbs (F)\otimes \lambda^2.$
Let $\lh = \operatorname{Tr}(i,e)$ be the trace of the duality
structure $(i,e)$. Then
\[ Y\mbs (F^2) = Y(\mbs (F)\otimes \lambda^2) =
  Y\mbs (F)\otimes Y(\lambda)^2 = \lh^2 (y\otimes q^{r_y}) =
  y\otimes q^{r_y +2}. \]
It follows that $\beta_2 (y)=1$ and by iteration (take
$(F^2)^2,$ etc.) that $\beta_{2s}(y)=1$ for all $s=1,2,3,\ldots$.
Thus
\[ b_y = q^{r_y}((1+q^2 +q^4 +\ldots)+ \beta_1 (y)q + \beta_3 (y)q^3 +
  \beta_5 (y)q^5 +\ldots). \]
If all $\beta_{\operatorname{odd}}(y)=0$, then $b_y$ is the rational function
\[ b_y = \frac{q^{r_y}}{1-q^2}. \]
Suppose that some $\beta_{2s_y +1} (y)\not= 0$ and choose $s_y$ to be the
first such index, that is, $\beta_{2s+1} (y)=0$ for all $s<s_y$.
The power series thus has the form
\[ b_y = q^{r_y} \left( \frac{1}{1-q^2} + q^{2s_y +1} +
  \beta_{2s_y +3} (y)q^{2s_y +3} +\ldots \right). \]
Since $y\otimes q^{r_y + 2s_y +1}$ is then a summand of $Z_W (f_M, f_N),$
there exists a fold field $F\in \Fa (W; f_M, f_N)$ such that
$Y\mbs (F)=y\otimes q^{r_y +2s_y +1}$. By Lemma \ref{lem.addtwoloops},
there is a field $F^2 \in \Fa (W; f_M, f_N)$ with
\[ \mbs (F^2) = \mbs (F)\otimes \lambda^2 \]
and we have
\[ Y\mbs (F^2) = \lh^2 (y\otimes q^{r_y +2s_y +1}) =
  y\otimes q^{r_y + 2s_y +3}. \]
It follows that $\beta_{2s_y +3} (y)=1$ and by iteration 
that $\beta_{2(s_y +s)+1}(y)=1$ for all $s=1,2,3,\ldots$.
So $b_y$ has the form
\[ b_y = q^{r_y} \left( \frac{1}{1-q^2} +
   q^{2s_y +1}(1+q^2+q^4+\ldots) \right)
  = \frac{q^{r_y} (1+ q^{2s_y +1})}{1-q^2}, \]
a rational function.
Let us summarize: For every $b_y,$ there are nonnegative integers $r_y$ and $s_y$
and $\alpha_y, \beta_y \in \bool$ such that
\[ b_y =  \frac{\alpha_y q^{r_y} (1+ \beta_y q^{2s_y +1})}{1-q^2} \]
(obviously $\alpha_y =0$ iff $b_y =0$ and if $\alpha_y =1$ then
$\beta_y =0$ iff all $\beta_{\operatorname{odd}} (y)=0$).
The state sum is the rational function
\[ Z_W (f_M, f_N) = \sum_y y\otimes 
\frac{\alpha_y q^{r_y} (1+ \beta_y q^{2s_y +1})}{1-q^2} \]
(finite sum). The polynomial $p_{f_M, f_N}(q)$ is
\[ p_{f_M, f_N}(q) = \sum_y y\otimes \alpha_y q^{r_y} (1+ \beta_y q^{2s_y +1}). \]
\end{proof}

\section{Time Consistent Diffeomorphism Invariance}

We turn to proving diffeomorphism invariance of the state sum.
We will need to assume that diffeomorphisms are compatible with the time functions
on the cobordisms, as explained below.
Let $W,W' \subset [0,1] \times \real^D$ be cobordisms with time functions
$\omega, \omega',$ respectively.
\begin{defn}
The time function $\omega$ on a cobordism $W$ is called \emph{progressive},
if for every $t \in [0,1]$, there is a point in $W_t$ at which $\omega$ is a submersion,
i.e. every slice $W_t$ contains an $\omega$-regular point.
\end{defn}
Thus for $W$ with progressive time, there exists at every point in time a place on
$W$, where time moves forward at a nonzero speed. In particular, such a cobordism
cannot have a slice $W_t$ which is a union of connected components of $W$.
\begin{example} \label{expl.cylprogr}
Cylinders $W = [0,1] \times M$ always have progressive time.
\end{example}
\begin{defn}
A diffeomorphism $\Phi: W\to W'$ is said
to be \emph{time preserving}, if $\omega' \Phi = \omega$.
\end{defn}
\begin{example}
Suppose that $\phi,\psi:M\to N$ are isotopic diffeomorphisms of
closed $(n-1)$-manifolds. Then an isotopy $\Xi:[0,1] \times M\to N$ between
$\phi$ and $\psi$ induces a time preserving diffeomorphism
$\Phi: [0,1]\times M \to [0,1]\times N,$ $\Phi (t,x)= (t,\Xi (t,x))$.
\end{example}
If $\Phi$ is time preserving, then
\begin{equation} \label{equ.phiwtwpt}
\Phi (W_t) = \Phi \omega^{-1} (t) = \Phi \Phi^{-1} (\omega')^{-1} (t) =
  (\omega')^{-1} (t) = W'_t.
\end{equation}
A weaker notion is given by diffeomorphisms that preserve \emph{simultaneity},
that is, if $p$ and $q$ occur at the same time in $W$, then $\Phi (p)$ and
$\Phi (q)$ are also simultaneous in $W'$:
\begin{defn} \label{def.timeconsistent}
A diffeomorphism $\Phi: W\to W'$ is said
to be \emph{time consistent}, if it sends slices $W_t$ to
slices $W'_{\tau}$. If the incoming boundary $W_0$ of $W$ is not empty, then such 
a diffeomorphism sends $W_0$ to either $W'_0$ or $W'_1$. 
A time consistent diffeomorphism $\Phi$ is 
called \emph{positive} if $\Phi (W_0)=W'_0$ and \emph{negative} if
$\Phi (W_0)=W'_1$.
\end{defn}
If $\Phi$ is time preserving, then (\ref{equ.phiwtwpt}) shows
that $\Phi$ is time consistent. We will see below that a time consistent
diffeomorphism $\Phi$ satisfying the initial condition $\Phi (W_0)=W'_0$
has the property that it preserves the time order of events, that is,
if event $p$ occurs before event $q$, then $\Phi (p)$ occurs before $\Phi (q)$.
\begin{lemma} \label{lem.invtimecons}
The inverse $\Phi^{-1}$ of a time consistent diffeomorphism $\Phi:W\to W'$
between cobordisms with progressive time is again time consistent.
\end{lemma}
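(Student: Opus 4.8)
The plan is to reduce the statement to the assertion that a certain reparametrisation map of $[0,1]$ induced by $\Phi$ is a bijection. Since $W$ has progressive time, every slice $W_t$, $t\in[0,1]$, is nonempty (it contains an $\omega$-regular point), hence so is $\Phi(W_t)$. As the slices $\{W'_\sigma\}_{\sigma\in[0,1]}$ partition $W'$, a nonempty subset of $W'$ that equals one of them equals exactly one of them; so time consistency of $\Phi$ furnishes a well-defined function $\tau\colon[0,1]\to[0,1]$ with $\Phi(W_t)=W'_{\tau(t)}$ for all $t$. First I would record that $\tau$ is injective: if $\tau(t_1)=\tau(t_2)$ then $\Phi(W_{t_1})=\Phi(W_{t_2})$, and since $\Phi$ is a bijection $W_{t_1}=W_{t_2}$; picking any point $p$ in this (nonempty) common slice gives $t_1=\omega(p)=t_2$.

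The main point is surjectivity of $\tau$, and this is exactly where progressive time on the \emph{target} $W'$ enters. Applying $\Phi$ to $W=\bigcup_{t\in[0,1]}W_t$ gives $W'=\bigcup_{t\in[0,1]}W'_{\tau(t)}$. If some $\sigma_0\in[0,1]$ were not in the image of $\tau$, then the slice $W'_{\sigma_0}$ would be disjoint from every $W'_{\tau(t)}$, hence disjoint from $W'$ itself, forcing $W'_{\sigma_0}=\varnothing$; this contradicts the hypothesis that $\omega'$ is progressive, by which $W'_{\sigma_0}$ contains an $\omega'$-regular point. Thus $\tau$ is a bijection of $[0,1]$.

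Finally I would conclude: given any slice $W'_\sigma$ of $W'$, surjectivity of $\tau$ provides $t\in[0,1]$ with $\sigma=\tau(t)$, and then $\Phi^{-1}(W'_\sigma)=\Phi^{-1}(\Phi(W_t))=W_t$, a slice of $W$. Hence $\Phi^{-1}$ sends slices to slices, i.e. $\Phi^{-1}$ is time consistent. The only genuine obstacle is the surjectivity step; everything else is formal bookkeeping with the partition of $W'$ into slices. One should also note in passing that $\Phi^{-1}$ is automatically a diffeomorphism between cobordisms with progressive time, so all hypotheses needed to speak of time consistency of $\Phi^{-1}$ are in place.
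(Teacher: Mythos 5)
Your proof is correct and follows essentially the same route as the paper: both arguments hinge on progressivity of $\omega'$ guaranteeing that each slice $W'_\sigma$ is nonempty, and then use time consistency of $\Phi$ to identify $\Phi^{-1}(W'_\sigma)$ with the slice $W_t$ containing the preimage of a point of $W'_\sigma$ (your surjectivity-of-$\tau$ argument is just this point-chasing phrased via the partition of $W'$ into slices). The injectivity of $\tau$ is not needed for this lemma, but it is harmless and in fact anticipates the paper's subsequent Lemma \ref{lem.alphadiffeo} on the time dilation.
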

\begin{proof}
Let $\tau \in [0,1]$. As $\omega'$ is progressive, the slice $W'_\tau$ is in 
particular nonempty. Choose a point $q\in W'_\tau$ and set
$t= \omega \Phi^{-1} (q)$ so that $\Phi^{-1} (q) \in W_t$. Since $\Phi$
is time consistent, there is a $\tau' \in [0,1]$ with $\Phi (W_t) = W'_{\tau'}.$
Thus $q=\Phi \Phi^{-1} (q)\in \Phi (W_t) = W'_{\tau'}$ and therefore
$\tau' = \omega'(q)=\tau$. It follows that
\[ \Phi^{-1} (W'_{\tau}) = \Phi^{-1} (W'_{\tau'}) =
  \Phi^{-1} \Phi (W_t) = W_t. \]
\end{proof}

\begin{lemma} \label{lem.alphadiffeo}
Let $W$ and $W'$ be cobordisms with progressive time. Let $\Phi:W\to W'$
be a time consistent diffeomorphism. Then there exists a unique diffeomorphism
$\alpha: [0,1]\to [0,1]$ such that $\Phi (W_t)=W'_{\alpha (t)}$.
\end{lemma}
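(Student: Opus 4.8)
The plan is to build $\alpha$ pointwise from the time-consistency data and then check well-definedness, bijectivity, and smoothness. First I would fix $t \in [0,1]$. By hypothesis $\Phi$ sends the slice $W_t$ to some slice $W'_\tau$; since $\omega$ is progressive, $W_t$ is nonempty, so such a $\tau$ exists, and it is unique because distinct slices $W'_\tau, W'_{\tau'}$ are disjoint (being level sets of $\omega'$). Set $\alpha(t) = \tau$. This already forces uniqueness of $\alpha$ as a function, since the relation $\Phi(W_t) = W'_{\alpha(t)}$ determines $\alpha(t)$ for every $t$.

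Next I would verify that $\alpha$ is a bijection of $[0,1]$. Surjectivity: given $\tau \in [0,1]$, the slice $W'_\tau$ is nonempty by progressiveness of $\omega'$, pick $q \in W'_\tau$, put $t = \omega\Phi^{-1}(q)$; then $\Phi^{-1}(q) \in W_t$, so $q \in \Phi(W_t) = W'_{\alpha(t)}$, whence $\alpha(t) = \tau$. This is exactly the computation already carried out in the proof of Lemma \ref{lem.invtimecons}, so I would simply cite or reproduce it. Injectivity: if $\alpha(t) = \alpha(t')$, then $\Phi(W_t) = W'_{\alpha(t)} = W'_{\alpha(t')} = \Phi(W_{t'})$, and applying $\Phi^{-1}$ gives $W_t = W_{t'}$; since slices of $\omega$ over distinct parameters are disjoint and $W_t$ is nonempty, $t = t'$. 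By Lemma \ref{lem.invtimecons}, $\Phi^{-1}$ is also time consistent, and the same construction applied to $\Phi^{-1}$ produces a function $\beta : [0,1] \to [0,1]$ with $\Phi^{-1}(W'_\tau) = W_{\beta(\tau)}$; from $\Phi(W_t) = W'_{\alpha(t)}$ one reads off $\beta\alpha = \mathrm{id}$ and $\alpha\beta = \mathrm{id}$, confirming $\alpha$ is a bijection with inverse $\beta$.

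The main obstacle is smoothness of $\alpha$ (and of $\beta = \alpha^{-1}$, which then gives that $\alpha$ is a diffeomorphism). Here I would use progressiveness essentially. Fix $t_0$ and choose a point $p \in W_{t_0}$ at which $\omega$ is submersive; since $\Phi$ is a diffeomorphism and $\Phi(W_t) \subset (\omega')^{-1}(\alpha(t))$, I would like to relate $\alpha$ near $t_0$ to the smooth maps $\omega$ and $\omega' \circ \Phi$. The key identity is $\omega'(\Phi(x)) = \alpha(\omega(x))$ for all $x \in W$: indeed $x \in W_{\omega(x)}$, so $\Phi(x) \in \Phi(W_{\omega(x)}) = W'_{\alpha(\omega(x))}$, i.e. $\omega'(\Phi(x)) = \alpha(\omega(x))$. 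Thus $\alpha \circ \omega = \omega' \circ \Phi$ as smooth maps $W \to [0,1]$. Now near $p$, since $\omega$ is a submersion, it admits a smooth local section $\sigma : (t_0 - \varepsilon, t_0 + \varepsilon) \to W$ with $\omega \circ \sigma = \mathrm{id}$ (shrinking to stay in the interior, or using a half-open interval at an endpoint); composing, $\alpha(t) = \alpha(\omega(\sigma(t))) = \omega'(\Phi(\sigma(t)))$ on that neighborhood, exhibiting $\alpha$ as a composite of smooth maps, hence smooth at $t_0$. Applying the same argument to $\Phi^{-1}$ (time consistent by Lemma \ref{lem.invtimecons}, and between cobordisms with progressive time) shows $\beta = \alpha^{-1}$ is smooth, so $\alpha$ is a diffeomorphism of $[0,1]$. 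Finally the defining property $\Phi(W_t) = W'_{\alpha(t)}$ holds by construction, and uniqueness was already noted, completing the proof.
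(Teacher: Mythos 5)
Your proposal is correct and follows essentially the same route as the paper: define $\alpha(t)$ via time consistency, prove smoothness locally by composing $\omega'\circ\Phi$ with a smooth local section of $\omega$ at a submersive point (which exists by progressiveness), and obtain the inverse $\beta$ by applying the same construction to $\Phi^{-1}$, which is time consistent by Lemma \ref{lem.invtimecons}. The extra direct injectivity/surjectivity checks you include are harmless but redundant once $\beta\alpha=\mathrm{id}$ and $\alpha\beta=\mathrm{id}$ are established.
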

\begin{proof}
Since $\Phi$ is time consistent, every $t\in [0,1]$ determines a unique $\tau \in [0,1]$
with $\Phi (W_t)=W'_{\tau}$. (Note that as time on $W$ is progressive, the slices
$W_t$ are all nonempty.) Thus, setting $\alpha (t)=\tau$, we obtain a function
$\alpha: [0,1]\to [0,1]$. We claim that $\alpha$ is a diffeomorphism.
Let us first establish that $\alpha$ is smooth. Let $t_0 \in [0,1]$ be any time point.
As $\omega$ is progressive, there exists a $p\in W$, $\omega (p)=t_0,$ such that
the differential $D_p \omega: T_p W\to T_{t_0} \real$ is onto. Thus there exists
a coordinate chart $g: U\stackrel{\cong}{\longrightarrow} U',$ 
$U\subset W$ open, $U' \subset \real^n$ open, $p\in U,$
$g(p)=0\in U'$ and a smooth chart $h: V\stackrel{\cong}{\longrightarrow} V',$ 
$V\subset \real$ open, $V' \subset \real$ open, $t_0 \in V,$
$h(t_0)=0\in V'$ such that 
$\pi = h\omega g^{-1}: U' \to V',$ where $\pi$ is the projection
$\pi (t,x_2,\ldots, x_n)=t.$ Let $\sigma: V' \to U'$ be the smooth map
$\sigma (t)=(t,0,\ldots, 0).$ Then $\pi \sigma =\id$ and $\sigma (0)=0$.
Using $\sigma$, we define a smooth local section $s$ for $\omega$ by
$s = g^{-1} \sigma h: V\to U.$ Then
\[ \omega s = (h^{-1} \pi g)(g^{-1} \sigma h) = h^{-1} (\pi \sigma) h=
 h^{-1} h=\id \]
and
\[ s(t_0) = g^{-1} \sigma h(t_0)= g^{-1} \sigma (0) = g^{-1} (0) =p. \]
For $t\in V$, we have $\omega s (t)=t,$ i.e. $s(t)\in W_t.$
Since $\Phi (W_t) = W'_{\alpha (t)},$ $\Phi s(t)$ is in $W'_{\alpha (t)}$,
which means $\omega' \Phi s(t)=\alpha (t)$. Therefore,
$\alpha|_V = \omega' \Phi s.$ This shows that $\alpha|_V$ is smooth, as
$\omega' \Phi s$ is smooth. Since $\alpha$ is thus smooth in a neighborhood
of every point $t_0 \in [0,1]$, $\alpha$ itself is smooth. \\

By Lemma \ref{lem.invtimecons},
the inverse diffeomorphism $\Phi^{-1}:W' \to W$ is also time consistent,
so a function $\beta: [0,1]\to [0,1]$ is given by
$\Phi^{-1} (W'_t)=W_{\beta (t)}$. By the preceding argument, $\beta$ is smooth.
Furthermore, $\alpha$ and $\beta$ are inverse to each other:
Given $t\in [0,1]$, it is possible to choose a point $p\in W_t$, since
time is progressive on $W$. By definition of $\alpha,$ 
$\Phi (p)\in W'_{\alpha (t)}$. As $\Phi^{-1} (W'_{\alpha (t)}) =
W_{\beta (\alpha (t))},$ we have
$p = \Phi^{-1} \Phi (p)\in W_{\beta (\alpha (t))},$ which means
$\omega (p) = \beta \alpha (t).$ But $\omega (p)=t$ as $p\in W_t$, whence
$\beta \alpha (t) = \omega (p) =t.$ The identity $\alpha \beta (t)=t$ follows
symmetrically. 
\end{proof}

\begin{defn}
The diffeomorphism $\alpha$ determined by $\Phi:W\to W'$ according to
Lemma \ref{lem.alphadiffeo} is called the \emph{time dilation} effected by $\Phi$.
\end{defn}
The time consistent diffeomorphism $\Phi$ in the situation of the lemma
is positive precisely when its time dilation is orientation preserving.
\begin{lemma} \label{lem.ggpagg}
Let $W$ and $W'$ be cobordisms with progressive time, $\Phi:W\to W'$ a time consistent
diffeomorphism and $G:W' \to \cplx$ a fold map. Then 
\[ \genim (G\Phi) = \alpha^{-1} \genim (G), \]
where $\alpha$ is the time dilation of $\Phi$.
\end{lemma}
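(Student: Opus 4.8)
The statement is a set-theoretic identity, $\genim(G\Phi) = \alpha^{-1}\genim(G)$, so it suffices to prove the two inclusions, or equivalently to show that for each $t \in [0,1]$ one has $t \in \genim(G\Phi)$ if and only if $\alpha(t) \in \genim(G)$. The plan is to unwind both sides in terms of the defining condition: $t \in \genim(G\Phi)$ means $\Ima \circ (G\Phi)\vert_{S(G\Phi)_t} \colon S(G\Phi) \cap W_t \to \real$ is injective, while $\alpha(t) \in \genim(G)$ means $\Ima \circ G\vert_{S(G)_{\alpha(t)}} \colon S(G) \cap W'_{\alpha(t)} \to \real$ is injective. The bridge between them is the diffeomorphism $\Phi$.

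First I would record the two structural facts about $\Phi$ that do the work. Since $\Phi \colon W \to W'$ is a diffeomorphism and $G\Phi$ is the composite, precomposition with a diffeomorphism carries the singular set across: $\Phi(S(G\Phi)) = S(G)$ (a fold map composed with a diffeomorphism is again a fold map, and the singular set transforms by the diffeomorphism, exactly as used in Lemma \ref{lem.singtanglereparam}). Second, by the defining property of the time dilation $\alpha$ from Lemma \ref{lem.alphadiffeo}, $\Phi(W_t) = W'_{\alpha(t)}$ for every $t$. Combining these, $\Phi$ restricts to a bijection
\[
\Phi\vert \colon S(G\Phi) \cap W_t \;\xrightarrow{\ \cong\ }\; S(G) \cap W'_{\alpha(t)}.
\]

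Then the identity becomes almost immediate: for $x \in S(G\Phi) \cap W_t$ we have $(G\Phi)(x) = G(\Phi(x))$, so $\Ima \circ (G\Phi)\vert_{S(G\Phi)_t} = \bigl(\Ima \circ G\vert_{S(G)_{\alpha(t)}}\bigr) \circ \Phi\vert$. Since $\Phi\vert$ is a bijection, the left-hand composite is injective precisely when $\Ima \circ G\vert_{S(G)_{\alpha(t)}}$ is injective; that is, $t \in \genim(G\Phi) \iff \alpha(t) \in \genim(G) \iff t \in \alpha^{-1}\genim(G)$. Taking the union over all $t \in [0,1]$ gives $\genim(G\Phi) = \alpha^{-1}\genim(G)$. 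One should note that since time on $W$ is progressive the slices $W_t$ are all nonempty, but this is not even needed here: the equivalence holds vacuously when $S(G\Phi) \cap W_t = \varnothing$, in which case $S(G) \cap W'_{\alpha(t)} = \varnothing$ as well.

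I do not anticipate a genuine obstacle; the only point requiring a little care is justifying $\Phi(S(G\Phi)) = S(G)$ and, dually via Lemma \ref{lem.invtimecons}, that the argument is symmetric in $\Phi$ and $\Phi^{-1}$ (with time dilation $\alpha^{-1}$), so that both inclusions are on equal footing. This is the same reasoning already deployed in the proofs of Lemmas \ref{lem.singtanglereparam} and \ref{lem.alphadiffeo}, so it can be invoked rather than re-derived. If anything, the mild subtlety is bookkeeping: being careful that $\alpha$ is the time dilation of $\Phi$ and not of $\Phi^{-1}$, so that $W'_{\alpha(t)}$ (and not $W'_{\alpha^{-1}(t)}$) is the correct target slice; this is fixed by appealing directly to the defining equation $\Phi(W_t) = W'_{\alpha(t)}$ from Lemma \ref{lem.alphadiffeo}.
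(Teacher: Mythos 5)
Your proposal is correct and matches the paper's argument: both rest on the facts that $\Phi$ carries $S(G\Phi)$ to $S(G)$ and slices $W_t$ to $W'_{\alpha(t)}$ (Lemma \ref{lem.alphadiffeo}), so that $\Ima\circ(G\Phi)$ on $S(G\Phi)\cap W_t$ factors through $\Ima\circ G$ on $S(G)\cap W'_{\alpha(t)}$ via a bijection. The only cosmetic difference is that you extract both inclusions at once from this bijection, whereas the paper proves one inclusion by the injectivity argument and obtains the converse by running the same reasoning for the time consistent inverse $\Phi^{-1}$ (Lemma \ref{lem.invtimecons}).
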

\begin{proof}
Using the formulae $S(G\Phi) = \Phi^{-1} (S(G))$ and
$W_{\alpha^{-1} (t)} = \Phi^{-1} (W'_t)$, we see that
\[ S(G\Phi)\cap W_{\alpha^{-1}(t)} =
 \Phi^{-1} (S(G)) \cap \Phi^{-1} (W'_t) = \Phi^{-1} (S(G)\cap W'_t). \]
Let $t\in \genim (G)$ and $p,q\in S(G\Phi)\cap W_{\alpha^{-1}(t)}$ with
equality $\Ima G\Phi (p)=\Ima G\Phi (q)$ of imaginary parts.
Then $\Phi (p), \Phi (q) \in S(G)\cap W'_t$ and since
$\Ima G|: S(G)\cap W'_t \to \real$ is injective, we have $\Phi (p)=\Phi (q)$.
As $\Phi$ is one-to-one, $p=q$. This shows that $\Ima G\Phi$ is injective
on $S(G\Phi)\cap W_{\alpha^{-1}(t)},$ that is,
$\alpha^{-1} (t)\in \genim (G\Phi)$. The converse inclusion follows from
reversing the roles of $W$ and $W'$, that is, writing $F=G\Phi$ and applying
the previous reasoning to the time consistent $\Phi^{-1}: W' \to W$ and
$F\Phi^{-1} =G$.
\end{proof}

\begin{lemma}  \label{lem.phiresid}
Let $W$ and $W'$ be cobordisms with progressive time, $\Phi:W\to W'$ a time consistent
diffeomorphism and $G:W' \to \cplx$ a fold map. Then $\genim (G)$ is residual in $[0,1]$
if and only if $\genim (G\phi)$ is residual in $[0,1]$.
\end{lemma}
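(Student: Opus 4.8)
The plan is to reduce this statement directly to Lemma \ref{lem.ggpagg}, which already does essentially all the work. First I would invoke Lemma \ref{lem.alphadiffeo} to obtain the time dilation $\alpha: [0,1]\to [0,1]$ effected by the time consistent diffeomorphism $\Phi$; recall that $\alpha$ is a diffeomorphism, in particular a homeomorphism. Then Lemma \ref{lem.ggpagg} gives the equality $\genim (G\Phi) = \alpha^{-1} \genim (G)$ of subsets of $[0,1]$.

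The only remaining point is the purely topological fact that a homeomorphism of $[0,1]$ carries residual sets to residual sets and reflects them. Concretely, if $\genim (G)$ is residual in $[0,1]$, then since $\alpha^{-1}: [0,1]\to [0,1]$ is a homeomorphism, the preimage $\alpha^{-1}(\genim (G)) = \genim (G\Phi)$ is residual in $[0,1]$ as well; for the converse, run the same argument with $\Phi$ replaced by $\Phi^{-1}$, whose time dilation is $\alpha^{-1}$ by Lemma \ref{lem.invtimecons} and the uniqueness clause of Lemma \ref{lem.alphadiffeo}, and whose action on $F = G\Phi$ recovers $G = F\Phi^{-1}$. That homeomorphisms preserve residuality follows because a homeomorphism maps dense open sets to dense open sets and commutes with countable intersections, so it maps a set containing a countable intersection of dense opens to a set of the same kind.

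I do not expect any genuine obstacle here: the substantive content — the precise relationship between the ``generic imaginary parts'' sets upstairs and downstairs — is exactly the content of Lemma \ref{lem.ggpagg}, and the present lemma is a one-line corollary of it together with the elementary observation on homeomorphisms. The only mild care needed is to make sure the statement's ``$\genim(G\phi)$'' is read as $\genim(G\Phi)$ (the lemma as stated writes $\phi$ where $\Phi$ is meant) and that one notes $\alpha$ being a homeomorphism of the \emph{closed} interval suffices, so no appeal to the Baire category theorem or to residuality on subintervals is required.

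\begin{proof}
By Lemma \ref{lem.alphadiffeo}, $\Phi$ effects a time dilation $\alpha: [0,1]\to [0,1]$, which is a diffeomorphism and in particular a homeomorphism, with $\Phi (W_t) = W'_{\alpha (t)}$. By Lemma \ref{lem.ggpagg},
\[ \genim (G\Phi) = \alpha^{-1} \genim (G). \]
A homeomorphism of $[0,1]$ sends dense open subsets to dense open subsets and commutes with countable intersections; hence it sends residual subsets to residual subsets. Applying this to the homeomorphism $\alpha^{-1}$, we see that if $\genim (G)$ is residual in $[0,1]$, then $\genim (G\Phi) = \alpha^{-1} \genim (G)$ is residual in $[0,1]$. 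Conversely, suppose $\genim (G\Phi)$ is residual. By Lemma \ref{lem.invtimecons}, $\Phi^{-1}: W' \to W$ is time consistent, and by the uniqueness part of Lemma \ref{lem.alphadiffeo} its time dilation is $\alpha^{-1}$. Writing $F = G\Phi$, we have $F\Phi^{-1} = G$, so Lemma \ref{lem.ggpagg} applied to $\Phi^{-1}$ and $F$ gives $\genim (G) = \genim (F\Phi^{-1}) = \alpha\, \genim (F) = \alpha\, \genim (G\Phi)$. Since $\alpha$ is a homeomorphism and $\genim (G\Phi)$ is residual, $\genim (G)$ is residual in $[0,1]$.
\end{proof}
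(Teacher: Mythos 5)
Your proof is correct and follows essentially the same route as the paper: both reduce to Lemma \ref{lem.ggpagg} via the time dilation $\alpha$ of Lemma \ref{lem.alphadiffeo} and then use that a homeomorphism of $[0,1]$ preserves residuality, with the converse handled via $\Phi^{-1}$. Your version is merely a bit more explicit about the converse direction, which the paper dispatches with "follows in the same manner."
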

\begin{proof}
This follows readily from Lemma \ref{lem.ggpagg}:
Suppose that $\genim (G)$ is residual. Since the time dilation $\alpha:[0,1]\to [0,1]$ is
a diffeomorphism by Lemma \ref{lem.alphadiffeo}, $\alpha^{-1} \genim (G)$ is again 
residual. But the latter set equals $\genim (G\Phi)$. The converse direction follows in the 
same manner.
\end{proof}

\begin{lemma} \label{lem.zeroonegenimphi}
Let $W,W'$ be any cobordisms and $\Phi: W\to W'$ a time consistent diffeomorphism
such that for all $k\in \nat$ there is an
$l\in \nat$ with $\Phi (W(k))\subset W'(l)$.
Let $G:W' \to \cplx$ be a fold map.
If $0,1\in \genim G(k)$ for all $k\in \nat$, then
$0,1\in \genim (G\Phi)(k)$ for all $k\in \nat$.
\end{lemma}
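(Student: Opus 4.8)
The plan is to reduce everything, one slab at a time, to the hypothesis on $G$, using only that a time consistent diffeomorphism carries the boundary slices of $W$ to boundary slices of $W'$; note that since the cobordisms here are \emph{not} assumed to have progressive time, the time dilation of Lemma \ref{lem.alphadiffeo} is not available, so the boundary behaviour must be extracted directly from Definition \ref{def.timeconsistent}. First I would fix $k\in\nat$ and, by hypothesis, choose $l\in\nat$ with $\Phi(W(k))\subset W'(l)$. Since $W(k)$ is a union of connected components of $W$ and $\Phi$ is a diffeomorphism, $\Phi(W(k))$ is a union of connected components of $W'$, hence open and closed in $W'$ and contained in $W'(l)$. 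Consequently $(G\Phi)(k) = (G\circ\Phi)|_{W(k)} = \big(G(l)|_{\Phi(W(k))}\big)\circ\big(\Phi|_{W(k)}\big)$, the composition of a fold map with a diffeomorphism onto its image, so $(G\Phi)(k)$ is a fold map on $W(k)$; and from $S(G\circ\Phi)=\Phi^{-1}(S(G))$ together with $S(G(l))=S(G)\cap W'(l)$ one gets
\[ S((G\Phi)(k)) = \big(\Phi|_{W(k)}\big)^{-1}\big(S(G(l))\cap\Phi(W(k))\big). \]

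Next I would treat the endpoint $t=0$. If $W(k)_0=\varnothing$ there is nothing to prove, so assume $W(k)_0\neq\varnothing$; then the incoming boundary $W_0$ of $W$ is nonempty, and since $\Phi$ is time consistent, Definition \ref{def.timeconsistent} forces $\Phi(W_0)=W'_\tau$ for some $\tau\in\{0,1\}$. Then $\Phi(W(k)_0)=\Phi(W(k)\cap W_0)=\Phi(W(k))\cap W'_\tau\subset W'(l)\cap W'_\tau = W'(l)_\tau$, and intersecting the displayed formula with $W(k)_0$ yields
\[ S((G\Phi)(k))\cap W(k)_0 = \big(\Phi|_{W(k)}\big)^{-1}\big(S(G(l))\cap\Phi(W(k)_0)\big)\subset \big(\Phi|_{W(k)}\big)^{-1}\big(S(G(l))\cap W'(l)_\tau\big). \]
On this set one has $\Ima\circ(G\Phi)(k)=\Ima\circ G(l)\circ\Phi$. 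Because $\tau\in\{0,1\}$ and, by hypothesis, $\tau\in\genim(G(l))$, the map $\Ima\circ G(l)$ is injective on $S(G(l))\cap W'(l)_\tau$; since $\Phi$ is injective, $\Ima\circ(G\Phi)(k)$ is then injective on $S((G\Phi)(k))\cap W(k)_0$, i.e. $0\in\genim((G\Phi)(k))$.

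The endpoint $t=1$ is handled verbatim with $W_1$ in place of $W_0$: if $W(k)_1=\varnothing$ we are done, and otherwise $\Phi(W_1)=W'_{\tau'}$ with $\tau'\in\{0,1\}$ by Definition \ref{def.timeconsistent} (necessarily the index other than $\tau$, though this is not needed), and $\tau'\in\genim(G(l))$ by hypothesis gives $1\in\genim((G\Phi)(k))$ by the same injectivity argument. Since $k\in\nat$ was arbitrary, this proves the lemma. I do not expect a serious obstacle; the only points requiring care are (a) that a time consistent diffeomorphism may interchange the incoming and outgoing boundaries, which is exactly why the hypothesis must control \emph{both} $0$ and $1$ in $\genim(G(k))$ rather than just one of them, and (b) that the slab $W(k)$ may be carried into a slab $W'(l)$ with $l\neq k$, which is the reason the hypothesis is stated as $\Phi(W(k))\subset W'(l)$ and is precisely what lets the whole argument be run with the single restriction $G(l)$.
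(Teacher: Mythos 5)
Your proposal is correct and follows essentially the same route as the paper's proof: fix $k$, choose $l$ with $\Phi(W(k))\subset W'(l)$, use time consistency (Definition \ref{def.timeconsistent}) to see that $\Phi$ carries $W_0$, resp. $W_1$, into a boundary slice $W'_j$ with $j\in\{0,1\}$, and then deduce injectivity of $\Ima\circ (G\Phi)$ on $S(G\Phi)\cap W_i(k)$ from the hypothesis $j\in\genim G(l)$ together with the injectivity of $\Phi$. The paper merely phrases this last step by taking two points $p,q$ with equal imaginary parts and concluding $p=q$; the content is identical.
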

\begin{proof}
Suppose that $0,1\in \genim G(l)$ for all $l\in \nat$. Thus
$\Ima G|: S(G)\cap W'_i (l)\to \real$ is injective for all $l$ and $i=0,1$.
Given $k\in \nat$, there is an $l_0$ such that $\Phi (W(k))\subset W'(l_0)$.
If $p,q$ are points in $S(G\Phi)\cap W_0 (k)$ with 
$\Ima G\Phi (p)=\Ima G\Phi (q),$ then $\Phi (p), \Phi (q)$ lie in
the singular set $S(G)$ of $G$ and in
$W'_j (l_0) = W'_j \cap W'(l_0)$, $j$ either $0$ (if $\Phi$ is positive) or $1$ (if $\Phi$ is negative).
Hence $\Phi (p)=\Phi (q)$. As $\Phi$ is one-to-one, we have $p=q$.
This shows that $\Ima G\Phi|: S(G\Phi)\cap W_0 (k) \to \real$ is injective, i.e.
$0\in \genim (G\Phi)(k)$ for every $k$. Similarly $1\in \genim (G\Phi)(k)$ for every $k$.
\end{proof}
A diffeomorphism $\Phi:W\to W'$ induces a homeomorphism
\[ 
\Phi_{\infty}: C^\infty (W',\cplx) \longrightarrow C^\infty (W,\cplx),~
G \mapsto G\circ \Phi
\]
with inverse $(\Phi^{-1})_\infty$.
\begin{lemma} \label{lem.phiresfawpfaw}
For $W$ and $W'$ with progressive time and $\Phi$ time consistent such that
for every $k$ there is an $l$ with $\Phi (W(k))=W'(l)$, 
the homeomorphism $\Phi_\infty$ restricts to a homeomorphism
$\Phi_{\Fa}$ of fold fields:
\[ \xymatrix@R=15pt{
C^\infty (W',\cplx) \ar[r]^{\Phi_\infty} & C^\infty (W,\cplx) \\
\Fa (W') \ar@{^{(}->}[u] \ar@{..>}[r]_{\Phi_{\Fa}} & \Fa (W). \ar@{^{(}->}[u]
} \]
Furthermore, if $\Phi (W(k))=W'(k)$ for all $k$ and $\Phi (W_0)=W'_0$, then
we have Brauer invariance:
\[ \mbs (\Phi_{\Fa} G)=\mbs (G) \in \Mor (\Br) \]
for fields $G\in \Fa (W')$.
\end{lemma}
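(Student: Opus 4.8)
The plan is to verify each of the three assertions --- that $\Phi_\infty$ restricts to fold fields, that this restriction $\Phi_\Fa$ is a homeomorphism, and the Brauer invariance --- by assembling the pieces already developed in Lemmas \ref{lem.ggpagg}, \ref{lem.phiresid}, \ref{lem.zeroonegenimphi} together with \ref{lem.singtanglereparam}. First I would take $G\in \Fa (W')$ and set $F = \Phi_\Fa G = G\circ \Phi$. Since $\Phi$ is a diffeomorphism, precomposing the fold map $G$ with $\Phi$ yields a fold map $F$ on $W$ (transport fold charts from the codomain to the domain of $\Phi$), so $F\in C^\infty (W,\cplx)$ is at least a fold map. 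It remains to check the two defining conditions of a fold field (Definition \ref{def.foldfield}) for every $k\in s(W)$. Note that the hypothesis $\Phi (W(k))=W'(l)$ for some $l$ --- together with the fact that $\Phi$ is a bijection on components --- means that $F(k) = G(l)\circ (\Phi|_{W(k)})$ up to the obvious identification, and $\Phi|_{W(k)}: W(k)\to W'(l)$ is itself a time consistent diffeomorphism between cobordisms with progressive time.

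For condition (1) of Definition \ref{def.foldfield}, namely $0,1\in \pitchfork F(k)\cap \genim F(k)$: the $\genim$ half is exactly Lemma \ref{lem.zeroonegenimphi} applied to $G$ (whose hypothesis $0,1\in \genim G(l)$ holds since $G$ is a fold field). For the $\pitchfork$ half, I would argue as in the proof of Lemma \ref{lem.singtanglereparam}: using $S(F(k)) = \Phi^{-1}(S(G(l)))$ and $W(k)_0 = \Phi^{-1}(W'(l)_j)$ with $j=0$ or $1$ according to whether $\Phi$ is positive or negative, transversality $S(G(l))\pitchfork W'(l)_j$ pulls back under the diffeomorphism $\Phi$ to $S(F(k))\pitchfork W(k)_0$, and similarly for the outgoing boundary. (Here $\Phi$ time consistent between progressive cobordisms has a well-defined positive/negative type by Definition \ref{def.timeconsistent}; $\Phi (W_0)\subset W'_0$ or $W'_1$, and under the further hypothesis $\Phi (W(k))=W'(k)$ this is a genuine alternative, not a slice-by-slice ambiguity.) For condition (2), residuality of $\genim F(k)$ in $[0,1]$ follows from Lemma \ref{lem.phiresid} (applied to the component diffeomorphism $\Phi|_{W(k)}$ and the fold map $G(l)$), since $\genim G(l)$ is residual. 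This establishes $F\in \Fa (W)$, i.e.\ $\Phi_\infty (\Fa (W'))\subset \Fa (W)$. Applying the same reasoning to the time consistent inverse $\Phi^{-1}: W'\to W$ (time consistent by Lemma \ref{lem.invtimecons}, and satisfying $\Phi^{-1}(W'(k))=W(k)$) shows $(\Phi^{-1})_\infty (\Fa (W))\subset \Fa (W')$; since $\Phi_\infty$ and $(\Phi^{-1})_\infty$ are mutually inverse homeomorphisms of $C^\infty$-spaces, their restrictions $\Phi_\Fa$ and $(\Phi^{-1})_\Fa$ are mutually inverse homeomorphisms $\Fa (W')\cong \Fa (W)$.

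For the Brauer invariance under the stronger hypotheses $\Phi (W(k))=W'(k)$ and $\Phi (W_0)=W'_0$: here I would essentially reproduce the endpoint-matching argument in the proof of Lemma \ref{lem.singtanglereparam}, but with a general time consistent $\Phi$ replacing the reparametrization $\alpha(t,x)=(\tau(t),x)$. Fix $k$. Because $\Phi$ is positive (as $\Phi (W_0)=W'_0$) and time consistent, $\Phi (W_0) = W'_0$ and $\Phi (W_1) = W'_1$; hence $\Phi|$ restricts to bijections $S(F(k))\cap W(k)_0 \xrightarrow{\cong} S(G(k))\cap W'(k)_0$ and similarly at $1$. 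The imaginary-part orderings on the two ends are preserved because $\Ima F(k)(p) = \Ima (G(k)\Phi)(p) = \Ima G(k)(\Phi(p))$, so the induced identifications with $M[m]$ and $M[l]$ on both sides agree with cardinality $m = \card(S(G(k))\cap W'(k)_0)$, $l = \card(S(G(k))\cap W'(k)_1)$. A connected component $c$ of $S(F(k))$ with $\partial c = \{p_i, p_j\}$ corresponds under $\Phi$ to a component $c' = \Phi(c)$ of $S(G(k))$ with $\partial c' = \{\Phi(p_i), \Phi(p_j)\}$, and conversely; the same holds for the other endpoint types and for closed components (so the loop counts match). Thus $\mbs (F(k)) = \mbs (G(k))$ for every $k$, and tensoring over $k$ gives $\mbs (\Phi_\Fa G) = \bigotimes_k \mbs (F(k)) = \bigotimes_k \mbs (G(k)) = \mbs (G)$.

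I expect the main technical point to be keeping careful track of whether $\Phi$ is positive or negative and how that interacts with the component-wise hypothesis: in the homeomorphism part only $\Phi (W(k)) = W'(l)$ is assumed, so a priori $\Phi$ could, say, reverse the direction of time, and one must check that the two defining conditions of a fold field --- which are symmetric in the endpoints $0$ and $1$ --- survive either way (they do, because both $0$ and $1$ appear symmetrically in Definition \ref{def.foldfield}). For the Brauer invariance the positivity hypothesis $\Phi (W_0)=W'_0$ is precisely what is needed so that incoming endpoints map to incoming endpoints and the domains/codomains of $\mbs (F(k))$ and $\mbs (G(k))$ match on the nose rather than getting swapped; this is the one place where one genuinely uses the ``positive'' assumption, and it is worth flagging that a negative $\Phi$ would instead relate $\mbs (F)$ to a ``time-reversed'' version of $\mbs (G)$. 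None of the individual steps is hard given the lemmas already in hand; the work is purely one of organizing them.
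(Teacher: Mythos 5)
Your overall route is the same as the paper's: $G\Phi$ is a fold map because precomposition with a diffeomorphism preserves fold maps, transversality at the two ends is pulled back through the diffeomorphism, the endpoint genericity is exactly Lemma \ref{lem.zeroonegenimphi}, residuality comes from Lemma \ref{lem.phiresid}, the inverse homeomorphism comes from Lemma \ref{lem.invtimecons}, and the Brauer invariance is the endpoint-ordering/component-matching argument (as in Lemma \ref{lem.singtanglereparam}, with $\Phi$ in place of the reparametrization). So in outline the proof is correct and essentially identical to the paper's.

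One step, however, is not justified as you state it: you apply Lemma \ref{lem.phiresid} to the restriction $\Phi|_{W(k)}\colon W(k)\to W'(l)$ and assert that this is "a time consistent diffeomorphism between cobordisms with progressive time." Progressive time of $W$ does not pass to its slices $W(k)$: the $\omega$-regular point required in each time slice $W_t$ may lie in a different component (for instance, $W$ could have one cylindrical slice making time progressive and another slice on which $\omega$ is locally constant, so that slice has empty, hence non-progressive, time levels). Since Lemma \ref{lem.phiresid} (through the time dilation of Lemma \ref{lem.alphadiffeo}) genuinely uses progressive time, its hypotheses are not verified for $\Phi|_{W(k)}$. The repair is easy and is what the paper does: apply Lemma \ref{lem.phiresid} to the full $\Phi$ and $G$, observing that $\genim(G)=\bigcap_{l\in s(W')}\genim(G(l))$ is a finite intersection of residual sets, hence residual, and that $\genim(G\Phi)\subset\genim\bigl((G\Phi)(k)\bigr)$ for every $k$; alternatively, rerun the argument of Lemma \ref{lem.ggpagg} componentwise using the \emph{global} time dilation $\alpha$, which gives $\alpha^{-1}\genim(G(l))\subset\genim\bigl((G\Phi)(k)\bigr)$. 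A second, purely cosmetic point: in the inverse direction the hypothesis transported to $\Phi^{-1}$ is "for every $l$ there is a $k$ with $\Phi^{-1}(W'(l))=W(k)$" (the assignment $k\mapsto l$ is a bijection of $s(W)$ onto $s(W')$), not $\Phi^{-1}(W'(k))=W(k)$, which is only available under the stronger hypothesis of the Brauer-invariance part.
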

\begin{proof}
Let $G\in \Fa (W')$ be a fold field. As $F=G\Phi$ is the composition
of a diffeomorphism and a fold map, $F$ is also a fold map.
The singular set $S(G)$ is transverse to the incoming boundary $W'_0$ and to 
the outgoing boundary $W'_1$. As $\Phi$ is a diffeomorphism, its differential
is an isomorphism everywhere. Thus $\Phi^{-1} S(G) = S(G\Phi)$
is transverse to $\Phi^{-1} (W'_0 \sqcup W'_1) = W_0 \sqcup W_1$.
This shows that $0,1\in~ \pitchfork (G\Phi)$. 

Given a natural number $k$, there is an $l$ with $\Phi(W(k))\subset W'(l)$. 
Since $0,1\in \genim (G(l))$ for every $l$,
we conclude with Lemma \ref{lem.zeroonegenimphi} that $0,1\in \genim (G\Phi)(k)$ for
every $k$.
Since $\genim (G)$ is residual in $[0,1]$, $\genim (G\Phi)$ is residual
in $[0,1]$ as well, by Lemma \ref{lem.phiresid}.
This shows that $\Phi_\infty (G) = G\Phi$ is a fold field on $W$ and
so defines the desired map $\Phi_{\Fa}$. The inverse of $\Phi_{\Fa}$ is given by the restriction
of the inverse of $\Phi_\infty$ to $\Fa (W)$.\\

In order to prove Brauer invariance, let us now assume
$\Phi (W(k))=W'(k)$ for all $k$ and $\Phi (W_0)=W'_0$.
Given a field $G\in \Fa (W')$, we need to show $\mbs (F)=\mbs (G)$, where
$F = \Phi_{\Fa} (G)=G\Phi \in \Fa (W)$. Fix a natural number $k$.
The function $\Ima F(k)$ induces a unique ordering
\[ S(F(k)) \cap W(k)_0 = \{ p_1,\ldots, p_m \} \]
such that $\Ima F(p_i)<\Ima F(p_j)$ if and only if $i<j$,
and a unique ordering
\[ S(F(k)) \cap W(k)_1 = \{ q_1,\ldots, q_l \} \]
such that $\Ima F(q_i)<\Ima F(q_j)$ if and only if $i<j$.
With $p'_i = \Phi (p_i),$ $q'_i = \Phi (q_i),$ we have
\[
S(G)\cap W'(k)_0  = 
 \Phi S(F)\cap \Phi W(k)_0 
 =  \Phi (S(F)\cap W(k)_0) 
 =  \{ p'_1, \ldots, p'_m \}
\]
(a set of cardinality $m$, since $\Phi$ is injective) and
\[
S(G)\cap W'(k)_1 =  \{ q'_1, \ldots, q'_l \}
\]
(a set of cardinality $l$). Thus both $\mbs (G(k))$ and $\mbs (F(k))$
are morphisms $[m]\to [l]$ in $\Br$.
The ordering principle is preserved because
\[ \Ima F(p_i)=\Ima (G\Phi)(p_i)=\Ima G(p'_i),~
  \Ima F(q_i)=\Ima (G\Phi)(q_i)=\Ima G(q'_i), \]
so  that $\Ima G(p'_i) < \Ima G(p'_j)$ if and only if $i<j$
if and only if $\Ima G(q'_i) < \Ima G(q'_j)$.
In the Brauer morphism $\mbs (G(k))$, the point $(0,i,0,0)$ is
connected to the point $(0,j,0,0)$ by an arc if and only if
there exists a connected component $c'=\Phi (c)$ of
$S(G(k))$ with boundary $\partial c' = \{ p'_i, p'_j \}$.
Such a $c'$ exists if and only if there is a connected component
$c$ of $S((G\Phi)(k))$ with $\partial c = \{ p_i, p_j \}$,
which is equivalent to $(0,i,0,0)$ and $(0,j,0,0)$
being connected by an arc in $\mbs (F(k))$.
Similarly for connections of type $(1,i,0,0)$ to $(1,j,0,0)$ and
$(0,i,0,0)$ to $(1,j,0,0)$.
The number of loop tensor factors in $\mbs (G(k))$
equals the number of loop tensor factors in $\mbs (F(k))$,
as the number of closed connected components of
$S(G(k))$ equals the number of closed connected components
of $S((G\Phi)(k))$. This shows that $\mbs (G(k))=\mbs (F(k))$ for every $k$.
Therefore,
$\mbs (G) = \bigotimes_k \mbs (G(k)) = \bigotimes_k \mbs (F(k))
  = \mbs (F).$
\end{proof}

Let $M$ and $N$ be closed, smooth $(n-1)$-dimensional manifolds and
$\phi: M\to N$ a diffeomorphism such that for all $k\in \nat$ there is an
$l\in \nat$ with $\phi (M(k)) = N(l)$.
(This implies that 
for every $l\in \nat$ there is a
$k\in \nat$ with $\phi^{-1} (N(l)) = M(k)$.)
We shall show in stages that $\phi$ induces
an isomorphism 
\[ \phi_\ast: Z(M)\to Z(N) \] 
of additive monoids. Let us denote the unit interval $[0,1]$ by $I$.
First, $\phi$ determines a diffeomorphism $\ophi: I\times M \to I\times N$
by $\overline{\phi} (t,x) = (t,\phi (x))$. For later reference, we observe that
if $\psi: N\to P$ is another diffeomorphism, then
\begin{equation} \label{equ.opsipgiisopsiophi}
(\overline{\psi \phi})(t,x) = (t,\psi \phi (x)) = \overline{\psi}(t,\phi (x)) =
  (\overline{\psi} \circ \ophi)(t,x). 
\end{equation}
Using this suspension of $\phi$, $\phi$
induces a homeomorphism
\[ 
\phi_{\infty}: C^\infty (I\times N,\cplx) \longrightarrow C^\infty (I\times M,\cplx),~ 
g \mapsto g\circ \overline{\phi}
\]
with inverse $(\phi^{-1})_\infty$.
The next lemma is a special case of Lemma \ref{lem.phiresfawpfaw}:
\begin{lemma}
The homeomorphism $\phi_\infty$ restricts to a homeomorphism
$\phi_{\Fa}$ of fold fields:
\[ \xymatrix{
C^\infty (I\times N,\cplx) \ar[r]^{\phi_\infty} & C^\infty (I\times M,\cplx) \\
\Fa (I\times N) \ar@{^{(}->}[u] \ar@{..>}[r]_{\phi_{\Fa}} & \Fa (I\times M). \ar@{^{(}->}[u]
} \]
\end{lemma}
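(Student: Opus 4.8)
The statement to be proved is that the homeomorphism $\phi_\infty : C^\infty(I\times N,\cplx)\to C^\infty(I\times M,\cplx)$, $g\mapsto g\circ\ophi$, carries $\Fa(I\times N)$ into $\Fa(I\times M)$ (with inverse carrying $\Fa(I\times M)$ into $\Fa(I\times N)$), so that it restricts to a homeomorphism $\phi_\Fa$. The plan is to deduce this directly from Lemma \ref{lem.phiresfawpfaw} applied to the cobordisms $W = I\times M$ and $W' = I\times N$, together with the diffeomorphism $\Phi = \ophi : I\times M\to I\times N$. So the first step is to check that the hypotheses of Lemma \ref{lem.phiresfawpfaw} are met: both $W$ and $W'$ carry progressive time by Example \ref{expl.cylprogr}; the diffeomorphism $\ophi(t,x)=(t,\phi(x))$ is time preserving, since the time function on $I\times M$ is projection onto the first coordinate and $\ophi$ fixes that coordinate, hence $\ophi$ is time consistent by (\ref{equ.phiwtwpt}); and for every $k$ there is an $l$ with $\ophi((I\times M)(k)) = W'(l)$, because $(I\times M)(k) = I\times M(k)$, $\ophi(I\times M(k)) = I\times \phi(M(k)) = I\times N(l) = (I\times N)(l)$ by the standing hypothesis that $\phi(M(k))=N(l)$ for a suitable $l$.

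Once these hypotheses are verified, Lemma \ref{lem.phiresfawpfaw} immediately yields that $(\ophi)_\infty = \phi_\infty$ restricts to a map $\Phi_\Fa : \Fa(I\times N)\to \Fa(I\times M)$, $G\mapsto G\circ\ophi$, which is exactly the desired $\phi_\Fa$. For the inverse: the hypothesis on $\phi$ also guarantees (as already remarked in the excerpt just before the lemma) that for every $l$ there is a $k$ with $\phi^{-1}(N(l)) = M(k)$, so $\phi^{-1}$ satisfies the same standing hypothesis and $\overline{\phi^{-1}} = (\ophi)^{-1}$ (using (\ref{equ.opsipgiisopsiophi}) with $\psi = \phi^{-1}$, or directly) is again time preserving. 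Applying Lemma \ref{lem.phiresfawpfaw} to $\overline{\phi^{-1}}$ gives that $(\phi^{-1})_\infty$ restricts to a map $\Fa(I\times M)\to\Fa(I\times N)$. Since $(\phi^{-1})_\infty$ is the inverse homeomorphism of $\phi_\infty$ at the level of all smooth maps, the two restrictions are mutually inverse, so $\phi_\Fa$ is a homeomorphism onto $\Fa(I\times M)$, establishing the commutative diagram in the statement.

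There is essentially no obstacle here: the lemma is stated as a special case of Lemma \ref{lem.phiresfawpfaw}, and the only thing to do is the bookkeeping verifying that the cylinder $\ophi$ meets the three hypotheses (progressive time, time consistency, slicewise compatibility with the $k$-filtration). The one point that deserves a sentence of care is the slicewise condition: one must note $(I\times M)(k) = I\times M(k)$ and invoke the standing assumption $\phi(M(k)) = N(l)$, and symmetrically for $\phi^{-1}$, so that Lemma \ref{lem.phiresfawpfaw} applies in both directions. No use of the Brauer-invariance half of Lemma \ref{lem.phiresfawpfaw} is needed for this statement, since here we only assert the homeomorphism of field spaces, not a statement about $\mbs$.
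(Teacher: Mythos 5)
Your proposal is correct and follows the paper's own route exactly: verify progressive time for the cylinders via Example \ref{expl.cylprogr}, note that $\ophi$ is time preserving (hence time consistent), check $\ophi((I\times M)(k)) = (I\times N)(l)$ from the standing hypothesis on $\phi$, and invoke Lemma \ref{lem.phiresfawpfaw}. Your extra paragraph on the inverse via $\overline{\phi^{-1}}$ is harmless but not needed, since Lemma \ref{lem.phiresfawpfaw} already delivers a homeomorphism whose inverse is the restriction of $(\Phi^{-1})_\infty$.
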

\begin{proof}
By Example \ref{expl.cylprogr}, the cylindrical cobordisms $W=I\times M$ and $W'=I\times N$
have progressive time.
The diffeomorphism $\overline{\phi}$ is by definition time preserving, in particular
time consistent.
Given a natural number $k$, there is an $l$ with $\phi (M(k))= N(l)$. For this $l$,
$\overline{\phi}(W(k))= W'(l)$. The conclusion follows from Lemma \ref{lem.phiresfawpfaw}.
\end{proof}

\begin{lemma} \label{lem.phimorse}
The homeomorphism $\phi_{\Fa}$ restricts to a homeomorphism
$\phi_{\closed}$ on $\Fa (N)$:
\[ \xymatrix@R=15pt{
\Fa (I\times N) \ar[r]^{\phi_{\Fa}} & \Fa (I\times M) \\
\Fa (N) \ar@{^{(}->}[u] \ar@{..>}[r]_{\phi_{\closed}} & \Fa (M). \ar@{^{(}->}[u]
} \]
\end{lemma}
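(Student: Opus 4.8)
The plan is to reduce the statement to a single assertion about Brauer morphisms. The preceding lemma already shows that $\phi_{\Fa}(g)=g\circ\ophi$ lies in $\Fa(I\times M)$ whenever $g\in\Fa(I\times N)$, and by definition $\Fa(N)\subset\Fa(I\times N)$ and $\Fa(M)\subset\Fa(I\times M)$ are precisely the subsets on which $\mbs$ is an identity morphism of $\Br$. So it suffices to prove that $\mbs(g\circ\ophi)$ is an identity whenever $\mbs(g)$ is. Granting this, $\phi_{\closed}$ is simply the restriction of the homeomorphism $\phi_\infty$ to these subspaces, hence continuous, and its two-sided inverse is $(\phi^{-1})_{\closed}$, built by the identical construction from $\phi^{-1}$, which inherits the slice hypothesis (for every $l$ there is a $k$ with $\phi^{-1}(N(l))=M(k)$).

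First I would record a slice-bookkeeping fact. For each $k\in s(M)$ the hypothesis on $\phi$ supplies a unique $l_k\in\nat$ with $\phi(M(k))=N(l_k)$, and necessarily $l_k\in s(N)$. Since $\phi$ is a bijection and the families $\{M(k)\}_{k\in s(M)}$ and $\{N(l)\}_{l\in s(N)}$ partition $M$ and $N$, the assignment $k\mapsto l_k$ is injective; surjectivity onto $s(N)$ follows from the parenthetical consequence of the hypothesis. Thus $k\mapsto l_k$ is a bijection $s(M)\to s(N)$.

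Next I would work one slice at a time. For each $k\in s(M)$ the diffeomorphism $\ophi$, $\ophi(t,x)=(t,\phi(x))$, restricts to a time-preserving diffeomorphism $\ophi_k\colon[0,1]\times M(k)\to[0,1]\times N(l_k)$, and $(g\circ\ophi)(k)=g(l_k)\circ\ophi_k$ with $g(l_k)=g|_{[0,1]\times N(l_k)}$. The claim to establish here is $\mbs(g(l_k)\circ\ophi_k)=\mbs(g(l_k))$: the map $\ophi_k$ preserves time on the nose, hence carries the incoming and outgoing boundaries to the incoming and outgoing boundaries; moreover $S(g(l_k)\circ\ophi_k)=\ophi_k^{-1}(S(g(l_k)))$ and $\Ima\circ(g(l_k)\circ\ophi_k)=(\Ima\circ g(l_k))\circ\ophi_k$, so $\ophi_k$ matches the orderings induced by $\Ima$ on the two pieces of the singular set over the endpoints, the endpoint-connection pattern of the connected components, and the number of closed components. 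This is exactly the Brauer-invariance computation from the proof of Lemma~\ref{lem.phiresfawpfaw} (compare also Lemma~\ref{lem.singtanglereparam}), which one may apply after translating $[0,1]\times N(l_k)$ into the same $\real^D$-slice as $[0,1]\times M(k)$ --- a translation that does not affect $\mbs$ --- or simply repeat verbatim.

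Finally I would assemble the slices. Using the bijection $k\mapsto l_k$,
\[
\mbs(g\circ\ophi)=\bigotimes_{k\in s(M)}\mbs\big((g\circ\ophi)(k)\big)=\bigotimes_{k\in s(M)}\mbs\big(g(l_k)\big).
\]
Since $g\in\Fa(N)$ we have $\mbs(g)=\bigotimes_{l\in s(N)}\mbs(g(l))=1$, an identity morphism, so Lemma~\ref{lem.idfactorstriv} forces $\mbs(g(l))$ to be an identity for every $l\in s(N)$; as $\{l_k:k\in s(M)\}=s(N)$, every factor above is an identity and hence so is $\mbs(g\circ\ophi)$. Therefore $g\circ\ophi\in\Fa(M)$, which completes the reduction of the first paragraph. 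The step I expect to be the main obstacle is the slice-wise Brauer invariance of the third paragraph: the point is that $\mbs$ is insensitive not only to reparametrization within a single slice but also to the shift $k\mapsto l_k$ in the ambient slice index, so the tensor decomposition of $\mbs(g\circ\ophi)$ must be read off along this bijection rather than along the identity, and one has to be careful to keep the two decompositions aligned.
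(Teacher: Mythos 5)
Your proposal is correct and follows essentially the same route as the paper: work slice by slice, use that $\ophi$ identifies singular sets and preserves imaginary parts (hence the orderings, endpoint connections and loop count), and combine this with Lemma~\ref{lem.idfactorstriv} to see that each $\mbs(g(l))$ is an identity. The paper verifies $\mbs((g\ophi)(k))=1_{[c]}$ directly by tracking the components $C_i$ of $S(g(l))$ rather than factoring through the general slice-wise invariance $\mbs(g(l_k)\circ\ophi_k)=\mbs(g(l_k))$, but the underlying computation is the same, and your caution that the index shift $k\mapsto l_k$ prevents a verbatim citation of Lemma~\ref{lem.phiresfawpfaw} (so one translates slices or repeats the argument) is exactly right.
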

\begin{proof}
Let $g\in \Fa (N)$; that is, $g\in \Fa (I\times N)$ is a fold field with $\mbs (g)=1\in \Mor (\Br)$.
Since $\mbs (g) = \bigotimes_{l\in \nat} \mbs (g(l)),$ an application of 
Lemma \ref{lem.idfactorstriv} shows that $\mbs (g(l))=1$ for every $l$.
Let $k$ be a natural number. We shall show that $\mbs (f(k))=1$, where
$f =g\overline{\phi}$. Let $l$ be such that $\phi (M(k))= N(l)$.
Then $\ophi ([0,1]\times M(k))= [0,1]\times N(l).$ By the injectivity of $g(l)$
on $S(g(l))\cap 0\times N(l),$ there is a unique ordering $p_1,\ldots, p_c$ of the
points of $S(g(l))\cap 0\times N(l)$ such that $\Ima g(p_i) < \Ima g(p_j)$ if and only if
$i<j$. Furthermore, there is a unique ordering $q_1,\ldots, q_c$ of the
points of $S(g(l))\cap 1\times N(l)$ such that $\Ima g(q_i) < \Ima g(q_j)$ if and only if
$i<j$. Since $\mbs (g(l))$ is the identity, the sets $S(g(l))\cap 0\times N(l)$ and
$S(g(l))\cap 1\times N(l)$ indeed have the same cardinality $c$ and $\{ p_i, q_i \}$
is the boundary of a connected component $C_i$ of $S(g(l))$ for all $i$. 
From $\ophi S(g\ophi)=S(g)$ it follows that
\begin{align*}
\ophi S(f(k)) &= \ophi S(g \ophi|_{I\times M(k)}) = \ophi (S(g\ophi)\cap I\times M(k)) \\
&= \ophi S(g\ophi) \cap \ophi (I\times M(k)) = S(g)\cap I\times N(l) = S(g|_{I\times N(l)}) = S(g(l)).
\end{align*}
Hence
\[ \ophi (S(f(k))\cap r\times M(k)) = \ophi S(f(k))\cap \ophi (r\times M(k))
 = S(g(l))\cap r\times N(l),~ r=0,1, \]
and
\[ S(f(k))\cap 0\times M(k) = \{ \ophi^{-1} (p_1), \ldots, \ophi^{-1} (p_c) \},\]
\[ S(f(k))\cap 1\times M(k) = \{ \ophi^{-1} (q_1), \ldots, \ophi^{-1} (q_c) \}. 
\]
Set $P_i = \ophi^{-1} (p_{i})$ and $Q_i = \ophi^{-1} (q_{i}),$ $i=1,\ldots, c$.
Then $S(f(k))\cap 0\times M(k) = \{ P_1,\ldots, P_c \}$ with
\begin{align*}
\Ima f(P_i) < \Ima f(P_j) & \Leftrightarrow 
 \Ima (g\ophi)(\ophi^{-1} p_{i}) < \Ima (g\ophi)(\ophi^{-1} p_{j}) \\
& \Leftrightarrow \Ima g(p_{i}) < \Ima g(p_{j}) \\
& \Leftrightarrow i<j
\end{align*}
and
$S(f(k))\cap 1\times M(k) = \{ Q_1,\ldots, Q_c \}$ with
\[ \Ima f(Q_i) < \Ima f(Q_j)  \Leftrightarrow  i<j. \]
For every $i,$ $\ophi^{-1} (C_{i})$ is a connected component of
$S(f)\cap [0,1] \times M(k)$ with
$\partial \ophi^{-1} (C_{i}) = \{ P_i, Q_i \}.$ Thus
$\mbs (f(k))$ connects $(0,i,0,0)$ to $(1,i,0,0)$ for every $i=1,\ldots, c$.
Since there are no loops in $S(f(k))$ (because there are no loops in $S(g(l))$),
we conclude that $\mbs (f(k))=1_{[c]} \in \operatorname{End}_{\Br} ([c])$.
Letting $k$ vary,
$\mbs (f) = \bigotimes_{k\in \nat} \mbs (f(k)) = \bigotimes_k 1 =1.$
This shows that $f = \phi_{\Fa} (g)$ is an element of $\Fa (M) \subset
\Fa (I\times M)$ and we obtain a map $\phi_{\closed}: \Fa (N)\to \Fa (M)$.
The inverse of $\phi_{\closed}$ is given by restricting the inverse of 
$\phi_{\Fa}$ to $\Fa (M)$.
\end{proof}
We proceed to define $\phi_\ast: Z(M)\to Z(N)$. Let $z\in Z(M)$ be a state,
i.e. a function $z: \Fa (M)\to Q= Q(i,e),$ $(i,e)$ a duality structure on
a vector space $V$ as before. Composition with $\phi_{\closed}$ yields a state
$\phi_\ast (z)$ in $Z(N)$, that is, we put
\[ \phi_\ast (z) = z\circ \phi_{\closed} \in Z(N). \]
Given $g\in \Fa (N),$ we have the more explicit formula
\[ \phi_\ast (z)(g) = z(g\ophi) \in Q. \]
The function $\phi_\ast$ is additive: 
\[ \phi_\ast (z_1 + z_2) = \phi_\ast (z_1) + \phi_\ast (z_2). \]
Let $\psi: N\to P$ be another diffeomorphism such that for every $k$
there is an $l$ with $\psi (N(k))=P(l)$. Then, using
(\ref{equ.opsipgiisopsiophi}), we obtain for every $h\in \Fa (P),$
\[ \psi_\ast (\phi_\ast (z))(h) = \phi_\ast (z) (h\overline{\psi}) =
 z ((h\overline{\psi})\ophi) = z (h (\overline{\psi \phi}))=
 (\psi \phi)_\ast (z)(h), \]
proving the functoriality relation
\[ \psi_\ast \circ \phi_\ast = (\psi \circ \phi)_\ast. \]
Of course, $(\id_M)_\ast = \id_{Z(M)}: Z(M)\to Z(M).$
Together, these properties imply
\[ (\phi^{-1})_\ast \phi_\ast = (\phi^{-1} \phi)_\ast = (\id_M)_\ast = \id_{Z(M)} \]
and $\phi_\ast (\phi^{-1})_\ast = \id_{Z(N)}$. This shows that $\phi_\ast$ is
an isomorphism.\\

In a similar manner, diffeomorphisms induce maps on tensor products
of state modules. Let $\phi^1: M_1 \to N_1$ and $\phi^2: M_2 \to N_2$
be diffeomorphisms such that for every $k\in \nat$ and $i=1,2,$ there is an $l\in \nat$
with $\phi^i (M_i (k))=N_i (l).$ Let 
\[ \phi = \phi^1 \sqcup \phi^2: M_1 \sqcup M_2 \stackrel{\cong}{\longrightarrow}
  N_1 \sqcup N_2 \]
be the disjoint union of $\phi^1$ and $\phi^2$. Then $\phi$ induces an
isomorphism
\[ \phi_\ast: Z(M_1)\hotimes Z(M_2) \longrightarrow
  Z(N_1)\hotimes Z(N_2). \]
The image of a state $z: \Fa (M_1)\times \Fa (M_2)\to Q$
under $\phi_\ast$ on a pair 
$(g_1, g_2)\in \Fa (N_1)\times \Fa (N_2)$ is given by
\[ \phi_\ast (z)(g_1, g_2) = z(\phi^1_{\closed} (g_1), \phi^2_{\closed}(g_2)), \]
using the homeomorphisms
\[ \phi^i_{\closed}:\Fa (N_i) \to \Fa (M_i),~ i=1,2, \]
given by Lemma \ref{lem.phimorse}. The assumptions on $\phi^1$ and $\phi^2$
do not imply that for a given $k$, we can find an $l$ with
$\phi ((M_1 \sqcup M_2)(k))=(N_1 \sqcup N_2)(l)$, but if such an $l$ can be found
for every $k$, then $\phi$ also induces
an isomorphism $\phi_\ast: Z(M_1 \sqcup M_2)\to Z(N_1 \sqcup N_2)$.
If $M_1, M_2$ are well-separated and $N_1,N_2$ are well-separated, then
by Proposition \ref{prop.zmonoidalonmodules},
$Z(M_1\sqcup M_2)\cong Z(M_1)\hotimes Z(M_2)$ and
$Z(N_1\sqcup N_2)\cong Z(N_1)\hotimes Z(N_2)$. In this situation there
is then a commutative diagram
\[ \xymatrix{
Z(M_1 \sqcup M_2) \ar[r]^{\phi_\ast} & Z(N_1 \sqcup N_2) \\
Z(M_1)\hotimes Z(M_2) \ar[u]^{\cong} \ar[r]^{\phi_\ast} &
  Z(N_1)\hotimes Z(N_2). \ar[u]_{\cong}
} \]

Let $W$ be a cobordism with progressive time from $M$ to $N$ and 
$W'$ a cobordism with progressive time from $M'$ to $N'$. Assume without
loss of generality that $W$ and $W'$ are endowed with equal cylinder scales
$\epsilon_W = \epsilon_{W'}.$
Let $\phi: \partial W \to \partial W'$ be a diffeomorphism that preserves
incoming and outgoing boundaries, that is, for every $k$ there is an $l$
with $\phi (M(k))=M'(l)$ and for every $k$ there is an $l$ with
$\phi (N(k))=N'(l)$. In this situation, we have the following result concerning
the introduction of boundary conditions.
\begin{lemma} \label{lem.phifb}
Let $f' \in \Fa (M')$ and $g' \in \Fa (N')$ be boundary conditions.
If $\phi$ extends to a time consistent diffeomorphism 
$\Phi: W\to W'$ which agrees with $\phi$ levelwise near the boundary, i.e.
$\Phi (t,x)=(t,\phi (x))$ for all $(t,x)\in [0,\epsilon_W] \times M \sqcup
[1-\epsilon_W, 1]\times N$, and has
for every $k$ an $l$ with $\Phi (W(k))=W'(l),$ then $\Phi_{\Fa}$ restricts
to a homeomorphism $\Phi_{\Fa \Ba}$ respecting the boundary conditions:
\[ \xymatrix{
\Fa (W') \ar[r]^{\Phi_{\Fa}} & \Fa (W) \\
\Fa (W';f',g') \ar@{^{(}->}[u] \ar@{..>}[r]_<<<<<<{\Phi_{\Fa \Ba}} & 
     \Fa (W; f'\ophi|_{I\times M}, g'\ophi|_{I\times N}). \ar@{^{(}->}[u]
} \]
\end{lemma}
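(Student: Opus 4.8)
The plan is to show that if $F' \in \Fa(W'; f', g')$, then $F = \Phi_{\Fa}(F') = F' \circ \Phi$ lies in $\Fa(W; f'\ophi|_{I\times M}, g'\ophi|_{I\times N})$, and conversely. We already know from Lemma \ref{lem.phiresfawpfaw} that $\Phi_\infty$ restricts to a homeomorphism $\Phi_{\Fa}: \Fa(W') \to \Fa(W)$, so the only thing left to check is that the boundary conditions are matched correctly under this homeomorphism. Thus the real content is the bookkeeping of the ``$\approx$'' relation near the boundary.

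First I would fix $F' \in \Fa(W'; f', g')$, so by definition there are $\epsilon'(l), \epsilon''(l) \in (0, \epsilon_{W'})$ with $F'|_{[0,\epsilon'(l)]\times M'(l)} \approx f'(l)$ and $F'|_{[1-\epsilon''(l),1]\times N'(l)} \approx g'(l)$ for all $l \in s(W')$. Set $F = F'\circ \Phi$. For each $k \in s(W)$, pick the $l$ with $\Phi(W(k)) = W'(l)$; near the incoming boundary $\Phi$ is given by $\Phi(t,x) = (t,\phi(x))$, hence $\Phi$ maps $[0,\epsilon_W]\times M(k)$ onto $[0,\epsilon_W]\times \phi(M(k)) = [0,\epsilon_W]\times M'(l)$ by $(t,x)\mapsto(t,\phi(x))$. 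Choose $\epsilon(k) = \min(\epsilon'(l), \epsilon_W) \in (0,\epsilon_W)$. Then on $[0,\epsilon(k)]\times M(k)$ we have $F(t,x) = F'(t,\phi(x)) = (F'|_{[0,\epsilon'(l)]\times M'(l)})(t,\phi(x))$, and since $F'|_{[0,\epsilon'(l)]\times M'(l)} \approx f'(l)$ via a diffeomorphism $\xi: [0,\epsilon'(l)]\to[0,1]$ with $\xi(0)=0$ and $F'(t,y) = f'(\xi(t),y)$, restricting $\xi$ to $[0,\epsilon(k)]$ gives $F(t,x) = f'(\xi(t),\phi(x)) = (f'\circ\ophi)(\xi(t),x)$, which is exactly $F|_{[0,\epsilon(k)]\times M(k)} \approx (f'\ophi|_{I\times M})(k)$, noting that $(f'\ophi|_{I\times M})(k) = f'(l)\circ(\ophi|_{[0,1]\times M(k)})$ and that $\ophi|_{[0,1]\times M(k)}(t,x)=(t,\phi(x))$. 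The identical argument at the outgoing boundary, using $\Phi(t,x)=(t,\phi(x))$ there too and the hypothesis $\phi(N(k))=N'(l')$, produces $\epsilon'(k)$ with $F|_{[1-\epsilon'(k),1]\times N(k)} \approx (g'\ophi|_{I\times N})(k)$. Hence $F \in \Fa(W; f'\ophi|_{I\times M}, g'\ophi|_{I\times N})$, so $\Phi_{\Fa}$ carries the left-hand set into the right-hand one. For the converse inclusion one applies the same reasoning to the inverse diffeomorphism $\Phi^{-1}: W' \to W$, which is time consistent by Lemma \ref{lem.invtimecons}, agrees with $\phi^{-1}$ levelwise near the boundary, and satisfies the analogous slice condition; this shows $(\Phi^{-1})_{\Fa}$ maps $\Fa(W; f'\ophi|_{I\times M}, g'\ophi|_{I\times N})$ into $\Fa(W'; f', g')$, using the evident fact that $(f'\ophi|_{I\times M})\circ\overline{\phi^{-1}}|_{I\times M'} = f'$ and similarly for $g'$. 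Since $(\Phi^{-1})_{\Fa}$ is the inverse of $\Phi_{\Fa}$, the restriction $\Phi_{\Fa\Ba}$ is a bijection, and it is a homeomorphism because it is the restriction of the homeomorphism $\Phi_{\Fa}$ (and its inverse is a restriction of the homeomorphism $(\Phi^{-1})_{\Fa}$) to subspaces.

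The main obstacle is bookkeeping rather than conceptual: one must be careful that the slicewise decomposition $W(k) \mapsto W'(l)$ is compatible with the levelwise form of $\Phi$ near the boundary, and that the various cylinder-scale parameters ($\epsilon_W = \epsilon_{W'}$, and the $\epsilon'(l)$ coming from $F'$) can be combined into admissible choices $\epsilon(k), \epsilon'(k) \in (0,\epsilon_W)$; taking minima as above handles this. There is also a minor point to verify, namely that $(f'\ophi|_{I\times M})$ is genuinely an element of $\Fa(M)$ so that the right-hand set is well-defined --- but this is precisely what Lemma \ref{lem.phimorse} (applied to $\phi^{-1}: N' \to M$, or directly to the restriction of $\Phi$) guarantees, since $\ophi|_{I\times M}$ sends the cylinder $I\times M$ to the cylinder $I\times M'$ levelwise and preserves the triviality of the Brauer morphism. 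No genuinely hard analytic input is needed here; everything rests on Lemmas \ref{lem.relapprox}, \ref{lem.phiresfawpfaw}, \ref{lem.phimorse}, and \ref{lem.invtimecons} already established.
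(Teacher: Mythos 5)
Your proposal is correct and follows essentially the same route as the paper's proof: unwind the $\approx$ relations via the diffeomorphisms $\xi$, use the levelwise form $\Phi(t,x)=(t,\phi(x))$ near the boundary to rewrite $F'\Phi(t,x)=f'(\xi(t),\phi(x))=(f'\ophi)(\xi(t),x)$ with $\epsilon(k)$ admissible because $\epsilon_W=\epsilon_{W'}$, and obtain the inverse by restricting $(\Phi^{-1})_{\Fa}$ using $(f'\ophi|_{I\times M})\overline{\phi^{-1}}|_{I\times M'}=f'$. (Minor cosmetic points: the minimum in $\epsilon(k)=\min(\epsilon'(l),\epsilon_W)$ is automatically $\epsilon'(l)$ since $\epsilon'(l)<\epsilon_{W'}=\epsilon_W$, and the well-definedness aside should cite Lemma \ref{lem.phimorse} for the restriction $\phi|_M:M\to M'$ rather than ``$\phi^{-1}:N'\to M$''.)
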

\begin{proof}
Given a field $G\in \Fa (W'; f', g')$, there exist for every $l=0,1,2,\ldots$
numbers $\delta (l), \delta'(l)\in (0,\epsilon_{W'})$ such that
\[ G|_{[0,\delta (l)] \times M'(l)} \approx f'(l),~
  G|_{[1-\delta' (l),1] \times N'(l)} \approx g'(l) \]
for all $l$. Hence there are diffeomorphisms
$\xi: [0,\delta (l)]\to [0,1]$ with $\xi (0)=0$ and such that
\[ G(t,y) = f'(\xi (t), y),~ (t,y)\in [0,\delta (l)]\times M'(l). \]
Let $k$ be a natural number and let $l$ be such that
$\phi (M(k))=M'(l)$. Setting $\epsilon (k)=\delta (l)$, we have
$0<\epsilon (k)<\epsilon_W$ since $\epsilon_W = \epsilon_{W'}$.
For $(t,x)\in [0,\epsilon (k)]\times M(k)$, we obtain
\[ G\Phi (t,x) = G(t,\phi (x))=f'(\xi (t),\phi (x))=
 (f'\ophi|_{I\times M})(\xi (t),x), \]
which shows that
\[ (G\Phi)|_{[0,\epsilon (k)] \times M(k)} \approx (f' \ophi|_{I\times M})(k). \]
Now taking $l$ such that $\phi (N(k))=N'(l)$ and setting 
$\epsilon' (k)=\delta' (l)$, we see similarly that
\[ (G\Phi)|_{[1-\epsilon' (k),1] \times N(k)} \approx (g' \ophi|_{I\times N})(k). \]
Hence $\Phi_{\Fa} (G)=G\Phi \in \Fa (W; f'\ophi|_{I\times M}, g'\ophi|_{I\times N})$
and we obtain the desired map $\Phi_{\Fa \Ba}$. The inverse of
$\Phi_{\Fa \Ba}$ is given by restricting the inverse $(\Phi_{\Fa})^{-1} = (\Phi^{-1})_{\Fa}$
to $\Fa (W; f'\ophi|_{I\times M}, g'\ophi|_{I\times N})$, noting that
$(f'\ophi|_{I\times M}) \overline{\phi^{-1}}|_{I\times M'} = f'$
(analogously for $g'$). 
\end{proof}

\begin{thm} \label{thm.tcdiffeoinvariance}
(Time Consistent Diffeomorphism Invariance.)
Let $W$ be a cobordism with progressive time from $M$ to $N$ and 
$W'$ a cobordism with progressive time from $M'$ to $N'$. Assume without
loss of generality that $W$ and $W'$ are endowed with equal cylinder scales
$\epsilon_W = \epsilon_{W'}.$
Let $\phi: \partial W \to \partial W'$ be a diffeomorphism that preserves
incoming and outgoing boundaries in the sense that
$\phi (M(k))=M'(k)$ and $\phi (N(k))=N'(k)$ for every $k$.
If $\phi$ extends to a time consistent diffeomorphism 
$\Phi: W\to W'$ which agrees with $\phi$ levelwise near the boundary, i.e.
$\Phi (t,x)=(t,\phi (x))$ for all $(t,x)\in [0,\epsilon_W] \times M \sqcup
[1-\epsilon_W, 1]\times N$, and has $\Phi (W(k))=W'(k)$ for every $k$, then
the state sums of $W$ and $W'$ are related by
\[ \phi_\ast (Z_W) = Z_{W'} \in Z(M')\hotimes Z(N') \]
under the isomorphism
\[ \phi_\ast: Z(M)\hotimes Z(N)\longrightarrow Z(M')\hotimes Z(N'). \]
\end{thm}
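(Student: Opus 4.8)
The plan is to evaluate both sides of the claimed identity on an arbitrary boundary condition $(f_{M'}, f_{N'}) \in \Fa(M') \times \Fa(N')$, to reindex one of the two resulting sums over $Q$ by the bijection of fold fields supplied by Lemma \ref{lem.phifb}, and then to identify the summands using the Brauer invariance of Lemma \ref{lem.phiresfawpfaw}. Throughout, $Z(M)\hotimes Z(N)$ is identified with $\fun_Q(\Fa(M)\times\Fa(N))$ via Theorem \ref{thm.funiso}, so that $Z_W$ is the function $(f_M,f_N)\mapsto \sum_{F\in\Fa(W;f_M,f_N)} Y\mbs(F)$ of (\ref{equ.statesumdef}); no well-separation hypothesis is needed for this identification.

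First I would unwind the definition of $\phi_\ast$. Since $\phi$ carries $M$ onto $M'$ and $N$ onto $N'$, it decomposes as $\phi = \phi^1 \sqcup \phi^2$ with $\phi^1 = \phi|_M\colon M\to M'$, $\phi^2=\phi|_N\colon N\to N'$, and each $\phi^i$ satisfies the levelwise hypothesis needed to induce $\phi^i_\ast$ (take $l=k$). By the construction of $\phi_\ast$ on tensor products preceding the theorem,
\[ \phi_\ast(Z_W)(f_{M'}, f_{N'}) = Z_W\big(\phi^1_{\closed}(f_{M'}),\, \phi^2_{\closed}(f_{N'})\big), \]
where $\phi^i_{\closed}$ is the homeomorphism of Lemma \ref{lem.phimorse}, explicitly $\phi^1_{\closed}(f_{M'}) = f_{M'}\ophi|_{I\times M}$ and $\phi^2_{\closed}(f_{N'}) = f_{N'}\ophi|_{I\times N}$. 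By (\ref{equ.statesumdef}) the right-hand side equals $\sum_{F\in\Fa(W;\,\phi^1_{\closed}(f_{M'}),\,\phi^2_{\closed}(f_{N'}))} Y\mbs(F)$.

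Next I would apply Lemma \ref{lem.phifb}. The hypotheses of the theorem (a time consistent extension $\Phi$ agreeing levelwise with $\phi$ near $\partial W$, with $\Phi(W(k))=W'(k)$ for all $k$) are exactly those of that lemma, so $\Phi_{\Fa}$ restricts to a homeomorphism
\[ \Phi_{\Fa\Ba}\colon \Fa(W'; f_{M'}, f_{N'}) \stackrel{\cong}{\longrightarrow} \Fa\big(W;\, \phi^1_{\closed}(f_{M'}),\, \phi^2_{\closed}(f_{N'})\big),\qquad G\longmapsto G\Phi. \]
In particular the two index sets of the two state sums are in bijection, and Eilenberg's reindexing axiom (equivalently Proposition \ref{prop.contidemsemiring}, since $Q$ is a continuous idempotent complete monoid) gives
\[ \sum_{F\in\Fa(W;\,\phi^1_{\closed}(f_{M'}),\,\phi^2_{\closed}(f_{N'}))} Y\mbs(F) = \sum_{G\in\Fa(W'; f_{M'}, f_{N'})} Y\mbs(G\Phi). \]
Because $\Phi$ is levelwise near the boundary, $\Phi(W_0)=\{0\}\times M' = W'_0$, so $\Phi$ is positive and $\Phi(W(k))=W'(k)$ for all $k$; hence the Brauer invariance part of Lemma \ref{lem.phiresfawpfaw} yields $\mbs(G\Phi)=\mbs(G)$, so $Y\mbs(G\Phi)=Y\mbs(G)$ in $Q$ for each $G$. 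Substituting, the last sum becomes $\sum_{G\in\Fa(W';f_{M'},f_{N'})} Y\mbs(G) = Z_{W'}(f_{M'},f_{N'})$. Since $(f_{M'},f_{N'})$ was arbitrary, $\phi_\ast(Z_W) = Z_{W'}$ in $Z(M')\hotimes Z(N')$.

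The argument is bookkeeping once Lemmas \ref{lem.phifb} and \ref{lem.phiresfawpfaw} are in hand; the one point needing attention is to confirm that the target boundary conditions $f_{M'}\ophi|_{I\times M}$ and $f_{N'}\ophi|_{I\times N}$ appearing in Lemma \ref{lem.phifb} coincide with the fold fields $\phi^1_{\closed}(f_{M'})$, $\phi^2_{\closed}(f_{N'})$ against which $\phi_\ast(Z_W)$ is evaluated, so that the two families really are indexed over the same set and compared under the correct bijection — this follows directly from the definition of $\phi_{\closed}$ but must be spelled out to make the reindexing legitimate. I do not anticipate any subtler obstacle.
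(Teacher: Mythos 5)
Your proposal is correct and follows essentially the same route as the paper's proof: evaluate both sides on a boundary condition, identify the two families of admissible fold fields via the homeomorphism $\Phi_{\Fa \Ba}$ of Lemma \ref{lem.phifb}, and use the Brauer invariance of Lemma \ref{lem.phiresfawpfaw} (noting $\Phi(W_0)=W'_0$). The only, harmless, difference is in the final summation step: you reindex by the bijection and compare summands termwise, which uses only the bijection-invariance of the Eilenberg summation law, whereas the paper compares the image sets $Y(\mathfrak{S}(W))=Y(\mathfrak{S}(W'))$ and then invokes continuity and idempotence of $Q$ via Propositions \ref{prop.qcontinuous} and \ref{prop.contidemsemiring}; both arguments are valid.
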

\begin{proof}
Let $\phi^1: M\to M'$ and $\phi^2: N\to N'$ denote the restrictions of $\phi$
to the incoming and outgoing boundaries, respectively. We have
$\overline{\phi^1} = \ophi|_{I\times M}$ and
$\overline{\phi^2} = \ophi|_{I\times N}$.
Subject to the boundary condition $(f',g')\in \Fa (M')\times \Fa (N'),$
the pushforward $\phi_\ast (Z_W)$ is given by
\[ \phi_\ast (Z_W)(f',g') = Z_W (\phi^1_{\closed} (f'), \phi^2_{\closed} (g')) =
 Z_W (f' \overline{\phi^1}, g' \overline{\phi^2}) =
 \sum_{F\in \Fa (W; f'\ophi|_{I\times M}, g'\ophi|_{I\times N})} Y\mbs (F), \]
while the state sum $Z_{W'}$ on $(f',g')$ is
\[ Z_{W'} (f',g') = \sum_{G\in \Fa (W'; f',g')} Y\mbs (G). \]
Let $\mathfrak{S}(W)$ be the set
\[
\mathfrak{S}(W)  = 
 \{ \mbs (F) ~|~ F\in \Fa (W;  f'\ophi|_{I\times M}, g'\ophi|_{I\times N}) \} 
\]
and let $\mathfrak{S}(W')$ be the set
\[
\mathfrak{S}(W')  = 
 \{ \mbs (G) ~|~ G\in \Fa (W';  f', g') \}. 
\]
Lemma \ref{lem.phifb} furnishes a homeomorphism
\[ \Phi_{\Fa \Ba}: \Fa (W';f',g') \stackrel{\cong}{\longrightarrow} 
     \Fa (W; f'\ophi|_{I\times M}, g'\ophi|_{I\times N})  \]
with inverse $(\Phi^{-1})_{\Fa \Ba}$.
Since $\Phi (W(k))=W'(k)$ for all $k$ and
$\Phi (W_0)=\Phi (M)=M' = W'_0,$ the Brauer invariance under $\Phi_{\Fa}$
and under $(\Phi^{-1})_{\Fa}$ provided by Lemma \ref{lem.phiresfawpfaw}
holds. Given an element $\mbs (G)\in \mathfrak{S}(W'),$ 
$\Phi_{\Fa \Ba} (G)$ is a fold field in $\Fa (W; f'\ophi|_{I\times M}, g'\ophi|_{I\times N}),$
and by Brauer invariance
\[ \mbs (G)= \mbs (\Phi_{\Fa \Ba} G) \in \mathfrak{S}(W). \]
Conversely, given an element $\mbs (F)\in \mathfrak{S}(W),$ 
$(\Phi^{-1})_{\Fa \Ba} (F)$ is a fold field in $\Fa (W'; f', g')$
with
\[ \mbs (F)= \mbs ((\Phi^{-1})_{\Fa \Ba} F) \in \mathfrak{S}(W'). \]
Consequently, $\mathfrak{S}(W) =
\mathfrak{S}(W')$ and, applying the functor $Y$,
$Y(\mathfrak{S}(W)) = Y(\mathfrak{S}(W')).$
By Proposition \ref{prop.qcontinuous}, the idempotent additive monoid $Q$ is continuous.
Thus we can conclude from Proposition \ref{prop.contidemsemiring}
that
\[  \sum_{F\in \Fa (W; f'\ophi|_{I\times M}, g'\ophi|_{I\times N})} Y\mbs (F) =
   \sum_{G \in \Fa (W'; f', g')} Y\mbs (G). \]
\end{proof}

\begin{cor} \label{cylstabpertbnd}
(Cylindrical Stability under Perturbation of Boundary Conditions.)
Let $\phi,\psi: M\to M$ be isotopic diffeomorphisms (with $\phi (M(k))=M(k)$ for all $k$) and
$f,g\in \Fa (M)$. Then the state sum of the cylinder $I\times M$
satisfies
\[ Z_{I\times M} (f\ophi, g\overline{\psi})= Z_{I\times M} (f,g). \]
In particular, if $\phi: M\to M$ is isotopic to the identity, then
$Z_{I\times M}(f\ophi, g)=Z_{I\times M} (f,g)$; analogously for $\psi$ and $g$.
\end{cor}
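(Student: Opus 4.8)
The plan is to deduce Corollary \ref{cylstabpertbnd} directly from Theorem \ref{thm.tcdiffeoinvariance} by finding a time consistent diffeomorphism of the cylinder $I\times M$ that realizes the given perturbation of boundary conditions near the ends. First I would take $W = W' = I\times M$, so that $M' = N' = M$ and $\epsilon_W = \epsilon_{W'}$ can be chosen equal automatically. Since $\phi$ and $\psi$ are isotopic to each other, there is an isotopy; more carefully, I want an isotopy of $M$ that equals $\phi$ at time $0$ and $\psi$ at time $1$, which exists precisely because $\phi$ and $\psi$ lie in the same path component of $\Diff(M)$. Concatenating with a path from $\psi$ to $\id$ and back, or simply picking an isotopy $\Xi\colon [0,1]\times M\to M$ with $\Xi(0,-)=\phi$, $\Xi(1,-)=\psi$, and with $\Xi$ constant (equal to $\phi$ resp. $\psi$) on a small interval $[0,\epsilon_W]$ resp. $[1-\epsilon_W,1]$ — such a reparametrized isotopy always exists — I obtain a diffeomorphism $\Phi\colon I\times M\to I\times M$, $\Phi(t,x) = (t,\Xi(t,x))$.

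Next I would check that this $\Phi$ satisfies all hypotheses of Theorem \ref{thm.tcdiffeoinvariance}. It is time preserving ($\omega'\Phi = \omega$) by construction, hence time consistent; both cylinders have progressive time by Example \ref{expl.cylprogr}; and near the boundary $\Phi$ agrees levelwise with the boundary diffeomorphism. The boundary diffeomorphism $\phi\colon \partial(I\times M)\to\partial(I\times M)$ restricts on the incoming copy of $M$ to the map $\phi$ and on the outgoing copy to $\psi$; the hypothesis $\phi(M(k))=M(k)$ (and isotopy, so also $\psi(M(k))=M(k)$) gives $\Phi((I\times M)(k)) = I\times(\Xi(t,-)(M(k)))$, and since $\Xi(t,-)$ is isotopic to $\phi$ through maps preserving the slice decomposition — which can be arranged, or alternatively one notes $M(k)$ is a union of connected components and any diffeomorphism isotopic to one preserving this decomposition preserves it — we get $\Phi((I\times M)(k)) = (I\times M)(k)$ for all $k$. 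With all hypotheses verified, Theorem \ref{thm.tcdiffeoinvariance} yields $\phi_\ast(Z_{I\times M}) = Z_{I\times M}$ in $Z(M)\hotimes Z(M)$, where $\phi_\ast$ is built from $\phi^1_{\closed}$ on the incoming side and $\phi^2_{\closed}$ on the outgoing side, $\phi^1 = \phi$, $\phi^2 = \psi$.

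Finally I would unwind what this equality says on a boundary condition. Evaluating the identity $\phi_\ast(Z_{I\times M}) = Z_{I\times M}$ on $(f,g)\in\Fa(M)\times\Fa(M)$ and using the explicit formula $\phi_\ast(z)(f,g) = z(\phi^1_{\closed}(f),\phi^2_{\closed}(g)) = z(f\,\overline{\phi^1}, g\,\overline{\phi^2})$ from the discussion preceding the theorem, together with $\overline{\phi^1} = \overline{\phi} = \ophi|_{I\times M}$ and $\overline{\phi^2} = \overline{\psi}$, gives
\[
Z_{I\times M}(f\ophi, g\overline{\psi}) = \phi_\ast(Z_{I\times M})(f,g) = Z_{I\times M}(f,g),
\]
which is the first assertion. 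The ``in particular'' statements follow by specializing: taking $\psi = \id$ (so $g\overline{\psi} = g$) when $\phi$ is isotopic to the identity, and symmetrically taking $\phi = \id$.

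The main obstacle I anticipate is bookkeeping around the slice decomposition: one must ensure that the isotopy $\Xi$ can be chosen so that every intermediate diffeomorphism $\Xi(t,-)$ still carries $M(k)$ to $M(k)$, so that $\Phi((I\times M)(k)) = (I\times M)(k)$ holds (this is needed for the Brauer invariance part of Lemma \ref{lem.phiresfawpfaw} invoked inside Theorem \ref{thm.tcdiffeoinvariance}). This is not automatic from $\phi(M(k))=M(k)$ alone, but since each $M(k)$ is a union of connected components of $M$ and any ambient isotopy must permute components while fixing their ``slice address'' $k$ continuously — hence constantly — it does follow; the care is purely in phrasing this. The remaining steps (constructing $\Xi$ constant near the ends, verifying time consistency and progressivity) are routine.
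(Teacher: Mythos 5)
Your proposal is correct and follows essentially the same route as the paper: build $\Phi(t,x)=(t,\Xi(t,x))$ from an isotopy $\Xi$ between $\phi$ and $\psi$ made constant near the two ends, note that cylinders have progressive time and that $\Phi$ is even time preserving, apply Theorem \ref{thm.tcdiffeoinvariance}, and evaluate the resulting identity on $(f,g)$. Your extra paragraph verifying that each intermediate diffeomorphism $\Xi(t,-)$ preserves the slices $M(k)$ (via constancy of the induced permutation of components) is a point the paper leaves implicit, but it is sound and does not change the argument.
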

\begin{proof}
We note that $\phi M(k)=M(k)$ implies $\psi M(k)=M(k)$, as $\phi$ and $\psi$ are isotopic.
The cylinder $I\times M$ has progressive time. Let $\Xi: I\times M \to M$
be a smooth isotopy from $\phi$ to $\psi$, arranged so that
$\Xi$ is constant in $t\in I$ on a neighborhood of $0\times M$ and
on a neighborhood of $1\times M$. Then $\Phi: I\times M \to I\times M,$
$\Phi (t,x)=(t,\Xi (t,x))$ satisfies all the hypotheses of Theorem \ref{thm.tcdiffeoinvariance}
on time consistent diffeomorphism invariance (in fact $\Phi$ is even time preserving)
and we conclude that
$\alpha_\ast (Z_{I\times M})=Z_{I\times M},$ where $\alpha: \{ 0,1 \} \times M
\to \{ 0,1 \} \times M$ is $\phi$ on $0\times M$ and $\psi$ on $1\times M$.
Hence
\[ Z_{I\times M} (f,g) = \alpha_\ast Z_{I\times M} (f,g) =
 Z_{I\times M} (f\ophi, g\overline{\psi}). \]
If $\psi$ is the identity, then $\overline{\psi}$ is the identity as well.
\end{proof}

Since pseudo-isotopies need not be time consistent, Corollary \ref{cylstabpertbnd}
does not immediately generalize to $\phi, \psi$ being only pseudo-isotopic.
However, if $M$ is simply connected and $n-1=\dim M\geq 5,$ then
Cerf's pseudo-isotopy theorem asserts that the two relations of isotopy and
pseudo-isotopy coincide.

\begin{cor}
Let $W$ and $W'$ be cobordisms with progressive time, both from $M$ to $N$ 
(endowed with equal cylinder scales). If there exists a
time consistent diffeomorphism 
$\Phi: W\to W'$ which is the identity near $M$ and near $N$ 
and has $\Phi (W(k))=W'(k)$ for every $k$, then
\[ Z_W = Z_{W'} \in Z(M)\hotimes Z(N). \]
\end{cor}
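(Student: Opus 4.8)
The plan is to deduce this corollary as a special case of Theorem \ref{thm.tcdiffeoinvariance} (Time Consistent Diffeomorphism Invariance), applied with the identity diffeomorphism on the common boundary $\partial W = M \sqcup N$. First I would set $\phi = \id_{\partial W} : \partial W \to \partial W'$, noting that since $W$ and $W'$ have the same incoming boundary $M$ and the same outgoing boundary $N$, the identity is indeed a diffeomorphism $\partial W \to \partial W'$ (as embedded submanifolds these boundaries literally coincide, sitting in $\{0\}\times\real^D$ and $\{1\}\times\real^D$ respectively). The hypothesis that $M(k)$ is the same for $W$ and $W'$ (which follows from $\Phi(W(k))=W'(k)$ together with the assumption that $\Phi$ is the identity near the boundary) gives $\phi(M(k)) = M(k) = M'(k)$ and $\phi(N(k)) = N(k) = N'(k)$ for every $k$, so $\phi$ preserves incoming and outgoing boundaries in the sense required by Theorem \ref{thm.tcdiffeoinvariance}.

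Next I would verify the extension hypothesis. The given $\Phi : W \to W'$ is, by assumption, a time consistent diffeomorphism; since $\Phi$ is the identity near $M$ and near $N$, it agrees with $\phi = \id$ levelwise near the boundary, i.e.\ $\Phi(t,x) = (t,x) = (t,\phi(x))$ for $(t,x)$ in the cylindrical collars $[0,\epsilon_W]\times M \sqcup [1-\epsilon_W,1]\times N$; and $\Phi(W(k)) = W'(k)$ for every $k$ by hypothesis. Thus every hypothesis of Theorem \ref{thm.tcdiffeoinvariance} is met. Applying that theorem yields $\phi_*(Z_W) = Z_{W'}$ in $Z(M')\hotimes Z(N') = Z(M)\hotimes Z(N)$. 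Finally, since $\phi = \id_{\partial W}$, the induced map $\phi_* : Z(M)\hotimes Z(N) \to Z(M)\hotimes Z(N)$ is the identity: indeed $\phi^1 = \id_M$ and $\phi^2 = \id_N$, so $\overline{\phi^i} = \id$, hence $\phi^i_{\closed}$ is the identity on $\Fa(M)$ resp.\ $\Fa(N)$, and therefore $\phi_*(z)(g_1,g_2) = z(g_1,g_2)$ for every state. We conclude $Z_W = \phi_*(Z_W) = Z_{W'}$.

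I do not anticipate a genuine obstacle here, since the corollary is a direct specialization. The only point requiring a moment's care is the bookkeeping identification $Z(M') \hotimes Z(N') = Z(M) \hotimes Z(N)$: one must observe that the boundaries really are equal as embedded manifolds (not merely abstractly diffeomorphic), so that $M' = M$ and $N' = N$ on the nose and no further identification is needed beyond what is already built into the setup. Once that is noted, the proof is the three-line argument above.

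\begin{proof}
Apply Theorem \ref{thm.tcdiffeoinvariance} with $\phi = \id : \partial W \to \partial W'$.
Since $W$ and $W'$ are both cobordisms from $M$ to $N$, their incoming boundaries
coincide ($M' = M$) and their outgoing boundaries coincide ($N' = N$), so $\phi$ is
well-defined. From $\Phi(W(k)) = W'(k)$ and the fact that $\Phi$ is the identity near
the boundary we get $M(k) = M'(k)$ and $N(k) = N'(k)$ for all $k$, hence
$\phi(M(k)) = M'(k)$ and $\phi(N(k)) = N'(k)$. Moreover $\Phi$ is a time consistent
diffeomorphism which, being the identity near $M$ and near $N$, agrees with $\phi$
levelwise near the boundary, i.e.\ $\Phi(t,x) = (t,x) = (t,\phi(x))$ for
$(t,x) \in [0,\epsilon_W]\times M \sqcup [1-\epsilon_W,1]\times N$, and satisfies
$\Phi(W(k)) = W'(k)$ for every $k$. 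Thus all hypotheses of
Theorem \ref{thm.tcdiffeoinvariance} are fulfilled, and we obtain
\[ \phi_\ast (Z_W) = Z_{W'} \in Z(M')\hotimes Z(N') = Z(M)\hotimes Z(N). \]
It remains to observe that $\phi_\ast$ is the identity. Writing $\phi^1 = \id_M$ and
$\phi^2 = \id_N$ for the restrictions of $\phi$ to the incoming and outgoing
boundaries, we have $\overline{\phi^1} = \id_{I\times M}$ and
$\overline{\phi^2} = \id_{I\times N}$, so the induced homeomorphisms
$\phi^1_{\closed}: \Fa(M) \to \Fa(M)$ and $\phi^2_{\closed}: \Fa(N) \to \Fa(N)$
of Lemma \ref{lem.phimorse} are the identity maps. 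Hence for any state
$z \in Z(M)\hotimes Z(N)$ and any $(g_1,g_2)\in \Fa(M)\times \Fa(N)$,
\[ \phi_\ast (z)(g_1,g_2) = z(\phi^1_{\closed}(g_1), \phi^2_{\closed}(g_2))
   = z(g_1,g_2), \]
so $\phi_\ast = \id$. Therefore $Z_W = \phi_\ast(Z_W) = Z_{W'}$ in
$Z(M)\hotimes Z(N)$.
\end{proof}
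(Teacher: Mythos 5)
Your proposal is correct and is exactly the intended argument: the paper states this corollary without proof, treating it as the immediate specialization of Theorem \ref{thm.tcdiffeoinvariance} to $\phi=\id_{\partial W}$ (using $(\id)_\ast=\id$), which is precisely what you wrote, with the levelwise-agreement and slice-preservation hypotheses checked correctly.
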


\section{Exotic Spheres}
\label{sec.exoticspheres}

Throughout this section, we assume the dimension $n$ to satisfy $n\geq 5$.
Let $\Sigma^n$ be an $n$-dimensional exotic smooth sphere, for example the $7$-dimensional
Milnor sphere. Thus $\Sigma^n$ is a smooth manifold homeomorphic, but not diffeomorphic, to 
the standard sphere $S^n$. On $S^n$, there is a special generic function 
(see Definition \ref{def.specialgenfn})
with precisely $2$ critical points,
that is, a Morse function $S^n \to \real$ with precisely one maximum point and one minimum point.
For $n\geq 5$, every $n$-dimensional exotic sphere is diffeomorphic to a twisted sphere,
a result due to Smale.
(Twisted spheres are smooth manifolds of the form $D^n \cup_\tau D^n,$ where
$\tau: \partial D^n = S^{n-1} \stackrel{\cong}{\longrightarrow} S^{n-1}$ is an
orientation preserving diffeomorphism.) 
Every twisted sphere has Morse number $2$. 
Thus on $\Sigma^n$ there is also a special generic function with
precisely $2$ critical points, a maximum point and a minimum point. For two closed
$n$-manifolds $M,N,$ let $\operatorname{Cob}(M,N)$ denote the collection of all 
oriented (embedded as in Section \ref{sec.embandcob}) 
cobordisms $W^{n+1}$ from $M$ to $N$. (If $M$ and $N$ are connected,
we will assume that $M=M(0),$ $N=N(0)$ and
$W=W(0)$ for every $W$ in $\operatorname{Cob}(M,N)$.)
\begin{lemma}
The collection
$\operatorname{Cob}(S^n, \Sigma^n)$ is not empty.
\end{lemma}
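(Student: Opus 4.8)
The plan is to produce an explicit oriented cobordism from $S^n$ to $\Sigma^n$. Since $n \geq 5$, Smale's result quoted just above tells us that $\Sigma^n$ is a twisted sphere, $\Sigma^n = D^n \cup_\tau D^n$ for some orientation preserving diffeomorphism $\tau\colon S^{n-1}\to S^{n-1}$. The first step is therefore to recall the standard fact that $S^n$ and any twisted sphere become diffeomorphic after connected sum, or more directly, that there is an $h$-cobordism between them: take $W_0 = (S^n \times [0,1])$ and modify the top boundary by regluing a collar using $\tau$. Concretely, one removes an embedded $D^n \times [1/2,1]$ from $S^n \times [0,1]$ near a point of the top boundary and glues it back in via the twist $\tau$ on the $S^{n-1}$-factor; the resulting manifold $W$ is a smooth compact oriented $(n+1)$-manifold with $\partial W = S^n \sqcup \Sigma^n$. (Alternatively, and perhaps more cleanly, one invokes that $\Theta_n = \pi_0(\operatorname{Diff}^+(S^{n-1}))$ modulo those extending over $D^n$, so every homotopy sphere bounds a parallelizable manifold; but for a \emph{cobordism to $S^n$} the twisted-collar construction is the most elementary.)

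Once such an abstract cobordism $W$ is in hand, the second step is to arrange the embedding-theoretic bookkeeping demanded by Section \ref{sec.embandcob}: $W$ must be smoothly embedded in $[0,1]\times \real^{D}$ with the incoming boundary $S^n$ in $\{0\}\times\real^D$, the outgoing boundary $\Sigma^n$ in $\{1\}\times\real^D$, product embedding near the ends, and every connected component in a single slice $[0,1]\times\{k_0\}\times\real^{D-1}$. Since $D \geq 2(n+1)+1$, by Hirsch's embedding theorem (\cite[Theorem 1.4.3]{hirsch}, cited in the excerpt) the compact manifold $W$ with boundary embeds in a halfspace of $\real^{2(n+1)+1}\subset\real^D$ with $\partial W$ in the bounding hyperplane; after a collar straightening near $\partial W$ and placing the (connected) $W$ in the $k_0 = 0$ slice, we satisfy all four bullet points of the definition of a cobordism. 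The orientations on $S^n$, $\Sigma^n$ inherited from $W$ are the standard ones up to the usual sign conventions, so $W \in \operatorname{Cob}(S^n,\Sigma^n)$ and the collection is nonempty.

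The main obstacle is really just the correct invocation of the structure theory: one must make sure that the twisted-collar cobordism genuinely has $\Sigma^n$ (not $S^n$ again) on the outgoing end, i.e. that $S^n \cup_\tau(\text{collar})$ at the top reproduces exactly the clutching description $D^n\cup_\tau D^n$ of the twisted sphere. This is a standard but slightly fiddly identification: one writes $S^n = D^n_- \cup_{\mathrm{id}} D^n_+$, performs the regluing in a disk contained in $D^n_+$, and checks that the effect on the top boundary is to replace the gluing of the two hemispheres by $\tau$, yielding $D^n_-\cup_\tau D^n_+ = \Sigma^n$. I would present this identification carefully and then let the embedding step be a one-line appeal to Hirsch. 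Note that this lemma is purely a nonemptiness statement and does not require any control over fold maps or time functions on $W$; those enter only in the subsequent results of this section where one computes $\mathfrak{A}(\Sigma^n)$.
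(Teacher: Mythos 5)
There is a genuine gap, and it sits exactly where you flag the "slightly fiddly identification": the twisted-collar regluing cannot be carried out smoothly when $\Sigma^n$ is exotic. The region along which the plug $D^n\times[1/2,1]$ is reattached is $A=(S^{n-1}\times[1/2,1])\cup(D^n\times\{1/2\})$, a smooth $n$-disk, and any smooth regluing is a diffeomorphism of $A$. If you insist on "$\tau$ on the $S^{n-1}$-factor" over the lateral part, you must extend $\tau$ over the bottom cap, i.e.\ extend $\tau$ to a diffeomorphism of $D^n$ --- which is impossible precisely when $D^n\cup_\tau D^n$ is not diffeomorphic to $S^n$. And if you allow an arbitrary diffeomorphism of $A$, then the gluing map on the corner sphere automatically extends over the disk $A$, so the new top boundary is $D^n\cup_g D^n$ with $g$ extending over $D^n$, hence the standard $S^n$ again: regluing along an $n$-disk can never create $\Sigma^n$. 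Your parenthetical that the construction yields an $h$-cobordism is itself a warning sign: an $h$-cobordism from $S^n$ to $\Sigma^n$ with $n\geq 5$ would force $\Sigma^n\cong S^n$ by Smale's theorem (this is exactly the mechanism the paper exploits, via Saeki, in Theorem \ref{thm.iexoticsphere}). The side remark that $\Theta_n\cong\pi_0\operatorname{Diff}^+(S^{n-1})/(\text{extendable})$ implies every homotopy sphere bounds a parallelizable manifold is also not correct (that would assert $\Theta_n=bP_{n+1}$), though nothing so strong is needed.

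The paper's proof avoids explicit constructions altogether: homotopy spheres are stably parallelizable, so all Pontrjagin and Stiefel--Whitney numbers of $\Sigma^n$ vanish, hence $\Sigma^n=\partial W'$ for some compact oriented smooth $(n+1)$-manifold $W'$; removing a small open ball from $\interi W'$ gives an oriented cobordism from $S^n$ to $\Sigma^n$. If you want to salvage your write-up, replace the twisted-collar paragraph by this cobordism-theoretic argument (or any other proof that $\Sigma^n$ is null-cobordant); your second paragraph on arranging the embedding into $[0,1]\times\real^D$ via Hirsch and placing the connected cobordism in a single slice is fine and can be kept as the bookkeeping step.
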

\begin{proof}
Homotopy spheres are stably parallelizable.
Thus all Pontrjagin numbers and all Stiefel-Whitney numbers of $\Sigma^n$ are trivial.
It follows that $\Sigma^n = \partial W'$ for some oriented compact smooth $(n+1)$-manifold $W'$.
Removing a small open ball from the interior of $W'$, we obtain an oriented cobordism $W$
from $S^n$ to $\Sigma^n$.
\end{proof}
Let $f_S: S^n \to \real$ be a Morse function with precisely $2$ critical points. 
Note that $f_S$ is excellent (Definition \ref{def.excellent}). 
Let $\overline{f}_S$ be the suspension of 
$f_S$, that is,
\[ \overline{f}_S = \id_I \times f_S: I\times S^n \longrightarrow I\times \real \subset \cplx. \]
Then $\overline{f}_S$ is a fold field with
$\mbs (\overline{f}_S)=1$ and hence defines an
element $\overline{f}_S \in \Fa (S^n),$ see also Remark \ref{rem.suspexcmorse}.
For any closed, smooth manifold $M$ homeomorphic to a sphere
(exotic or not), let $C_2 (M)$ denote the space of all $\overline{f}_M \in \Fa (M)$, which are
of the form $\overline{f}_M (t,x) = (\xi (t), f_M (x)),$ $(t,x)\in [0,1]\times M,$
for some diffeomorphism $\xi: [0,1] \to [a,b]$ with $\xi (0)=a$,
where $f_M: M\to \real$ is a Morse function with precisely two critical points.
A cobordism $W \in \operatorname{Cob}(S^n, M)$ has a state sum
$Z_W \in Z(S^n)\hotimes Z(M),$ $Z_W: \Fa (S^n)\times \Fa (M)\to
Q$, so we can evaluate on the boundary condition
$(\overline{f}_S, \overline{f}_M) \in \Fa (S^n)\times \Fa (M)$, where 
we wish to concentrate on $\overline{f}_M \in C_2 (M)$.
We get
$Z_W (\overline{f}_S, \overline{f}_M) \in Q$ and shall study the \emph{aggregate invariant}
\[ \mathfrak{A}(M) := \sum_{\overline{f}_M \in C_2 (M)}
 \sum_{W\in \operatorname{Cob}(S^n, M)} 
  Z_W (\overline{f}_S, \overline{f}_M) \in Q, \]
which is well-defined by the Eilenberg-completeness of $Q$.
Here, as before, $Q$ is the semiring associated via profinite idempotent completion to
a duality structure on a real vector space of finite dimension at least $2$, such that
the associated functor $Y$ is faithful on loops.
\begin{thm} \label{thm.iexoticsphere}
If $\Sigma^n,$ $n\geq 5,$ is an exotic sphere not diffeomorphic to $S^n$, 
then the invariant $\mathfrak{A}(\Sigma^n)$ is a multiple of $q$. 
\end{thm}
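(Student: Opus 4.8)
The plan is to show that every term $Z_W(\overline{f}_S, \overline{f}_M)$ contributing to $\mathfrak{A}(\Sigma^n)$ is itself a multiple of $q$ in $Q$, so that the (Eilenberg-)sum is again a multiple of $q$. The leverage is Saeki's cobordism theory of special generic functions (\cite{saekihtpyspheres}): for $n\geq 6$, since $\Sigma^n$ is \emph{not} diffeomorphic to $S^n$, its class in $\Theta_n$ is nonzero, hence the pair $(S^n, \Sigma^n)$ with its chosen two-critical-point special generic functions $\overline{f}_S, \overline{f}_M$ is \emph{not} special-generic-cobordant; equivalently, no cobordism $W\in \Cob(S^n,\Sigma^n)$ carries a fold field $F$ restricting near the boundary to $\overline{f}_S$ and $\overline{f}_M$ whose entire singular set $S(F)$ consists of definite fold points. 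I would first record this translation carefully: the boundary conditions $\overline{f}_S\in C_2(S^n)$ and $\overline{f}_M\in C_2(\Sigma^n)$ are suspensions of two-point Morse functions, whose critical points are definite folds of the fold field; a fold field $F$ on $W$ with these boundary conditions whose fold locus is entirely definite would, restricting to slices, produce exactly a special generic cobordism between $(S^n, f_S)$ and $(\Sigma^n, f_M)$ in Saeki's sense, contradicting $[\Sigma^n]\neq 0$ in $\Theta_n$.

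Next I would exploit the invariance of the \emph{absolute index} $\tau(x)=\max(i,n-1-i)$ along connected components of $S(F)$ (Proposition \ref{prop.foldmapnormalform} and the remark following it). Since $W$ has dimension $n+1$ here (cobordisms between $n$-manifolds), write the relevant dimension count accordingly. The boundary slices force certain components of $S(F)$ to be definite: the two critical points of $\overline{f}_S$ and the two of $\overline{f}_M$ lie on components of $S(F)$ that meet the boundary, and near the boundary $F$ looks like the suspension of a two-point Morse function, so those boundary-adjacent fold lines are definite. The conclusion of the previous paragraph is then precisely that $S(F)$ must contain at least one component $c$ with $\tau(c) < n-1$, i.e. a non-definite component. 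A non-definite fold component that is a \emph{closed} circle in the interior of $W$ contributes, under $\mbs$, either a loop $\lambda$ (if $c$ is closed) or an open strand; the key point is to show that the presence of at least one non-definite component forces the Brauer morphism $\mbs(F)$ to contain at least one loop factor, hence $Y\mbs(F)$ to be a multiple of $\lh = \dim V$, hence $Z_W(\overline{f}_S,\overline{f}_M)$ to be a multiple of $q$ in $Q$. Actually the cleaner route is: a non-definite closed fold circle is exactly what $\mbs$ reads off as a loop $\lambda$; and one shows that \emph{any} fold field on $W$ with the prescribed two-point boundary conditions, if it has no closed non-definite fold circle, would be deformable (rel boundary, keeping it a fold field) to one with no interior folds at all, yielding a special generic cobordism — again a contradiction. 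So every admissible $F$ has a closed fold circle, $\mbs(F)=\mbs(F')\otimes\lambda$ for some $F'$, and by $Y(\lambda)=\lh$ and the structure of $Q(H_{m,n})$ every summand lies in $q\cdot Q$.

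Then the aggregate follows formally: $\mathfrak{A}(\Sigma^n) = \sum_{\overline{f}_M\in C_2(\Sigma^n)}\sum_{W} Z_W(\overline{f}_S,\overline{f}_M)$, each inner term is in $q\cdot Q$ by the above, and $q\cdot Q$ is closed under the Eilenberg summation law (this is immediate from the explicit summation formula (\ref{equ.sumlawonqh}) together with infinite distributivity $\sum_i q b_i = q\sum_i b_i$ in $\bool[[q]]$, which holds since $\bool[[q]]$ is a complete semiring and the morphism $\nat[\tau]\to\bool[[q]]$ is a semiring map). Hence $\mathfrak{A}(\Sigma^n)\in q\cdot Q$, i.e. it is a multiple of $q$.

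The main obstacle is the second paragraph: rigorously converting ``$[\Sigma^n]\neq 0$ in $\Theta_n$, so no special generic cobordism exists'' into ``every fold field on $W$ satisfying the boundary conditions has a closed non-definite fold circle.'' This requires (i) checking that Saeki's notion of cobordism of special generic functions in \cite{saekihtpyspheres} is recovered by slicing a fold field whose fold locus is all definite — including verifying the Stein factorization / quotient space behaves as Saeki requires — and (ii) a surgery-of-fold-maps argument showing that interior \emph{open} folds and interior \emph{definite closed} folds can be cancelled rel boundary without destroying the fold-field property (transversality to slices, residual generic imaginary parts), so that the \emph{only} obstruction to reducing $S(F)$ to the boundary-adjacent definite lines is a non-definite closed circle. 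Step (ii) is where the dimension hypothesis $n\geq 5$ (so $n+1\geq 6$, matching Saeki's range $n\geq 6$ in the cited theorem) and general-position arguments for $1$-dimensional fold loci enter; I expect to be able to quote the relevant fold-map surgery results rather than redo them, but the bookkeeping to stay inside $\Fa(W;\overline{f}_S,\overline{f}_M)$ throughout is the delicate part.
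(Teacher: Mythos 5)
Your overall strategy (every admissible fold field must contribute a loop, hence every summand and then the whole Eilenberg sum lies in $q\cdot Q$) is the paper's strategy, and your final bookkeeping paragraph is fine. The genuine gap is in the crux step you yourself flag as the ``main obstacle'': converting the non-existence of an all-definite fold field into ``$\mbs(F)$ contains a loop.'' You already have all the needed ingredients in your second paragraph, but you do not assemble them; instead you pivot to a fold-surgery argument (``cancel interior open folds and interior definite closed folds rel boundary, preserving the fold-field conditions'') which is both unsubstantiated and unnecessary. Unsubstantiated, because eliminating fold components rel boundary while staying inside $\Fa(W;\overline{f}_S,\overline{f}_\Sigma)$ is not an off-the-shelf result you can quote; and misdirected, because a \emph{definite} closed fold circle does not need to be cancelled at all --- any closed component of $S(F)$, definite or not, already contributes a loop tensor factor to $\mbs(F)$ and hence a factor of $q$. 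The argument the paper uses is immediate and deformation-free: since the boundary conditions lie in $C_2$, the four boundary singular points are maxima/minima, i.e.\ definite folds; if $\mbs(F)$ had no loop, then $S(F)$ would have no closed components, so every component would be an arc with an endpoint among these four definite points; by constancy of the absolute index along components (the remark after Proposition \ref{prop.foldmapnormalform}), every fold point would then be definite, i.e.\ the map would be special generic, contradicting Saeki. Equivalently: an indefinite component cannot be an arc, hence is a circle, hence a loop. Your proposal never closes this implication by this (or any complete) route.

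A secondary, smaller issue: you invoke Saeki via the cobordism group of special generic functions and its isomorphism with $\Theta_n$ (valid for $n\geq 6$), whereas the paper applies \cite[Lemma 3.3]{saekihtpyspheres} directly to a special generic map on the cobordism, obtaining an h-cobordism and using Smale for $n\geq 5$ (for $n=5$ the statement is vacuous anyway, so this is harmless). More substantively, your obstacle (i) --- checking that a fold field with all-definite folds and the given boundary conditions really is a cobordism in Saeki's sense --- is where the paper spends most of its proof: it explicitly reparametrizes $F$ to a fold map $\widetilde{F}$ with $\mbs(\widetilde{F})=\mbs(F)$, $\widetilde{F}(W_0)\subset\{0\}\times\real$, $\widetilde{F}(W_1)\subset\{1\}\times\real$, $\widetilde{F}(\operatorname{int}W)\subset(0,1)\times\real$, and product form $(t,f_S(x))$, $(t,f_\Sigma(x))$ near the two ends. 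You correctly identify that this translation must be verified, but you leave it as a check rather than carrying it out; in the paper it is this construction, together with the short absolute-index argument above, that constitutes the proof.
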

\begin{proof}
We need to show that for every cobordism
$W\in \operatorname{Cob}(S^n, \Sigma^n),$ every map $\overline{f}_\Sigma \in C_2 (\Sigma^n)$ and
every fold field $F: W\to \cplx$ with 
\[ F|_{[0,\epsilon]\times S^n} \approx \overline{f}_S,~
F|_{[1-\epsilon,1]\times \Sigma^n} \approx \overline{f}_\Sigma, \]
the Brauer morphism $\mbs (F)\in 
\operatorname{Mor}(\Br),$ a morphism from $2$ points to
$2$ points in $\Br$, contains a loop $\lambda: I\to I$. 
For then every summand $Y\mbs (F)\otimes 1$ of $\mathfrak{A}(\Sigma^n)$ is a multiple of $q$,
\[ Y\mbs (F)\otimes 1 = Y(\lambda \otimes \phi)\otimes 1 =
  (\lh \otimes Y(\phi))\otimes 1 = Y(\phi)\otimes q, \]
and consequently $\mathfrak{A}(\Sigma^n)$ itself is a multiple of $q$.

To fit into the cobordism framework of \cite{saekihtpyspheres}, 
we shall construct a modification $\widetilde{F}:W\to \real^2$ of $F$ such that
\begin{enumerate}
\item[(i)] $\widetilde{F}$ is a fold map with $\mbs (\widetilde{F})=\mbs (F)$,
\item[(ii)] $\widetilde{F} (W_0)\subset \{ 0 \} \times \real,$
  $\widetilde{F} (W_1)\subset \{ 1 \} \times \real,$ $\widetilde{F} (\interi W)\subset (0,1) \times \real,$
\item[(iii)] there exist $\epsilon_0, \epsilon_1 \in (0,\epsilon)$ such that
\[ \widetilde{F}(t,x) = \begin{cases} (t,f_S (x)), &\text{ for } (t,x)\in [0,\epsilon_0]\times S^n, \\
  (t,f_\Sigma (x)) &\text{ for } (t,x)\in [1-\epsilon_1,1]\times \Sigma^n. \end{cases} \] 
\end{enumerate}
(It is not required that $\widetilde{F}$ is a fold \emph{field}.)
As $F|_{[0,\epsilon]\times S^n} \approx \overline{f}_S,$ there exists a diffeomorphism
$\xi_S:[0,\epsilon] \to [0,1],$ $\xi_S (0)=0,$ such that
$F(t,x)=\overline{f}_S (\xi_S (t),x)=(\xi_S (t), f_S (x))$ for $(t,x)\in [0,\epsilon]\times S^n$.
In particular, we have $F(0,x)=(\xi_S (0), f_S (x))=(0,f_S (x))$.
Similarly for the outgoing 
boundary $\Sigma^n$: As $F|_{[1-\epsilon,1]\times \Sigma^n} \approx \overline{f}_\Sigma$,
there exists a diffeomorphism
$\xi_\Sigma:[1-\epsilon,1] \to [0,1],$ $\xi_\Sigma (1)=1,$ such that
$F(t,x)=\overline{f}_\Sigma (\xi_\Sigma (t),x)$ for
$(t,x)\in [1-\epsilon,1]\times \Sigma^n$. 
Since $\overline{f}_\Sigma \in C_2 (\Sigma^n),$ there exists a diffeomorphism
$\overline{\xi}: [0,1]\to [a,b],$ $\overline{\xi}(0)=a,$ and a 
Morse function $f_\Sigma: \Sigma^n \to \real$ with precisely two critical points such that
$\overline{f}_\Sigma (t,x)=(\overline{\xi}(t),f_\Sigma (x))$. 
Hence, when $(t,x)\in [1-\epsilon,1]\times \Sigma^n$,
$F(t,x)=(\overline{\xi}_\Sigma (t), f_\Sigma (x)),$ where $\overline{\xi}_\Sigma$ is
the diffeomorphism
$\overline{\xi}_\Sigma = \overline{\xi} \circ \xi_\Sigma: [1-\epsilon,1]\to [a,b].$
For $t=1$, $F(1,x)=(b,f_\Sigma (x))$.
We will first construct an intermediate modification $F':W\to \real^2$.
Choose an $\epsilon_0 >0$ with $\xi_S (\epsilon_0)\in (0,\frac{1}{3})$ and
$\epsilon_0 < \min (\epsilon, \frac{1}{3})$. This is possible since
$\xi_S (t)\to 0$ as $t\to 0$. As $F(W)$ is compact and thus bounded as a subset of $\real^2$,
there exists an $R> \max \{ |a|, |b|, 1 \}$ such that $F(W)\subset (-R,R)\times (-R,R)$. Let
$\eta_0 = \xi_S (\epsilon_0)$.
As $\eta_0 < \frac{1}{3}$ and $\xi^{-1}_S (\eta_0)-R = \epsilon_0 -R < \frac{2}{3},$
there exists a diffeomorphism
$\phi_S: [0,1] \to [-R,1]$ such that
\[ \phi_S (t) = \begin{cases}
t, & t\in [\frac{2}{3}, 1], \\
\xi^{-1}_S (t)-R, & t\in [0,\eta_0].
\end{cases} \]
Similarly, choose an $\epsilon_1 >0$ with 
$\overline{\xi}_\Sigma (1-\epsilon_1)\in (a+\frac{2}{3} (b-a),b)$ and
$\epsilon_1 < \min (\epsilon, \frac{1}{3}(b-a), \frac{1}{3})$. This is possible since
$\overline{\xi}_\Sigma (t)\to b$ as $t\to 1$. Let
$\eta_1 = \overline{\xi}_\Sigma (1-\epsilon_1)$.
As $a+\frac{1}{3} (b-a)< \eta_1$ and 
\[ a+\frac{1}{3} (b-a)< a + \frac{2}{3}(b-a) = b- \frac{1}{3}(b-a) < b-\epsilon_1 < R-\epsilon_1 =
  \overline{\xi}^{-1}_\Sigma (\eta_1) -1+R, \]
there exists a diffeomorphism
$\phi_\Sigma: [a,b] \to [a,R]$ such that
\[ \phi_\Sigma (t) = \begin{cases}
t, & t\in [a,a+\frac{1}{3}(b-a)], \\
\overline{\xi}^{-1}_\Sigma (t)-1+R, & t\in [\eta_1,b].
\end{cases} \]
Setting
\[ W' = W- ([0,\xi^{-1}_S (\frac{2}{3})) \times S^n \sqcup
  (\overline{\xi}^{-1}_\Sigma (a+\frac{1}{3}(b-a)),1] \times \Sigma^n ), \]
we define $F':W\to \real^2$ on $y\in W$ by
\[ F' (y) = \begin{cases}
(\phi_S \times \id_\real)(F(t,x)),& y=(t,x)\in [0,\epsilon]\times S^n \\
F(y),& y\in W' \\
(\phi_\Sigma \times \id_\real)(F(t,x)),& y=(t,x)\in [1-\epsilon,1]\times \Sigma^n.
\end{cases} \]
Then $F'$ is indeed well-defined, since for $y=(t,x)$ in the overlap
\[ W' \cap [0,\epsilon]\times S^n = [\xi^{-1}_S (\frac{2}{3}),\epsilon]\times S^n, \]
we have
\begin{eqnarray*}
(\phi_S \times \id_\real)(F(t,x)) 
& = & (\phi_S \times \id_\real)(\xi_S (t), f_S (x)) =
  (\phi_S (\xi_S (t)), f_S (x)) \\
& = & (\xi_S (t), f_S (x)) = F(t,x)=F(y), 
\end{eqnarray*}
while for $y=(t,x)$ in the overlap
\[ W' \cap [1-\epsilon,1]\times \Sigma^n = [1-\epsilon, 
\overline{\xi}^{-1}_\Sigma (a+\frac{1}{3}(b-a))]\times \Sigma^n, \]
we have
\begin{eqnarray*}
(\phi_\Sigma \times \id_\real)(F(t,x)) 
& = & (\phi_\Sigma \times \id_\real)(\overline{\xi}_\Sigma (t), f_\Sigma (x)) =
  (\phi_\Sigma (\overline{\xi}_\Sigma (t)), f_\Sigma (x)) \\
& = & (\overline{\xi}_\Sigma (t), f_\Sigma (x)) = F(t,x)=F(y). 
\end{eqnarray*}
(Note that $\overline{\xi}_\Sigma (1-\epsilon)=a<a+\frac{1}{3}(b-a)$ and thus
$1-\epsilon < \overline{\xi}^{-1}_\Sigma (a+\frac{1}{3}(b-a))$.)
Since the restrictions of $F'$ to $[0,\epsilon]\times S^n,$ to $W'$ and to $[1-\epsilon,1]\times \Sigma^n$
are each smooth, and every point $y\in W$ has an open neighborhood in $W$ that lies entirely in
 $[0,\epsilon)\times S^n,$ or in the interior of $W',$ or in $(1-\epsilon,1]\times \Sigma^n,$ we conclude that
$F'$ is smooth. The restriction $F'|_{W'} = F|_{W'}$ is a fold map. The restriction
$F'|_{[0,\epsilon]\times S^n}$ is the composition of a fold map with a diffeomorphism, hence itself
a fold map. Similarly, $F'|_{[1-\epsilon,1]\times \Sigma^n}$ is a fold map. Therefore, $F':W\to \real$
is a fold map. Note that $S(F')=S(F)$ for the singular sets. The imaginary part of the image
$F' (0,x)$ of an incoming singular point $(0,x)\in S(F)\cap (\{ 0 \} \times S^n)$ is the same as the imaginary part
of $F(0,x),$ namely $f_S (x)$. An analogous statement holds for the outgoing singular points.
This shows that $\mbs (F') = \mbs (F)$.
On points $(t,x)\in [0,\epsilon_0]\times S^n$ we find that
\[
F'(t,x) = (\phi_S (\xi_S (t)), f_S (x)) = (\xi^{-1}_S (\xi_S (t)) -R, f_S (x)) 
 = (t-R, f_S (x)),  
\]
and on $(t,x)\in [1-\epsilon_1, 1]\times \Sigma^n,$
\[
F'(t,x) = (\phi_\Sigma (\overline{\xi}_\Sigma (t)), f_\Sigma (x)) = 
        (\overline{\xi}^{-1}_\Sigma (\overline{\xi}_\Sigma (t)) -1+R, f_\Sigma (x)) 
 =  (t-1+R, f_\Sigma (x)).  
\]
This shows in particular that the image of $W$ under $F'$ satisfies
\begin{equation} \label{equ.imageoffprime}
F'(W_0)\subset \{ -R \} \times \real,~
  F'(W_1)\subset \{ R \} \times \real,~ F' (\interi W)\subset (-R,R) \times \real. 
\end{equation}
This completes the construction of $F'$. 

There exists a diffeomorphism $\lambda: [-R,R]\to [0,1]$ such that
\[ \lambda (t) = \begin{cases}
t+R,& t\in [-R, \frac{1}{3} -R], \\
t+1-R,& t\in [R- \frac{1}{3}, R],
\end{cases} \]
since
$(\frac{1}{3} -R)+R = \frac{1}{3} < \frac{2}{3} = (R-\frac{1}{3})+1-R.$
A diffeomorphism $\Lambda: [-R,R]\times \real \to [0,1]\times \real$ is then given
by $\Lambda (t,u)=(\lambda (t),u).$ We put
\[ \widetilde{F} = \Lambda \circ F': W\longrightarrow [0,1]\times \real \subset \real^2. \]
As $\Lambda$ is a diffeomorphism and $F'$ is a fold map, the composition $\widetilde{F}$
is a fold map as well and $S(\widetilde{F})=S(F')=S(F)$. Since $\Lambda$ acts as the
identity on the second coordinate $u$ (i.e. the imaginary part), we have
$\mbs (\widetilde{F})=\mbs (F')=\mbs (F)$. This proves (i).
By (\ref{equ.imageoffprime}),
\[ \widetilde{F}(W_0)\subset \Lambda (\{ -R \} \times \real) = \{ 0 \} \times \real,~
  \widetilde{F}(W_1)\subset \Lambda (\{ R \} \times \real) = \{ 1 \} \times \real, \]
\[ \widetilde{F}(\interi W)\subset \Lambda ((-R,R) \times \real) = (0,1)\times \real,  \]
which establishes (ii).
Finally, (iii) holds for $(t,x)\in [0,\epsilon_0]\times S^n,$
\begin{eqnarray*}
\widetilde{F}(t,x)
& = & \Lambda (F'(t,x)) = \Lambda (t-R,f_S (x)) = (\lambda (t-R), f_S (x)) \\
& = & ((t-R)+R, f_S (x)) = (t, f_S (x)),
\end{eqnarray*}
and when 
$(t,x)\in [1-\epsilon_1,1]\times \Sigma^n,$
\begin{eqnarray*}
\widetilde{F}(t,x)
& = & \Lambda (F'(t,x)) = \Lambda (t-1+R,f_\Sigma (x)) = (\lambda (t-1+R), f_\Sigma (x)) \\
& = & ((t-1+R)+1-R, f_\Sigma (x)) = (t, f_\Sigma (x)).
\end{eqnarray*}

Since $\Sigma^n$ is not diffeomorphic to
$S^n$, $\widetilde{F}$ is not a special generic map, according to
\cite[Lemma 3.3, p. 4]{saekihtpyspheres}. (If $\widetilde{F}$ were special generic,
then one could, following Saeki, construct an h-cobordism between 
$\Sigma^n$ and $S^n$. By Smale's h-cobordism theorem, $\Sigma^n$
would then be diffeomorphic to $S^n$, a contradiction.)
Thus $\widetilde{F}$ must have an indefinite fold point. Recall 
(Proposition \ref{prop.foldmapnormalform}) that 
$\widetilde{F}: W^{n+1} \to [0,1]\times \real$ is given in local coordinates by
\[ (t,x_1,\ldots, x_n)\mapsto (t,y), \]
\[ y = -x_1^2 - \ldots - x^2_{i} + x^2_{i+1} + \ldots
  + x^2_n,~ 0\leq i \leq n. \]
A fold point is definite if and only if $i \in \{ 0,n \}$; it is indefinite
if and only if $i \in \{ 1,\ldots, n-1 \}.$ The
absolute index of a fold point $p$ is defined by $\tau (p) = \max
\{ i, n-i \}$. Thus $p$ is definite precisely when
$\tau (p)=n$ and indefinite precisely when $\tau (p)<n$.
Since $\widetilde{F}$ has an indefinite fold point $p$, it thus has a singular point
with $\tau (p)<n$. If $\mbs(F)=\mbs(\widetilde{F})$ had no loop, then it would have to be \\
\vspace{-1cm}
\begin{center}
$
\xygraph{ !{0;/r1pc/:}
!{\hcross[2]}
} $ \hspace{1cm} or \hspace{1cm}
$\xygraph{ !{0;/r1pc/:}
!{\xcaph[2]@(0)}  [ddl] !{\xcaph[2]@(0)} 
} $
\hspace{1cm} or \hspace{1cm}
$\xygraph{ !{0;/r1pc/:}
!{\hcap[2]} [rrr] !{\hcap[-2]}
}$ \hspace{.5cm} .
\end{center}
But the absolute index $\tau$ is constant along every connected component
of $S(\widetilde{F})$. Since all four endpoints are either a maximum or a minimum point,
their absolute indices are all equal to $n$. Thus $\tau (p)=n$ for every 
$p\in S(\widetilde{F})$ in all three cases --- a contradiction. Hence, $\mbs(F)$ has a loop.
\end{proof}
Let us compare this to the aggregate invariant $\mathfrak{A}(S^n)$ of the standard sphere,
\[ \mathfrak{A}(S^n) = \sum_{\overline{g}_S \in C_2 (S^n)}
 \sum_{W\in \operatorname{Cob}(S^n, S^n)} 
  Z_W (\overline{f}_S, \overline{g}_S) \in Q. \]
\begin{prop} \label{prop.istandardsphere}
For the standard sphere $S^n$, 
\[ \mathfrak{A}(S^n) = 1_{V\otimes V} + r(q) \]
for some power series $r(q)\in Q$.
\end{prop}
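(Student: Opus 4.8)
The plan is to exhibit one explicit summand of $\mathfrak{A}(S^n)$ that equals the unit $1_{V\otimes V} = Y(1_{[2]})$, and then to argue that every other summand is a multiple of $q$, so that all other contributions may be absorbed into a power series $r(q)$ with zero constant term. For the first part, I would take the trivial cobordism $W = I\times S^n \in \operatorname{Cob}(S^n, S^n)$, together with the boundary condition $(\overline{f}_S, \overline{f}_S)$, i.e.\ the same Morse-$2$ suspension on both ends. On such a cylinder the suspended Morse function $\overline{f}_S = \id_I \times f_S$ itself is a fold field lying in $\Fa(W; \overline{f}_S, \overline{f}_S)$ whose singular set $S(\overline{f}_S)$ consists of exactly two arcs (one through the maximum locus, one through the minimum locus), each running straight from the incoming to the outgoing boundary, with no closed components. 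Hence $\mbs(\overline{f}_S) = 1_{[2]}$ and $Y\mbs(\overline{f}_S)\otimes 1 = 1_{V\otimes V}\otimes 1$, which is precisely the $1$-element of $Q(H_{2,2})\subset Q$ contributing the constant term $1_{V\otimes V}$. I would note that this is the \emph{only} loop-free Brauer morphism appearing among all the fold fields in the state sums involved, because Theorem \ref{thm.iexoticsphere}'s argument, applied now with $\Sigma^n = S^n$, shows: if $\widetilde F$ (the modification of a fold field $F$ satisfying the boundary conditions) is not special generic then $\mbs(F)$ has a loop, while if $\widetilde F$ \emph{is} special generic then $S(\widetilde F)$ has only definite fold points and all four endpoints are maxima or minima, forcing $\mbs(F)$ to be one of the three loop-free morphisms $[2]\to[2]$; of these only the identity is consistent with $f_S$ having exactly two critical points (one max, one min), and the two arcs must connect max-to-max and min-to-min.

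First I would set up the bookkeeping: by the rationality analysis in the proof of Theorem \ref{thm.rationalinq}, each value $Z_W(\overline{f}_S, \overline{g}_S)$ lies in $Q(H_{\bar m,\bar n})$ with $\bar m = \bar n = 2$ (since both boundary conditions have Brauer morphism $1_{[2]}$), so the entire aggregate invariant $\mathfrak{A}(S^n)$ lies in $Q(H_{2,2})$. By Lemma \ref{lem.phiiso} and Lemma \ref{lem.minshellshmn}, $\mathfrak{A}(S^n)$ has a unique expansion $\sum_{y\in Y(\OP_{2,2})} y\otimes b_y$ with $b_y\in\bool[[q]]$. The content of the proposition is that the coefficient $b_{1_{V\otimes V}}$ has constant term $1$ while $b_y$ for every other $y$ — in particular for any $y$ arising from a morphism with a loop, which after stripping loops lies again in $Y(\OP_{2,2})$ — contributes only positive powers of $q$. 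Concretely: any summand $Y\mbs(F)\otimes 1$ with $\mbs(F)$ containing a loop factors as $Y(\lambda\otimes\phi)\otimes 1 = Y(\phi)\otimes q^{k}$ with $k\geq 1$ (by $Y(\lambda) = \tr(i,e) = \dim V$ and the product formula for $Q^m$), so it contributes to $r(q)$. The only way to contribute to the constant term is via a loop-free $\mbs(F)$, and by the dichotomy above the only loop-free morphism occurring is $1_{[2]}$, realized for instance by the cylinder with $W = I\times S^n$ and $F = \overline{f}_S$. Thus $b_{1_{V\otimes V}} = 1 + (\text{positive powers of }q)$ and $b_y = (\text{positive powers of }q)$ for $y\neq 1_{V\otimes V}$, so
\[ \mathfrak{A}(S^n) = 1_{V\otimes V}\otimes 1 + \Big( \sum_{y} y\otimes \tilde b_y\Big) = 1_{V\otimes V} + r(q) \]
with $r(q) = \sum_y y\otimes \tilde b_y\in Q$ having $\tilde b_y(0) = 0$ for all $y$; here $1_{V\otimes V}$ denotes $1_{V\otimes V}\otimes 1\in Q$ as identified in Section \ref{sec.proidemcompletion}. (One should also observe the sum over $\operatorname{Cob}(S^n,S^n)$ and $C_2(S^n)$ of the constant-term contributions collapses to a single $1_{V\otimes V}$ by the additive idempotence of $Q$ and Proposition \ref{prop.contidemsemiring}, since every loop-free contribution equals the same element $1_{V\otimes V}$.)

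The main obstacle I anticipate is the verification that $1_{[2]}$ is genuinely the \emph{only} loop-free Brauer morphism that can occur among fold fields satisfying the boundary conditions $(\overline{f}_S, \overline{g}_S)$ with both $\overline{f}_S, \overline{g}_S\in C_2$. The argument sketched above uses Saeki's special-generic characterization, but one must be careful: unlike the exotic case, here a special generic modification $\widetilde F$ \emph{can} exist (indeed the cylinder provides one), so the contradiction used in Theorem \ref{thm.iexoticsphere} is unavailable; instead one argues directly that for a special generic $\widetilde F$ all fold points are definite, hence $\mbs(F)$ is loop-free and drawn only with max/min endpoints, and then that the injectivity-of-imaginary-parts condition (part (1) of Definition \ref{def.foldfield}) together with the fact that $f_S$ has one maximum and one minimum pins the connection pattern to the identity. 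A secondary subtlety is simply confirming that $\overline{f}_S\in\Fa(I\times S^n; \overline{f}_S,\overline{f}_S)$, i.e.\ that the boundary-condition equivalence relation $\approx$ is satisfied with $\epsilon(k)$ any value in $(0,\epsilon_W)$ — this is essentially immediate from Remark \ref{rem.suspexcmorse} and Lemma \ref{lem.timelocconst}'s companion reasoning, since $\overline{f}_S$ restricted to a collar is literally $\overline{f}_S$ reparametrized by a diffeomorphism of the time interval. These are both local differential-topology checks and carry no deep content, but they must be stated precisely for the proof to be complete.
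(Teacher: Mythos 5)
There is a genuine gap at the very center of your construction: the field you exhibit does not satisfy the boundary conditions. You take $W=I\times S^n$ with boundary condition $(\overline{f}_S,\overline{f}_S)$ and claim $\overline{f}_S\in\Fa(W;\overline{f}_S,\overline{f}_S)$, dismissing the verification as "essentially immediate." But by Definition \ref{def.equivalencesmmaps}, the condition $F|_{[0,\epsilon]\times S^n}\approx \overline{f}_S$ requires a diffeomorphism $\xi\colon[0,\epsilon]\to[0,1]$ with $\xi(0)=0$ and $F(t,x)=\overline{f}_S(\xi(t),x)=(\xi(t),f_S(x))$. With $F=\overline{f}_S$, i.e.\ $F(t,x)=(t,f_S(x))$, matching real parts forces $\xi(t)=t$, which is not onto $[0,1]$; so $\overline{f}_S\notin\Fa(I\times S^n;\overline{f}_S,\overline{f}_S)$, and your sole exhibited summand $1_{V\otimes V}$ is not actually a summand of the state sum you cite. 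Nor can you repair this by a field of the form $(\sigma(t),f_S(x))$ whose real part sweeps $[0,1]$ in the initial collar, returns, and sweeps $[0,1]$ again in the final collar: at interior zeros of $\sigma'$ the singular set becomes $n$-dimensional, so such a map is not a fold map. The paper's proof sidesteps exactly this by choosing as outgoing boundary condition the time-translated suspension $\overline{g}_S(t,x)=(2+t,f_S(x))\in C_2(S^n)$ (allowed, since $\mathfrak{A}$ sums over all of $C_2(S^n)$) and the strictly progressive field $F(t,x)=(3t,f_S(x))$, for which $\xi(t)=3t$ on $[0,\tfrac13]$ and $\xi'(t)=3t-2$ on $[\tfrac23,1]$ give the required $\approx$-equivalences, while $\mbs(F)=1_{[2]}$ and $Y\mbs(F)=1_{V\otimes V}$.

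Separately, you set out to prove more than the proposition asserts, namely that $r(q)$ has zero constant term because $1_{[2]}$ is the only loop-free Brauer morphism occurring. This is unnecessary: since $Q$ is additively idempotent, exhibiting a single summand equal to $1_{V\otimes V}$ already yields $\mathfrak{A}(S^n)=1_{V\otimes V}+r(q)$ (take $r(q)=\mathfrak{A}(S^n)$ if you like), and the application in Corollary \ref{cor.asigmanotasn} only uses that the $1_{V\otimes V}$-coefficient has constant term $1$, not that the other coefficients vanish at $q=0$. Moreover your argument for the stronger claim is not complete as sketched: for $S^n$ the special generic case does occur, and ruling out a loop-free $b_{1,1}$ or $i_1\circ e_1$ connection pattern among all cobordisms in $\operatorname{Cob}(S^n,S^n)$ and all fields satisfying the boundary conditions would need an argument you have not supplied; fortunately the proposition does not require it.
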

\begin{proof}
Let $W$ be the cylinder $W= [0,1]\times S^n$ and let $\overline{g}_S:W\to \real^2$
be the map $\overline{g}_S (t,x)=(2+t, f_S (x))$.
Up to a translation in the $t$-direction, $\overline{g}_S$ is the suspension of a Morse
function with precisely two critical points.
Thus $\overline{g}_S$ is a fold field with
$\mbs (\overline{g}_S)=1$ and hence defines an
element $\overline{g}_S \in \Fa (S^n).$ Taking $\xi (t)=2+t,$ this element clearly lies in
$C_2 (S^n)$.
The map $F:W\to \cplx$ defined by $F(t,x)=(3t, f_S (x))$ is a fold field.
Taking $\xi: [0,\frac{1}{3}]\to [0,1]$ to be $\xi (t)=3t,$ we have
$F(t,x)=(\xi (t),f_S (x))=\overline{f}_S (\xi (t),x)$ for $t\in [0,\frac{1}{3}]$,
which shows that
$F|_{[0,1/3]\times S^n} \approx \overline{f}_S.$
Taking $\xi: [\frac{2}{3},1]\to [0,1]$ to be $\xi (t)=3t-2,$ we have
$F(t,x)=(2+\xi (t),f_S (x))=\overline{g}_S (\xi (t),x)$ for $t\in [\frac{2}{3},1]$,
which shows that
$F|_{[2/3,1]\times S^n} \approx \overline{g}_S$.
Thus $F\in \Fa (W; \overline{f}_S, \overline{g}_S).$
The Brauer morphism of $F$ is
\vspace{-.4cm}
\[ \xygraph{ !{0;/r1pc/:}
!{\xcaph[2]@(0)}  [ddl] !{\xcaph[2]@(0)} [rr]{,} [lllllu] {\mbs(F)=}
} \]
the identity morphism on the object $[2]$ in $\Br$.
Then $Z_W (\overline{f}_S, \overline{g}_S)$ contains the summand
\[ Y\mbs (F) = Y(1_{[2]}) = 1_{Y([2])} = 1_{V\otimes V}. \]
\end{proof}
\begin{cor} \label{cor.asigmanotasn}
Let $\Sigma^n,$ $n\geq 5,$ be an exotic $n$-sphere, not diffeomorphic to $S^n$. Then
$\mathfrak{A}(\Sigma^n)\not= \mathfrak{A}(S^n)$ in the semiring $Q$.
\end{cor}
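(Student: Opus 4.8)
The plan is to combine the two preceding results, Theorem \ref{thm.iexoticsphere} and Proposition \ref{prop.istandardsphere}, with an argument about the structure of elements of $Q$ living in a fixed component $Q(H_{m,n})$. By Theorem \ref{thm.iexoticsphere}, $\mathfrak{A}(\Sigma^n)$ is a multiple of $q$ in $Q$; concretely, every summand $Y\mbs(F)\otimes 1$ contributing to $\mathfrak{A}(\Sigma^n)$ has $\mbs(F)$ a morphism $[2]\to [2]$ containing a loop, so $\mbs(F) = \lambda \otimes \phi$ for some loop-free $\phi: [2]\to[2]$, and hence $Y\mbs(F)\otimes 1 = Y(\phi)\otimes q$. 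By Proposition \ref{prop.istandardsphere}, $\mathfrak{A}(S^n) = 1_{V\otimes V} + r(q)$ for some $r(q)\in Q$. The point is that $1_{V\otimes V}\otimes 1 \in Q(H_{2,2})$ has a nonzero coefficient of $q^0$, whereas no multiple of $q$ can.

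First I would localize both invariants to the component $Q(H_{2,2})$ of $Q$. Both aggregate invariants are sums of terms $Y\mbs(F)\otimes 1$ over boundary conditions $(\overline{f}_S, \overline{f}_M)$ with $\overline{f}_M \in C_2(M)$; since $C_2$-maps have $\mbs = 1$ on the closed sphere, the domain of $\mbs(F)$ is $[2]$ (from the two critical points of $f_S$, ordered by imaginary part via Definition \ref{def.foldfield}) and likewise its codomain is $[2]$ (from the two critical points of $f_M$), exactly as in the proof of Theorem \ref{thm.rationalinq}. Hence $Y\mbs(F)\otimes 1 \in Q(H_{2,2})$ for every summand, so both $\mathfrak{A}(\Sigma^n)$ and $\mathfrak{A}(S^n)$ lie in $Q(H_{2,2})\subset Q$. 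By Lemma \ref{lem.minshellshmn} and Lemma \ref{lem.phiiso}, any element of $Q(H_{2,2})$ is written uniquely as $\sum_{y\in Y(\OP_{2,2})} y\otimes b_y$ with $b_y\in\bool[[q]]$, where $\OP_{2,2}$ is the set of loop-free morphisms $[2]\to[2]$. The key observation is that $1_{[2]}\in\OP_{2,2}$, so $1_{V\otimes V}=Y(1_{[2]})$ lies in the minimal shell $S(H_{2,2})$, and $1_{V\otimes V}\otimes 1$ is precisely the basis element $s_{1_{[2]}}\otimes 1$, i.e. its coordinate $b_{1_{V\otimes V}}$ equals $1\in\bool[[q]]$, which has a nonzero $q^0$-coefficient.

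Next I would show that $\mathfrak{A}(\Sigma^n)$, being a multiple of $q$, has all $q^0$-coefficients zero in this normal form. Writing $\mathfrak{A}(\Sigma^n) = \sum_j Y(\phi_j)\otimes q$ over the (possibly infinite) index set of contributing fields, with each $\phi_j$ loop-free in $\OP_{2,2}$, and applying the summation law formula (\ref{equ.sumlawonqh}) together with Corollary \ref{cor.qhmncompleteness}, each coordinate $b_y$ of $\mathfrak{A}(\Sigma^n)$ is a sum in $\bool[[q]]$ of power series each divisible by $q$; since multiplication by $q$ and the Eilenberg summation law are both computed coefficientwise and $q\cdot\bool[[q]]$ consists of series with vanishing constant term, every coordinate $b_y$ of $\mathfrak{A}(\Sigma^n)$ has $q^0$-coefficient $0$. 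In particular the coordinate indexed by $y=1_{V\otimes V}$ has vanishing $q^0$-term. For $\mathfrak{A}(S^n)$, on the other hand, Proposition \ref{prop.istandardsphere} exhibits $1_{V\otimes V}$ as a summand; since $\bool[[q]]$ is additively idempotent and the Eilenberg summation law on $\bool[[q]]$ returns the supremum (so that adding $1_{V\otimes V}\otimes 1$ can only raise, never cancel, the $1_{V\otimes V}$-coordinate), the $q^0$-coefficient of the $1_{V\otimes V}$-coordinate of $\mathfrak{A}(S^n)$ equals $1$. By uniqueness of the normal form (Lemma \ref{lem.phiiso}), these two elements of $Q(H_{2,2})$ differ, hence $\mathfrak{A}(\Sigma^n)\neq\mathfrak{A}(S^n)$ in $Q$.

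The main obstacle I anticipate is the bookkeeping needed to justify, rigorously and without hand-waving, that the $q^0$-coefficient of the $1_{V\otimes V}$-component is genuinely preserved under the (possibly uncountable) Eilenberg sum defining $\mathfrak{A}(S^n)$: one must invoke that $Q(H_{2,2})$ is complete with the coefficientwise summation law inherited from $\bool[[q]]$ (Corollary \ref{cor.qhmncompleteness}), that $\bool[[q]]$ is continuous so that Proposition \ref{prop.contidemsemiring} applies, and that in $\bool$ the sum of any family containing a $1$ equals $1$ — so the presence of the single summand $1_{V\otimes V}\otimes 1$ forces the relevant coefficient to be $1$ regardless of the other (infinitely many) terms. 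Everything else is a direct assembly of Theorem \ref{thm.iexoticsphere}, Proposition \ref{prop.istandardsphere}, and the normal-form uniqueness of Lemma \ref{lem.phiiso}.
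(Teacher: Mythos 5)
Your proposal is correct and follows essentially the same route as the paper: localize both aggregate invariants to $Q(H_{2,2})$, expand in the unique normal form over the minimal shell $Y(\OP_{2,2})$ (Lemmas \ref{lem.minshellshmn} and \ref{lem.phiiso}), and observe that the $1_{V\otimes V}$-coordinate of $\mathfrak{A}(S^n)$ has constant term $1$ by Proposition \ref{prop.istandardsphere} and zerosumfreeness, while by Theorem \ref{thm.iexoticsphere} every coordinate of $\mathfrak{A}(\Sigma^n)$ is divisible by $q$, which is impossible in $\bool[[q]]$. The only cosmetic difference is that the paper additionally verifies $\card\, Y(\OP_{2,2})=3$ (via Proposition \ref{prop.detieiszero}) so as to write the isomorphism onto $\bool[[q]]\oplus\bool[[q]]\oplus\bool[[q]]$ explicitly, a bookkeeping step your coordinate-at-$1_{V\otimes V}$ argument does not require.
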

\begin{proof}
The image of $\mathfrak{A}(-)$ is contained in $Q(H_{2,2}) \subset Q$.
By Lemma \ref{lem.minshellshmn}, the minimal shell $S(H_{2,2})$
of $H_{2,2}$ is $S(H_{2,2})=Y(\OP_{2,2}).$ The set $\OP_{2,2}$
of open endomorphisms on the object $[2]$ in $\Br$ has
$3!!=3$ elements, and these are
\[ \OP_{2,2} = \{ 1_{[2]}, b_{1,1}, i_1 \circ e_1 \}. \]
Hence
\[ Y(\OP_{2,2}) = \{ y_1, y_2, y_3 \} \]
has cardinality at most $3$, with
\[ y_1 = Y(1_{[2]})=1_{V\otimes V},~ 
 y_2 = Y(b_{1,1})=b,~ y_3 = Y(i_1 e_1)=ie. \]
We claim that $Y(\OP_{2,2})$ has cardinality $3$. To see this, we note
that $y_1 \not= y_2$ as the braiding $b: V^{\otimes 2} \to V^{\otimes 2}$ 
is not equal to the identity.
Both $y_1$ and $y_2$ are isomorphisms with nonzero determinant.
By Proposition \ref{prop.detieiszero}, the determinant of 
$i\circ e$ vanishes. Thus $y_3 \not\in \{ y_1, y_2 \},$ as claimed.
Lemma \ref{lem.phiiso} then provides an isomorphism
\[ \phi: Q(H_{2,2}) \stackrel{\cong}{\longrightarrow}
  \bool [[q]] \oplus \bool [[q]] \oplus \bool [[q]] \]
such that
\[ \phi (y_1 \otimes b_1 + y_2 \otimes b_2 + y_3 \otimes b_3) =
  (b_1, b_2, b_3). \]
By Proposition \ref{prop.istandardsphere}, 
\[ \phi \mathfrak{A}(S^n) = (1+a, b_2, b_3), \]
whereas by Theorem \ref{thm.iexoticsphere},
\[ \phi \mathfrak{A}(\Sigma^n) = (qa', b'_2, b'_3). \]
In the Boolean power series semiring $\bool [[q]]$, there do not
exist elements $a,a'$ with $1+a=qa'$. Thus $\phi \mathfrak{A}(S^n)\not= \phi \mathfrak{A}(\Sigma^n)$
and in particular $\mathfrak{A}(S^n)\not= \mathfrak{A}(\Sigma^n)$.
\end{proof}

Given any homotopy sphere $\Sigma^n_1$, the invariant $\mathfrak{A}$ can
be modified in such a way that it will distinguish $\Sigma^n_1$ from any other
homotopy sphere $\Sigma^n_2$. Let $f_1:\Sigma_1 \to \real$ be a Morse function
with precisely two critical points and let $\overline{f}_1$ be its suspension.
Given any homotopy sphere $M^n$, let
\[ \mathfrak{A}_1 (M) := \sum_{\overline{f}_M \in C_2 (M)}
 \sum_{W\in \operatorname{Cob}(\Sigma_1, M)} 
  Z_W (\overline{f}_1, \overline{f}_M) \in Q. \]
If $M=\Sigma_2$ is not diffeomorphic to $\Sigma_1,$ then
$\mathfrak{A}(\Sigma_2) \not= \mathfrak{A}(\Sigma_1)$.

\providecommand{\bysame}{\leavevmode\hbox to3em{\hrulefill}\thinspace}
\providecommand{\MR}{\relax\ifhmode\unskip\space\fi MR }
\providecommand{\MRhref}[2]{%
  \href{http://www.ams.org/mathscinet-getitem?mr=#1}{#2}
}
\providecommand{\href}[2]{#2}

\end{document}